\setlist[itemize]{leftmargin=2em}
\setlist[enumerate]{leftmargin=2em}
\definecolor{darkblue}{rgb}{0.0,0,0.7} % darkblue color
\definecolor{darkred}{rgb}{0.7,0,0} % darkred color
\definecolor{darkgreen}{rgb}{0, .6, 0} % darkgreen color
\newcommand{\defncolor}{\color{darkred}}
\newcommand{\defn}[1]{{\defncolor\emph{#1}}} % emphasis of a definition
\newtheorem{theorem}{Theorem}[section]
\newtheorem{prop}[theorem]{Proposition}
\newtheorem{cor}[theorem]{Corollary}
\newtheorem{lemma}[theorem]{Lemma}
\theoremstyle{definition}
\newtheorem{definition}[theorem]{Definition}
\newtheorem{example}[theorem]{Example}
\newtheorem{remark}[theorem]{Remark}
\numberwithin{equation}{section}
\newcommand{\idiot}[1]{\vspace{5 mm}\par \noindent
\marginpar{\textsc{Note}}
\framebox{\begin{minipage}[c]{0.95 \textwidth}
#1 \end{minipage}}\vspace{5 mm}\par}
\renewcommand{\idiot}[1]{}
\def\CC{{\mathbb C}}
\def\ZZ{{\mathbb Z}}
\def\o{\overline}
\def\ind{{\mathfrak I}}
\def\vtype{{\overrightarrow{\mathsf{type}}}}
\def\XXX{{\mathbf{X}}}
\def\EEE{{\mathbf{E}}}
\def\epart{{\text{\footnotesize$\varnothing$}}}
\newcommand{\cf}{\textit{cf.} }
\newcommand{\ie}{\textit{i.e.}}
\newcommand{\class}{\operatorname{Cl}}
\tikzstyle{partition-diagram}=[scale=0.4, thick, baseline={(0,-1ex/2)}, %framed, background rectangle/.style={thick, draw=gray, rounded corners}
\tikzstyle{idempotent}=[background rectangle/.style={ultra thick, draw=ForestGreen, rounded corners}]
\tikzstyle{vertex} = [shape = circle, minimum size = 3pt, inner sep = 1pt]
\tikzstyle{representative}=[background rectangle/.style={ultra thick, draw=RoyalBlue, rounded corners}]
\tikzstyle{partition-diagram-small}=[scale=0.15, thin, baseline={(0,-1ex/2)}]
\tikzstyle{vertex-small} = [shape = circle, minimum size = 2pt, inner sep = 1pt]
\def\unprotectedboldentry#1{\textcolor{Red}{\large{#1}}}
\def\boldentry{\protect\unprotectedboldentry}
\newcommand{\tikztableauinternal}[1]{
    \def\newtableau{#1}
    \coordinate (x) at (-0.5,0.5);
    \coordinate (y) at (-0.5,0.5);
    \foreach \row in \newtableau {
        \foreach \entry in \row {
            \ifthenelse{\equal{\entry}{X} \OR \equal{\entry}{None}}
               {
                \node (y) at ($(y) + (1,0)$) {};
                \fill[color=gray!30] ($(y)-(0.5,0.5)$) rectangle +(1,1);
                \draw[color=gray, dotted] ($(y)-(0.5,0.5)$) rectangle +(1,1);
               }
               {
                \ifthenelse{\equal{\entry}{\boldentry X}}
                   {
                    \node (y) at ($(y) + (1,0)$) {};
                    \fill[color=gray] ($(y)-(0.5,0.5)$) rectangle +(1,1);
                    \draw ($(y)-(0.5,0.5)$) rectangle +(1,1);
                   }
                   {
                    \node (y) at ($(y) + (1,0)$) {\entry};
                    \draw ($(y)-(0.5,0.5)$) rectangle +(1,1);
                   }
               }
            }
        \coordinate (x) at ($(x)-(0,1)$);
        \coordinate (y) at (x);
        }
}
\newcommand{\tikztableau}[2][scale=0.6,every node/.style={font=\small}]{%
    \begin{tikzpicture}[#1]%
        \tikztableauinternal{#2}%
    \end{tikzpicture}%
}
\newcommand{\tikztableausmall}[1]{\tikztableau[scale=0.45,every node/.style={font=\rm\small}]{#1}}
\newdimen\squaresize \squaresize=10pt
\newdimen\thickness \thickness=0.4pt
\def\square#1{\hbox{\vrule width \thickness
     \vbox to \squaresize{\hrule height \thickness\vss
        \hbox to \squaresize{\hss#1\hss}
     \vss\hrule height\thickness}
\unskip\vrule width \thickness}
\kern-\thickness}
\def\vsquare#1{\vbox{\square{$#1$}}\kern-\thickness}
\def\thisbox#1{\kern-.09ex\fbox{#1}}
\def\downbox#1{\lower1.200em\hbox{#1}}
\newdimen\Squaresize \Squaresize=20pt
\newdimen\Thickness \Thickness=0.4pt
\def\Square#1{\hbox{\vrule width \Thickness
     \vbox to \Squaresize{\hrule height \Thickness\vss
        \hbox to \Squaresize{\hss#1\hss}
     \vss\hrule height\Thickness}
\unskip\vrule width \Thickness}
\kern-\Thickness}
\def\Vsquare#1{\vbox{\Square{$#1$}}\kern-\Thickness}
\title[plethysm and UBP]{Plethysm and the algebra of uniform block permutations}
\author[Orellana]{Rosa Orellana}
\address[R. Orellana]{Mathematics Department, Dartmouth College, 
Hanover, NH 03755, U.S.A.}
\email{Rosa.C.Orellana@dartmouth.edu}
\urladdr{\href{https://math.dartmouth.edu/~orellana/}{https://math.dartmouth.edu/~orellana/}}
\author[Saliola]{Franco Saliola}
\address[F. Saliola]{D\'epartement de math\'ematiques,
Universit\'e du Qu\'ebec \`a Montr\'eal, Canada}
\email{saliola.franco@uqam.ca}
\urladdr{\href{http://lacim.uqam.ca/~saliola/}{http://lacim.uqam.ca/~saliola/}}
\author[Schilling]{Anne Schilling}
\address[A. Schilling]{Department of Mathematics, University of California, One Shields
Avenue, Davis, CA 95616-8633, U.S.A.}
\email{anne@math.ucdavis.edu}
\urladdr{\href{http://www.math.ucdavis.edu/~anne}{http://www.math.ucdavis.edu/~anne}}
\author[Zabrocki]{Mike Zabrocki}
\address[M. Zabrocki]{Department of Mathematics and Statistics,  York University, 4700 Keele Street, Toronto,
Ontario M3J 1P3, Canada}
\email{zabrocki@mathstat.yorku.ca}
\urladdr{\href{http://garsia.math.yorku.ca/~zabrocki/}{http://garsia.math.yorku.ca/~zabrocki/}}
\begin{document}

\maketitle

\begin{abstract}
We study the representation theory of the uniform block permutation algebra in the context of the representation theory
of factorizable inverse monoids. The uniform block permutation algebra is a subalgebra of the partition algebra and is also
known as the party algebra. We compute its characters and provide a Frobenius characteristic map to symmetric functions.
This reveals connections of the characters of the uniform block permutation algebra and plethysms of Schur functions.
\end{abstract}

\setcounter{tocdepth}{2}
\tableofcontents

%%%%%%%%%%%%%%%%%%%%%%%%%%%%%%%%%%%%%%%%%%%%%%%%%%
\section{Introduction}
\label{Introduction}
%%%%%%%%%%%%%%%%%%%%%%%%%%%%%%%%%%%%%%%%%%%%%%%%%%

The partition algebra arose in the early 1990s in the work of Martin~\cite{Martin.1991,Martin.1994, Martin.1996, Martin.2000} and Jones~\cite{Jones.1994}
in the context of the Potts model in statistical mechanics. It is a generalization of the Temperley--Lieb algebra and can be formulated
in terms of an important question in invariant theory: If a group $G$ acts on an $n$-dimensional vector space $V$, how does $V^{\otimes k}$
decompose into irreducible representations of $G$? This question can be studied using the centralizer algebra $\operatorname{End}_G(V^{\otimes k})$.
The partition algebra is isomorphic to this centralizer algebra when the group $G$ is the symmetric group $\mathfrak{S}_n$~\cite{Jones.1994,Martin.1994}, that is, $\operatorname{End}_{\mathfrak{S}_n}(V^{\otimes k}) \simeq P_k(n)$.

Inspired by this work, Tanabe~\cite{Tanabe.1997} considered the case when $G$ is a unitary reflection group $G(r,p,n)$, where $G(r,1,n)$ is a group
of $n \times n$ monomial matrices whose nonzero entries are $r$-th roots of unity and $G(r,p,n)$ is a subgroup of index $p$ in $G(r,1,n)$.
Kosuda~\cite{Kosuda.2000, Kosuda.2006} studied the party algebra $\mathcal{U}_k$, which corresponds to the subcase $p=1$, $n \geqslant k$
and $r>k$. The party algebra is a subalgebra of the partition algebra $P_k(n)$. Elements in the party algebra can be viewed as bijections between blocks
of the same size of two set partitions of $\{1,2,\ldots,k\}$ of the same type. To quote from Kosuda~\cite{Kosuda.2000}:
\begin{quote}
Suppose that there exist two parties each of which consists of $n$ members. The parties hold meetings splitting into several small groups.
Every group consists of the members of each party of the same number. The set of such decompositions into small groups makes an algebra
called the party algebra under a certain product.\footnote{Clearly in arriving at the name
`party algebra' Kosuda was not imagining a party of introverted mathematicians for
which we would likely see each vertex in the diagram isolated.}
\end{quote}
Since the block sizes of the two set  partitions are required to match, this algebra is also known as the uniform block permutation
algebra~\cite{FitzGerald.2003}, which is the terminology we will use in this paper.

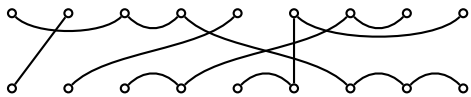
\begin{figure}[h]
\begin{tikzpicture}[scale = 0.5,thick, baseline={(0,-1ex/2)}]
%\tikzstyle{vertex} = [shape = circle, minimum size = 4pt, inner sep = 1pt]
\node[vertex] (G--9) at (12.0, -1) [shape = circle, draw] {};
\node[vertex] (G--8) at (10.5, -1) [shape = circle, draw] {};
\node[vertex] (G--6) at (7.5, -1) [shape = circle, draw] {};
\node[vertex] (G-1) at (0.0, 1) [shape = circle, draw] {};
\node[vertex] (G-3) at (3.0, 1) [shape = circle, draw] {};
\node[vertex] (G-4) at (4.5, 1) [shape = circle, draw] {};
\node[vertex] (G--7) at (9.0, -1) [shape = circle, draw] {};
\node[vertex] (G--5) at (6.0, -1) [shape = circle, draw] {};
\node[vertex] (G-7) at (9.0, 1) [shape = circle, draw] {};
\node[vertex] (G-9) at (12.0, 1) [shape = circle, draw] {};
\node[vertex] (G--4) at (4.5, -1) [shape = circle, draw] {};
\node[vertex] (G--3) at (3.0, -1) [shape = circle, draw] {};
\node[vertex] (G-6) at (7.5, 1) [shape = circle, draw] {};
\node[vertex] (G-8) at (10.5, 1) [shape = circle, draw] {};
\node[vertex] (G--2) at (1.5, -1) [shape = circle, draw] {};
\node[vertex] (G-5) at (6.0, 1) [shape = circle, draw] {};
\node[vertex] (G--1) at (0.0, -1) [shape = circle, draw] {};
\node[vertex] (G-2) at (1.5, 1) [shape = circle, draw] {};
\draw[] (G-1) .. controls +(0.6, -0.6) and +(-0.6, -0.6) .. (G-3);
\draw[] (G-3) .. controls +(0.5, -0.5) and +(-0.5, -0.5) .. (G-4);
\draw[] (G-4) .. controls +(1, -1) and +(-1, 1) .. (G--7);
\draw[] (G--9) .. controls +(-0.5, 0.5) and +(0.5, 0.5) .. (G--8);
\draw[] (G--8) .. controls +(-0.5, 0.5) and +(0.5, 0.5) .. (G--7);
\draw[] (G-7) .. controls +(0.5, -0.5) and +(-0.5, -0.5) .. (G-8);
\draw[] (G--6) .. controls +(-0.5, 0.5) and +(0.5, 0.5) .. (G--5);
\draw[] (G-6) .. controls +(0, -1) and +(0, 1) .. (G--6);
\draw[] (G-6) .. controls +(0.8, -0.8) and +(-0.8, -0.8) .. (G-9);
\draw[] (G--4) .. controls +(-0.5, 0.5) and +(0.5, 0.5) .. (G--3);
\draw[] (G--4) .. controls +(1, 1) and +(-1, -1) .. (G-7);
\draw[] (G-5) .. controls +(-1, -1) and +(1, 1) .. (G--2);
\draw[] (G-2) .. controls +(-0.75, -1) and +(0.75, 1) .. (G--1);
\end{tikzpicture}
\caption{An example diagram representing an element of the uniform block permutation monoid $\mathcal{U}_9$.
The connected components of the graph visually represent the blocks of the set partitions.
Each connected component contains the same number of elements in the top row as in the bottom row.}
\end{figure}

In this paper, we study the representation theory of the uniform block permutation algebra $\mathcal{U}_k$. It is an interesting, nontrivial example
of a factorizable inverse monoid. We use the general theory of finite inverse monoids to develop the representation theory of $\mathcal{U}_k$. 
This relies on theorems due to Clifford~\cite{Clifford}, Munn~\cite{Munn} and Ponizovski\u{\i}~\cite{Ponizovski} and the explicit constructions
given in~\cite{RZ1991,LP1969}. The exposition and notation we follow here is found in~\cite{GMS09, Steinberg.2016}. 
In particular, by characterizing the idempotents, the maximal subgroups, the $\mathscr{J}$-classes and the $\mathscr{L}$-classes of $\mathcal{U}_k$,
the Sch\"utzenberger representations can be employed to construct the irreducible representations of $\mathcal{U}_k$. The 
representations that we obtain very nicely extend Young's construction of the irreducible representations of symmetric groups:
instead of a symmetric group action on standard tableaux we obtain a monoid action on
sequences of set-valued tableaux. The action can be described on a basis indexed by combinatorial objects
using familiar and well-used relations on tableaux, rather than operations on paths in a
Bratteli diagram as appears in the construction by Kosuda~\cite{Kosuda.2006}.

We also compute the irreducible characters of the uniform block permutation algebra $\mathcal{U}_k$
and relate them to symmetric functions by defining a Frobenius characteristic that maps
a class function of $\mathcal{U}_k$ to an element of the $k$-fold tensor product of the symmetric functions.
More precisely, each irreducible representation of $\mathcal{U}_k$ is indexed by a
$k$-tuple of partitions $\vec{\lambda}=(\lambda^{(1)},\lambda^{(2)},\ldots,\lambda^{(k)})$
such that $\sum_{i=1}^k i | \lambda^{(i)}| =k$, and the associated symmetric function
of the character of the restriction of this representation to $\mathfrak{S}_k$ is
\[
    s_{\lambda^{(1)}}[s_1] s_{\lambda^{(2)}}[s_2] \ldots s_{\lambda^{(k)}}[s_k],
\]
where $s_\lambda$ is the Schur function indexed by a partition $\lambda$ and $s_\lambda[s_h]$ is the plethysm of $s_\lambda$ with
the Schur function $s_h$ indexed by a single row. In this sense, the representation theory of
$\mathcal{U}_k$ gives a novel representation theoretic approach to plethysm.

Furthermore, having the image of the characters under the Frobenius map reduces
the computation of the characters to a computation on symmetric functions.
In a 2005 talk, Naruse presented (without proof) several results on the
characters of the Tanabe algebra, and hence as a special case the uniform block
permutation algebra $\mathcal{U}_k$~\cite{Naruse.2005}.
This included character tables for $\mathcal{U}_k$ for small values of $k$.
Using the symmetric function connection that we establish, these tables can be verified.
We are not aware of any other proofs of Naruse's results in the literature.

In a subsequent paper, we will consider the restriction from the general linear group $\operatorname{GL}_n$ to the symmetric group $\mathfrak{S}_n$
which involves the same restriction coefficients as the restriction of the partition algebra $P_k(n)$ to the symmetric group $\mathfrak{S}_k$.
The uniform block permutation algebra can be viewed as an intermediate step in this restriction, see \cite[Section 4.1]{Har}.
The restriction from the partition algebra $P_k(n)$ to the uniform block permutation algebra $\mathcal{U}_k$ involves the Littlewood--Richardson rule,
whereas the restriction from $\mathcal{U}_k$ to $\mathfrak{S}_k$ involves the plethysm operation.

To conclude, let us compare our approach in this paper with existing constructions in the literature.
Irreducible matrix representations of $\mathcal{U}_k$ were previously constructed by
Kosuda~\cite{Kosuda.2006} by defining a tower of algebras $\mathcal{U}_k \subseteq \mathcal{U}_{k+1}$.
In Kosuda's approach~\cite{Kosuda.2006}, the rows of the matrices are indexed by paths in the Bratteli diagram of the tower 
of monoid algebras and the action is defined over the field
$\mathbb{Q}(\sqrt{2}, \sqrt{3}, \ldots, \sqrt{k})$.
In contrast, we use the theory of finite monoids to construct the irreducible representations
of $\mathcal{U}_k$ in terms of tuples of set-valued tableaux
and the action is expressed in the basis with coefficients that are integers.
The bijection between the tuples of set-valued tableaux that we use here and the path model
used by Kosuda \cite{Kosuda.2006} is similar to the bijection described in~\cite{COSSZ} relating the path model for diagram algebras
and set-valued tableaux. Set-valued tableaux also appear in the construction of the representations 
of the partition algebra and sub-diagram algebras by Halverson and Jacobson~\cite{HJ}.

This paper is organized as follows. In Section~\ref{section.monoid}, we introduce uniform block permutations and describe the monoid
structure on them. In particular, we provide a presentation of the monoid of uniform block permutations $\mathcal{U}_k$ and show that it is an inverse monoid.
In Section~\ref{section.irreps of Uk}, we compute the maximal subgroups, $\mathscr{J}$- and $\mathscr{L}$-classes of $\mathcal{U}_k$.
Using Sch\"utzenberger representations, this makes it possible to construct the irreducible representations of $\mathcal{U}_k$.
The characters of $\mathcal{U}_k$ are computed in Section~\ref{section.characters}. Finally, in Section~\ref{sec:charsSym} the connection of the 
characters with symmetric functions is established.

%%%%%%%%%%%%%%%%%%%%%%%%%%%%%%%%%%%%%%%%%%%%%%%%%%
\subsection*{Acknowledgements}
This project benefitted tremendously from the support by the American Institute for Mathematics (AIM) through their AIM SQuaRE program.
We thank Laura Colmenarejo for discussions.

RO was partially supported by NSF grant DMS--300512.
AS was partially supported by NSF grant DMS--1760329 and DMS--2053350.
FS and MZ are supported by NSERC/CNSRG.

%%%%%%%%%%%%%%%%%%%%%%%%%%%%%%%%%%%%%%%%%%%%%%%%%%
\section{The monoid of uniform block permutations}
\label{section.monoid}
%%%%%%%%%%%%%%%%%%%%%%%%%%%%%%%%%%%%%%%%%%%%%%%%%%

After some preliminary notation in Sections~\ref{section.partitions} and~\ref{section.set partitions} on partitions and set partitions,
we define uniform block permutations $\mathcal{U}_k$ in Section~\ref{section.UBP} and give its monoid structure in Section~\ref{sec:monoidstructure}. 
We show in Section~\ref{section.factorizable} that every element of $\mathcal{U}_k$ is a product of an idempotent and a permutation. We recall a 
presentation of $\mathcal{U}_k$ in Section~\ref{sec:presentation} and we conclude in Section~\ref{sec:inverse-monoid} with a proof that $\mathcal{U}_k$ 
is an inverse monoid.

%%%%%%%%%%%%%%%%%%%%%%%%%%%%%%%%%%%%%%%%%%%%%%%%%%
\subsection{Partitions}
\label{section.partitions}
A \defn{partition} of a positive integer $k$ is a nonincreasing sequence $\lambda = (\lambda_1, \ldots, \lambda_\ell)$ of positive integers such that 
$\lambda_1 + \cdots + \lambda_\ell = k$.
We write $|\lambda|$ for $\lambda_1 + \cdots + \lambda_\ell$ and call $\lambda_i$ the \defn{parts} of $\lambda$.
The \defn{length} of the partition $\lambda$ is $\ell(\lambda)=\ell$.
We write $\lambda \vdash k$ to mean that $\lambda$ is a partition of $k$.
We declare that the empty sequence $()$ is the unique partition of $0$, and we denote this by $\epart$.
We will often use \defn{exponential notation} for partitions in which $b$ consecutive occurrences of the part $i$ is denoted by $i^b$: 
for example, $(4,4,4,2,1,1,1,1)$ can be denoted $(1^4 2^1 4^3)$.

We use Young diagrams to represent partitions.  If $\lambda = (\lambda_1, \ldots, \lambda_\ell)$ is a partition of $k$, then the \defn{Young diagram}
of $\lambda$ is the left-justified array of $k$ cells (or boxes) with $\lambda_i$ cells in the $i$-th row. We use French notation, so that
the largest row is at the bottom. This may be upside down from what is sometimes used in representation theory.

For every nonnegative integer $k$, we define
\begin{equation}
\label{equation.Ik}
    I_k = \left\{ \left(\lambda^{(1)}, \lambda^{(2)}, \ldots, \lambda^{(k)}\right) : \lambda^{(i)}
    \hbox{ are partitions such that }
    \sum_{i=1}^k i |\lambda^{(i)}| = k \right\}.
\end{equation}
We denote elements in $I_k$ as $\vec{\lambda} = (\lambda^{(1)},\ldots,\lambda^{(k)})$.
We will see that the elements of $I_k$ index the irreducible representations of
the uniform block permutation algebra $\mathcal{U}_k$.  In examples the elements of
$I_k$ will be expressed by dropping the trailing empty partitions in the list of partitions
so that, for instance, the element $(\epart, (2), \epart, \epart)$ of $I_4$ will be displayed
without loss of information as $(\epart, (2))$.

%%%%%%%%%%%%%%%%%%%%%%%%%%%%%%%%%%%%%%%%%%%%%%%%%%
\subsection{Set partitions}
\label{section.set partitions}
A \defn{set partition} $\pi$ of a set $X$ is a collection of nonempty subsets $\{\pi_1, \ldots, \pi_\ell\}$ of $X$ such that
$\pi_i \cap \pi_j = \emptyset$ for all $i \neq j$ and $\bigcup_{i=1}^{\ell} \pi_i = X$.  We use $\pi \vdash X$ to denote that $\pi$
is a set partition of $X$. The subsets $\pi_i$ are called the \defn{blocks} of $\pi$.

If $\pi = \{\pi_1, \ldots, \pi_\ell\}$ is a set partition of $[k] = \{1, 2, \ldots, k\}$,
then we order the blocks in $\pi$ using the \defn{graded last letter order}: if $A$ and $B$ are two blocks, then
$A\leqslant B$ in the graded last letter order if either $|A|<|B|$ or if $|A|=|B|$, then $\mathsf{max}(A)\leqslant \mathsf{max}(B)$.
For example, the blocks of $\pi = \{\{4\}, \{1,6\},\{3,8\},\{2,5,7\}\}$ are listed in graded last letter order.

To simplify notation, we often write the set partitions by separating the blocks by vertical lines. For example, the set partition
$\pi = \{\{4\}, \{1,6\},\{3,8\},\{2,5,7\}\}$ will also be denoted by $\pi = 4|16|38|257$.

The \defn{type} of a set partition $\pi$, denoted $\mathsf{type}(\pi)$,
is the (integer) partition formed by the sizes of the blocks of $\pi$.
For example,
\begin{equation*}
    \mathsf{type}(4|16|38|257) = (3,2,2,1) = (1, 2^2, 3).
\end{equation*}
The number of set partitions of type $\lambda = (1^{a_1}, 2^{a_2}, \ldots, k^{a_k})$ is
\begin{equation}\label{eq:nosetpartitions}
    \mathsf{sp}_k(\lambda) = \frac{k!}{a_1! \cdots a_k!(1!)^{a_1}(2!)^{a_2} \cdots (k!)^{a_k}}.
\end{equation}

Set partitions are ordered by \defn{refinement}: if $\pi$ and $\gamma$ are two set partitions, then we say that $\pi$ is \defn{finer}
than $\gamma$ and that $\gamma$ is \defn{coarser} than $\pi$, if every block of $\pi$ is a subset of some block of $\gamma$. In that case,
we write $\pi\leqslant \gamma$.

Given a set partition $\pi = \{\pi_1, \ldots, \pi_\ell\} \vdash [k]$
and permutation $\sigma \in \mathfrak{S}_k$, we define
$\sigma(\pi) = \{\sigma(\pi_1), \ldots, \sigma(\pi_\ell)\}$,
where $\sigma(A) = \{\sigma(a) : a\in A\}$ for every subset $A \subseteq [k]$.
Since $|A| = |\sigma(A)|$ for all $A \subseteq [k]$, it follows that
$\pi$ and $\sigma(\pi)$ have the same type.

%%%%%%%%%%%%%%%%%%%%%%%%%%%%%%%%%%%%%%%%%%%%%%%%%%
\subsection{Uniform block permutations}
\label{section.UBP}

We define the set of uniform block permutations $\mathcal{U}_k$ and give
three equivalent ways to view its elements:
as set partitions of $[k]\cup [\bar{k}]$;
as size-preserving bijections between the blocks of two set partitions;
as two-row diagrams.
We will use these interpretations interchangeably throughout the paper.

\subsubsection{Set partitions of $[k] \cup [\overline{k}]$}
For nonzero $k\in \mathbb{N}$, define
\begin{equation*}
    [k]= \{1, \dots, k\}
    \qquad\text{and}\qquad
    [\overline{k}] = \{\overline{1}, \ldots, \overline{k}\}.
\end{equation*}
For each $a \in [k]$, we define $\overline{\overline{a}} = a$ so that
$a \mapsto \overline{a}$ as an involution on $[k] \cup [\overline{k}]$.

Let $d = \{d_1, d_2, \ldots, d_\ell\}$ be a set partition of $[k]\cup [\bar{k}]$.
We say that $d$ is \defn{uniform} if
$|d_i\cap [k]|= |d_i\cap [\bar{k}]|$ for all $1\leqslant i\leqslant \ell$.
Let $\mathcal{U}_k$ be the set of uniform set partitions of $[k] \cup [\bar{k}]$:
\[
    \mathcal{U}_k = \big\{ d \vdash [k]\cup[\bar{k}]: \text{$d$ uniform}\big\}.
\]

Let $\mathsf{top}(d)$ be the set partition of $[k]$ consisting of the blocks $d_i \cap [k]$
for $1 \leqslant i \leqslant \ell$ and $\mathsf{bot}(d)$ the set
partition of $[k]$ containing the blocks $\overline{d_i} \cap [k]$
for $1 \leqslant i \leqslant \ell$, where $\overline{d_i} = \{ \overline{a} : a \in d_i\}$. For example,
\begin{align*}
    d & = \{ \{2,\bar{4} \}, \{5,\bar{7} \}, \{1,3,\bar{1},\bar{2} \}, \{4,6,\bar{3}, \bar{6}\}, \{7, 8, 9, \bar{5}, \bar{8}, \bar{9} \}  \}, \\
\mathsf{top}(d) &=\{\{2\}, \{5\}, \{ 1,3\},  \{4, 6\},  \{7, 8, 9\}\}, \\
\mathsf{bot}(d) &=\{ \{ 4\}, \{7\},  \{1,2\},\{3,6\},  \{5, 8, 9\}\}.
\end{align*}
When writing set partitions, we list the blocks in graded last letter order.

\subsubsection{Size-preserving bijections between set partitions of $[k]$}
\label{sssection:bijection}
It is useful to think of $d$ as the size-preserving bijection $d \colon
\mathsf{top}(d) \rightarrow \mathsf{bot}(d)$ that maps
$d_i \cap [k]$ to $\overline{d_i} \cap [k]$.
Continuing the previous example, the bijection associated with $d$, expressed
in two-line notation, is
\begin{align*}
    \left(
        \begin{array}{ccccc}
            \{2\} & \{5\} & \{ 1,3\} & \{4, 6\} & \{7, 8, 9\} \\
            \{4\} & \{7\} & \{1,2\}  & \{3,6\}  & \{5, 8, 9\}
        \end{array}
    \right).
\end{align*}
For this reason, the elements of $\mathcal{U}_k$ are called \defn{uniform block
permutations}. With this interpretation, it follows that the number of elements
in $\mathcal{U}_k$ is
\[
     |\mathcal{U}_k| = \sum_{\lambda = (1^{a_1}, \ldots, k^{a_k})\vdash k} \mathsf{sp}_k(\lambda)^2 a_1! \cdots a_k!.
\]
Starting with $k=0$, the sequence of $ |\mathcal{U}_k|$ begins
\[
1, 1, 3, 16, 131, 1496, 22482, 426833,\ldots
\]
and is listed as sequence \href{https://oeis.org/A023998}{A023998}
in the Online Encyclopedia of Integer Sequences \cite{OEIS}.

\subsubsection{Diagrams}
A graph on the vertex set $[k] \cup [\bar{k}]$ \defn{represents}
a set partition $d \vdash [k] \cup [\bar{k}]$ if (the vertices of) the
connected components of the graph are the blocks of $d$.
We draw these graphs by arranging the vertices in two rows:
$1, 2, \ldots, k$ appear from left to right in the top row; and
$\bar1, \bar2, \ldots, \bar{k}$ from left to right in the bottom row.
In this way, the graph
\begin{center}
\begin{tikzpicture}[scale = 0.5,thick, baseline={(0,-1ex/2)}]
%\tikzstyle{vertex} = [shape = circle, minimum size = 4pt, inner sep = 1pt]
\node[vertex] (G--9) at (12.0, -1) [shape = circle, draw] {};
\node[vertex] (G--8) at (10.5, -1) [shape = circle, draw] {};
\node[vertex] (G--5) at (6.0, -1) [shape = circle, draw] {};
\node[vertex] (G-5) at (6.0, 1) [shape = circle, draw] {};
\node[vertex] (G-8) at (10.5, 1) [shape = circle, draw] {};
\node[vertex] (G-9) at (12.0, 1) [shape = circle, draw] {};
\node[vertex] (G--7) at (9.0, -1) [shape = circle, draw] {};
\node[vertex] (G-7) at (9.0, 1) [shape = circle, draw] {};
\node[vertex] (G--6) at (7.5, -1) [shape = circle, draw] {};
\node[vertex] (G--3) at (3.0, -1) [shape = circle, draw] {};
\node[vertex] (G-3) at (3.0, 1) [shape = circle, draw] {};
\node[vertex] (G-6) at (7.5, 1) [shape = circle, draw] {};
\node[vertex] (G--4) at (4.5, -1) [shape = circle, draw] {};
\node[vertex] (G--1) at (0.0, -1) [shape = circle, draw] {};
\node[vertex] (G-1) at (0.0, 1) [shape = circle, draw] {};
\node[vertex] (G-4) at (4.5, 1) [shape = circle, draw] {};
\node[vertex] (G--2) at (1.5, -1) [shape = circle, draw] {};
\node[vertex] (G-2) at (1.5, 1) [shape = circle, draw] {};
\draw[] (G-7) .. controls +(0.5, -0.5) and +(-0.5, -0.5) .. (G-8);
\draw[] (G-8) .. controls +(0.5, -0.5) and +(-0.5, -0.5) .. (G-9);
\draw[] (G-9) .. controls +(0, -1) and +(0, 1) .. (G--9);
\draw[] (G--9) .. controls +(-0.5, 0.5) and +(0.5, 0.5) .. (G--8);
\draw[] (G--8) .. controls +(-0.7, 0.7) and +(0.7, 0.7) .. (G--5);
%\draw[] (G--5) .. controls +(0, 1) and +(0, -1) .. (G-5);
\draw[] (G-5) .. controls +(1, -1) and +(-1, 1) .. (G--7);
\draw[] (G-4) .. controls +(0.7, -0.7) and +(-0.7, -0.7) .. (G-6);
%\draw[] (G-6) .. controls +(0, -1) and +(0, 1) .. (G--6);
\draw[] (G--6) .. controls +(-0.7, 0.7) and +(0.7, 0.7) .. (G--3);
%\draw[] (G--3) .. controls +(0, 1) and +(0, -1) .. (G-3);
\draw[] (G-1) .. controls +(0.7, -0.7) and +(-0.7, -0.7) .. (G-3);
\draw[] (G-4) .. controls +(0, -1) and +(0, 1) .. (G--3);
\draw[] (G--2) .. controls +(-0.5, 0.5) and +(0.5, 0.5) .. (G--1);
\draw[] (G--1) .. controls +(0, 1) and +(0, -1) .. (G-1);
\draw[] (G-2) .. controls +(1, -1) and +(-1, 1) .. (G--4);
\end{tikzpicture}
\end{center}
represents the set partition $\{ \{1,3,\bar{1},\bar{2} \}, \{2,\bar{4} \}, \{4,6,\bar{3}, \bar{6}\}, \{5,\bar{7} \}, \{7, 8, 9, \bar{5}, \bar{8}, \bar{9} \}  \}$.
We call this the \defn{(two-row) diagram} of the set partition.
Notice that it is possible that more than one graph represents a given set
partition; therefore, a diagram represents a class of labeled graphs that have
the same connected components.

%%%%%%%%%%%%%%%%%%%%%%%%%%%%%%%%%%%%%%%%%%%%%%%%%%
\subsection{Monoid structure}
\label{sec:monoidstructure}
We next define a monoid structure on the set of all set partitions of $[k] \cup
[\bar{k}]$. It follows from this definition that the product of two uniform
block permutations is again a uniform block permutation, from which we obtain
a monoid structure on $\mathcal{U}_k$.

Let $d, d' \in \mathcal{U}_k$ (or more generally, any pair of set partitions
$[k] \cup [\bar{k}]$), which we view as diagrams. The product $d d'$ is defined
as follows:
\begin{itemize}
    \item
        stack $d$ on top of $d'$ so that the bottom vertices of $d$
        line up with the top vertices of $d'$;

    \item
        compute the connected components of the resulting three-row diagram;

    \item
        eliminate the vertices of the middle row from the connected components.
\end{itemize}

\begin{example}
    We illustrate the product of the following two set partitions:
    \begin{equation*}
        d =
        \begin{tikzpicture}[scale = 0.5,thick, baseline={(0,-1ex/2)}]
        %\tikzstyle{vertex} = [shape = circle, minimum size = 4pt, inner sep = 1pt]
        \node[vertex, fill=Green] (G-1) at (0.0, 1) [shape = circle, draw] {};
        \node[vertex, fill=Green] (G-2) at (1.5, 1) [shape = circle, draw] {};
        \node[vertex, fill=Green] (G-3) at (3.0, 1) [shape = circle, draw] {};
        \node[vertex, fill=Green] (G-4) at (4.5, 1) [shape = circle, draw] {};
        \node[vertex, fill=Green] (G-5) at (6.0, 1) [shape = circle, draw] {};
        \node[vertex, fill=Green] (G-6) at (7.5, 1) [shape = circle, draw] {};
        \node[vertex] (G--1) at (0.0, -1) [shape = circle, draw] {};
        \node[vertex] (G--2) at (1.5, -1) [shape = circle, draw] {};
        \node[vertex] (G--3) at (3.0, -1) [shape = circle, draw] {};
        \node[vertex] (G--4) at (4.5, -1) [shape = circle, draw] {};
        \node[vertex] (G--5) at (6.0, -1) [shape = circle, draw] {};
        \node[vertex] (G--6) at (7.5, -1) [shape = circle, draw] {};
        \draw[] (G-2) .. controls +(-1, -1) and +(0,0) .. (G--1);
        \draw[] (G-1) .. controls +(1, -1) and +(0, 0) .. (G--2);
        %\draw[] (G-4) .. controls +(-1, -1) and +(0, 0) .. (G--3);
        \draw[] (G--2) .. controls +(0.5, 0.5) and +(-0.5, 0.5) .. (G--3);
        \draw[] (G-1) .. controls +(0.7, -0.7) and +(-0.7, -0.7) .. (G-4);
        \draw[] (G-3) .. controls +(0.7, -0.7) and +(-0.7, -0.7) .. (G-6);
        \draw[] (G--4) .. controls +(0.5, 0.5) and +(-0.5, 0.5) .. (G--5);
        %\draw[] (G--4) .. controls +(0, 0) and +(1, -1) .. (G-3);
        \draw[] (G-6) .. controls +(-1, -1) and +(0, 0) .. (G--5);
        \draw[] (G-5) .. controls +(1, -1) and +(0, 0) .. (G--6);
        \end{tikzpicture}
        \qquad\text{and}\qquad
        d' =
        \begin{tikzpicture}[scale = 0.5,thick, baseline={(0,-1ex/2)}]
        %\tikzstyle{vertex} = [shape = circle, minimum size = 4pt, inner sep = 1pt]
        \node[vertex] (G-1) at (0.0, 1) [shape = circle, draw] {};
        \node[vertex] (G-2) at (1.5, 1) [shape = circle, draw] {};
        \node[vertex] (G-3) at (3.0, 1) [shape = circle, draw] {};
        \node[vertex] (G-4) at (4.5, 1) [shape = circle, draw] {};
        \node[vertex] (G-5) at (6.0, 1) [shape = circle, draw] {};
        \node[vertex] (G-6) at (7.5, 1) [shape = circle, draw] {};
        \node[vertex, fill=Blue] (G--1) at (0.0, -1) [shape = circle, draw] {};
        \node[vertex, fill=Blue] (G--2) at (1.5, -1) [shape = circle, draw] {};
        \node[vertex, fill=Blue] (G--3) at (3.0, -1) [shape = circle, draw] {};
        \node[vertex, fill=Blue] (G--4) at (4.5, -1) [shape = circle, draw] {};
        \node[vertex, fill=Blue] (G--5) at (6.0, -1) [shape = circle, draw] {};
        \node[vertex, fill=Blue] (G--6) at (7.5, -1) [shape = circle, draw] {};
        \draw[] (G-1) .. controls +(0.5, -0.5) and +(-0.5, -0.5) .. (G-5);
        \draw[] (G--4) .. controls +(0.7, 0.7) and +(-0.7, 0.7) .. (G--6);
        %\draw[] (G-1) .. controls +(1, -1) and +(0, 0) .. (G--4);
        \draw[] (G-2) .. controls +(0, -1) and +(0, 0) .. (G--2);
        \draw[] (G-3) .. controls +(-1, -1) and +(0, 0) .. (G--1);
        \draw[] (G-4) .. controls +(1, -1) and +(0, 0) .. (G--5);
        \draw[] (G-5) .. controls +(1, -1) and +(0, 0) .. (G--6);
        \draw[] (G-6) .. controls +(-1, -1) and +(0, 0) .. (G--3);
        \end{tikzpicture}\; .
    \end{equation*}
    The product $d d'$ is the set partition whose blocks correspond to the
    connected components of the diagram obtained by stacking the diagrams of
    $d$ and $d'$:
    \begin{equation*}
        d d' = \hskip .2in
        \raisebox{.25in}{\begin{tikzpicture}[scale = 0.5,thick, baseline={(0,-1ex/2)}]
        %\tikzstyle{vertex} = [shape = circle, minimum size = 3pt, inner sep = 1pt]
        \node[vertex] (G--1) at (0.0, -1) [shape = circle, draw] {};
        \node[vertex] (G--2) at (1.5, -1) [shape = circle, draw] {};
        \node[vertex] (G--3) at (3.0, -1) [shape = circle, draw] {};
        \node[vertex] (G--4) at (4.5, -1) [shape = circle, draw] {};
        \node[vertex] (G--5) at (6.0, -1) [shape = circle, draw] {};
        \node[vertex] (G--6) at (7.5, -1) [shape = circle, draw] {};
        \node[vertex, fill=Green] (G-1) at (0.0, 1) [shape = circle, draw] {};
        \node[vertex, fill=Green] (G-2) at (1.5, 1) [shape = circle, draw] {};
        \node[vertex, fill=Green] (G-3) at (3.0, 1) [shape = circle, draw] {};
        \node[vertex, fill=Green] (G-4) at (4.5, 1) [shape = circle, draw] {};
        \node[vertex, fill=Green] (G-5) at (6.0, 1) [shape = circle, draw] {};
        \node[vertex, fill=Green] (G-6) at (7.5, 1) [shape = circle, draw] {};
        \node[vertex, fill=Blue] (G--1p) at (0.0, -3.5) [shape = circle, draw] {};
        \node[vertex, fill=Blue] (G--2p) at (1.5, -3.5) [shape = circle, draw] {};
        \node[vertex, fill=Blue] (G--3p) at (3.0, -3.5) [shape = circle, draw] {};
        \node[vertex, fill=Blue] (G--4p) at (4.5, -3.5) [shape = circle, draw] {};
        \node[vertex, fill=Blue] (G--5p) at (6.0, -3.5) [shape = circle, draw] {};
        \node[vertex, fill=Blue] (G--6p) at (7.5, -3.5) [shape = circle, draw] {};
        \node[vertex] (G-1p) at (0.0, -1.5) [shape = circle, draw] {};
        \node[vertex] (G-2p) at (1.5, -1.4) [shape = circle, draw] {};
        \node[vertex] (G-3p) at (3.0, -1.5) [shape = circle, draw] {};
        \node[vertex] (G-4p) at (4.5, -1.5) [shape = circle, draw] {};
        \node[vertex] (G-5p) at (6.0, -1.5) [shape = circle, draw] {};
        \node[vertex] (G-6p) at (7.5, -1.5) [shape = circle, draw] {};
        \draw[] (G-2) .. controls +(-1, -1) and +(0,0) .. (G--1);
        \draw[] (G-1) .. controls +(1, -1) and +(0, 0) .. (G--2);
        %\draw[] (G-4) .. controls +(-1, -1) and +(0, 0) .. (G--3);
        \draw[] (G--2) .. controls +(0.5, 0.5) and +(-0.5, 0.5) .. (G--3);
        \draw[] (G-1) .. controls +(0.7, -0.7) and +(-0.7, -0.7) .. (G-4);
        \draw[] (G-3) .. controls +(0.7, -0.7) and +(-0.7, -0.7) .. (G-6);
        \draw[] (G--4) .. controls +(0.5, 0.5) and +(-0.5, 0.5) .. (G--5);
        %\draw[] (G--4) .. controls +(0, 0) and +(1, -1) .. (G-3);
        \draw[] (G-6) .. controls +(-1, -1) and +(0, 0) .. (G--5);
        \draw[] (G-5) .. controls +(1, -1) and +(0, 0) .. (G--6);
        \draw[] (G-1p) .. controls +(0.5, -0.5) and +(-0.5, -0.5) .. (G-5p);
        \draw[] (G--4p) .. controls +(0.7, 0.7) and +(-0.7, 0.7) .. (G--6p);
        \draw[] (G-2p) .. controls +(0, -1) and +(0, 0) .. (G--2p);
        \draw[] (G-3p) .. controls +(-1, -1) and +(0, 0) .. (G--1p);
        \draw[] (G-4p) .. controls +(1, -1) and +(0, 0) .. (G--5p);
        \draw[] (G-5p) .. controls +(1, -1) and +(0, 0) .. (G--6p);
        \draw[] (G-6p) .. controls +(-1, -1) and +(0, 0) .. (G--3p);
        \draw[] (G--1) .. controls +(0, -.5) and +(0, +.5) .. (G-1p);
        \draw[] (G--2) .. controls +(0, -.5) and +(0, +.5) .. (G-2p);
        \draw[] (G--3) .. controls +(0, -.5) and +(0, +.5) .. (G-3p);
        \draw[] (G--4) .. controls +(0, -.5) and +(0, +.5) .. (G-4p);
        \draw[] (G--5) .. controls +(0, -.5) and +(0, +.5) .. (G-5p);
        \draw[] (G--6) .. controls +(0, -.5) and +(0, +.5) .. (G-6p);
        \end{tikzpicture}}\hskip .2in
        =\hskip .2in
        \begin{tikzpicture}[scale = 0.5,thick, baseline={(0,-1ex/2)}]
        %\tikzstyle{vertex} = [shape = circle, minimum size = 3pt, inner sep = 1pt]
        \node[vertex, fill=Blue] (G--1) at (0.0, -1) [shape = circle, draw] {};
        \node[vertex, fill=Blue] (G--2) at (1.5, -1) [shape = circle, draw] {};
        \node[vertex, fill=Blue] (G--3) at (3.0, -1) [shape = circle, draw] {};
        \node[vertex, fill=Blue] (G--4) at (4.5, -1) [shape = circle, draw] {};
        \node[vertex, fill=Blue] (G--5) at (6.0, -1) [shape = circle, draw] {};
        \node[vertex, fill=Blue] (G--6) at (7.5, -1) [shape = circle, draw] {};
        \node[vertex, fill=Green] (G-1) at (0.0, 1) [shape = circle, draw] {};
        \node[vertex, fill=Green] (G-2) at (1.5, 1) [shape = circle, draw] {};
        \node[vertex, fill=Green] (G-3) at (3.0, 1) [shape = circle, draw] {};
        \node[vertex, fill=Green] (G-4) at (4.5, 1) [shape = circle, draw] {};
        \node[vertex, fill=Green] (G-5) at (6.0, 1) [shape = circle, draw] {};
        \node[vertex, fill=Green] (G-6) at (7.5, 1) [shape = circle, draw] {};
        \draw[] (G-1) .. controls +(0, -1) and +(0,0) .. (G--1);
        %\draw[] (G-4) .. controls +(-1, -1) and +(0, 0) .. (G--2);
        %\draw[] (G-2) .. controls +(1, -1) and +(0, 0) .. (G--4);
        \draw[] (G-1) .. controls +(0.7, -0.7) and +(-0.7, -0.7) .. (G-4);
        \draw[] (G-2) .. controls +(0.5, -0.5) and +(-0.5, -0.5) .. (G-3);
        \draw[] (G-3) .. controls +(0.7, -0.7) and +(-0.7, -0.7) .. (G-6);
        \draw[] (G--1) .. controls +(0.5, 0.5) and +(-0.5, 0.5) .. (G--2);
        \draw[] (G--4) .. controls +(0.5, 0.5) and +(-0.5, 0.5) .. (G--5);
        \draw[] (G--5) .. controls +(0.5, 0.5) and +(-0.5, 0.5) .. (G--6);
        \draw[] (G--3) .. controls +(0, 0) and +(-1, -1) .. (G-5);
        \draw[] (G-6) .. controls +(0, -1) and +(0, 0) .. (G--6);
        \end{tikzpicture}\; .
    \end{equation*}
\end{example}

This multiplication of diagrams is associative and the product of two uniform
block permutations is a uniform block permutations, and hence makes
$\mathcal{U}_k$ into a finite monoid whose identity element is
\begin{equation*}
    \big\{ \{1, \bar{1}\}, \{2, \bar{2}\}, \ldots, \{k, \bar{k}\} \big\}
    \hskip .2in = \hskip .2in
    \begin{tikzpicture}[scale = 0.5,thick, baseline={(0,-1ex/2)}]
        %\tikzstyle{vertex} = [minimum size = 4pt, inner sep = .5pt]
        \node[vertex] (G-1) at (1.0, 1) [shape = circle, draw] {};
        \node[vertex] (G-2) at (2.0, 1) [shape = circle, draw] {};
        \node[vertex] (G-3) at (3.0, 1) [] {};
        \node[vertex] (G-4) at (4.0, 1) [] {};
        \node[vertex] (G-5) at (5.0, 1) [shape = circle, draw] {};
        \node[vertex] (G--1) at (1.0, -1) [shape = circle, draw] {};
        \node[vertex] (G--2) at (2.0, -1) [shape = circle, draw] {};
        \node[vertex] (G--3) at (3.0, -1) [] {};
        \node[vertex] (G--4) at (4.0, -1) [] {};
        \node[vertex] (G--5) at (5.0, -1) [shape = circle, draw] {};
        \draw[] (G-1) to (G--1);
        \draw[] (G-2) to (G--2);
        \draw[] (G-5) to (G--5);
        \node (DOTS) at (3.5, 0) [] {$\cdots$};
    \end{tikzpicture}\; .
\end{equation*}
Since connected vertices in the top row of $d$ remain connected in $d d'$,
it follows that the set partition $\mathsf{top}(d d')$ is coarser than or equal to $\mathsf{top}(d)$.
Similarly, the set partition $\mathsf{bot}(d d')$ is coarser than or equal to $\mathsf{bot}(d')$.
Furthermore, any block of $d d'$ contains at least one block of $\mathsf{top}(d)$
and at least one block of $\mathsf{bot}(d')$.
If $n(d)$ is the \defn{number of blocks} in a diagram $d$, then
for all $d, d' \in \mathcal{U}_k$,
\begin{equation*}
    n(dd') \leqslant \mathsf{min}\{n(d), n(d')\}.
\end{equation*}

\begin{remark}[Diagram multiplication and composition of bijections]
    \label{remark:composition-and-product}
    As explained in Section~\ref{sssection:bijection},
    it is often useful to think of diagrams $d$ as bijections
    $d : \mathsf{top}(d) \to \mathsf{bot}(d)$ that preserve block-size,
    and so we highlight some important nuances of this approach.

    If $d : \mathsf{top}(d) \to \mathsf{bot}(d)$
    and $d' : \mathsf{top}(d') \to \mathsf{bot}(d')$
    satisfy $\mathsf{top}(d') = \mathsf{bot}(d)$,
    then the composition of $d$ and $d'$ is defined,
    and the resulting bijection is precisely the one
    associated with the product $d d'$.
    In particular, in this case, $d d'$ maps a block $B$ to the block $d'(d(B))$.

    The inverse of a bijection $d : \mathsf{top}(d) \to \mathsf{bot}(d)$
    is obtained by reflecting the diagram of $d$ across a horizontal line,
    which we denote by $\widetilde{d}$ (\cf Section~\ref{sec:inverse-monoid}).
    Note that $d \widetilde{d}$ is the identity mapping on $\mathsf{top}(d)$
    and $\widetilde{d} d $ is the identity mapping on $\mathsf{bot}(d)$,
    which are not necessarily equal to the identity element of $\mathcal{U}_k$.
    However, they are idempotents of $\mathcal{U}_k$ (\cf Lemma~\ref{lem:idempotents}).
\end{remark}

%%%%%%%%%%%%%%%%%%%%%%%%%%%%%%%%%%%%%%%%%%%%%%%%%%
\subsection{Permutation-idempotent and idempotent-permutation decompositions}
\label{section.factorizable}

We prove that every uniform block permutation
can be factored as a product of a permutation and an idempotent,
and also as a product of an idempotent and a permutation.
We begin by embedding the symmetric group $\mathfrak{S}_k$ in $\mathcal{U}_k$,
then we characterize the idempotents in $\mathcal{U}_k$,
and finally we prove the existence of the factorizations.

\subsubsection{Permutations}
Let $\mathfrak{S}_k$ denote the symmetric group consisting of
the permutations of the set $[k]$. We identify each permutation $\sigma \in
\mathfrak{S}_k$ with the uniform block permutation
$\{\{1, \overline{\sigma(1)}\}, \ldots, \{k, \overline{\sigma(k)}\}\}$,
which we also denote by $\sigma$.
Note that the diagram representing $\sigma$ is the diagram
with an edge connecting $i$ and $\overline{\sigma(i)}$.
(Observe that under this identification,
the product of two permutations $\sigma_1 \sigma_2$ maps $i \in [k]$ to $\sigma_2(\sigma_1(i))$
instead of $\sigma_1(\sigma_2(i))$; \cf Remark~\ref{remark:composition-and-product}.)
For instance, if $s_i$ is the permutation that swaps $i$ and $i+1$ and fixes
all other elements of $[k]$, then
\begin{equation*}
    s_i
    \hskip .1in = \hskip .1in
    \big\{\{1,{\overline 1}\},\ldots,\{i,{\overline {i+1}}\}, \{i+1,{\overline i}\},\ldots, \{k,{\overline k}\}\big\}
    \hskip .1in = \hskip .1in
    \begin{tikzpicture}[scale = 0.5,thick, baseline={(0,-1ex/2)}]
    %\tikzstyle{vertex} = [shape = circle, minimum size = 4pt, inner sep = 1pt]
    \node[vertex] (G--6) at (7.5, -1) [shape = circle, draw] {};
    \node[vertex] (G-6) at (7.5, 1) [shape = circle, draw] {};
    \node[vertex] (G--5) at (5.5, -1) [shape = circle, draw] {};
    \node[vertex] (G-5) at (5.5, 1) [shape = circle, draw] {};
    \node[vertex] (G--4) at (4.5, -1) [shape = circle, draw] {};
    \node[vertex] (G-3) at (3.0, 1) [shape = circle, draw] {};
    \node[vertex] (G--3) at (3.0, -1) [shape = circle, draw] {};
    \node[vertex] (G-4) at (4.5, 1) [shape = circle, draw] {};
    \node[vertex] (G--2) at (2.0, -1) [shape = circle, draw] {};
    \node[vertex] (G-2) at (2.0, 1) [shape = circle, draw] {};
    \node[vertex] (G--1) at (0.0, -1) [shape = circle, draw] {};
    \node[vertex] (G-1) at (0.0, 1) [shape = circle, draw] {};
    \node[] at (1.0,0) {...};
    \node[] at (6.5,0) {...};
    \draw[] (G-6) .. controls +(0, -1) and +(0, 1) .. (G--6);
    \draw[] (G-5) .. controls +(0, -1) and +(0, 1) .. (G--5);
    \draw[] (G-3) .. controls +(0.75, -1) and +(-0.75, 1) .. (G--4);
    \draw[] (G-4) .. controls +(-0.75, -1) and +(0.75, 1) .. (G--3);
    \draw[] (G-2) .. controls +(0, -1) and +(0, 1) .. (G--2);
    \draw[] (G-1) .. controls +(0, -1) and +(0, 1) .. (G--1);
    \end{tikzpicture}.
\end{equation*}

\subsubsection{Idempotents}
\label{section.idempotents}

For every set partition $\pi$ of $[k]$ we define the following element of $\mathcal{U}_k$:
\[
    e_\pi = \{A\cup \bar{A}: A\in \pi\} \in \mathcal{U}_k,
\]
where $\bar{A} = \{\bar{i} : i \in A\}$.
For example,
\begin{equation*}
e_{2|7|14|36|589}
\hskip .1in = \hskip .1in
\begin{tikzpicture}[scale = 0.5,thick, baseline={(0,-1ex/2)}]
%\tikzstyle{vertex} = [shape = circle, minimum size = 4pt, inner sep = 1pt]
\node[vertex] (G--9) at (12.0, -1) [shape = circle, draw] {};
\node[vertex] (G--8) at (10.5, -1) [shape = circle, draw] {};
\node[vertex] (G--5) at (6.0, -1) [shape = circle, draw] {};
\node[vertex] (G-5) at (6.0, 1) [shape = circle, draw] {};
\node[vertex] (G-8) at (10.5, 1) [shape = circle, draw] {};
\node[vertex] (G-9) at (12.0, 1) [shape = circle, draw] {};
\node[vertex] (G--7) at (9.0, -1) [shape = circle, draw] {};
\node[vertex] (G-7) at (9.0, 1) [shape = circle, draw] {};
\node[vertex] (G--6) at (7.5, -1) [shape = circle, draw] {};
\node[vertex] (G--3) at (3.0, -1) [shape = circle, draw] {};
\node[vertex] (G-3) at (3.0, 1) [shape = circle, draw] {};
\node[vertex] (G-6) at (7.5, 1) [shape = circle, draw] {};
\node[vertex] (G--4) at (4.5, -1) [shape = circle, draw] {};
\node[vertex] (G--1) at (0.0, -1) [shape = circle, draw] {};
\node[vertex] (G-1) at (0.0, 1) [shape = circle, draw] {};
\node[vertex] (G-4) at (4.5, 1) [shape = circle, draw] {};
\node[vertex] (G--2) at (1.5, -1) [shape = circle, draw] {};
\node[vertex] (G-2) at (1.5, 1) [shape = circle, draw] {};
\draw[] (G-5) .. controls +(0.7, -0.7) and +(-0.7, -0.7) .. (G-8);
\draw[] (G-8) .. controls +(0.5, -0.5) and +(-0.5, -0.5) .. (G-9);
\draw[] (G-9) .. controls +(0, -1) and +(0, 1) .. (G--9);
\draw[] (G--9) .. controls +(-0.5, 0.5) and +(0.5, 0.5) .. (G--8);
\draw[] (G--8) .. controls +(-0.7, 0.7) and +(0.7, 0.7) .. (G--5);
\draw[] (G-7) .. controls +(0, -1) and +(0, 1) .. (G--7);
\draw[] (G-3) .. controls +(0.7, -0.7) and +(-0.7, -0.7) .. (G-6);
\draw[] (G--6) .. controls +(-0.7, 0.7) and +(0.7, 0.7) .. (G--3);
\draw[] (G--3) .. controls +(0, 1) and +(0, -1) .. (G-3);
\draw[] (G-1) .. controls +(0.7, -0.7) and +(-0.7, -0.7) .. (G-4);
\draw[] (G--4) .. controls +(-0.7, 0.7) and +(0.7, 0.7) .. (G--1);
\draw[] (G--1) .. controls +(0, 1) and +(0, -1) .. (G-1);
\draw[] (G-2) .. controls +(0, -1) and +(0, 1) .. (G--2);
\end{tikzpicture}\;.
\end{equation*}

It is not hard to see that $e_\pi$ is an idempotent, and the
next result proves that all the idempotents in $\mathcal{U}_k$
are of this form.

\begin{lemma} \label{lem:idempotents}
The set $E(\mathcal{U}_k) = \{e_\pi : \pi \vdash [k] \}$ is a complete set of idempotents in $\mathcal{U}_k$.

Furthermore, if $\Pi_k$ is the lattice of set partitions of $[k]$ viewed as a monoid with the join operation $\vee$,
then the map $\pi \mapsto e_\pi$ is monoid isomorphism from $\Pi_k$ to $E(\mathcal{U}_k)$. Thus,
\begin{equation*}
    e_\pi e_\gamma = e_{\pi \vee \gamma}.
\end{equation*}
\end{lemma}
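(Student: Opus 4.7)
The plan is to prove this in three steps: verify each $e_\pi$ is idempotent, show every idempotent arises this way, and then establish the isomorphism with $(\Pi_k, \vee)$.

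For the first step, I would compute $e_\pi^2$ diagrammatically: stacking $e_\pi$ on itself, each block $A \cup \overline{A}$ in the top copy has middle-row vertices $\overline{A}$, which coincide with the middle-row vertices $A$ of the corresponding bottom-copy block; the outer connected component is again $A \cup \overline{A}$, so $e_\pi^2 = e_\pi$.

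The main obstacle is the second step. Let $d \in \mathcal{U}_k$ be an arbitrary idempotent and write its blocks as $d_i = A_i \cup \overline{B_i}$, where $|A_i| = |B_i| \geqslant 1$ and both $\{A_i\}$ and $\{B_i\}$ are set partitions of $[k]$. Computing $d^2$ by the stacking rule, blocks $d_i$ (top copy) and $d_j$ (bottom copy) merge through the middle row precisely when $B_i \cap A_j \neq \emptyset$. Encode this as a bipartite graph $G$ with $\ell$ vertices on each side; its connected components correspond to the blocks of $d^2$. Since $d^2 = d$ has $\ell$ blocks and every vertex of $G$ has degree at least one (each $B_i$ meets $\bigsqcup_j A_j = [k]$, and vice versa), a pigeonhole argument forces $G$ to be a perfect matching, encoded by a permutation $\sigma$ satisfying $B_i \cap A_{\sigma(i)} \neq \emptyset$ and $B_i \cap A_j = \emptyset$ for $j \neq \sigma(i)$. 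The corresponding block of $d^2$ is $A_i \cup \overline{B_{\sigma(i)}}$; equating this with some block $d_k$ of $d$ and intersecting with $[k]$ and $[\overline{k}]$ (using distinctness of the $A_j$'s and $B_j$'s as nonempty parts of partitions) forces $k = i$, $\sigma = \mathrm{id}$, and $B_i \subseteq A_i$. Combined with $|B_i| = |A_i|$, this gives $A_i = B_i$, so $d = e_\pi$ with $\pi = \{A_1, \ldots, A_\ell\}$.

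For the third step, the map $\pi \mapsto e_\pi$ is injective since $\pi = \mathsf{top}(e_\pi)$. For multiplicativity, stacking $e_\pi$ on $e_\gamma$, two blocks $A \cup \overline{A}$ (with $A \in \pi$) and $B \cup \overline{B}$ (with $B \in \gamma$) merge through the middle row precisely when $A \cap B \neq \emptyset$; taking the transitive closure of this adjacency on $\pi \cup \gamma$ yields the join $\pi \vee \gamma$ by the standard description of joins in the set partition lattice. The blocks of $e_\pi e_\gamma$ are therefore $C \cup \overline{C}$ for $C \in \pi \vee \gamma$, proving $e_\pi e_\gamma = e_{\pi \vee \gamma}$ and hence that $\pi \mapsto e_\pi$ is a monoid isomorphism onto $E(\mathcal{U}_k)$.
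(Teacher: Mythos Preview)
Your proof is correct, and the bipartite-graph argument for the characterization of idempotents is a genuinely different route from the paper's.

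The paper separates the argument into two parts. First it observes that once $\mathsf{top}(d) = \mathsf{bot}(d)$ is known, $d$ is a bijection $\delta$ of $\mathsf{top}(d)$ to itself, and $\delta \circ \delta = \delta$ forces $\delta = \mathrm{id}$. It then proves $\mathsf{top}(d) = \mathsf{bot}(d)$ by directly tracking vertices through the three-row stacked diagram, arguing that the top row of one connected component reappears as the bottom row of a component of $d'$. Your approach bypasses this split: by encoding the middle-row interactions in a bipartite graph on $\ell + \ell$ vertices with no isolated vertices, the equality $n(d^2) = n(d) = \ell$ forces exactly $\ell$ components, hence a perfect matching, and the rest falls out by block-comparison. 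This is cleaner and more self-contained; the paper's argument avoids introducing the auxiliary graph but pays for it with a more delicate component chase.

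You also supply an explicit proof of $e_\pi e_\gamma = e_{\pi \vee \gamma}$ via the transitive closure of the adjacency on $\pi \cup \gamma$, which the paper asserts but does not prove.
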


\begin{proof}
    Suppose $d \in \mathcal{U}_k$ is an idempotent.
    We will prove that $d = e_{\mathsf{top}(d)}$, where
    \begin{equation*}
        e_{\mathsf{top}(d)} = \left\{
            \big(d_i \cap [k]\big)
            \cup
            \overline{\big(d_i \cap [k]\big)}
            :
            d_i \text{~is a block of~} d
        \right\}.
    \end{equation*}
    We need to prove $d_i = (d_i \cap [k]) \cup \overline{(d_i \cap [k])}$
    or equivalently, $\overline{d_i \cap [k]} =  d_i \cap [\overline{k}]$.

    Let us first show that it suffices to prove $\mathsf{bot}(d) = \mathsf{top}(d)$.
    Let $\delta : \mathsf{top}(d) \to \mathsf{bot}(d)$
    be the bijection associated with $d$ that maps
    $d_i \cap [k]$ to $\overline{d_i} \cap [k]$.
    Then $\delta$ is a permutation of $\mathsf{top}(d)$ and
    $\delta \circ \delta$ is the bijection associated with $dd$.
    Since $dd = d$, it follows that $\delta \circ \delta = \delta$,
    and thus $\delta$ is the identity mapping.
    Hence, $\overline{d_i \cap [k]}
    = \overline{\delta(d_i \cap [k])}
    = \overline{\overline{d_i} \cap [k]}
    = d_i \cap [\overline{k}]$.

    We now prove $\mathsf{bot}(d) = \mathsf{top}(d)$; more specifically,
    if $d_i \cap [k] \in \mathsf{top}(d)$, then $d_i \cap [k] \in \mathsf{bot}(d)$.
    Consider the three-row diagram constructed by stacking $d$ on top of
    a second copy of $d$, which we denote by $d'$.
    We will refer to the three vertices occurring in a column by $v_a$, $v_a'$
    and $v_a''$, with $v_a$ in the top row, $v_a'$ in the middle row, and
    $v_a''$ in the bottom row.

    View $d_i$ as a connected component of $d$ and let $d_i'$ be the corresponding
    component of $d'$.
    Write $\mathsf{top}(d_i) = \{v_1, \ldots, v_\ell\}$ and
    $\mathsf{top}(d_i') = \{v_1', \ldots, v_\ell'\}$.
    To prove $d_i \cap [k] \in \mathsf{bot}(d)$,
    it suffices to prove $v_1'', \ldots, v_\ell''$ belong to the same connected
    component of $d'$.

    For each $v_a''$, pick $u_a'$ such that $u_a'$ and $v_a''$ are in the same connected component of $d'$.
    Then $u_a$ and $v_a'$ are in the same connected component of $d$.
    Since $v_1', \ldots, v_\ell' \in d_i'$, and $d_i'$ is a connected component,
    it follows that $u_1, \ldots, u_\ell$ are in the same connected component of $dd'$.
    Since $d$ is idempotent, the connected components of $dd'$ and $d$
    coincide (up to relabelling).
    Thus, $u_1, \ldots, u_\ell$ are in the same connected component of $d$.
    Consequently, $u_1', \ldots, u_\ell'$ are in the same connected component of $d'$,
    and thus so are $v_1'', \ldots, v_\ell''$.
\end{proof}

\begin{example} There are 5 idempotents of $\mathcal{U}_3$ corresponding to the 5 set partitions of $[3]$.
    These are depicted below:
\begin{equation*}
    \begin{tikzpicture}[partition-diagram, idempotent]
    \node[vertex] (G--3) at (3.0, -1) [shape = circle, draw] {};
    \node[vertex] (G-3) at (3.0, 1) [shape = circle, draw] {};
    \node[vertex] (G--2) at (1.5, -1) [shape = circle, draw] {};
    \node[vertex] (G-2) at (1.5, 1) [shape = circle, draw] {};
    \node[vertex] (G--1) at (0.0, -1) [shape = circle, draw] {};
    \node[vertex] (G-1) at (0.0, 1) [shape = circle, draw] {};
    \draw[] (G-3) .. controls +(0, -1) and +(0, 1) .. (G--3);
    \draw[] (G-2) .. controls +(0, -1) and +(0, 1) .. (G--2);
    \draw[] (G-1) .. controls +(0, -1) and +(0, 1) .. (G--1);
    \end{tikzpicture}
    \qquad\quad
    \begin{tikzpicture}[partition-diagram, idempotent]
    \node[vertex] (G--3) at (3.0, -1) [shape = circle, draw] {};
    \node[vertex] (G-3) at (3.0, 1) [shape = circle, draw] {};
    \node[vertex] (G--2) at (1.5, -1) [shape = circle, draw] {};
    \node[vertex] (G--1) at (0.0, -1) [shape = circle, draw] {};
    \node[vertex] (G-1) at (0.0, 1) [shape = circle, draw] {};
    \node[vertex] (G-2) at (1.5, 1) [shape = circle, draw] {};
    \draw[] (G-3) .. controls +(0, -1) and +(0, 1) .. (G--3);
    \draw[] (G-1) .. controls +(0.5, -0.5) and +(-0.5, -0.5) .. (G-2);
    %\draw[] (G-2) .. controls +(0, -1) and +(0, 1) .. (G--2);
    \draw[] (G--2) .. controls +(-0.5, 0.5) and +(0.5, 0.5) .. (G--1);
    \draw[] (G--1) .. controls +(0, 1) and +(0, -1) .. (G-1);
    \end{tikzpicture}
    \qquad\quad
    \begin{tikzpicture}[partition-diagram, idempotent]
    \node[vertex] (G--3) at (3.0, -1) [shape = circle, draw] {};
    \node[vertex] (G--2) at (1.5, -1) [shape = circle, draw] {};
    \node[vertex] (G-2) at (1.5, 1) [shape = circle, draw] {};
    \node[vertex] (G-3) at (3.0, 1) [shape = circle, draw] {};
    \node[vertex] (G--1) at (0.0, -1) [shape = circle, draw] {};
    \node[vertex] (G-1) at (0.0, 1) [shape = circle, draw] {};
    \draw[] (G-2) .. controls +(0.5, -0.5) and +(-0.5, -0.5) .. (G-3);
    %\draw[] (G-3) .. controls +(0, -1) and +(0, 1) .. (G--3);
    \draw[] (G--3) .. controls +(-0.5, 0.5) and +(0.5, 0.5) .. (G--2);
    \draw[] (G--2) .. controls +(0, 1) and +(0, -1) .. (G-2);
    \draw[] (G-1) .. controls +(0, -1) and +(0, 1) .. (G--1);
    \end{tikzpicture}
    \qquad\quad
    \begin{tikzpicture}[partition-diagram]
    \node[vertex] (G--3) at (3.0, -1) [shape = circle, draw] {};
    \node[vertex] (G--2) at (1.5, -1) [shape = circle, draw] {};
    \node[vertex] (G--1) at (0.0, -1) [shape = circle, draw] {};
    \node[vertex] (G-1) at (0.0, 1) [shape = circle, draw] {};
    \node[vertex] (G-2) at (1.5, 1) [shape = circle, draw] {};
    \node[vertex] (G-3) at (3.0, 1) [shape = circle, draw] {};
    \draw[] (G-1) .. controls +(0.5, -0.5) and +(-0.5, -0.5) .. (G-2);
    \draw[] (G-2) .. controls +(0.5, -0.5) and +(-0.5, -0.5) .. (G-3);
    %\draw[] (G-3) .. controls +(0, -1) and +(0, 1) .. (G--3);
    \draw[] (G--3) .. controls +(-0.5, 0.5) and +(0.5, 0.5) .. (G--2);
    \draw[] (G--2) .. controls +(-0.5, 0.5) and +(0.5, 0.5) .. (G--1);
    \draw[] (G--1) .. controls +(0, 1) and +(0, -1) .. (G-1);
    \end{tikzpicture}
    \qquad\quad
    \begin{tikzpicture}[partition-diagram, idempotent]
    \node[vertex] (G--3) at (3.0, -1) [shape = circle, draw] {};
    \node[vertex] (G--1) at (0.0, -1) [shape = circle, draw] {};
    \node[vertex] (G-1) at (0.0, 1) [shape = circle, draw] {};
    \node[vertex] (G-3) at (3.0, 1) [shape = circle, draw] {};
    \node[vertex] (G--2) at (1.5, -1) [shape = circle, draw] {};
    \node[vertex] (G-2) at (1.5, 1) [shape = circle, draw] {};
    \draw[] (G-1) .. controls +(0.6, -0.6) and +(-0.6, -0.6) .. (G-3);
    %\draw[] (G-3) .. controls +(0, -1) and +(0, 1) .. (G--3);
    \draw[] (G--3) .. controls +(-0.6, 0.6) and +(0.6, 0.6) .. (G--1);
    \draw[] (G--1) .. controls +(0, 1) and +(0, -1) .. (G-1);
    \draw[] (G-2) .. controls +(0, -1) and +(0, 1) .. (G--2);
    \end{tikzpicture}
\end{equation*}
\end{example}

\subsubsection{Permutation-idempotent and idempotent-permutation decompositions}
We now prove that every uniform block permutation can be factored as the product of
a permutation and an idempotent; for example,
\begin{equation*}
    \begin{tikzpicture}[partition-diagram]
        %\tikzstyle{vertex} = [shape = circle, minimum size = 3pt, inner sep = 1pt]
        \node[vertex] (G--1) at (0.0, -1) [shape = circle, draw] {};
        \node[vertex] (G--2) at (1.5, -1) [shape = circle, draw] {};
        \node[vertex] (G--3) at (3.0, -1) [shape = circle, draw] {};
        \node[vertex] (G--4) at (4.5, -1) [shape = circle, draw] {};
        \node[vertex] (G--5) at (6.0, -1) [shape = circle, draw] {};
        \node[vertex] (G--6) at (7.5, -1) [shape = circle, draw] {};
        \node[vertex] (G-1) at (0.0, 1) [shape = circle, draw] {};
        \node[vertex] (G-2) at (1.5, 1) [shape = circle, draw] {};
        \node[vertex] (G-3) at (3.0, 1) [shape = circle, draw] {};
        \node[vertex] (G-4) at (4.5, 1) [shape = circle, draw] {};
        \node[vertex] (G-5) at (6.0, 1) [shape = circle, draw] {};
        \node[vertex] (G-6) at (7.5, 1) [shape = circle, draw] {};
        %\draw[] (G-2) .. controls +(-1, -1) and +(1,1) .. (G--1);
        \draw[] (G-2) -- (G--2);
        %\draw[] (G-1) .. controls +(1, -1) and +(-1, 1) .. (G--3);
        \draw[] (G-3) -- (G--3);
        \draw[] (G--1) .. controls +(0.5, 0.5) and +(-0.5, 0.5) .. (G--2);
        \draw[] (G-1) .. controls +(0.7, -0.7) and +(-0.7, -0.7) .. (G-3);
        \draw[] (G-2) .. controls +(0.7, -0.7) and +(-0.7, -0.7) .. (G-4);
        \draw[] (G--3) .. controls +(0.5, 0.5) and +(-0.5, 0.5) .. (G--5);
        \draw[] (G-6) .. controls +(-1, -1) and +(1, 1) .. (G--4);
        \draw[] (G-5) .. controls +(1, -1) and +(-1, 1) .. (G--6);
    \end{tikzpicture}
    \quad = \quad
    \begin{tikzpicture}[partition-diagram]
        %\tikzstyle{vertex} = [shape = circle, minimum size = 3pt, inner sep = 1pt]
        \node[vertex] (G--1p) at (0.0, -2) [shape = circle, draw] {};
        \node[vertex] (G--2p) at (1.5, -2) [shape = circle, draw] {};
        \node[vertex] (G--3p) at (3.0, -2) [shape = circle, draw] {};
        \node[vertex] (G--4p) at (4.5, -2) [shape = circle, draw] {};
        \node[vertex] (G--5p) at (6.0, -2) [shape = circle, draw] {};
        \node[vertex] (G--6p) at (7.5, -2) [shape = circle, draw] {};
        \node[vertex] (G--1) at (0.0, 0) [shape = circle, draw] {};
        \node[vertex] (G--2) at (1.5, 0) [shape = circle, draw] {};
        \node[vertex] (G--3) at (3.0, 0) [shape = circle, draw] {};
        \node[vertex] (G--4) at (4.5, 0) [shape = circle, draw] {};
        \node[vertex] (G--5) at (6.0, 0) [shape = circle, draw] {};
        \node[vertex] (G--6) at (7.5, 0) [shape = circle, draw] {};
        \node[vertex] (G-1) at (0.0, 2) [shape = circle, draw] {};
        \node[vertex] (G-2) at (1.5, 2) [shape = circle, draw] {};
        \node[vertex] (G-3) at (3.0, 2) [shape = circle, draw] {};
        \node[vertex] (G-4) at (4.5, 2) [shape = circle, draw] {};
        \node[vertex] (G-5) at (6.0, 2) [shape = circle, draw] {};
        \node[vertex] (G-6) at (7.5, 2) [shape = circle, draw] {};
        \draw[] (G--1) .. controls +(0, 1) and +(0, -1) .. (G-1);
        \draw[] (G-1) .. controls +(0.7, -0.7) and +(-0.7, -0.7) .. (G-3);
        \draw[] (G--1) .. controls +(0.7, 0.7) and +(-0.7, 0.7) .. (G--3);
        \draw[] (G--2) .. controls +(0, 1) and +(0, -1) .. (G-2);
        \draw[] (G-2) .. controls +(0.7, -0.7) and +(-0.7, -0.7) .. (G-4);
        \draw[] (G--2) .. controls +(0.7, 0.7) and +(-0.7, 0.7) .. (G--4);
        \draw[] (G--5) .. controls +(0, 1) and +(0, -1) .. (G-5);
        \draw[] (G--6) .. controls +(0, 1) and +(0, -1) .. (G-6);
        \draw[] (G--1) .. controls +( 1, -1) and +(-1,  1) .. (G--3p);
        \draw[] (G--2) .. controls +(-1, -1) and +( 1,  1) .. (G--1p);
        \draw[] (G--3) .. controls +( 1, -1) and +(-1,  1) .. (G--5p);
        \draw[] (G--4) .. controls +(-1, -1) and +( 1,  1) .. (G--2p);
        \draw[] (G--5) .. controls +( 1, -1) and +(-1,  1) .. (G--6p);
        \draw[] (G--6) .. controls +(-1, -1) and +( 1,  1) .. (G--4p);
    \end{tikzpicture}
    \quad = \quad
    \begin{tikzpicture}[partition-diagram]
        %\tikzstyle{vertex} = [shape = circle, minimum size = 3pt, inner sep = 1pt]
        \node[vertex] (G--1p) at (0.0, -2) [shape = circle, draw] {};
        \node[vertex] (G--2p) at (1.5, -2) [shape = circle, draw] {};
        \node[vertex] (G--3p) at (3.0, -2) [shape = circle, draw] {};
        \node[vertex] (G--4p) at (4.5, -2) [shape = circle, draw] {};
        \node[vertex] (G--5p) at (6.0, -2) [shape = circle, draw] {};
        \node[vertex] (G--6p) at (7.5, -2) [shape = circle, draw] {};
        \node[vertex] (G--1) at (0.0, 0) [shape = circle, draw] {};
        \node[vertex] (G--2) at (1.5, 0) [shape = circle, draw] {};
        \node[vertex] (G--3) at (3.0, 0) [shape = circle, draw] {};
        \node[vertex] (G--4) at (4.5, 0) [shape = circle, draw] {};
        \node[vertex] (G--5) at (6.0, 0) [shape = circle, draw] {};
        \node[vertex] (G--6) at (7.5, 0) [shape = circle, draw] {};
        \node[vertex] (G-1) at (0.0, 2) [shape = circle, draw] {};
        \node[vertex] (G-2) at (1.5, 2) [shape = circle, draw] {};
        \node[vertex] (G-3) at (3.0, 2) [shape = circle, draw] {};
        \node[vertex] (G-4) at (4.5, 2) [shape = circle, draw] {};
        \node[vertex] (G-5) at (6.0, 2) [shape = circle, draw] {};
        \node[vertex] (G-6) at (7.5, 2) [shape = circle, draw] {};
        \draw[] (G-1) .. controls +( 1, -1) and +(-1,  1) .. (G--3);
        \draw[] (G-2) .. controls +(-1, -1) and +( 1,  1) .. (G--1);
        \draw[] (G-3) .. controls +( 1, -1) and +(-1,  1) .. (G--5);
        \draw[] (G-4) .. controls +(-1, -1) and +( 1,  1) .. (G--2);
        \draw[] (G-5) .. controls +( 1, -1) and +(-1,  1) .. (G--6);
        \draw[] (G-6) .. controls +(-1, -1) and +( 1,  1) .. (G--4);
        \draw[] (G--1p) .. controls +(0, 1) and +(0, -1) .. (G--1);
        \draw[] (G--1) .. controls +(0.5, -0.5) and +(-0.5, -0.5) .. (G--2);
        \draw[] (G--1p) .. controls +(0.5, 0.5) and +(-0.5, 0.5) .. (G--2p);
        \draw[] (G--3p) .. controls +(0, 1) and +(0, -1) .. (G--3);
        \draw[] (G--3) .. controls +(0.7, -0.7) and +(-0.7, -0.7) .. (G--5);
        \draw[] (G--3p) .. controls +(0.7, 0.7) and +(-0.7, 0.7) .. (G--5p);
        \draw[] (G--4p) .. controls +(0, 1) and +(0, -1) .. (G--4);
        \draw[] (G--6p) .. controls +(0, 1) and +(0, -1) .. (G--6);
    \end{tikzpicture}
\end{equation*}
It turns out that the idempotents in the above decomposition are determined by
$d$: they are $e_{\mathsf{top}(d)}$ and $e_{\mathsf{bot}(d)}$, respectively.
However, the permutation is \emph{not} unique.

\begin{prop}
    \label{prop:standard-decomposition}
    For every $d \in \mathcal{U}_k$ and every $\sigma \in \mathfrak{S}_k$
    satisfying $\sigma(B \cap [k]) = \overline{B} \cap [k]$ for all blocks
    $B$ of $d$, we have
    \begin{equation*}
        d = e_{\mathsf{top}(d)} \, \sigma = \sigma e \,_{\mathsf{bot}(d)}.
    \end{equation*}
    Consequently,
    \begin{equation*}
        \mathcal{U}_k
        = E(\mathcal{U}_k) \, \mathfrak{S}_k
        = \mathfrak{S}_k \, E(\mathcal{U}_k).
    \end{equation*}
\end{prop}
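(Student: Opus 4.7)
The plan is to verify the two factorizations by a direct diagram computation, and then derive the consequence by checking that an admissible $\sigma$ always exists. First, I would observe that for every $d \in \mathcal{U}_k$, a permutation $\sigma \in \mathfrak{S}_k$ satisfying the stated condition exists: the sets $B \cap [k]$ partition $[k]$ (this is $\mathsf{top}(d)$) and the sets $\overline{B} \cap [k]$ also partition $[k]$ (this is $\mathsf{bot}(d)$), and uniformity of $d$ gives $|B \cap [k]| = |B \cap [\overline{k}]| = |\overline{B} \cap [k]|$; so one may pick any bijection $\sigma_B : B \cap [k] \to \overline{B} \cap [k]$ for each block $B$ of $d$ and concatenate them into a permutation of $[k]$.

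To show $d = e_{\mathsf{top}(d)}\,\sigma$, I would stack the diagram of $e_{\mathsf{top}(d)}$ on top of the diagram of $\sigma$ and read off the connected components. Fix a block $B$ of $d$ and set $A = B \cap [k]$. By definition of $e_{\mathsf{top}(d)}$ (see Section~\ref{section.idempotents}), the top-row vertices labelled by $A$ and the middle-row vertices labelled by $\overline{A}$ all lie in one connected component of the upper diagram. Since $\sigma$ connects middle-row vertex $\overline{j}$ (viewed as the top of $\sigma$, labelled $j$) to the bottom-row vertex $\overline{\sigma(j)}$, the connected component extends to include the bottom vertices $\overline{\sigma(A)}$. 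By hypothesis $\sigma(A) = \overline{B} \cap [k]$, so $\overline{\sigma(A)} = B \cap [\overline{k}]$. Erasing the middle row thus recovers exactly the block $A \cup (B \cap [\overline{k}]) = B$ of $d$. Running over all blocks $B$ gives $e_{\mathsf{top}(d)}\,\sigma = d$.

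The second identity $d = \sigma\,e_{\mathsf{bot}(d)}$ follows by the entirely symmetric argument, now stacking $\sigma$ on top of $e_{\mathsf{bot}(d)}$: the blocks of $e_{\mathsf{bot}(d)}$ glue each $\overline{B} \cap [k]$ (in the top of $e_{\mathsf{bot}(d)}$) to its barred copy in the bottom, and pre-composing with $\sigma$ connects these to the top-row vertices $A = B \cap [k]$ via the same block-to-block correspondence. Alternatively, one can deduce this from the first identity by applying the horizontal reflection $d \mapsto \widetilde{d}$ from Remark~\ref{remark:composition-and-product}, using that $\widetilde{e_\pi} = e_\pi$, $\widetilde{\sigma} = \sigma^{-1}$, and $\mathsf{top}(\widetilde{d}) = \mathsf{bot}(d)$. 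Finally, the equalities $\mathcal{U}_k = E(\mathcal{U}_k)\,\mathfrak{S}_k = \mathfrak{S}_k\,E(\mathcal{U}_k)$ are immediate from these factorizations together with Lemma~\ref{lem:idempotents}.

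The proof poses no serious mathematical obstacle; the only care required is the bookkeeping of the bars (distinguishing vertices in $[k]$ from those in $[\overline{k}]$) across the three rows of the stacked diagram, and verifying that the compatibility condition $\sigma(B \cap [k]) = \overline{B} \cap [k]$ is exactly what is needed for the connected components of the product diagram to match the blocks of $d$.
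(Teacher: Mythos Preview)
Your proof is correct and essentially parallel to the paper's, differing only in which description of the monoid product you use for the verification. The paper works in the block-bijection viewpoint of Section~\ref{sssection:bijection} and Remark~\ref{remark:composition-and-product}: it observes that the bijection $\sigma^{-1} \circ d$ fixes each block of $\mathsf{top}(d)$, hence equals the bijection associated with $e_{\mathsf{top}(d)}$, giving $d\sigma^{-1} = e_{\mathsf{top}(d)}$ in one line. You instead carry out the equivalent computation at the level of stacked diagrams, tracking individual vertices through the three-row picture; this is the same argument unpacked one level down, and your explicit check of the existence of an admissible $\sigma$ (which the paper leaves implicit) is a useful addition.
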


\begin{remark}
    A monoid $M$ is said to be \defn{factorizable} if $M = G E$, where $G$ is
    a subgroup of $M$ and $E$ is a set of idempotents in $M$.
    Thus, Proposition~\ref{prop:standard-decomposition} states that
    $\mathcal{U}_k$ is factorizable.
\end{remark}

\begin{proof}[Proof of Proposition~\ref{prop:standard-decomposition}]
    Recall from Section~\ref{sssection:bijection} that every uniform block
    permutation $d \in \mathcal{U}_k$ is associated with the size-preserving
    bijection defined by
    \begin{eqnarray*}
        \mathsf{top}(d) & \longrightarrow & \mathsf{bot}(d) \\
        B \cap [k] & \longmapsto & \overline{B} \cap [k]
    \end{eqnarray*}
    where $B$ ranges over all blocks of $d$.
    If $\sigma$ is any permutation in $\mathfrak{S}_k$ satisfying
    $\sigma(B \cap [k]) = \overline{B} \cap [k]$ for all blocks $B$,
    then $\sigma^{-1} \circ d$ maps each block $B \cap [k]$ to itself,
    and so it is the bijection associated with the idempotent $e_{\mathsf{top}(d)}$.
    Thus, in $\mathcal{U}_k$ we have $d \sigma^{-1} = e_{\mathsf{top}(d)}$.
\end{proof}

\subsubsection{Properties of idempotents}

The following lemma collects some useful properties of the idempotents in
$\mathcal{U}_k$ that we use throughout the paper.
They can be proved directly from the definition of the product of two diagrams.

\begin{lemma}\label{lem:eproperties}
Let $\pi\vdash [k]$, $\tau\in \mathfrak{S}_k$ and $d \in \mathcal{U}_k$.
\begin{enumerate}[label=(\alph*)]
\item $\widetilde{e}_\pi = e_\pi$.
\item $\mathsf{top}(\tau e_\pi) = \tau^{-1}(\pi)$ and $\mathsf{bot}(e_\pi\tau) = \tau(\pi)$.
\item $\tau e_\pi \tau^{-1} = e_{\tau^{-1}(\pi)}$; consequently, $e_\pi \tau = \tau e_{\tau(\pi)}$ and $\tau e_\pi  =  e_{\tau^{-1}(\pi)}\tau$.
\item $e_{\mathsf{top}(d)} d = d$ and $de_{\mathsf{bot}(d)} = d$.
\item $\mathsf{bot}(d e_\pi)$ and $\mathsf{top}(e_\pi d)$ are coarser than or equal to $\pi$.
\end{enumerate}
\end{lemma}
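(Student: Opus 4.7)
The plan is to verify each of the five properties \textbf{(a)}–\textbf{(e)} in turn by directly manipulating the diagrammatic (or, equivalently, bijective) descriptions of the elements involved, making essential use of Proposition~\ref{prop:standard-decomposition}, Lemma~\ref{lem:idempotents}, and the observations on coarseness recorded in Section~\ref{sec:monoidstructure}. None of the five items should require a substantial new idea; the work is bookkeeping, and the main thing to watch is that composition in $\mathcal{U}_k$ reads left-to-right on the blocks (see Remark~\ref{remark:composition-and-product}).

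For \textbf{(a)}, I note that the defining blocks $A\cup\overline{A}$ of $e_\pi$ are symmetric under the bar involution $a\leftrightarrow \overline{a}$, which is exactly the horizontal reflection used to define $\widetilde{\cdot}$; hence $\widetilde{e}_\pi=e_\pi$. For \textbf{(b)}, I compute the stacked three-row diagram of $\tau e_\pi$: a top vertex $i$ is joined to the middle vertex $\overline{\tau(i)}$, which is identified with $\tau(i)$ in the top of $e_\pi$, and in $e_\pi$ all of $\tau(i)$ for $i\in \tau^{-1}(A)$ lie in the same block. Therefore the connected components in the top row are $\{\tau^{-1}(A): A\in \pi\}=\tau^{-1}(\pi)$. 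The second half of (b) is the symmetric argument applied to $e_\pi\tau$.

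For \textbf{(c)}, I use (b) together with a quick diagram chase: multiplying $\tau e_\pi$ on the right by $\tau^{-1}$ connects $i$ at the top to $\overline{\tau^{-1}(j)}$ at the bottom iff $\tau(i)$ and $j$ lie in the same block of $\pi$, which (after the substitution $j=\tau(j')$) is the statement that $i$ and $j'$ lie in the same block of $\tau^{-1}(\pi)$. Thus $\tau e_\pi\tau^{-1}=e_{\tau^{-1}(\pi)}$; the two commutation formulas follow by multiplying by $\tau$ on the right (respectively replacing $\tau$ by $\tau^{-1}$). For \textbf{(d)}, I invoke Proposition~\ref{prop:standard-decomposition} to write $d=e_{\mathsf{top}(d)}\sigma=\sigma\, e_{\mathsf{bot}(d)}$ for some $\sigma\in\mathfrak{S}_k$; then using the idempotence $e_\pi e_\pi=e_\pi$ from Lemma~\ref{lem:idempotents} (which is the $\pi=\gamma$ case of $e_\pi e_\gamma=e_{\pi\vee\gamma}$), both identities in (d) are immediate, e.g.\ $e_{\mathsf{top}(d)}d=e_{\mathsf{top}(d)}e_{\mathsf{top}(d)}\sigma=e_{\mathsf{top}(d)}\sigma=d$.

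For \textbf{(e)}, I quote the general monotonicity principle already stated in Section~\ref{sec:monoidstructure}: for any product $d_1 d_2$ in $\mathcal{U}_k$, $\mathsf{top}(d_1 d_2)$ is coarser than $\mathsf{top}(d_1)$ and $\mathsf{bot}(d_1 d_2)$ is coarser than $\mathsf{bot}(d_2)$; applying this to $d_1=e_\pi$ in the first case and $d_2=e_\pi$ in the second, and using $\mathsf{top}(e_\pi)=\mathsf{bot}(e_\pi)=\pi$, yields (e). I do not expect a genuine obstacle in any of these items; the only place one has to be careful is in (c), where it is easy to confuse $\tau^{-1}(\pi)$ with $\tau(\pi)$, so I would explicitly track a block through the three-row diagram to make the direction of the action unambiguous.
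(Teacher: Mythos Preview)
Your proposal is correct and is essentially what the paper does: the paper's entire proof is the single sentence ``They can be proved directly from the definition of the product of two diagrams,'' so you have simply supplied the routine details the authors omitted. The only small difference is that for part~(d) you invoke Proposition~\ref{prop:standard-decomposition} rather than arguing directly from the diagram product (one can also just note that $e_{\mathsf{top}(d)}$, viewed as a bijection, is the identity on $\mathsf{top}(d)$, so stacking it on top of $d$ does not change the connected components); either way is fine and noncircular since Proposition~\ref{prop:standard-decomposition} precedes this lemma and does not rely on it.
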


%%%%%%%%%%%%%%%%%%%%%%%%%%%%%%%%%%%%%%%%%%%%%%%%%%
\subsection{Presentation of $\mathcal{U}_k$}
\label{sec:presentation}
We recall here a known presentation of $\mathcal{U}_k$; see
\cite{FitzGerald.2003, Kosuda.2000, Kosuda.2001}.
For $1 \leqslant i < k$,
set $s_i = \{\{1,{\overline 1}\},\ldots,\{i,{\overline {i+1}}\}, \{i+1,{\overline i}\},\ldots, \{k,{\overline k}\}\}$,
which corresponds to the permutation in $\mathfrak{S}_k$ that swaps $i$ and $i+1$,
and $b_i = \{\{1,{\overline 1}\},\ldots,\{i,i+1,{\overline i},{\overline {i+1}}\}, \ldots, \{k,{\overline k}\}\}$.
As diagrams
\begin{equation*}
s_i =
\begin{tikzpicture}[scale = 0.5,thick, baseline={(0,-1ex/2)}]
%\tikzstyle{vertex} = [shape = circle, minimum size = 4pt, inner sep = 1pt]
\node[vertex] (G--6) at (7.5, -1) [shape = circle, draw] {};
\node[vertex] (G-6) at (7.5, 1) [shape = circle, draw] {};
\node[vertex] (G--5) at (5.5, -1) [shape = circle, draw] {};
\node[vertex] (G-5) at (5.5, 1) [shape = circle, draw] {};
\node[vertex] (G--4) at (4.5, -1) [shape = circle, draw] {};
\node[vertex] (G-3) at (3.0, 1) [shape = circle, draw] {};
\node[vertex] (G--3) at (3.0, -1) [shape = circle, draw] {};
\node[vertex] (G-4) at (4.5, 1) [shape = circle, draw] {};
\node[vertex] (G--2) at (2.0, -1) [shape = circle, draw] {};
\node[vertex] (G-2) at (2.0, 1) [shape = circle, draw] {};
\node[vertex] (G--1) at (0.0, -1) [shape = circle, draw] {};
\node[vertex] (G-1) at (0.0, 1) [shape = circle, draw] {};
\node[] at (1.0,0) {...};
\node[] at (6.5,0) {...};
\draw[] (G-6) .. controls +(0, -1) and +(0, 1) .. (G--6);
\draw[] (G-5) .. controls +(0, -1) and +(0, 1) .. (G--5);
\draw[] (G-3) .. controls +(0.75, -1) and +(-0.75, 1) .. (G--4);
\draw[] (G-4) .. controls +(-0.75, -1) and +(0.75, 1) .. (G--3);
\draw[] (G-2) .. controls +(0, -1) and +(0, 1) .. (G--2);
\draw[] (G-1) .. controls +(0, -1) and +(0, 1) .. (G--1);
\end{tikzpicture}
\qquad\qquad
b_i =
\begin{tikzpicture}[scale = 0.5,thick, baseline={(0,-1ex/2)}]
%\tikzstyle{vertex} = [shape = circle, minimum size = 4pt, inner sep = 1pt]
\node[vertex] (G--6) at (7.5, -1) [shape = circle, draw] {};
\node[vertex] (G-6) at (7.5, 1) [shape = circle, draw] {};
\node[vertex] (G-5) at (5.5, 1) [shape = circle, draw] {};
\node[vertex] (G--5) at (5.5, -1) [shape = circle, draw] {};
\node[vertex] (G--4) at (4.5, -1) [shape = circle, draw] {};
\node[vertex] (G-4) at (4.5, 1) [shape = circle, draw] {};
\node[vertex] (G--3) at (3.0, -1) [shape = circle, draw] {};
\node[vertex] (G-3) at (3.0, 1) [shape = circle, draw] {};
\node[vertex] (G--2) at (2, -1) [shape = circle, draw] {};
\node[vertex] (G--1) at (0.0, -1) [shape = circle, draw] {};
\node[vertex] (G-1) at (0.0, 1) [shape = circle, draw] {};
\node[vertex] (G-2) at (2, 1) [shape = circle, draw] {};
\draw[] (G-6) .. controls +(0, -1) and +(0, 1) .. (G--6);
\node[] at (1.0,0) {...};
\node[] at (6.5,0) {...};
\draw[] (G-3) .. controls +(0, -1) and +(0, 1) .. (G--3);
\draw[] (G-3) .. controls +(0.5, -0.5) and +(-0.5, -0.5) .. (G-4);
\draw[] (G-5) .. controls +(0, -1) and +(0, 1) .. (G--5);
\draw[] (G--4) .. controls +(-0.5, 0.5) and +(0.5, 0.5) .. (G--3);
\draw[] (G--1) .. controls +(0, 1) and +(0, -1) .. (G-1);
\draw[] (G--2) .. controls +(0, 1) and +(0, -1) .. (G-2);
\end{tikzpicture}
\end{equation*}
Then $s_i, b_i$ for $1\leqslant i \leqslant k-1$ generate $\mathcal{U}_k$ subject to the following relations:
\begin{align*}
   & (1)\  s_i^2 = 1, \quad 1\leqslant i \leqslant k-1&   &(2)\  b_i^2 = b_i, \quad 1\leqslant i \leqslant k-1\\
        &(3)\   s_is_{i+1}s_i = s_{i+1}s_is_{i+1}, \quad 1\leqslant i \leqslant k-2&   &(4)\  s_i b_{i+1}s_i = s_{i+1}b_i s_{i+1} , \quad 1\leqslant i \leqslant k-2 \\
    &(5)\  s_i s_j = s_js_i, \quad |i-j|>1 &   & (6)\  b_is_j = s_j b_i, \quad |i-j|>1 \\
    &(7)\  b_i s_i =s_i b_i = b_i , \quad 1\leqslant i \leqslant k-1 &  &(8)\  b_i b_j = b_j b_i, \quad 1\leqslant i \leqslant k-1.
\end{align*}

%%%%%%%%%%%%%%%%%%%%%%%%%%%%%%%%%%%%%%%%%%%%%%%%%%
\subsection{$\mathcal{U}_k$ is an inverse monoid}
\label{sec:inverse-monoid}

We prove that $\mathcal{U}_k$ is an inverse monoid,
which will allow us to make use of known results for this class of monoids
(see \cite[Chapter 3]{Steinberg.2016}).

A monoid $M$ is called an \defn{inverse monoid} if for every $x \in M$,
there exists a unique element $x^\ast \in M$,
called the \defn{generalized inverse} of $x$,
satisfying $x x^\ast x = x$ and $x^\ast x x^\ast = x^\ast$.

Given a set partition $d$ of $[k] \cap [\bar{k}]$,
let $\widetilde{d}$ denote the set partition whose diagram is obtained by
reflecting the diagram of $d$ across a horizontal line.
Note that if $d$ is uniform, then $\widetilde{d}$ is also uniform.
If $d$ is a permutation in $\mathfrak{S}_k$,
then $\widetilde{d}$ is precisely the inverse of the permutation.
Furthermore, it can be verified directly using diagrams that
\begin{equation*}
    d\widetilde{d}d = d,
    \qquad\quad
    \widetilde{d}d\widetilde{d} = \widetilde{d},
    \qquad\quad
    \widetilde{dd'} = \widetilde{d'}\widetilde{d},
    \qquad\quad
    \tilde{\tilde{d}} = d.
\end{equation*}
Using the fact that the idempotents of $\mathcal{U}_k$ commute
(Lemma~\ref{lem:idempotents}), one can prove that $\widetilde{d}$ is the unique
element satisfying
$d\widetilde{d}d = d$ and $\widetilde{d}d\widetilde{d} = \widetilde{d}$;
see the proof of \cite[Theorem~3.2]{Steinberg.2016}.
\begin{prop} \leavevmode
    \begin{enumerate}
        \item
            $\mathcal{U}_k$ is an inverse monoid,
            where the generalized inverse of $d \in \mathcal{U}_k$ is $\widetilde{d}$.

        \item
            $E(\mathcal{U}_k)$ is a commutative inverse monoid
            that is generated by
            $(i+1,j) b_i (i+1,j)$ for $1\leqslant i < j \leqslant k$,
            where $(i+1,j)$ is the transposition in $\mathfrak{S}_k$ that swaps $i+1$ and $j$.
    \end{enumerate}
\end{prop}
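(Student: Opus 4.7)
The plan is to dispatch the two parts of the proposition in order, relying on the groundwork developed in Sections~\ref{section.factorizable} and~\ref{sec:inverse-monoid}.

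For part (1), the diagram identities $d\widetilde{d}d = d$ and $\widetilde{d}d\widetilde{d} = \widetilde{d}$ have already been recorded just above the statement, so I would begin by confirming them directly from the diagram calculus: stacking $d$ on $\widetilde{d}$ produces a three-row diagram in which each block $B$ of $d$ is connected to itself through the middle row, so $d\widetilde{d} = e_{\mathsf{top}(d)}$ and, symmetrically, $\widetilde{d}d = e_{\mathsf{bot}(d)}$. Then Lemma~\ref{lem:eproperties}(d) yields $d\widetilde{d}d = e_{\mathsf{top}(d)}d = d$, and analogously $\widetilde{d}d\widetilde{d} = \widetilde{d}e_{\mathsf{bot}(d)} = \widetilde{d}$. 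This establishes existence of a generalized inverse. For uniqueness I would invoke the classical argument of \cite[Theorem~3.2]{Steinberg.2016}: if $d^* \in \mathcal{U}_k$ satisfies $dd^*d = d$ and $d^*dd^* = d^*$, then $dd^*$, $d\widetilde{d}$, $d^*d$, $\widetilde{d}d$ are all idempotents of $\mathcal{U}_k$, and commutativity of idempotents (Lemma~\ref{lem:idempotents}) forces $d^* = \widetilde{d}$ after a short rearrangement.

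For part (2), commutativity of $E(\mathcal{U}_k)$ is immediate from the monoid isomorphism $\pi \mapsto e_\pi$ with $(\Pi_k, \vee)$ in Lemma~\ref{lem:idempotents}, and every idempotent is trivially its own generalized inverse (since $e\cdot e\cdot e = e$), so $E(\mathcal{U}_k)$ is itself an inverse monoid. The substantive point is identifying the generators. Writing $b_i = e_{\pi_{i,i+1}}$, where $\pi_{i,i+1}$ denotes the set partition of $[k]$ whose unique non-singleton block is $\{i, i+1\}$, and using that $(i+1,j)$ is an involution, Lemma~\ref{lem:eproperties}(c) gives
\begin{equation*}
    (i+1,j)\, b_i\, (i+1,j) = e_{(i+1,j)(\pi_{i,i+1})} = e_{\pi_{i,j}},
\end{equation*}
where $\pi_{i,j}$ is the set partition with unique non-singleton block $\{i,j\}$; note the boundary case $j=i+1$ recovers $b_i$ itself.

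It remains to verify that the atomic partitions $\{\pi_{i,j} : 1\leqslant i < j \leqslant k\}$ generate the lattice $\Pi_k$ under join. For any $\pi \in \Pi_k$ and any non-singleton block $B = \{a_1 < a_2 < \cdots < a_m\}$ of $\pi$, the join $\pi_{a_1,a_2} \vee \pi_{a_2,a_3} \vee \cdots \vee \pi_{a_{m-1},a_m}$ equals the set partition whose only non-trivial block is $B$; taking the join over all non-singleton blocks of $\pi$ recovers $\pi$. Translating through $e_\pi e_\gamma = e_{\pi \vee \gamma}$ expresses every $e_\pi$ as a product of the claimed generators, completing the proof. No step constitutes a serious obstacle: the work lives almost entirely in the lemmas already proved, and the most delicate moment is the standard uniqueness argument in part (1), for which the cited reference is adequate.
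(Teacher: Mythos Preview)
Your proposal is correct and matches the paper's approach. The paper itself does not supply a proof after the proposition: part~(1) is justified only by the sentence preceding the statement (the diagram identities plus the reference to \cite[Theorem~3.2]{Steinberg.2016} for uniqueness via commuting idempotents), and part~(2) is not argued at all. Your write-up fills in exactly the details the paper leaves implicit, using the same ingredients (Lemma~\ref{lem:idempotents} for commutativity and the join structure, Lemma~\ref{lem:eproperties}(c) for the conjugation identity), so there is nothing to compare.
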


%%%%%%%%%%%%%%%%%%%%%%%%%%%%%%%%%%%%%%%%%%%%%%
\section{Irreducible representations of $\mathcal{U}_k$}
\label{section.irreps of Uk}
%%%%%%%%%%%%%%%%%%%%%%%%%%%%%%%%%%%%%%%%%%%%%%

We will develop the representation theory of $\mathcal{U}_k$
using results from the representation theory of finite monoids
as presented in the excellent book by Steinberg~\cite{Steinberg.2016}.
We begin with a very brief overview to guide our development.

Let $M$ be a finite monoid.
Given an idempotent $e \in M$, there is a unique largest subgroup of $M$ that
contains $e$, which is called the maximal subgroup of $M$ at the
idempotent $e$ and denoted by $G_e$.
The irreducible (complex) representations of $M$ (\ie, the simple
$\CC{M}$-modules) are determined by the irreducible representations of the
maximal subgroups $G_e$. We describe the maximal subgroups of $\mathcal{U}_k$
in Section~\ref{sec:maximalsubgroups}.
Two maximal subgroups $G_e$ and $G_f$ of $M$ are isomorphic if the idempotents
$e$ and $f$ are \emph{$\mathscr{J}$-equivalent}.
This equivalence relation is defined in Section~\ref{section.J classes}, where
we determine the $\mathscr{J}$-classes of $\mathcal{U}_k$.
In Section~\ref{sec:irrepsGe} we describe the irreducible representations
of the maximal subgroups of $\mathcal{U}_k$.
The construction makes use of an auxiliary representation called the
Sch\"utzenberger representation that we describe in
Section~\ref{sec:Schutzenberger-representations}.
In Section~\ref{sec:irreps-of-UBP-algebra} we construct all the
irreducible representations of $\mathcal{U}_k$.
Finally, in Section~\ref{sec:irreps-of-UBP-algebra-2} we give a tableau model
for the irreducible $\mathcal{U}_k$-representations.

%%%%%%%%%%%%%%%%%%%%%%%%%%%%%%%%%%%%%%%%%%%%%%%%%%
\subsection{Maximal subgroups of $\mathcal{U}_k$}
\label{sec:maximalsubgroups}

As explained above,
the representation theory of $\mathcal{U}_k$
can be expressed in terms of the representation theory of
its maximal subgroups, and the subsequent results will describe their structure.
The next result identifies the maximal subgroups of $\mathcal{U}_k$.
Recall that every idempotent of $\mathcal{U}_k$ is of the form
$e_\pi$, where $\pi$ is a set partition of $[k]$ (\cf Lemma~\ref{lem:idempotents}).

\begin{lemma}
    \label{lemma:maximal-subgroups}
    Let $e_\pi \in \mathcal{U}_k$ be the idempotent corresponding to a set partition
    $\pi \vdash [k]$.
    The maximal subgroup of $\mathcal{U}_k$ at the idempotent $e_\pi$ is
    \[
        G_{e_\pi} = \{d \in \mathcal{U}_k: d \widetilde{d} = \widetilde{d} d = e_\pi\}
                  = \{d \in \mathcal{U}_k: \mathsf{top}(d) =\mathsf{bot}(d) = \pi\}.
    \]
\end{lemma}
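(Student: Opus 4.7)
The plan is to establish the two equalities in order: first that $G_{e_\pi} = \{d \in \mathcal{U}_k : d\widetilde{d} = \widetilde{d}d = e_\pi\}$ using general inverse-monoid theory, and then that this set coincides with $\{d \in \mathcal{U}_k : \mathsf{top}(d) = \mathsf{bot}(d) = \pi\}$ using the diagrammatic interpretation of $d\widetilde{d}$ and $\widetilde{d}d$ recorded in Remark~\ref{remark:composition-and-product} together with Lemma~\ref{lem:idempotents}.

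For the first equality, I would work in $\mathcal{U}_k$ viewed as an inverse monoid via Proposition~2.9. Recall that for any finite monoid $M$ and idempotent $e$, the maximal subgroup $G_e$ consists of exactly those elements of $eMe$ that are units. For the inclusion $G_{e_\pi} \subseteq \{d : d\widetilde{d} = \widetilde{d}d = e_\pi\}$, if $d \in G_{e_\pi}$ then $d$ has a group-inverse $d^{-1} \in G_{e_\pi}$ satisfying $d d^{-1} = d^{-1} d = e_\pi$. The pair $(d,d^{-1})$ then satisfies the generalized-inverse equations $d d^{-1} d = d$ and $d^{-1} d d^{-1} = d^{-1}$, so by the uniqueness of the generalized inverse in an inverse monoid one has $d^{-1} = \widetilde{d}$, giving $d\widetilde{d} = \widetilde{d}d = e_\pi$. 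Conversely, if $d$ satisfies $d\widetilde{d} = \widetilde{d}d = e_\pi$, then $e_\pi d = d\widetilde{d}d = d$ and $de_\pi = d\widetilde{d}d = d$, so $d \in e_\pi \mathcal{U}_k e_\pi$ with two-sided inverse $\widetilde{d}$, placing $d$ in the group of units of $e_\pi\mathcal{U}_k e_\pi$, which is $G_{e_\pi}$.

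For the second equality, I would translate the conditions $d\widetilde{d} = e_\pi$ and $\widetilde{d}d = e_\pi$ into statements about $\mathsf{top}(d)$ and $\mathsf{bot}(d)$. The key identities are
\[
    d\widetilde{d} = e_{\mathsf{top}(d)}, \qquad \widetilde{d}d = e_{\mathsf{bot}(d)},
\]
which are pointed to in Remark~\ref{remark:composition-and-product}; they follow from the diagram construction by stacking $d$ on top of its horizontal reflection $\widetilde{d}$, observing that after removing the middle row each block of $\mathsf{top}(d)$ becomes joined to its mirror image in the bottom row, so that the resulting set partition is precisely $\{(B \cap [k]) \cup \overline{B \cap [k]} : B \in \mathsf{top}(d)\} = e_{\mathsf{top}(d)}$. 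Once these identities are in hand, the bijectivity of the map $\pi \mapsto e_\pi$ from set partitions of $[k]$ onto $E(\mathcal{U}_k)$ (Lemma~\ref{lem:idempotents}) immediately converts $d\widetilde{d} = e_\pi$ into $\mathsf{top}(d) = \pi$ and $\widetilde{d}d = e_\pi$ into $\mathsf{bot}(d) = \pi$.

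The main obstacle is not conceptual but notational: carefully verifying the identity $d\widetilde{d} = e_{\mathsf{top}(d)}$ from the three-row diagram construction. All other steps are either standard inverse-monoid theory or direct applications of already-established lemmas from Section~\ref{section.monoid}.
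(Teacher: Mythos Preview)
Your proposal is correct, and the first equality is handled exactly as the paper does (via the inverse-monoid characterization of maximal subgroups, which the paper simply cites from \cite[Corollary~3.6]{Steinberg.2016} rather than spelling out).

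For the second equality your route differs slightly from the paper's. You invoke the identities $d\widetilde{d}=e_{\mathsf{top}(d)}$ and $\widetilde{d}d=e_{\mathsf{bot}(d)}$ directly (as recorded in Remark~\ref{remark:composition-and-product}) and then use the injectivity of $\pi\mapsto e_\pi$ from Lemma~\ref{lem:idempotents}. The paper instead uses the permutation--idempotent factorization of Proposition~\ref{prop:standard-decomposition}: writing $d=\sigma e_{\mathsf{bot}(d)}$ with $\sigma\in\mathfrak{S}_k$, it computes $\widetilde{d}d=(e_{\mathsf{bot}(d)}\sigma^{-1})(\sigma e_{\mathsf{bot}(d)})=e_{\mathsf{bot}(d)}$ and $d\widetilde{d}=\sigma e_{\mathsf{bot}(d)}\sigma^{-1}=e_{\sigma^{-1}(\mathsf{bot}(d))}=e_{\mathsf{top}(d)}$ explicitly. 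Your argument is a bit more direct since it bypasses the factorization and Lemma~\ref{lem:eproperties}(c); the paper's version has the small advantage that it does not rely on the diagrammatic verification of $d\widetilde{d}=e_{\mathsf{top}(d)}$ (which, as you note, is the only step requiring care). Both arguments are short and ultimately establish the same pair of identities.
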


\begin{proof}
    Let $G_{e_\pi}$ be the maximal subgroup of $\mathcal{U}_k$ associated with $e_\pi$.
    Since $\mathcal{U}_k$ is an inverse monoid, $G_{e_\pi}$ consists of all
    elements $d$ of $\mathcal{U}_k$ such that $\widetilde{d}d = d\widetilde{d} = e_\pi$ \cite[Corollary 3.6]{Steinberg.2016}.

    Suppose $\mathsf{top}(d) = \mathsf{bot}(d) = \pi$.
    By Proposition \ref{prop:standard-decomposition},
    there exists $\sigma \in \mathfrak{S}_k$ satisfying $d = \sigma e_\pi = e_\pi \sigma$
    and $\sigma(\pi) = \pi$. Thus,
    \begin{align*}
        \widetilde{d} d = (e_\pi \sigma^{-1}) (\sigma e_\pi) = e_\pi
        \quad\text{and}\quad
        d \widetilde{d} = (\sigma e_\pi) (e_\pi \sigma^{-1}) = e_{\sigma^{-1}(\pi)} = e_{\pi}.
    \end{align*}

    Conversely, suppose that $\widetilde{d}d = d\widetilde{d} = e_\pi$.
    Write $d = \sigma e_{\mathsf{bot}(d)}$ with $\sigma \in \mathfrak{S}_k$.
    Then
    \begin{align*}
        e_\pi & = \widetilde{d}d = \big(e_{\mathsf{bot}(d)} \sigma^{-1}\big) \big(\sigma e_{\mathsf{bot}(d)}\big) = e_{\mathsf{bot}(d)}, \\
        e_\pi &= d \widetilde{d} = \big(\sigma e_{\mathsf{bot}(d)}\big) \big(e_{\mathsf{bot}(d)} \sigma^{-1}\big) = e_{\sigma^{-1}(\mathsf{bot}(d))},
    \end{align*}
    which imply that $\pi = \mathsf{bot}(d)$; and $\pi = \sigma^{-1}(\mathsf{bot}(d)) = \mathsf{top}(d)$.
\end{proof}

Next, we prove that each maximal subgroup is a direct product of symmetric groups.
For any set $B$, let $\mathfrak{S}_B$ denote the permutation group of the elements in $B$.

\begin{prop}\label{prop:structGe}
    Let $\pi = \{\pi_1, \ldots, \pi_\ell\}\vdash [k]$.
    \begin{enumerate}
        \item
            $G_{e_\pi} = \mathfrak{S}_{\pi^{(1)}} \times \mathfrak{S}_{\pi^{(2)}}
            \times \cdots \times \mathfrak{S}_{\pi^{(k)}}$,
            where $\pi^{(i)}$ is the set of blocks of $\pi$ of size $i$.

        \item
            Let $B_i=\{j : |\pi_j| = i \}$. If $d \in G_{e_{\pi}}$, then there
            exists $\tau \in \mathfrak{S}_{B_1}\times \cdots \times
            \mathfrak{S}_{B_k}$ such that
            \begin{equation*}
                d = \big\{
                    \pi_i \cup \overline{\pi_{\tau(i)}} : 1 \leqslant i \leqslant \ell
                \big\}.
            \end{equation*}
    \end{enumerate}
\end{prop}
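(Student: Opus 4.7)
The plan is to use the bijection interpretation from Section~\ref{sssection:bijection} to identify elements of $G_{e_\pi}$ with size-preserving permutations of the blocks of $\pi$. First I would invoke Lemma~\ref{lemma:maximal-subgroups} to rewrite $G_{e_\pi}$ as the set of all $d \in \mathcal{U}_k$ satisfying $\mathsf{top}(d) = \mathsf{bot}(d) = \pi$. For such a $d$, the associated bijection goes from $\pi$ to itself, and since it preserves block sizes, it must permute the subset $\pi^{(i)} \subseteq \pi$ of blocks of size $i$ within itself. Thus $d$ determines a tuple in $\mathfrak{S}_{\pi^{(1)}} \times \cdots \times \mathfrak{S}_{\pi^{(k)}}$, and conversely every such tuple is realized by a unique uniform block permutation in $G_{e_\pi}$. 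This yields a bijection $\Phi \colon G_{e_\pi} \to \mathfrak{S}_{\pi^{(1)}} \times \cdots \times \mathfrak{S}_{\pi^{(k)}}$.

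Next I would verify that $\Phi$ respects multiplication. For $d, d' \in G_{e_\pi}$, the identity $\mathsf{bot}(d) = \pi = \mathsf{top}(d')$ means that the diagrammatic product $dd'$ is, by Remark~\ref{remark:composition-and-product}, exactly the composition of the bijections associated with $d$ and $d'$. Consequently the induced permutations of the blocks compose in the corresponding way, so $\Phi$ is a group isomorphism (up to the standard convention for composing permutations; \cf Remark~\ref{remark:composition-and-product}). This establishes part~(1).

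For part~(2), I would make $\Phi$ explicit. Assembling the components of $\Phi(d)$ into a single $\tau \in \mathfrak{S}_{B_1} \times \cdots \times \mathfrak{S}_{B_k}$, where $B_i$ indexes the blocks of size $i$, the block $d_i$ of $d$ containing the top vertex set $\pi_i$ must satisfy $\overline{d_i} \cap [k] = \pi_{\tau(i)}$ by definition of the induced permutation; uniformness and the identification of $d$ with its bijection force $d_i = \pi_i \cup \overline{\pi_{\tau(i)}}$.

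I do not anticipate a serious obstacle: the argument is largely a translation through the bijection viewpoint already set up in Section~\ref{sssection:bijection}. The one point requiring a little care is the homomorphism property, where invoking Remark~\ref{remark:composition-and-product} sidesteps any direct diagrammatic computation and handles the permutation-composition convention in one stroke.
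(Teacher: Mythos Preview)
Your proposal is correct and follows essentially the same approach as the paper: both invoke Lemma~\ref{lemma:maximal-subgroups} to characterize $G_{e_\pi}$ as those $d$ with $\mathsf{top}(d)=\mathsf{bot}(d)=\pi$, then use the bijection interpretation of Section~\ref{sssection:bijection} to identify each such $d$ with a size-preserving permutation of the blocks, yielding the tuple $\tau$ and the explicit description $d=\{\pi_i\cup\overline{\pi_{\tau(i)}}\}$. You are slightly more explicit than the paper in verifying the homomorphism property via Remark~\ref{remark:composition-and-product}, which the paper leaves implicit.
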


\begin{proof}
If $d\in G_{e_\pi}$ then $\mathsf{top}(d) = \mathsf{bot}(d) = \pi$. Then $d$ is a bijection from the
blocks of $\pi$ to the blocks of $\pi$ such that blocks of the same size map to blocks of the same size.  This means that if
we consider only the blocks of size $i$ in $d$, there is a $\tau^{(i)} \in \mathfrak{S}_{B_i}$ that describes the bijection for these blocks.
Since this holds for any size $i$, the permutation $\tau : = \tau^{(1)} \times \tau^{(2)}\times \cdots \times \tau^{(k)} \in \mathfrak{S}_{B_1}
\times \mathfrak{S}_{B_2}\times \cdots \times \mathfrak{S}_{B_k}$ describes the bijection for all the blocks of all sizes.  Since $\pi_i \mapsto \pi_{\tau(i)}$,
the corresponding set partition has blocks $\pi_i \cup \overline{\pi}_{\tau(i)}$.
\end{proof}

\begin{cor} \label{cor:whatisGe}
For $\pi \vdash [k]$ with $\mathsf{type}(\pi)= (1^{a_1}2^{a_2}\ldots k^{a_k})$, we have
\[
    G_{e_\pi} \simeq \mathfrak{S}_{a_1} \times \mathfrak{S}_{a_2} \times \cdots \times \mathfrak{S}_{a_k}.
\]
\end{cor}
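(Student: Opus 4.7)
The plan is to derive this corollary directly from Proposition~\ref{prop:structGe}, which does essentially all of the heavy lifting. The corollary merely repackages the structural description there into its isomorphism class, which depends only on the \emph{number} of blocks of each size rather than on the blocks themselves.

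More concretely, I would first unpack the hypothesis: if $\mathsf{type}(\pi) = (1^{a_1} 2^{a_2} \cdots k^{a_k})$, then by definition of type the number of blocks of $\pi$ of size $i$ equals $a_i$. In the notation of Proposition~\ref{prop:structGe}, $\pi^{(i)}$ is the set of blocks of $\pi$ of size $i$, so $|\pi^{(i)}| = a_i$ for each $1 \leqslant i \leqslant k$. Since the symmetric group on a finite set depends (up to isomorphism) only on its cardinality, we have the abstract isomorphism
\[
\mathfrak{S}_{\pi^{(i)}} \simeq \mathfrak{S}_{a_i}
\]
for each $i$, obtained by transporting structure along any bijection $\pi^{(i)} \to [a_i]$.

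Combining these isomorphisms for $i = 1, 2, \ldots, k$ and using Proposition~\ref{prop:structGe}(1), which gives
\[
G_{e_\pi} = \mathfrak{S}_{\pi^{(1)}} \times \mathfrak{S}_{\pi^{(2)}} \times \cdots \times \mathfrak{S}_{\pi^{(k)}},
\]
yields the desired isomorphism $G_{e_\pi} \simeq \mathfrak{S}_{a_1} \times \mathfrak{S}_{a_2} \times \cdots \times \mathfrak{S}_{a_k}$.

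There is no real obstacle here; the substantive work was carried out in Proposition~\ref{prop:structGe}, where one had to observe that an element of $G_{e_\pi}$ can only permute blocks of equal size (since the bijection $\mathsf{top}(d) \to \mathsf{bot}(d)$ associated to $d$ preserves block size, and here $\mathsf{top}(d) = \mathsf{bot}(d) = \pi$). The corollary is then just a convenient restatement in terms of the type of $\pi$, which is the data that will ultimately control the indexing of irreducible representations via the parametrization by $I_k$ from \eqref{equation.Ik}.
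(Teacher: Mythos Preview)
Your proposal is correct and matches the paper's treatment: the paper states this as an immediate corollary of Proposition~\ref{prop:structGe} with no separate proof, and your argument spells out exactly the trivial step of passing from $\mathfrak{S}_{\pi^{(i)}}$ to $\mathfrak{S}_{a_i}$ via $|\pi^{(i)}| = a_i$.
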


It follows from Corollary~\ref{cor:whatisGe} that
$G_{e_\pi}$ and $G_{e_\gamma}$ are isomorphic if $\mathsf{type}(\pi)
= \mathsf{type}(\gamma)$. Explicitly, $\sigma^{-1} G_{e_\pi} \sigma
= G_{e_\gamma}$ for any permutation $\sigma \in \mathfrak{S}_k$
satisfying $\sigma^{-1} e_\pi \sigma = e_\gamma$.

\begin{cor}\label{cor:isomGe}
If $\pi$ and $\gamma$ are two set partitions of $[k]$ satisfying $\mathsf{type}(\pi) = \mathsf{type}(\gamma)$, then $G_{e_\pi}$
is isomorphic to $G_{e_\gamma}$. In particular, there exists a $\sigma \in \mathfrak{S}_k$ such that $\sigma(\pi) = \gamma$ and
\[
    \sigma^{-1} G_{e_\pi} \sigma = G_{e_\gamma}.
\]
\end{cor}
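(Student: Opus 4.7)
The isomorphism half of the corollary is immediate: by Corollary~\ref{cor:whatisGe}, both $G_{e_\pi}$ and $G_{e_\gamma}$ are isomorphic to $\mathfrak{S}_{a_1}\times\cdots\times\mathfrak{S}_{a_k}$, where $(1^{a_1}2^{a_2}\cdots k^{a_k})$ is the common type of $\pi$ and $\gamma$. So the real content is producing an explicit conjugating permutation in $\mathfrak{S}_k$. My plan is to construct $\sigma$ by hand and then deduce the conjugation identity from results already in hand.

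To construct $\sigma$, I would use that $\pi$ and $\gamma$ have the same multiset of block sizes. Match the blocks of $\pi$ bijectively with the blocks of $\gamma$ by size; on each matched pair of blocks (which have equal cardinality) choose an arbitrary bijection of the underlying subsets of $[k]$; and assemble these bijections into a single permutation $\sigma\in\mathfrak{S}_k$. By construction $\sigma(\pi)=\gamma$.

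The second step is to apply Lemma~\ref{lem:eproperties}(c) with $\tau=\sigma^{-1}$, which gives
\[
    \sigma^{-1}e_\pi\,\sigma \;=\; e_{\sigma(\pi)} \;=\; e_\gamma.
\]
Since $\sigma$ is a unit of $\mathcal{U}_k$ (with two-sided inverse $\sigma^{-1}=\widetilde\sigma$ in the symmetric group sitting inside $\mathcal{U}_k$), the map $x\mapsto\sigma^{-1}x\sigma$ is a monoid automorphism of $\mathcal{U}_k$. Any monoid automorphism permutes idempotents and carries the maximal subgroup at an idempotent $e$ bijectively onto the maximal subgroup at its image. Applying this here yields $\sigma^{-1}G_{e_\pi}\sigma=G_{\sigma^{-1}e_\pi\sigma}=G_{e_\gamma}$, which is the claim.

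There is no serious obstacle. If one prefers not to invoke the automorphism principle as a black box, the closure $\sigma^{-1}G_{e_\pi}\sigma\subseteq G_{e_\gamma}$ can be verified directly from Lemma~\ref{lemma:maximal-subgroups}: for $d\in G_{e_\pi}$ we have $d\widetilde d=\widetilde d d=e_\pi$, and since $\widetilde\sigma=\sigma^{-1}$ together with $\widetilde{ab}=\widetilde b\widetilde a$ (Section~\ref{sec:inverse-monoid}) give $\widetilde{\sigma^{-1}d\sigma}=\sigma^{-1}\widetilde d\sigma$, both products $(\sigma^{-1}d\sigma)\,\widetilde{\sigma^{-1}d\sigma}$ and $\widetilde{\sigma^{-1}d\sigma}\,(\sigma^{-1}d\sigma)$ collapse to $\sigma^{-1}e_\pi\sigma=e_\gamma$. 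The reverse inclusion follows by swapping the roles of $\pi$ and $\gamma$ via $\sigma^{-1}$. Either route is short, so the main task is really just to assemble the pieces from Lemma~\ref{lem:eproperties} and Corollary~\ref{cor:whatisGe} in the right order.
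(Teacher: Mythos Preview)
Your argument is correct and matches the paper's approach: the paper does not give a separate proof but simply notes (just before the corollary) that the isomorphism follows from Corollary~\ref{cor:whatisGe} and that explicitly $\sigma^{-1}G_{e_\pi}\sigma=G_{e_\gamma}$ for any $\sigma\in\mathfrak{S}_k$ with $\sigma^{-1}e_\pi\sigma=e_\gamma$. Your proposal fills in exactly these details, constructing such a $\sigma$ and invoking Lemma~\ref{lem:eproperties}(c) and the characterization in Lemma~\ref{lemma:maximal-subgroups}.
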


%%%%%%%%%%%%%%%%%%%%%%%%%%%%%%%%%%%%%%%%%%%%%%%%%%
\subsection{$\mathscr{J}$-classes}
\label{section.J classes}

Let $x$ and $y$ be elements of a monoid $M$.  We say that $x$ and $y$ are \defn{$\mathscr{J}$-equivalent} if $MxM = MyM$.
This is an equivalence relation; hence, it partitions the monoid $M$ into classes which are called the \defn{$\mathscr{J}$-classes} of $M$. We denote by \defn{$J_x$} the $\mathscr{J}$-class containing $x$.
In the next proposition we give a characterization for the $\mathscr{J}$-classes of $\mathcal{U}_k$ and show that they are indexed by partitions of $k$.

\begin{prop}\label{prop:charJclass}
Let $k$ be a nonnegative integer. 
\begin{enumerate}
    \item[(a)] Every $\mathscr{J}$-class of $\mathcal{U}_k$ contains an idempotent. 
    \item[(b)] Two elements $d, d' \in \mathcal{U}_k$ are in the same $\mathscr{J}$-class if and only if $\mathsf{type}(\mathsf{top}(d)) = \mathsf{type}(\mathsf{top}(d'))$.
    \item[(c)] The $\mathscr{J}$-classes are in bijection with partitions $\lambda$ of $k$. In particular, if $d\in \mathcal{U}_k$ and 
    $\mathsf{type}(\mathsf{top}(d)) = \lambda$, then 
    \[ {\defncolor J_\lambda} := J_d = \{ d' : \mathsf{type}(\mathsf{top}(d')) = \lambda\}. \]
    \item[(d)] If $\pi\vdash [k]$ and $\mathsf{type}(\pi) = \lambda$, then 
    \[ J_{\lambda} = \{\sigma e_\pi \tau : \sigma, \tau\in \mathfrak{S}_k\}.\]
\end{enumerate}
\end{prop}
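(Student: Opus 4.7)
The plan is to exploit the factorizable structure $\mathcal{U}_k = E(\mathcal{U}_k)\mathfrak{S}_k = \mathfrak{S}_k E(\mathcal{U}_k)$ from Proposition~\ref{prop:standard-decomposition}, reducing every question to the semilattice of idempotents $\{e_\pi : \pi \vdash [k]\}$ characterized in Lemma~\ref{lem:idempotents}. Part (a) falls out immediately: the decomposition $d = e_{\mathsf{top}(d)}\sigma$ with $\sigma \in \mathfrak{S}_k$ shows simultaneously that $d \in \mathcal{U}_k e_{\mathsf{top}(d)} \mathcal{U}_k$ and $e_{\mathsf{top}(d)} = d\sigma^{-1} \in \mathcal{U}_k d \mathcal{U}_k$, so every $d$ is $\mathscr{J}$-equivalent to the idempotent $e_{\mathsf{top}(d)}$. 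Consequently, part (b) reduces to the statement $e_\pi \mathscr{J} e_\gamma$ if and only if $\mathsf{type}(\pi) = \mathsf{type}(\gamma)$.

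For the easy direction, Corollary~\ref{cor:isomGe} furnishes $\sigma \in \mathfrak{S}_k$ with $\sigma(\pi) = \gamma$ whenever the types agree, and then Lemma~\ref{lem:eproperties}(c) yields $e_\gamma = \sigma^{-1} e_\pi \sigma$, placing the two idempotents in the same $\mathscr{J}$-class; combined with (a), this handles one direction of (b).

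The converse is the main work. Starting from $e_\gamma = a e_\pi b$, I would factor $a = \alpha e_{\mathsf{bot}(a)}$ and $b = e_{\mathsf{top}(b)}\beta$ via Proposition~\ref{prop:standard-decomposition}, then use the semilattice law $e_{\pi_1} e_{\pi_2} = e_{\pi_1 \vee \pi_2}$ from Lemma~\ref{lem:idempotents} to collapse the three middle idempotents into $e_\gamma = \alpha e_\eta \beta$ with $\eta = \mathsf{bot}(a) \vee \pi \vee \mathsf{top}(b)$ coarser than $\pi$. A further application of Lemma~\ref{lem:eproperties}(c) rewrites this as $e_\gamma = (\alpha\beta) e_{\beta(\eta)}$, and Lemma~\ref{lem:eproperties}(b) then forces $\gamma = \alpha^{-1}(\eta)$, so $\mathsf{type}(\gamma) = \mathsf{type}(\eta)$ and $n(\gamma) = n(\eta) \leqslant n(\pi)$. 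The symmetric argument (swapping $\pi$ and $\gamma$) supplies the reverse inequality, so the block counts agree and $\eta = \pi$ (a coarsening with the same number of blocks must be the original); hence $\mathsf{type}(\gamma) = \mathsf{type}(\pi)$. The hard part is the careful bookkeeping of how permutations and idempotents are pushed past one another via Lemma~\ref{lem:eproperties}, keeping in mind the $\mathcal{U}_k$-convention that products of permutations correspond to reversed function composition.

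Parts (c) and (d) then follow quickly. Part (c) uses that every $\lambda \vdash k$ arises as $\mathsf{type}(\pi)$ for some $\pi \vdash [k]$, so by (b) the map $\lambda \mapsto J_\lambda$ is the desired bijection. For $\supseteq$ in (d), I would rewrite $\sigma e_\pi \tau = (\sigma\tau) e_{\tau(\pi)}$ via Lemma~\ref{lem:eproperties}(c) and compute its $\mathsf{top}$ via Lemma~\ref{lem:eproperties}(b) to confirm the type is $\lambda$; for $\subseteq$, given $d \in J_\lambda$ I would write $d = e_{\mathsf{top}(d)}\mu$ via Proposition~\ref{prop:standard-decomposition}, invoke Corollary~\ref{cor:isomGe} to find $\rho$ with $\rho(\pi) = \mathsf{top}(d)$, use Lemma~\ref{lem:eproperties}(c) to express $e_{\mathsf{top}(d)} = \rho^{-1} e_\pi \rho$, and assemble $d = \rho^{-1} e_\pi (\rho\mu)$ in the required form.
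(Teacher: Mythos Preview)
Your proof is correct and follows essentially the same strategy as the paper: reduce to idempotents via the factorizable decomposition, handle one direction of (b) by conjugation, and the other by showing that any element of $\mathcal{U}_k e_\pi \mathcal{U}_k$ has top (and bottom) set partition a permuted coarsening of $\pi$. Your extraction of $\mathsf{type}(\gamma) = \mathsf{type}(\pi)$ from this coarsening---using the symmetry of $\mathscr{J}$-equivalence to force equal block counts and hence $\eta = \pi$---is a bit slicker than the paper's direct multiplicity comparison on block sizes, but the underlying idea is the same.
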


\begin{proof}
(a) Every $d\in \mathcal{U}_k$ can be written using the permutation-idempotent
representation as $d = \sigma e_\pi$ for some $\sigma \in \mathfrak{S}_k$,
where $\pi = \mathsf{bot}(d)$. Then $\mathcal{U}_k d \mathcal{U}_k
= \mathcal{U}_k \sigma e_\pi \mathcal{U}_k = \mathcal{U}_k e_\pi \mathcal{U}_k$,
where the last equality follows because $\sigma$ is invertible in $\mathcal{U}_k$.
Thus, $J_d$ contains $e_\pi$.

(b)  Using the permutation-idempotent representation of $d$ and $d'$ we know that $d= \sigma e_\pi$ and $d' = \sigma' e_\gamma$ for some 
$\pi, \gamma \vdash [k]$ and $\sigma, \sigma' \in \mathfrak{S}_k$.
Furthermore, by Lemma~\ref{lem:eproperties}, $\mathsf{type}(\mathsf{top}(d)) = \mathsf{type}(\pi)$ and
$\mathsf{type}(\mathsf{top}(d')) = \mathsf{type}(\gamma)$.  Therefore, it suffices to prove that $e_\pi$ and $e_\gamma$ are in the same 
$\mathscr{J}$-class if and only if $\mathsf{type}(\pi) = \mathsf{type}(\gamma)$.  

If $\mathsf{type}(\pi) = \mathsf{type}(\gamma)$, then there exists a $\tau\in \mathfrak{S}_k$ such that $\gamma = \tau(\pi)$.  
Also, $\tau^{-1} e_\pi \tau= e_{\tau(\pi)}$. Hence, 
     $\mathcal{U}_k e_\gamma \mathcal{U}_k =
     \mathcal{U}_k \tau^{-1}e_\pi\tau \mathcal{U}_k = \mathcal{U}_k e_\pi \mathcal{U}_k$,
since $\mathcal{U}_k \tau^{-1} = \mathcal{U}_k = \tau \mathcal{U}_k$ due to the fact that $\tau$ is invertible in $\mathcal{U}_k$.
This implies that any two idempotents $e_\pi$ and $e_\gamma$ such that $\mathsf{type}(\pi) = \mathsf{type}(\gamma)$ are in the same $\mathscr{J}$-class.

Observe that $\mathcal{U}_k e_\pi \mathcal{U}_k$ contains elements $d$ such that $\mathsf{top}(d)$ and $\mathsf{bot}(d)$ are equal to or coarser than
$\tau(\pi)$ for some $\tau\in \mathfrak{S}_k$.
Assume that $\mathsf{type}(\gamma) \neq \mathsf{type}(\pi)$ with
$\gamma, \pi\vdash [k]$ and $\mathsf{type}(\gamma) = (1^{b_1} 2^{b_2}\ldots k^{b_k})$ and $\mathsf{type}(\pi) = (1^{a_1}2^{a_2}\ldots k^{a_k})$. 
Then for some $i$,  $a_i\neq b_i$.  Without loss of generality assume that $a_1=b_1, \ldots, a_{i-1}=b_{i-1}$ and $a_i<b_i$ for some $i$. 
This means that $\gamma$ has more blocks of size
$i$ than $\pi$.  Therefore, it is not possible for $\gamma$ to be coarser than $\tau(\pi)$ for any $\tau\in \mathfrak{S}_k$.  Hence,
$e_\gamma\notin \mathcal{U}_k e_\pi \mathcal{U}_k$.

(c) This is a direct consequence of (b).

(d) Multiplying $e_\pi$ on the left by a permutation results in a diagram whose top is a permutation of $\pi$ and hence has the same type as $\pi$.  
Similarly, multiplying $e_\pi$ on the right by a permutation results in a diagram with bottom that has the same type as $\pi$.
Hence $\{\sigma e_\pi \tau : \sigma, \tau\in \mathfrak{S}_k\} \subseteq J_\lambda$. Conversely, suppose $d \in J_\lambda$. We may write
$d= e_{\mathsf{top}(d)} \sigma$ for some $\sigma \in \mathfrak{S}_k$. Since $\mathsf{type}(\mathsf{top}(d)) = \mathsf{type}(\pi)$, there exists a 
$\tau \in \mathfrak{S}_k$ such that $\tau(\pi) = \mathsf{top}(d)$. Hence $d = e_{\tau(\pi)} \sigma = \tau^{-1} e_\pi \tau \sigma$, proving that
$J_\lambda \subseteq \{\sigma e_\pi \tau : \sigma, \tau\in \mathfrak{S}_k\}$.
\end{proof}

\begin{example}
There are three $\mathscr{J}$-classes for $\mathcal{U}_3$:
\begin{gather*}
\begin{split}
J_{(3)} &=
\left\{
\begin{tikzpicture}[scale=0.8, partition-diagram, idempotent]
%\tikzstyle{vertex} = [shape = circle, minimum size = 3pt, inner sep = 1pt]
    \node[vertex] (G--3) at (3.0, -1) [shape = circle, draw] {};
    \node[vertex] (G--2) at (1.5, -1) [shape = circle, draw] {};
    \node[vertex] (G--1) at (0.0, -1) [shape = circle, draw] {};
    \node[vertex] (G-1) at (0.0, 1) [shape = circle, draw] {};
    \node[vertex] (G-2) at (1.5, 1) [shape = circle, draw] {};
    \node[vertex] (G-3) at (3.0, 1) [shape = circle, draw] {};
    \draw[] (G-1) .. controls +(0.5, -0.5) and +(-0.5, -0.5) .. (G-2);
    \draw[] (G-2) .. controls +(0.5, -0.5) and +(-0.5, -0.5) .. (G-3);
    \draw[] (G--3) .. controls +(-0.5, 0.5) and +(0.5, 0.5) .. (G--2);
    \draw[] (G--2) .. controls +(-0.5, 0.5) and +(0.5, 0.5) .. (G--1);
    \draw[] (G--1) .. controls +(0, 1) and +(0, -1) .. (G-1);
    \end{tikzpicture}\right\},
    \\
   J_{(1,1,1)} &= \left\{
        \begin{gathered}
    \begin{tikzpicture}[scale=0.8, partition-diagram]
%\tikzstyle{vertex} = [shape = circle, minimum size = 3pt, inner sep = 1pt]
            \node[vertex] (G--3) at (3.0, -1) [shape = circle, draw] {};
            \node[vertex] (G-1) at (0.0, 1) [shape = circle, draw] {};
            \node[vertex] (G--2) at (1.5, -1) [shape = circle, draw] {};
            \node[vertex] (G-2) at (1.5, 1) [shape = circle, draw] {};
            \node[vertex] (G--1) at (0.0, -1) [shape = circle, draw] {};
            \node[vertex] (G-3) at (3.0, 1) [shape = circle, draw] {};
            \draw[] (G-1) .. controls +(1, -1) and +(-1, 1) .. (G--3);
            \draw[] (G-2) .. controls +(0, -1) and +(0, 1) .. (G--2);
            \draw[] (G-3) .. controls +(-1, -1) and +(1, 1) .. (G--1);
            \end{tikzpicture},
            \quad
    \begin{tikzpicture}[scale=0.8, partition-diagram]
%\tikzstyle{vertex} = [shape = circle, minimum size = 3pt, inner sep = 1pt]
            \node[vertex] (G--3) at (3.0, -1) [shape = circle, draw] {};
            \node[vertex] (G-3) at (3.0, 1) [shape = circle, draw] {};
            \node[vertex] (G--2) at (1.5, -1) [shape = circle, draw] {};
            \node[vertex] (G-2) at (1.5, 1) [shape = circle, draw] {};
            \node[vertex] (G--1) at (0.0, -1) [shape = circle, draw] {};
            \node[vertex] (G-1) at (0.0, 1) [shape = circle, draw] {};
            \draw[] (G-3) .. controls +(0, -1) and +(0, 1) .. (G--3);
            \draw[] (G-2) .. controls +(0, -1) and +(0, 1) .. (G--2);
            \draw[] (G-1) .. controls +(0, -1) and +(0, 1) .. (G--1);
            \end{tikzpicture},
            \quad
    \begin{tikzpicture}[scale=0.8, partition-diagram]
%\tikzstyle{vertex} = [shape = circle, minimum size = 3pt, inner sep = 1pt]
            \node[vertex] (G--3) at (3.0, -1) [shape = circle, draw] {};
            \node[vertex] (G-1) at (0.0, 1) [shape = circle, draw] {};
            \node[vertex] (G--2) at (1.5, -1) [shape = circle, draw] {};
            \node[vertex] (G-3) at (3.0, 1) [shape = circle, draw] {};
            \node[vertex] (G--1) at (0.0, -1) [shape = circle, draw] {};
            \node[vertex] (G-2) at (1.5, 1) [shape = circle, draw] {};
            \draw[] (G-1) .. controls +(1, -1) and +(-1, 1) .. (G--3);
            \draw[] (G-3) .. controls +(-0.75, -1) and +(0.75, 1) .. (G--2);
            \draw[] (G-2) .. controls +(-0.75, -1) and +(0.75, 1) .. (G--1);
            \end{tikzpicture},
            \quad
    \begin{tikzpicture}[scale=0.8, partition-diagram]
%\tikzstyle{vertex} = [shape = circle, minimum size = 3pt, inner sep = 1pt]
            \node[vertex] (G--3) at (3.0, -1) [shape = circle, draw] {};
            \node[vertex] (G-3) at (3.0, 1) [shape = circle, draw] {};
            \node[vertex] (G--2) at (1.5, -1) [shape = circle, draw] {};
            \node[vertex] (G-1) at (0.0, 1) [shape = circle, draw] {};
            \node[vertex] (G--1) at (0.0, -1) [shape = circle, draw] {};
            \node[vertex] (G-2) at (1.5, 1) [shape = circle, draw] {};
            \draw[] (G-3) .. controls +(0, -1) and +(0, 1) .. (G--3);
            \draw[] (G-1) .. controls +(0.75, -1) and +(-0.75, 1) .. (G--2);
            \draw[] (G-2) .. controls +(-0.75, -1) and +(0.75, 1) .. (G--1);
            \end{tikzpicture},
            \quad
    \begin{tikzpicture}[scale=0.8, partition-diagram]
%\tikzstyle{vertex} = [shape = circle, minimum size = 3pt, inner sep = 1pt]
            \node[vertex] (G--3) at (3.0, -1) [shape = circle, draw] {};
            \node[vertex] (G-2) at (1.5, 1) [shape = circle, draw] {};
            \node[vertex] (G--2) at (1.5, -1) [shape = circle, draw] {};
            \node[vertex] (G-3) at (3.0, 1) [shape = circle, draw] {};
            \node[vertex] (G--1) at (0.0, -1) [shape = circle, draw] {};
            \node[vertex] (G-1) at (0.0, 1) [shape = circle, draw] {};
            \draw[] (G-2) .. controls +(0.75, -1) and +(-0.75, 1) .. (G--3);
            \draw[] (G-3) .. controls +(-0.75, -1) and +(0.75, 1) .. (G--2);
            \draw[] (G-1) .. controls +(0, -1) and +(0, 1) .. (G--1);
            \end{tikzpicture},
            \quad
    \begin{tikzpicture}[scale=0.8, partition-diagram]
%\tikzstyle{vertex} = [shape = circle, minimum size = 3pt, inner sep = 1pt]
            \node[vertex] (G--3) at (3.0, -1) [shape = circle, draw] {};
            \node[vertex] (G-2) at (1.5, 1) [shape = circle, draw] {};
            \node[vertex] (G--2) at (1.5, -1) [shape = circle, draw] {};
            \node[vertex] (G-1) at (0.0, 1) [shape = circle, draw] {};
            \node[vertex] (G--1) at (0.0, -1) [shape = circle, draw] {};
            \node[vertex] (G-3) at (3.0, 1) [shape = circle, draw] {};
            \draw[] (G-2) .. controls +(0.75, -1) and +(-0.75, 1) .. (G--3);
            \draw[] (G-1) .. controls +(0.75, -1) and +(-0.75, 1) .. (G--2);
            \draw[] (G-3) .. controls +(-1, -1) and +(1, 1) .. (G--1);
            \end{tikzpicture}
        \end{gathered}
    \right\},
    \\
  J_{(2,1)} &=  \left\{
    \begin{gathered}
    \begin{tikzpicture}[scale=0.8, partition-diagram]
%\tikzstyle{vertex} = [shape = circle, minimum size = 3pt, inner sep = 1pt]
    \node[vertex] (G--3) at (3.0, -1) [shape = circle, draw] {};
    \node[vertex] (G--2) at (1.5, -1) [shape = circle, draw] {};
    \node[vertex] (G-2) at (1.5, 1) [shape = circle, draw] {};
    \node[vertex] (G-3) at (3.0, 1) [shape = circle, draw] {};
    \node[vertex] (G--1) at (0.0, -1) [shape = circle, draw] {};
    \node[vertex] (G-1) at (0.0, 1) [shape = circle, draw] {};
    \draw[] (G-2) .. controls +(0.5, -0.5) and +(-0.5, -0.5) .. (G-3);
    \draw[] (G--3) .. controls +(-0.5, 0.5) and +(0.5, 0.5) .. (G--2);
    \draw[] (G--2) .. controls +(0, 1) and +(0, -1) .. (G-2);
    \draw[] (G-1) .. controls +(0, -1) and +(0, 1) .. (G--1);
    \end{tikzpicture},
    \
    \begin{tikzpicture}[scale=0.8, partition-diagram]
%\tikzstyle{vertex} = [shape = circle, minimum size = 3pt, inner sep = 1pt]
    \node[vertex] (G--3) at (3.0, -1) [shape = circle, draw] {};
    \node[vertex] (G--2) at (1.5, -1) [shape = circle, draw] {};
    \node[vertex] (G-1) at (0.0, 1) [shape = circle, draw] {};
    \node[vertex] (G-3) at (3.0, 1) [shape = circle, draw] {};
    \node[vertex] (G--1) at (0.0, -1) [shape = circle, draw] {};
    \node[vertex] (G-2) at (1.5, 1) [shape = circle, draw] {};
    \draw[] (G-1) .. controls +(0.6, -0.6) and +(-0.6, -0.6) .. (G-3);
    \draw[] (G-3) .. controls +(0, -1) and +(0, 1) .. (G--3);
    \draw[] (G--3) .. controls +(-0.5, 0.5) and +(0.5, 0.5) .. (G--2);
    \draw[] (G-2) .. controls +(-0.75, -1) and +(0.75, 1) .. (G--1);
    \end{tikzpicture},
    \
    \begin{tikzpicture}[scale=0.8, partition-diagram]
%\tikzstyle{vertex} = [shape = circle, minimum size = 3pt, inner sep = 1pt]
    \node[vertex] (G--3) at (3.0, -1) [shape = circle, draw] {};
    \node[vertex] (G--1) at (0.0, -1) [shape = circle, draw] {};
    \node[vertex] (G-1) at (0.0, 1) [shape = circle, draw] {};
    \node[vertex] (G-3) at (3.0, 1) [shape = circle, draw] {};
    \node[vertex] (G--2) at (1.5, -1) [shape = circle, draw] {};
    \node[vertex] (G-2) at (1.5, 1) [shape = circle, draw] {};
    \draw[] (G-1) .. controls +(0.6, -0.6) and +(-0.6, -0.6) .. (G-3);
    \draw[] (G--3) .. controls +(-0.6, 0.6) and +(0.6, 0.6) .. (G--1);
    \draw[] (G--1) .. controls +(0, 1) and +(0, -1) .. (G-1);
    \draw[] (G-2) .. controls +(0, -1) and +(0, 1) .. (G--2);
    \end{tikzpicture},
    \
    \begin{tikzpicture}[scale=0.8, partition-diagram]
%\tikzstyle{vertex} = [shape = circle, minimum size = 3pt, inner sep = 1pt]
    \node[vertex] (G--3) at (3.0, -1) [shape = circle, draw] {};
    \node[vertex] (G--1) at (0.0, -1) [shape = circle, draw] {};
    \node[vertex] (G-2) at (1.5, 1) [shape = circle, draw] {};
    \node[vertex] (G-3) at (3.0, 1) [shape = circle, draw] {};
    \node[vertex] (G--2) at (1.5, -1) [shape = circle, draw] {};
    \node[vertex] (G-1) at (0.0, 1) [shape = circle, draw] {};
    \draw[] (G-2) .. controls +(0.5, -0.5) and +(-0.5, -0.5) .. (G-3);
    \draw[] (G-3) .. controls +(0, -1) and +(0, 1) .. (G--3);
    \draw[] (G--3) .. controls +(-0.6, 0.6) and +(0.6, 0.6) .. (G--1);
    \draw[] (G-1) .. controls +(0.75, -1) and +(-0.75, 1) .. (G--2);
    \end{tikzpicture},
    \
    \begin{tikzpicture}[scale=0.8, partition-diagram]
%\tikzstyle{vertex} = [shape = circle, minimum size = 3pt, inner sep = 1pt]
    \node[vertex] (G--3) at (3.0, -1) [shape = circle, draw] {};
    \node[vertex] (G--1) at (0.0, -1) [shape = circle, draw] {};
    \node[vertex] (G-1) at (0.0, 1) [shape = circle, draw] {};
    \node[vertex] (G-2) at (1.5, 1) [shape = circle, draw] {};
    \node[vertex] (G--2) at (1.5, -1) [shape = circle, draw] {};
    \node[vertex] (G-3) at (3.0, 1) [shape = circle, draw] {};
    \draw[] (G-1) .. controls +(0.5, -0.5) and +(-0.5, -0.5) .. (G-2);
    \draw[] (G--3) .. controls +(-0.6, 0.6) and +(0.6, 0.6) .. (G--1);
    \draw[] (G--1) .. controls +(0, 1) and +(0, -1) .. (G-1);
    \draw[] (G-3) .. controls +(-0.75, -1) and +(0.75, 1) .. (G--2);
    \end{tikzpicture},
    \
    \begin{tikzpicture}[scale=0.8, partition-diagram]
%\tikzstyle{vertex} = [shape = circle, minimum size = 3pt, inner sep = 1pt]
    \node[vertex] (G--3) at (3.0, -1) [shape = circle, draw] {};
    \node[vertex] (G--2) at (1.5, -1) [shape = circle, draw] {};
    \node[vertex] (G-1) at (0.0, 1) [shape = circle, draw] {};
    \node[vertex] (G-2) at (1.5, 1) [shape = circle, draw] {};
    \node[vertex] (G--1) at (0.0, -1) [shape = circle, draw] {};
    \node[vertex] (G-3) at (3.0, 1) [shape = circle, draw] {};
    \draw[] (G-1) .. controls +(0.5, -0.5) and +(-0.5, -0.5) .. (G-2);
    \draw[] (G--3) .. controls +(-0.5, 0.5) and +(0.5, 0.5) .. (G--2);
    %\draw[] (G--2) .. controls +(-0.75, 1) and +(0.75, -1) .. (G-1);
    \draw[] (G--2) -- (G-2);
    \draw[] (G-3) .. controls +(-1, -1) and +(1, 1) .. (G--1);
    \end{tikzpicture},
    \
    \begin{tikzpicture}[scale=0.8, partition-diagram]
%\tikzstyle{vertex} = [shape = circle, minimum size = 3pt, inner sep = 1pt]
    \node[vertex] (G--3) at (3.0, -1) [shape = circle, draw] {};
    \node[vertex] (G-2) at (1.5, 1) [shape = circle, draw] {};
    \node[vertex] (G--2) at (1.5, -1) [shape = circle, draw] {};
    \node[vertex] (G--1) at (0.0, -1) [shape = circle, draw] {};
    \node[vertex] (G-1) at (0.0, 1) [shape = circle, draw] {};
    \node[vertex] (G-3) at (3.0, 1) [shape = circle, draw] {};
    \draw[] (G-2) .. controls +(0.75, -1) and +(-0.75, 1) .. (G--3);
    \draw[] (G-1) .. controls +(0.6, -0.6) and +(-0.6, -0.6) .. (G-3);
    \draw[] (G--2) .. controls +(-0.5, 0.5) and +(0.5, 0.5) .. (G--1);
    \draw[] (G--1) .. controls +(0, 1) and +(0, -1) .. (G-1);
    \end{tikzpicture},
    \
    \begin{tikzpicture}[scale=0.8, partition-diagram]
%\tikzstyle{vertex} = [shape = circle, minimum size = 3pt, inner sep = 1pt]
    \node[vertex] (G--3) at (3.0, -1) [shape = circle, draw] {};
    \node[vertex] (G-1) at (0.0, 1) [shape = circle, draw] {};
    \node[vertex] (G--2) at (1.5, -1) [shape = circle, draw] {};
    \node[vertex] (G--1) at (0.0, -1) [shape = circle, draw] {};
    \node[vertex] (G-2) at (1.5, 1) [shape = circle, draw] {};
    \node[vertex] (G-3) at (3.0, 1) [shape = circle, draw] {};
    \draw[] (G-1) .. controls +(1, -1) and +(-1, 1) .. (G--3);
    \draw[] (G-2) .. controls +(0.5, -0.5) and +(-0.5, -0.5) .. (G-3);
    \draw[] (G--2) .. controls +(-0.5, 0.5) and +(0.5, 0.5) .. (G--1);
    %\draw[] (G--1) .. controls +(0.75, 1) and +(-0.75, -1) .. (G-2);
    \draw[] (G--2) -- (G-2);
    \end{tikzpicture},
    \
    \begin{tikzpicture}[scale=0.8, partition-diagram]
%\tikzstyle{vertex} = [shape = circle, minimum size = 3pt, inner sep = 1pt]
    \node[vertex] (G--3) at (3.0, -1) [shape = circle, draw] {};
    \node[vertex] (G-3) at (3.0, 1) [shape = circle, draw] {};
    \node[vertex] (G--2) at (1.5, -1) [shape = circle, draw] {};
    \node[vertex] (G--1) at (0.0, -1) [shape = circle, draw] {};
    \node[vertex] (G-1) at (0.0, 1) [shape = circle, draw] {};
    \node[vertex] (G-2) at (1.5, 1) [shape = circle, draw] {};
    \draw[] (G-3) .. controls +(0, -1) and +(0, 1) .. (G--3);
    \draw[] (G-1) .. controls +(0.5, -0.5) and +(-0.5, -0.5) .. (G-2);
    \draw[] (G--2) .. controls +(-0.5, 0.5) and +(0.5, 0.5) .. (G--1);
    \draw[] (G--1) .. controls +(0, 1) and +(0, -1) .. (G-1);
    \end{tikzpicture}
    \end{gathered}
    \right\}.
\end{split}
\end{gather*}
\end{example}

When constructing irreducible representations of $\mathcal{U}_k$, we need only one maximal subgroup for each $\mathscr{J}$-class.
It is useful to make this choice standard.
Recall that the $\mathscr{J}$-classes are indexed by partitions of $k$. Hence, if $\lambda = (1^{a_1}2^{a_2}\ldots k^{a_k})$, we define the
\defn{representative set partition} associated to $\lambda$ as
\begin{equation}
\label{equation.canonical}
    \pi_\lambda = \{\{1\}, \{2\}, \ldots, \{a_1\}, \{a_1+1, a_1+2\}, \ldots, \{a_1+2a_2-1, a_1+2a_2\}, \ldots \}.
\end{equation}
This is the set partition that uses $1, \ldots, a_1$ for blocks of size one, $a_1+1, \ldots, a_1+2a_2$ for blocks of size two, etc..
For example if $k=11$ and $\lambda = (1^42^23^1)$, then
\[
    \pi_\lambda = \{\{1\}, \{2\}, \{3\}, \{4\},\{5,6\},\{7,8\},\{9,10,11\}\}.
\]
In this case, we write $G_\lambda := G_{e_{\pi_\lambda}}$ and call it the
\defn{representative maximal subgroup} associated to $\lambda$.
Note that $\{G_\lambda : \lambda \vdash k\}$ is a set of maximal subgroups of
$\mathcal{U}_k$ with each subgroup associated with a distinct
$\mathscr{J}$-class of $\mathcal{U}_k$.

%%%%%%%%%%%%%%%%%%%%%%%%%%%%%%%%%%%%%%%%%%%%%%%%%%
\subsection{Irreducible representations of the maximal subgroups}
\label{sec:irrepsGe}
We now describe the irreducible representations of the maximal subgroups $G_{e_\pi}$.
From Proposition~\ref{prop:structGe}, we know that
$G_{e_\pi} = \mathfrak{S}_{\pi^{(1)}} \times \mathfrak{S}_{\pi^{(2)}}
\times \cdots \times \mathfrak{S}_{\pi^{(k)}}$,
where $\pi^{(i)}$ is the set of blocks of $\pi$ of size $i$.
Hence, each irreducible representation of $G_{e_\pi}$
is isomorphic to a tensor product of the form
$V_1 \otimes V_2 \otimes \cdots \otimes V_k$,
where $V_i$ is an irreducible representation of $\mathfrak{S}_{\pi^{(i)}}$
for all $1 \leqslant i \leqslant k$.

Recall that the irreducible representations of the symmetric group
$\mathfrak{S}_{\pi^{(i)}}$ are indexed by partitions of $|\pi^{(i)}|$
and admit the following combinatorial description.
As a vector space, the irreducible representation indexed by the partition
$\lambda^{(i)}$ is spanned by the set of standard tableaux with entries the
blocks in $\pi^{(i)}$. The action of $\mathfrak{S}_{\pi^{(i)}}$ is given by
permuting the entries of the tableaux; however, the result may not be
a standard tableaux, in which case one uses the Garnir relations to express the
result as a linear combination of standard tableaux. For details, see
\cite[Chapter~2]{Sagan.2001}.

For a sequence of partitions
$\vec{\lambda} = (\lambda^{(1)}, \lambda^{(2)}, \ldots, \lambda^{(k)})$
with $\lambda^{(i)} \vdash |\pi^{(i)}|$,
let
\begin{equation*}
    {\defncolor V_{G_{e_\pi}}^{\vec{\lambda}}} =
    V_{\mathfrak{S}_{\pi^{(1)}}}^{\lambda^{(1)}}
    \otimes \cdots \otimes
    V_{\mathfrak{S}_{\pi^{(k)}}}^{\lambda^{(k)}},
\end{equation*}
where $V_{\mathfrak{S}_{\pi^{(i)}}}^{\lambda^{(i)}}$ is the irreducible
representation of $\mathfrak{S}_{\pi^{(i)}}$ indexed by $\lambda^{(i)}$.
By the above discussion, this is an irreducible representation of $G_{e_\pi}$
and all the irreducible representations of $G_{e_\pi}$ are of this form.
In particular, the irreducible representations of $G_{e_\pi}$ are indexed by
$k$-tuples of partitions
$\vec{\lambda} = (\lambda^{(1)}, \lambda^{(2)}, \ldots, \lambda^{(k)})$ such that
$\lambda^{(i)} \vdash |\pi^{(i)}|$ and $\sum_{i=1}^k i |\pi^{(i)}| = k$.
This implies that $\vec{\lambda}\in I_k$ with $I_k$ as defined in Equation~\eqref{equation.Ik}.

The combinatorial descriptions of each of the irreducible representations in
the tensor product above combine to give a combinatorial description of the
irreducible representations of $G_{e_\pi}$. In order to state it, we need the
following definitions.

\begin{definition}
Let $\pi \vdash [k]$.
A \defn{$\pi$-tableau} $\mathbf{T}$ of shape
$\vec{\lambda} = (\lambda^{(1)}, \lambda^{(2)}, \ldots, \lambda^{(k)})$
is a
$k$-tuple of tableaux $(T^{(1)}, T^{(2)}, \ldots, T^{(k)})$,
where  $T^{(i)}$ is a standard tableau of shape $\lambda^{(i)} = |\lambda^{(i)}|$
filled with the blocks in $\pi$ of size $i$.
\end{definition}

As a vector space, the irreducible $G_{e_\pi}$-representation
$V^{\vec{\lambda}}_{G_{e_\pi}}$ is spanned by the set of $\pi$-tableaux
of shape $\vec{\lambda}$.
By Proposition~\ref{prop:structGe},
every $\tau\in G_{e_\pi}$ can be expressed uniquely as $\tau = \tau^{(1)}
\tau^{(2)} \cdots  \tau^{(k)}$ with $\tau^{(i)} \in \mathfrak{S}_{\pi^{(i)}}$.
Then the action of $\tau$ on a $\pi$-tableau is given by
\begin{equation*}
    \tau \cdot \mathbf{T} = ( \tau^{(1)}\cdot T^{(1)}, \ldots, \tau^{(k)}\cdot T^{(k)}),
\end{equation*}
where $\tau^{(i)} \cdot T^{(i)}$ is obtained by permuting the entries of the tableau $T^{(i)}$.
The result may not be a $\pi$-tableau since it may not be standard,
in which case we use the Garnir straightening relations on each component
to express the result as a linear combination of standard $\pi$-tableaux
(for details see \cite[Section 2.6]{Sagan.2001} or \cite{CarreLascouxLeclerc}).

\begin{example} \label{ex:maximalsubgp}
Let $\pi = \{ \{1\}, \{2\}, \{3,4\}, \{5, 6\}\}$, so that $\mathsf{type}(\pi) = (1^2 2^2)$. In this case,
\begin{gather*}
G_{e_\pi} =
    \left\{\begin{tikzpicture}[partition-diagram]
%\tikzstyle{vertex} = [shape = circle, minimum size = 4pt, inner sep = 1pt]
    \node[vertex] (G--3) at (2.4, -1) [shape = circle, draw] {};
    \node[vertex] (G-1) at (0.0, 1) [shape = circle, draw] {};
    \node[vertex] (G--2) at (1.2, -1) [shape = circle, draw] {};
    \node[vertex] (G-2) at (1.2, 1) [shape = circle, draw] {};
    \node[vertex] (G--1) at (0.0, -1) [shape = circle, draw] {};
    \node[vertex] (G-3) at (2.4, 1) [shape = circle, draw] {};
    \node[vertex] (G-4) at (3.6, 1) [shape = circle, draw] {};
    \node[vertex] (G--4) at (3.6,-1) [shape = circle, draw] {};
    \node[vertex] (G-5) at (4.8, 1) [shape = circle, draw] {};
    \node[vertex] (G--5) at (4.8,-1) [shape = circle, draw] {};
    \node[vertex] (G-6) at (6.0, 1) [shape = circle, draw] {};
    \node[vertex] (G--6) at (6.0,-1) [shape = circle, draw] {};
    \draw[] (G-1) .. controls +(0, -1) and +(0, 1) .. (G--1);
    \draw[] (G-2) .. controls +(0, -1) and +(0, 1) .. (G--2);
    \draw[] (G-3) .. controls +(0, -1) and +(0, 1) .. (G--3);
    \draw[] (G-5) .. controls +(0, -1) and +(0, 1) .. (G--5);
    \draw[] (G-3) .. controls +(.5, -.5) and +(-.5, -.5) .. (G-4);
    \draw[] (G-5) .. controls +(.5, -.5) and +(-.5, -.5) .. (G-6);
    \draw[] (G--3) .. controls +(.5, .5) and +(-.5, .5) .. (G--4);
    \draw[] (G--5) .. controls +(.5, .5) and +(-.5, .5) .. (G--6);
    \end{tikzpicture},\quad
    \,
    \begin{tikzpicture}[partition-diagram]
    \node[vertex] (G--3) at (2.4, -1) [shape = circle, draw] {};
    \node[vertex] (G-1) at (0.0, 1) [shape = circle, draw] {};
    \node[vertex] (G--2) at (1.2, -1) [shape = circle, draw] {};
    \node[vertex] (G-2) at (1.2, 1) [shape = circle, draw] {};
    \node[vertex] (G--1) at (0.0, -1) [shape = circle, draw] {};
    \node[vertex] (G-3) at (2.4, 1) [shape = circle, draw] {};
    \node[vertex] (G-4) at (3.6, 1) [shape = circle, draw] {};
    \node[vertex] (G--4) at (3.6,-1) [shape = circle, draw] {};
    \node[vertex] (G-5) at (4.8, 1) [shape = circle, draw] {};
    \node[vertex] (G--5) at (4.8,-1) [shape = circle, draw] {};
    \node[vertex] (G-6) at (6.0, 1) [shape = circle, draw] {};
    \node[vertex] (G--6) at (6.0,-1) [shape = circle, draw] {};
    \draw[] (G-1) .. controls +(0, -1) and +(0, 1) .. (G--1);
    \draw[] (G-2) .. controls +(0, -1) and +(0, 1) .. (G--2);
    \draw[] (G-4) .. controls +(0, -1) and +(0, 1) .. (G--5);
    \draw[] (G-5) .. controls +(0, -1) and +(0, 1) .. (G--4);
    \draw[] (G-3) .. controls +(.5, -.5) and +(-.5, -.5) .. (G-4);
    \draw[] (G-5) .. controls +(.5, -.5) and +(-.5, -.5) .. (G-6);
    \draw[] (G--3) .. controls +(.5, .5) and +(-.5, .5) .. (G--4);
    \draw[] (G--5) .. controls +(.5, .5) and +(-.5, .5) .. (G--6);
    \end{tikzpicture},\quad
    \,
    \begin{tikzpicture}[partition-diagram]
    \node[vertex] (G--3) at (2.4, -1) [shape = circle, draw] {};
    \node[vertex] (G-1) at (0.0, 1) [shape = circle, draw] {};
    \node[vertex] (G--2) at (1.2, -1) [shape = circle, draw] {};
    \node[vertex] (G-2) at (1.2, 1) [shape = circle, draw] {};
    \node[vertex] (G--1) at (0.0, -1) [shape = circle, draw] {};
    \node[vertex] (G-3) at (2.4, 1) [shape = circle, draw] {};
    \node[vertex] (G-4) at (3.6, 1) [shape = circle, draw] {};
    \node[vertex] (G--4) at (3.6,-1) [shape = circle, draw] {};
    \node[vertex] (G-5) at (4.8, 1) [shape = circle, draw] {};
    \node[vertex] (G--5) at (4.8,-1) [shape = circle, draw] {};
    \node[vertex] (G-6) at (6.0, 1) [shape = circle, draw] {};
    \node[vertex] (G--6) at (6.0,-1) [shape = circle, draw] {};
    \draw[] (G-1) .. controls +(0, -1) and +(0, 1) .. (G--2);
    \draw[] (G-2) .. controls +(0, -1) and +(0, 1) .. (G--1);
    \draw[] (G-3) .. controls +(0, -1) and +(0, 1) .. (G--3);
    \draw[] (G-5) .. controls +(0, -1) and +(0, 1) .. (G--5);
    \draw[] (G-3) .. controls +(.5, -.5) and +(-.5, -.5) .. (G-4);
    \draw[] (G-5) .. controls +(.5, -.5) and +(-.5, -.5) .. (G-6);
    \draw[] (G--3) .. controls +(.5, .5) and +(-.5, .5) .. (G--4);
    \draw[] (G--5) .. controls +(.5, .5) and +(-.5, .5) .. (G--6);
    \end{tikzpicture},\quad
    \,
    \begin{tikzpicture}[partition-diagram]
    \node[vertex] (G--3) at (2.4, -1) [shape = circle, draw] {};
    \node[vertex] (G-1) at (0.0, 1) [shape = circle, draw] {};
    \node[vertex] (G--2) at (1.2, -1) [shape = circle, draw] {};
    \node[vertex] (G-2) at (1.2, 1) [shape = circle, draw] {};
    \node[vertex] (G--1) at (0.0, -1) [shape = circle, draw] {};
    \node[vertex] (G-3) at (2.4, 1) [shape = circle, draw] {};
    \node[vertex] (G-4) at (3.6, 1) [shape = circle, draw] {};
    \node[vertex] (G--4) at (3.6,-1) [shape = circle, draw] {};
    \node[vertex] (G-5) at (4.8, 1) [shape = circle, draw] {};
    \node[vertex] (G--5) at (4.8,-1) [shape = circle, draw] {};
    \node[vertex] (G-6) at (6.0, 1) [shape = circle, draw] {};
    \node[vertex] (G--6) at (6.0,-1) [shape = circle, draw] {};
    \draw[] (G-1) .. controls +(0, -1) and +(0, 1) .. (G--2);
    \draw[] (G-2) .. controls +(0, -1) and +(0, 1) .. (G--1);
    \draw[] (G-4) .. controls +(0, -1) and +(0, 1) .. (G--5);
    \draw[] (G-5) .. controls +(0, -1) and +(0, 1) .. (G--4);
    \draw[] (G-3) .. controls +(.5, -.5) and +(-.5, -.5) .. (G-4);
    \draw[] (G-5) .. controls +(.5, -.5) and +(-.5, -.5) .. (G-6);
    \draw[] (G--3) .. controls +(.5, .5) and +(-.5, .5) .. (G--4);
    \draw[] (G--5) .. controls +(.5, .5) and +(-.5, .5) .. (G--6);
    \end{tikzpicture}    \right\}.
\end{gather*}
There are four irreducible representations of $G_{e_\pi}$, which are all one dimensional:
\begin{align*}
    V^{((1,1), (1,1))}_{G_{e_\pi}} \text{ with basis } & \left\{ \left(\raisebox{-.1in}{\tikztableausmall{2,1}, \tikztableausmall{56,34}} \right)
\right\},  \\
    V^{((2), (1,1))}_{G_{e_\pi}} \text{ with basis } & \left\{ \left(\raisebox{-.1in}{\tikztableausmall{{1,2}}, \tikztableausmall{56,34}} \right)
\right\} ,\\
    V^{((1,1), (2))}_{G_{e_\pi}} \text{ with basis } & \left\{ \left(\raisebox{-.1in}{\tikztableausmall{2,1}, \tikztableausmall{{34,56}}} \right)
\right\},\\
    V^{((2), (2))}_{G_{e_\pi}} \text{ with basis } & \left\{ \left(\raisebox{-.1in}{\tikztableausmall{{1,2}}, \tikztableausmall{{34,56}}} \right)
\right\}.
\end{align*}
\end{example}

%%%%%%%%%%%%%%%%%%%%%%%%%%%%%%%%%%%%%%%%%%%%%%%%%%
\subsection{$\mathscr{L}$-classes and Sch\"utzenberger representations}
\label{sec:Schutzenberger-representations}

For each idempotent $e \in \mathcal{U}_k$, we define a $(\mathcal{U}_k, G_e)$-bimodule
$\CC{L_e}$ that is known in the semigroup theory literature as
the \defn{left Sch\"utzenberger representation associated with $e$}.
The  Sch\"utzenberger representations will be used in Section~\ref{sec:irreps-of-UBP-algebra} to construct
irreducible $\mathcal{U}_k$-representations from irreducible $G_e$-representations.
As a vector space, $\CC{L_e}$ is spanned by the elements of the $\mathscr{L}$-class
of $e$, so we begin by studying the $\mathscr{L}$-classes of $\mathcal{U}_k$.

Let $x$ and $y$ be elements of a monoid $M$.  We say that $x$ and $y$ are \defn{$\mathscr{L}$-equivalent} if $Mx = My$.
This is an equivalence relation; hence, it partitions $M$ into classes which are called the \defn{$\mathscr{L}$-classes} of $M$.
The $\mathscr{L}$-class of an element $x$ is denoted by \defn{$L_x$}.

\begin{prop} Let $k$ be a nonnegative integer.
\begin{enumerate}
    \item[(a)] Two elements $d_1,d_2\in \mathcal{U}_k$ are in the same $\mathscr{L}$-class if and only if $\mathsf{bot}(d_1) = \mathsf{bot}(d_2)$.
    \item[(b)] Every $\mathscr{L}$-class in $\mathcal{U}_k$ contains a unique idempotent.
    \item[(c)] The $\mathscr{L}$-classes of $\mathcal{U}_k$ are in bijection with the set partitions $\pi$ of $[k]$. More precisely,
    $${\defncolor L_\pi} := L_{e_\pi} = \left\{ d \in \mathcal{U}_k : \mathsf{bot}(d) = \pi\right\}.$$
    \item[(d)] For every $\lambda \vdash k$, the $\mathscr{J}$-class $J_\lambda$
        is a disjoint union of $\mathscr{L}$-classes.
        More precisely, $$J_\lambda = \biguplus_{\pi: \mathsf{type}(\pi) = \lambda} L_\pi.$$
\end{enumerate}
\end{prop}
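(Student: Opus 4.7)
The plan is to prove (a) first — everything else follows quickly — using the two main tools developed earlier: the inequality $\mathsf{bot}(dd') \geqslant \mathsf{bot}(d')$ in the refinement order (from the end of Section~\ref{sec:monoidstructure}), and the decomposition $d = \sigma\, e_{\mathsf{bot}(d)}$ from Proposition~\ref{prop:standard-decomposition}.

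For (a), if $d_1 \mathscr{L} d_2$ then there exist $x,y \in \mathcal{U}_k$ with $d_1 = x d_2$ and $d_2 = y d_1$. The coarsening inequality gives $\mathsf{bot}(d_1) \geqslant \mathsf{bot}(d_2) \geqslant \mathsf{bot}(d_1)$ in the refinement order, forcing $\mathsf{bot}(d_1) = \mathsf{bot}(d_2)$. Conversely, if $\mathsf{bot}(d_1) = \mathsf{bot}(d_2) = \pi$, write $d_i = \sigma_i e_\pi$ with $\sigma_i \in \mathfrak{S}_k$ using Proposition~\ref{prop:standard-decomposition}. Then $d_1 = (\sigma_1 \sigma_2^{-1}) d_2$ and $d_2 = (\sigma_2 \sigma_1^{-1}) d_1$, so $\mathcal{U}_k d_1 = \mathcal{U}_k d_2$.

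For (b), by (a) the $\mathscr{L}$-class containing $e_\pi$ is $\{d : \mathsf{bot}(d) = \pi\}$, and every $\mathscr{L}$-class has this form (for some $\pi = \mathsf{bot}(d)$), and hence contains the idempotent $e_\pi$. For uniqueness, any idempotent in $\mathcal{U}_k$ is of the form $e_\gamma$ for some $\gamma \vdash [k]$ by Lemma~\ref{lem:idempotents}, and $\mathsf{bot}(e_\gamma) = \gamma$, so the condition $\mathsf{bot}(e_\gamma) = \pi$ forces $\gamma = \pi$. Part (c) is then immediate: $L_\pi := L_{e_\pi} = \{d \in \mathcal{U}_k : \mathsf{bot}(d) = \mathsf{bot}(e_\pi)\} = \{d : \mathsf{bot}(d) = \pi\}$, and distinct $\pi$ index distinct $\mathscr{L}$-classes.

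For (d), I will combine (c) with Proposition~\ref{prop:charJclass}(b). Since every $d \in \mathcal{U}_k$ is a size-preserving bijection between $\mathsf{top}(d)$ and $\mathsf{bot}(d)$, we have $\mathsf{type}(\mathsf{top}(d)) = \mathsf{type}(\mathsf{bot}(d))$, and so $J_\lambda = \{d : \mathsf{type}(\mathsf{bot}(d)) = \lambda\}$. Partitioning this set according to $\mathsf{bot}(d)$ and applying (c) yields
\begin{equation*}
    J_\lambda
    = \bigcup_{\pi \,:\, \mathsf{type}(\pi) = \lambda} \{d : \mathsf{bot}(d) = \pi\}
    = \bigcup_{\pi \,:\, \mathsf{type}(\pi) = \lambda} L_\pi,
\end{equation*}
and the union is disjoint because distinct $\pi$ give distinct $\mathscr{L}$-classes by (c). There is no real obstacle here: the only subtle point is the coarsening inequality direction in (a), which one must verify points the right way in the refinement order (finer $=$ smaller, so coarser $=$ larger), after which the antisymmetry of the lattice order closes the argument.
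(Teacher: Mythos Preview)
Your proof is correct and follows essentially the same approach as the paper: both use the factorization $d = \sigma e_{\mathsf{bot}(d)}$ from Proposition~\ref{prop:standard-decomposition} together with the coarsening inequality $\mathsf{bot}(dd') \geqslant \mathsf{bot}(d')$ to establish (a), after which (b)--(d) follow from Lemma~\ref{lem:idempotents} and Proposition~\ref{prop:charJclass}. The only cosmetic difference is that the paper packages (a) as the single identity $\mathcal{U}_k d = \mathcal{U}_k e_{\mathsf{bot}(d)}$, whereas you split it into two directions, and for (d) the paper invokes Proposition~\ref{prop:charJclass}(d) directly while you first observe $\mathsf{type}(\mathsf{top}(d)) = \mathsf{type}(\mathsf{bot}(d))$ --- both routes are equivalent.
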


\begin{proof}
(a) By Proposition~\ref{prop:standard-decomposition}, every element $d \in \mathcal{U}_k$ can be written as
$d = \sigma e_{\mathsf{bot}(d)}$ for some permutation $\sigma \in \mathfrak{S}_k$. Thus,
$\mathcal{U}_k d = \mathcal{U}_k \sigma e_{\mathsf{bot}(d)} = \mathcal{U}_k e_{\mathsf{bot}(d)}$. The last equality follows since $\sigma$
is invertible in $\mathcal{U}_k$. Therefore, if $m \in \mathcal{U}_k e_{\mathsf{bot}(d)}$, then $\mathsf{bot}(m)$ is coarser than or equal to
$\mathsf{bot}(d)$. Hence, $d_1,d_2$ are in the same $\mathscr{L}$-class if and only if $\mathsf{bot}(d_1)= \mathsf{bot}(d_2)$.

(b) From part (a) we have that an $\mathscr{L}$-class $L$ contains elements that have the same set partition $\pi$ as the bottom row. Since there is a
unique idempotent, namely $e_\pi$, that has bottom row $\pi$, the result follows.

(c) Since every $\mathscr{L}$-class contains a unique idempotent, the $\mathscr{L}$-classes are in bijection with the set partitions of $[k]$.

(d) By Proposition \ref{prop:charJclass} (d),  $J_\lambda =\{ \sigma e_\pi \tau : \sigma, \tau \in \mathfrak{S}_k \text{ and } \mathsf{type}(\pi) = \lambda\}$, while
$L_\pi = \{\sigma e_\pi : \sigma \in \mathfrak{S}_k \}$. Thus, the result follows.
\end{proof}

\begin{example}
There are five $\mathscr{L}$-classes for $\mathcal{U}_3$:
\begin{align*}
  L_{1|2|3} &=  \left\{\begin{tikzpicture}[partition-diagram]
    \node[vertex] (G--3) at (3.0, -1) [shape = circle, draw] {};
    \node[vertex] (G-1) at (0.0, 1) [shape = circle, draw] {};
    \node[vertex] (G--2) at (1.5, -1) [shape = circle, draw] {};
    \node[vertex] (G-2) at (1.5, 1) [shape = circle, draw] {};
    \node[vertex] (G--1) at (0.0, -1) [shape = circle, draw] {};
    \node[vertex] (G-3) at (3.0, 1) [shape = circle, draw] {};
    \draw[] (G-1) .. controls +(1, -1) and +(-1, 1) .. (G--3);
    \draw[] (G-2) .. controls +(0, -1) and +(0, 1) .. (G--2);
    \draw[] (G-3) .. controls +(-1, -1) and +(1, 1) .. (G--1);
    \end{tikzpicture},
    \quad
    \begin{tikzpicture}[partition-diagram, idempotent]
    \node[vertex] (G--3) at (3.0, -1) [shape = circle, draw] {};
    \node[vertex] (G-3) at (3.0, 1) [shape = circle, draw] {};
    \node[vertex] (G--2) at (1.5, -1) [shape = circle, draw] {};
    \node[vertex] (G-2) at (1.5, 1) [shape = circle, draw] {};
    \node[vertex] (G--1) at (0.0, -1) [shape = circle, draw] {};
    \node[vertex] (G-1) at (0.0, 1) [shape = circle, draw] {};
    \draw[] (G-3) .. controls +(0, -1) and +(0, 1) .. (G--3);
    \draw[] (G-2) .. controls +(0, -1) and +(0, 1) .. (G--2);
    \draw[] (G-1) .. controls +(0, -1) and +(0, 1) .. (G--1);
    \end{tikzpicture},
    \quad
    \begin{tikzpicture}[partition-diagram]
    \node[vertex] (G--3) at (3.0, -1) [shape = circle, draw] {};
    \node[vertex] (G-1) at (0.0, 1) [shape = circle, draw] {};
    \node[vertex] (G--2) at (1.5, -1) [shape = circle, draw] {};
    \node[vertex] (G-3) at (3.0, 1) [shape = circle, draw] {};
    \node[vertex] (G--1) at (0.0, -1) [shape = circle, draw] {};
    \node[vertex] (G-2) at (1.5, 1) [shape = circle, draw] {};
    \draw[] (G-1) .. controls +(1, -1) and +(-1, 1) .. (G--3);
    \draw[] (G-3) .. controls +(-0.75, -1) and +(0.75, 1) .. (G--2);
    \draw[] (G-2) .. controls +(-0.75, -1) and +(0.75, 1) .. (G--1);
    \end{tikzpicture},
    \quad
    \begin{tikzpicture}[partition-diagram]
    \node[vertex] (G--3) at (3.0, -1) [shape = circle, draw] {};
    \node[vertex] (G-3) at (3.0, 1) [shape = circle, draw] {};
    \node[vertex] (G--2) at (1.5, -1) [shape = circle, draw] {};
    \node[vertex] (G-1) at (0.0, 1) [shape = circle, draw] {};
    \node[vertex] (G--1) at (0.0, -1) [shape = circle, draw] {};
    \node[vertex] (G-2) at (1.5, 1) [shape = circle, draw] {};
    \draw[] (G-3) .. controls +(0, -1) and +(0, 1) .. (G--3);
    \draw[] (G-1) .. controls +(0.75, -1) and +(-0.75, 1) .. (G--2);
    \draw[] (G-2) .. controls +(-0.75, -1) and +(0.75, 1) .. (G--1);
    \end{tikzpicture},
    \quad
    \begin{tikzpicture}[partition-diagram]
    \node[vertex] (G--3) at (3.0, -1) [shape = circle, draw] {};
    \node[vertex] (G-2) at (1.5, 1) [shape = circle, draw] {};
    \node[vertex] (G--2) at (1.5, -1) [shape = circle, draw] {};
    \node[vertex] (G-3) at (3.0, 1) [shape = circle, draw] {};
    \node[vertex] (G--1) at (0.0, -1) [shape = circle, draw] {};
    \node[vertex] (G-1) at (0.0, 1) [shape = circle, draw] {};
    \draw[] (G-2) .. controls +(0.75, -1) and +(-0.75, 1) .. (G--3);
    \draw[] (G-3) .. controls +(-0.75, -1) and +(0.75, 1) .. (G--2);
    \draw[] (G-1) .. controls +(0, -1) and +(0, 1) .. (G--1);
    \end{tikzpicture},
    \quad
    \begin{tikzpicture}[partition-diagram]
    \node[vertex] (G--3) at (3.0, -1) [shape = circle, draw] {};
    \node[vertex] (G-2) at (1.5, 1) [shape = circle, draw] {};
    \node[vertex] (G--2) at (1.5, -1) [shape = circle, draw] {};
    \node[vertex] (G-1) at (0.0, 1) [shape = circle, draw] {};
    \node[vertex] (G--1) at (0.0, -1) [shape = circle, draw] {};
    \node[vertex] (G-3) at (3.0, 1) [shape = circle, draw] {};
    \draw[] (G-2) .. controls +(0.75, -1) and +(-0.75, 1) .. (G--3);
    \draw[] (G-1) .. controls +(0.75, -1) and +(-0.75, 1) .. (G--2);
    \draw[] (G-3) .. controls +(-1, -1) and +(1, 1) .. (G--1);
    \end{tikzpicture}\right\},
    \\   L_{12|3} &=
    \left\{\begin{tikzpicture}[partition-diagram]
    \node[vertex] (G--3) at (3.0, -1) [shape = circle, draw] {};
    \node[vertex] (G-1) at (0.0, 1) [shape = circle, draw] {};
    \node[vertex] (G--2) at (1.5, -1) [shape = circle, draw] {};
    \node[vertex] (G--1) at (0.0, -1) [shape = circle, draw] {};
    \node[vertex] (G-2) at (1.5, 1) [shape = circle, draw] {};
    \node[vertex] (G-3) at (3.0, 1) [shape = circle, draw] {};
    \draw[] (G-1) .. controls +(1, -1) and +(-1, 1) .. (G--3);
    \draw[] (G-2) .. controls +(0.5, -0.5) and +(-0.5, -0.5) .. (G-3);
    %\draw[] (G-3) .. controls +(-0.75, -1) and +(0.75, 1) .. (G--2);
    \draw[] (G--2) .. controls +(-0.5, 0.5) and +(0.5, 0.5) .. (G--1);
    %\draw[] (G--1) .. controls +(0.75, 1) and +(-0.75, -1) .. (G-2);
    \draw[] (G--2) -- (G-2);
    \end{tikzpicture}
    \quad
    \begin{tikzpicture}[partition-diagram]
    \node[vertex] (G--3) at (3.0, -1) [shape = circle, draw] {};
    \node[vertex] (G-2) at (1.5, 1) [shape = circle, draw] {};
    \node[vertex] (G--2) at (1.5, -1) [shape = circle, draw] {};
    \node[vertex] (G--1) at (0.0, -1) [shape = circle, draw] {};
    \node[vertex] (G-1) at (0.0, 1) [shape = circle, draw] {};
    \node[vertex] (G-3) at (3.0, 1) [shape = circle, draw] {};
    \draw[] (G-2) .. controls +(0.75, -1) and +(-0.75, 1) .. (G--3);
    \draw[] (G-1) .. controls +(0.6, -0.6) and +(-0.6, -0.6) .. (G-3);
    %\draw[] (G-3) .. controls +(-0.75, -1) and +(0.75, 1) .. (G--2);
    \draw[] (G--2) .. controls +(-0.5, 0.5) and +(0.5, 0.5) .. (G--1);
    \draw[] (G--1) .. controls +(0, 1) and +(0, -1) .. (G-1);
    \end{tikzpicture}
    \quad
    \begin{tikzpicture}[partition-diagram, idempotent]
    \node[vertex] (G--3) at (3.0, -1) [shape = circle, draw] {};
    \node[vertex] (G-3) at (3.0, 1) [shape = circle, draw] {};
    \node[vertex] (G--2) at (1.5, -1) [shape = circle, draw] {};
    \node[vertex] (G--1) at (0.0, -1) [shape = circle, draw] {};
    \node[vertex] (G-1) at (0.0, 1) [shape = circle, draw] {};
    \node[vertex] (G-2) at (1.5, 1) [shape = circle, draw] {};
    \draw[] (G-3) .. controls +(0, -1) and +(0, 1) .. (G--3);
    \draw[] (G-1) .. controls +(0.5, -0.5) and +(-0.5, -0.5) .. (G-2);
    %\draw[] (G-2) .. controls +(0, -1) and +(0, 1) .. (G--2);
    \draw[] (G--2) .. controls +(-0.5, 0.5) and +(0.5, 0.5) .. (G--1);
    \draw[] (G--1) .. controls +(0, 1) and +(0, -1) .. (G-1);
    \end{tikzpicture}\right\},
    \\   L_{1|23} &=
    \left\{\begin{tikzpicture}[partition-diagram, idempotent]
    \node[vertex] (G--3) at (3.0, -1) [shape = circle, draw] {};
    \node[vertex] (G--2) at (1.5, -1) [shape = circle, draw] {};
    \node[vertex] (G-2) at (1.5, 1) [shape = circle, draw] {};
    \node[vertex] (G-3) at (3.0, 1) [shape = circle, draw] {};
    \node[vertex] (G--1) at (0.0, -1) [shape = circle, draw] {};
    \node[vertex] (G-1) at (0.0, 1) [shape = circle, draw] {};
    \draw[] (G-2) .. controls +(0.5, -0.5) and +(-0.5, -0.5) .. (G-3);
    %\draw[] (G-3) .. controls +(0, -1) and +(0, 1) .. (G--3);
    \draw[] (G--3) .. controls +(-0.5, 0.5) and +(0.5, 0.5) .. (G--2);
    \draw[] (G--2) .. controls +(0, 1) and +(0, -1) .. (G-2);
    \draw[] (G-1) .. controls +(0, -1) and +(0, 1) .. (G--1);
    \end{tikzpicture}
    \quad
    \begin{tikzpicture}[partition-diagram]
    \node[vertex] (G--3) at (3.0, -1) [shape = circle, draw] {};
    \node[vertex] (G--2) at (1.5, -1) [shape = circle, draw] {};
    \node[vertex] (G-1) at (0.0, 1) [shape = circle, draw] {};
    \node[vertex] (G-3) at (3.0, 1) [shape = circle, draw] {};
    \node[vertex] (G--1) at (0.0, -1) [shape = circle, draw] {};
    \node[vertex] (G-2) at (1.5, 1) [shape = circle, draw] {};
    \draw[] (G-1) .. controls +(0.6, -0.6) and +(-0.6, -0.6) .. (G-3);
    \draw[] (G-3) .. controls +(0, -1) and +(0, 1) .. (G--3);
    \draw[] (G--3) .. controls +(-0.5, 0.5) and +(0.5, 0.5) .. (G--2);
    %\draw[] (G--2) .. controls +(-0.75, 1) and +(0.75, -1) .. (G-1);
    \draw[] (G-2) .. controls +(-0.75, -1) and +(0.75, 1) .. (G--1);
    \end{tikzpicture}
    \quad
    \begin{tikzpicture}[partition-diagram]
    \node[vertex] (G--3) at (3.0, -1) [shape = circle, draw] {};
    \node[vertex] (G--2) at (1.5, -1) [shape = circle, draw] {};
    \node[vertex] (G-1) at (0.0, 1) [shape = circle, draw] {};
    \node[vertex] (G-2) at (1.5, 1) [shape = circle, draw] {};
    \node[vertex] (G--1) at (0.0, -1) [shape = circle, draw] {};
    \node[vertex] (G-3) at (3.0, 1) [shape = circle, draw] {};
    \draw[] (G-1) .. controls +(0.5, -0.5) and +(-0.5, -0.5) .. (G-2);
    %\draw[] (G-2) .. controls +(0.75, -1) and +(-0.75, 1) .. (G--3);
    \draw[] (G--3) .. controls +(-0.5, 0.5) and +(0.5, 0.5) .. (G--2);
    %\draw[] (G--2) .. controls +(-0.75, 1) and +(0.75, -1) .. (G-1);
    \draw[] (G--2) -- (G-2);
    \draw[] (G-3) .. controls +(-1, -1) and +(1, 1) .. (G--1);
    \end{tikzpicture}\right\},
    \\  L_{13|2} &=
    \left\{\begin{tikzpicture}[partition-diagram]
    \node[vertex] (G--3) at (3.0, -1) [shape = circle, draw] {};
    \node[vertex] (G--1) at (0.0, -1) [shape = circle, draw] {};
    \node[vertex] (G-1) at (0.0, 1) [shape = circle, draw] {};
    \node[vertex] (G-2) at (1.5, 1) [shape = circle, draw] {};
    \node[vertex] (G--2) at (1.5, -1) [shape = circle, draw] {};
    \node[vertex] (G-3) at (3.0, 1) [shape = circle, draw] {};
    \draw[] (G-1) .. controls +(0.5, -0.5) and +(-0.5, -0.5) .. (G-2);
    %\draw[] (G-2) .. controls +(0.75, -1) and +(-0.75, 1) .. (G--3);
    \draw[] (G--3) .. controls +(-0.6, 0.6) and +(0.6, 0.6) .. (G--1);
    \draw[] (G--1) .. controls +(0, 1) and +(0, -1) .. (G-1);
    \draw[] (G-3) .. controls +(-0.75, -1) and +(0.75, 1) .. (G--2);
    \end{tikzpicture}
    \quad
    \begin{tikzpicture}[partition-diagram, idempotent]
    \node[vertex] (G--3) at (3.0, -1) [shape = circle, draw] {};
    \node[vertex] (G--1) at (0.0, -1) [shape = circle, draw] {};
    \node[vertex] (G-1) at (0.0, 1) [shape = circle, draw] {};
    \node[vertex] (G-3) at (3.0, 1) [shape = circle, draw] {};
    \node[vertex] (G--2) at (1.5, -1) [shape = circle, draw] {};
    \node[vertex] (G-2) at (1.5, 1) [shape = circle, draw] {};
    \draw[] (G-1) .. controls +(0.6, -0.6) and +(-0.6, -0.6) .. (G-3);
    %\draw[] (G-3) .. controls +(0, -1) and +(0, 1) .. (G--3);
    \draw[] (G--3) .. controls +(-0.6, 0.6) and +(0.6, 0.6) .. (G--1);
    \draw[] (G--1) .. controls +(0, 1) and +(0, -1) .. (G-1);
    \draw[] (G-2) .. controls +(0, -1) and +(0, 1) .. (G--2);
    \end{tikzpicture}
    \quad
    \begin{tikzpicture}[partition-diagram]
    \node[vertex] (G--3) at (3.0, -1) [shape = circle, draw] {};
    \node[vertex] (G--1) at (0.0, -1) [shape = circle, draw] {};
    \node[vertex] (G-2) at (1.5, 1) [shape = circle, draw] {};
    \node[vertex] (G-3) at (3.0, 1) [shape = circle, draw] {};
    \node[vertex] (G--2) at (1.5, -1) [shape = circle, draw] {};
    \node[vertex] (G-1) at (0.0, 1) [shape = circle, draw] {};
    \draw[] (G-2) .. controls +(0.5, -0.5) and +(-0.5, -0.5) .. (G-3);
    \draw[] (G-3) .. controls +(0, -1) and +(0, 1) .. (G--3);
    \draw[] (G--3) .. controls +(-0.6, 0.6) and +(0.6, 0.6) .. (G--1);
    %\draw[] (G--1) .. controls +(0.75, 1) and +(-0.75, -1) .. (G-2);
    \draw[] (G-1) .. controls +(0.75, -1) and +(-0.75, 1) .. (G--2);
    \end{tikzpicture}\right\},
    \\  L_{123} &=
    \left\{\begin{tikzpicture}[partition-diagram, idempotent]
    \node[vertex] (G--3) at (3.0, -1) [shape = circle, draw] {};
    \node[vertex] (G--2) at (1.5, -1) [shape = circle, draw] {};
    \node[vertex] (G--1) at (0.0, -1) [shape = circle, draw] {};
    \node[vertex] (G-1) at (0.0, 1) [shape = circle, draw] {};
    \node[vertex] (G-2) at (1.5, 1) [shape = circle, draw] {};
    \node[vertex] (G-3) at (3.0, 1) [shape = circle, draw] {};
    \draw[] (G-1) .. controls +(0.5, -0.5) and +(-0.5, -0.5) .. (G-2);
    \draw[] (G-2) .. controls +(0.5, -0.5) and +(-0.5, -0.5) .. (G-3);
    %\draw[] (G-3) .. controls +(0, -1) and +(0, 1) .. (G--3);
    \draw[] (G--3) .. controls +(-0.5, 0.5) and +(0.5, 0.5) .. (G--2);
    \draw[] (G--2) .. controls +(-0.5, 0.5) and +(0.5, 0.5) .. (G--1);
    \draw[] (G--1) .. controls +(0, 1) and +(0, -1) .. (G-1);
    \end{tikzpicture}\right\}.
\end{align*}
\end{example}

For any $\pi \vdash [k]$, let $\CC{L_\pi}$ be the vector space with basis the
elements of the $\mathscr{L}$-class $L_\pi$.
It has a \defn{left $\mathcal{U}_k$-action} defined by
\begin{equation}
    \label{eq:odot}
    m \odot \ell=\begin{cases}
    m\ell, & \text{if } m\ell \in L_\pi,  \\
    0, & \text{else,}  \end{cases}
\end{equation}
for all $m\in \mathcal{U}_k$ and $\ell \in L_\pi$, which is then extended $\CC$-linearly to all of $\CC{L_\pi}$.
The nonzero products $m \odot \ell$ can be characterized as follows.

\begin{lemma}\label{lem:UkactsL} Let $m\in \mathcal{U}_k$ and $\ell \in L_\pi$. Then
$m\odot \ell \neq 0$ if and only if $\mathsf{bot}(m)$ is finer than $\mathsf{top}(\ell)$.
\end{lemma}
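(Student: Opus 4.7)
My plan is to unpack the definition of $\odot$ and then analyze exactly how the bottom partition of a product is determined by the stacking construction.

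First I would observe that, since $L_\pi = \{d \in \mathcal{U}_k : \mathsf{bot}(d) = \pi\}$, the condition $m \odot \ell \neq 0$ simply means $\mathsf{bot}(m\ell) = \pi = \mathsf{bot}(\ell)$. From the general observation in Section~\ref{sec:monoidstructure} that $\mathsf{bot}(dd')$ is always coarser than or equal to $\mathsf{bot}(d')$, we know $\mathsf{bot}(m\ell) \geqslant \mathsf{bot}(\ell)$ in the refinement order; so the real content is to identify when the product produces no extra coarsening.

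Next I would analyze the three-row diagram obtained by stacking $m$ on top of $\ell$. The middle row carries two set partitions of its vertices: $\mathsf{bot}(m)$ from the edges of $m$, and $\mathsf{top}(\ell)$ from the edges of $\ell$. The connected components restricted to the middle row are precisely the blocks of the join $\mathsf{bot}(m) \vee \mathsf{top}(\ell)$ in the lattice $\Pi_k$. Passing to the bottom row via the bijection $\ell : \mathsf{top}(\ell) \to \mathsf{bot}(\ell)$ shows that the blocks of $\mathsf{bot}(m\ell)$ are obtained from those of $\mathsf{bot}(\ell) = \pi$ by amalgamating any two blocks whose corresponding blocks of $\mathsf{top}(\ell)$ are linked by $\mathsf{bot}(m) \vee \mathsf{top}(\ell)$. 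Hence $\mathsf{bot}(m\ell) = \pi$ if and only if $\mathsf{bot}(m) \vee \mathsf{top}(\ell) = \mathsf{top}(\ell)$, which in turn is equivalent to $\mathsf{bot}(m) \leqslant \mathsf{top}(\ell)$, i.e., $\mathsf{bot}(m)$ is finer than $\mathsf{top}(\ell)$.

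To finish, I would confirm both directions cleanly: if $\mathsf{bot}(m)$ is finer than $\mathsf{top}(\ell)$, every block of $\mathsf{bot}(m)$ is contained in some block of $\mathsf{top}(\ell)$, so stacking creates no new identifications between distinct blocks of $\mathsf{top}(\ell)$, giving $\mathsf{bot}(m\ell) = \pi$; conversely, if $\mathsf{bot}(m) \not\leqslant \mathsf{top}(\ell)$, some block of $\mathsf{bot}(m)$ meets two distinct blocks of $\mathsf{top}(\ell)$, which merges their images under $\ell$ and strictly coarsens $\pi$. The main (mild) obstacle is making the join description of the middle row precise enough that the translation to $\mathsf{bot}(m\ell)$ via the block-bijection $\ell$ is unambiguous; once that is in place, both implications are immediate. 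If a cleaner argument is desired, one can instead write $m = \sigma e_{\mathsf{bot}(m)}$ and $\ell = e_{\mathsf{top}(\ell)}\tau$ using Proposition~\ref{prop:standard-decomposition}, apply Lemma~\ref{lem:eproperties}(b),(c) to reduce to the product $e_{\mathsf{bot}(m)} \, e_{\sigma(\mathsf{top}(\ell))} = e_{\mathsf{bot}(m) \vee \sigma(\mathsf{top}(\ell))}$ from Lemma~\ref{lem:idempotents}, and read off the same equivalence.
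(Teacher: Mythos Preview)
The paper states this lemma without proof, so there is no argument to compare against; your main line is correct and supplies exactly the details the paper omits. Analyzing the middle row of the stacked diagram as carrying the join $\mathsf{bot}(m)\vee\mathsf{top}(\ell)$ and concluding that $\mathsf{bot}(m\ell)=\pi$ iff this join equals $\mathsf{top}(\ell)$ is the right computation.

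One small slip in your alternative sketch: with $m=\sigma e_{\mathsf{bot}(m)}$ and $\ell=e_{\mathsf{top}(\ell)}\tau$, the relevant idempotent product is $e_{\mathsf{bot}(m)}\,e_{\mathsf{top}(\ell)}=e_{\mathsf{bot}(m)\vee\mathsf{top}(\ell)}$ (no $\sigma$ in the subscript). Then $m\ell=\sigma\,e_{\mathsf{bot}(m)\vee\mathsf{top}(\ell)}\,\tau$, so by Lemma~\ref{lem:eproperties}(b) one has $\mathsf{bot}(m\ell)=\tau(\mathsf{bot}(m)\vee\mathsf{top}(\ell))$, which equals $\pi=\tau(\mathsf{top}(\ell))$ iff $\mathsf{bot}(m)\leqslant\mathsf{top}(\ell)$. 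This matches your main argument.
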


The \defn{right $G_{e_\pi}$-action} on $\CC{L_\pi}$ is extended $\CC$-linearly from the
action of $G_{e_\pi}$ on $L_\pi$ given by right multiplication.
Although it is true for any finite monoid that the maximal subgroup of an
idempotent $e$ acts by right multiplication on the $\mathscr{L}$-class of $e$
\cite[Proposition~1.10]{Steinberg.2016},
below we provide a proof that is specific to $\mathcal{U}_k$
and we identify the orbits of this action.

\begin{prop}\label{prop:orbitreps}
Let $\pi$ be a set partition of $[k]$.
\begin{enumerate}[label=(\alph*)]
    \item
        $G_{e_\pi}$ acts by right multiplication on $L_\pi$
        and this right action is free. In other words,
        \begin{itemize}
            \item
                if $\ell \in L_\pi$ and $g \in G_{e_\pi}$, then $\ell g \in L_\pi$; and
            \item
                if $\ell \in L_\pi$ and $\ell g = \ell h$ for some $g, h \in G_{e_\pi}$, then $g = h$.
        \end{itemize}

    \item
        $d_1, d_2\in L_\pi$ are in the same $G_{e_\pi}$-orbit if and only if $\mathsf{top}(d_1) = \mathsf{top}(d_2)$.

    \item
        For every set partition $\gamma\vdash [k]$ such that $\mathsf{type}(\gamma) = \mathsf{type}(\pi)$,
        $$L_{\pi}^\gamma = \{d \in \mathcal{U}_k : \mathsf{top}(d) = \gamma \text{ and } \mathsf{bot}(d) = \pi\}$$
        is an orbit for the right $G_{e_\pi}$-action on $L_\pi$, and all the orbits are of this form.
        Thus, the $G_{e_\pi}$-orbits in $L_\pi$ are in bijection with the set partitions $\gamma$ of $\mathsf{type}(\pi)$.
\end{enumerate}
\end{prop}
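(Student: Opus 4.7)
The plan is to exploit the size-preserving bijection interpretation from Section~\ref{sssection:bijection} together with Remark~\ref{remark:composition-and-product}, which makes the product of two diagrams transparent whenever the bottom of one factor equals the top of the next (as set partitions): no blocks merge in the middle row, and the resulting diagram is simply the composition of the two bijections. This ``no merging'' observation is the single technical point I expect to do real work; everything else is bookkeeping via Lemma~\ref{lem:eproperties} and the characterization of $G_{e_\pi}$ in Lemma~\ref{lemma:maximal-subgroups}.

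For (a), I would first note that if $\ell \in L_\pi$ then $\mathsf{bot}(\ell) = \pi$, and if $g \in G_{e_\pi}$ then $\mathsf{top}(g) = \mathsf{bot}(g) = \pi$ by Lemma~\ref{lemma:maximal-subgroups}. Since $\mathsf{bot}(\ell) = \mathsf{top}(g)$ as set partitions, the bijection associated with $\ell g$ is the composition $\mathsf{top}(\ell) \xrightarrow{\ell} \pi \xrightarrow{g} \pi$, so $\mathsf{top}(\ell g) = \mathsf{top}(\ell)$ and $\mathsf{bot}(\ell g) = \pi$; hence $\ell g \in L_\pi$. For freeness, assume $\ell g = \ell h$ with $g,h \in G_{e_\pi}$. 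Multiplying on the left by $\widetilde{\ell}$ and using $\widetilde{\ell}\ell = e_{\mathsf{bot}(\ell)} = e_\pi$ (Remark~\ref{remark:composition-and-product}) gives $e_\pi g = e_\pi h$. But $\mathsf{top}(g) = \pi$, so Lemma~\ref{lem:eproperties}(d) yields $e_\pi g = g$ and likewise $e_\pi h = h$, forcing $g = h$.

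For (b), one direction is already contained in the computation above: for any $g \in G_{e_\pi}$ we have $\mathsf{top}(d_1 g) = \mathsf{top}(d_1)$. For the converse, suppose $d_1, d_2 \in L_\pi$ satisfy $\mathsf{top}(d_1) = \mathsf{top}(d_2) = \gamma$. Set $g := \widetilde{d_1} d_2$. Since $\mathsf{bot}(\widetilde{d_1}) = \mathsf{top}(d_1) = \gamma = \mathsf{top}(d_2)$, the same matching argument yields the bijection $\pi \xrightarrow{\widetilde{d_1}} \gamma \xrightarrow{d_2} \pi$, so $\mathsf{top}(g) = \mathsf{bot}(g) = \pi$ and Lemma~\ref{lemma:maximal-subgroups} places $g \in G_{e_\pi}$. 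Then
\[
    d_1 g = d_1 \widetilde{d_1} d_2 = e_\gamma d_2 = d_2,
\]
using $d_1\widetilde{d_1} = e_{\mathsf{top}(d_1)} = e_\gamma$ from Remark~\ref{remark:composition-and-product} and $e_\gamma d_2 = d_2$ from Lemma~\ref{lem:eproperties}(d).

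Part (c) then follows almost by inspection. By (b), the orbit containing a given $d \in L_\pi$ is precisely $\{d' \in L_\pi : \mathsf{top}(d') = \mathsf{top}(d)\}$, which is the set $L_\pi^\gamma$ with $\gamma = \mathsf{top}(d)$. Since every element of $\mathcal{U}_k$ is a size-preserving bijection between its top and bottom set partitions, $L_\pi^\gamma$ is nonempty precisely when $\mathsf{type}(\gamma) = \mathsf{type}(\pi)$; conversely, for any such $\gamma$ one can exhibit a diagram with top $\gamma$ and bottom $\pi$ by choosing any size-preserving bijection between their blocks, so every set partition $\gamma$ of type $\mathsf{type}(\pi)$ indexes one such orbit, proving the bijective description.
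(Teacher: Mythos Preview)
Your proof is correct and follows essentially the same approach as the paper's: both rely on the bijection interpretation from Section~\ref{sssection:bijection} and Remark~\ref{remark:composition-and-product}, and in (b) both set $g = \widetilde{d_1}d_2$. The only minor difference is in the freeness part of (a): the paper cancels $\ell$ directly by noting it is a bijection, whereas you multiply on the left by $\widetilde{\ell}$ and invoke Lemma~\ref{lem:eproperties}(d); both arguments are equally short and valid.
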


\begin{proof}
    (a) Let $\ell \in L_\pi$ and $g, h \in G_{e_\pi}$, and think of them as bijections
    as in Section~\ref{sssection:bijection}.
    Since $\mathsf{bot}(\ell) = \pi = \mathsf{top}(g)$,
    we have that $\ell g$ is the composition of $\ell$ and $g$ (see Remark~\ref{remark:composition-and-product}).
    Hence, $\mathsf{bot}(\ell g) = \mathsf{bot}(g) = \pi$ and so $\ell g \in L_\pi$.
    Similarly, $\ell h$ is the composition of $\ell$ and $h$. Thus,
    if $\ell g = \ell h$, then $g = h$ since $\ell$ is a bijection.

    (b) If $d_2 = d_1 g$ for some $g \in G_{e_\pi}$, then $\mathsf{top}(d_1)
    = \mathsf{top}(d_1 g) = \mathsf{top}(d_2)$ since multiplying on the right
    by an element in $G_{e_\pi}$ has no effect on the top row of the diagram of $d_1$.

    Conversely, if $d_1, d_2 \in L_\pi$ and $\mathsf{top}(d_1)
    = \mathsf{top}(d_2)$, then both $d_1$ and $d_2$ are
    bijections from $\mathsf{top}(d_1)$ to $\pi$.
    Since $\widetilde{d}_1 d_2$ is a size-preserving bijection
    from $\pi$ to itself, it is equal to an element $g \in G_{e_\pi}$.
    By composing $\widetilde{d}_1 d_2 = g$ on the right with $d_1$,
    we conclude that $d_2 = d_1 g$.

    (c) This follows directly from (b).
\end{proof}

\begin{example}
Let $\pi = 12|34$.
The $\mathscr{L}$-class $L_\pi$ contains 6 elements all with the same bottom row.
The group $G_{e_\pi}$ contains two elements,
the identity and the permutation of the two blocks.
Hence we obtain that $L_\pi$ decomposes into three $G_{e_\pi}$-orbits:
\begin{gather*}
    L_{12|34}^{13|24} =
    \left\{\begin{tikzpicture}[partition-diagram]
    \node[vertex] (G--4) at (4.5, -1) [shape = circle, draw] {};
    \node[vertex] (G--3) at (3.0, -1) [shape = circle, draw] {};
    \node[vertex] (G-1) at (0.0, 1) [shape = circle, draw] {};
    \node[vertex] (G-3) at (3.0, 1) [shape = circle, draw] {};
    \node[vertex] (G--2) at (1.5, -1) [shape = circle, draw] {};
    \node[vertex] (G--1) at (0.0, -1) [shape = circle, draw] {};
    \node[vertex] (G-2) at (1.5, 1) [shape = circle, draw] {};
    \node[vertex] (G-4) at (4.5, 1) [shape = circle, draw] {};
    \draw[] (G-1) .. controls +(0.6, -0.6) and +(-0.6, -0.6) .. (G-3);
    \draw[] (G--4) .. controls +(-0.5, 0.5) and +(0.5, 0.5) .. (G--3);
    %\draw[] (G--3) .. controls +(-1, 1) and +(1, -1) .. (G-1);
    \draw[] (G-2) .. controls +(0.6, -0.6) and +(-0.6, -0.6) .. (G-4);
    %\draw[] (G-4) .. controls +(-1, -1) and +(1, 1) .. (G--2);
    \draw[] (G--2) .. controls +(-0.5, 0.5) and +(0.5, 0.5) .. (G--1);
    %\draw[] (G-3) .. controls +(0.75, -1) and +(-0.75, 1) .. (G--4);
    %\draw[] (G--1) .. controls +(0.75, 1) and +(-0.75, -1) .. (G-2);
    \draw[] (G--2) -- (G-2);
    \draw[] (G--3) -- (G-3);
    \end{tikzpicture},
    \quad
    \begin{tikzpicture}[partition-diagram, representative]
    \node[vertex] (G--4) at (4.5, -1) [shape = circle, draw] {};
    \node[vertex] (G--3) at (3.0, -1) [shape = circle, draw] {};
    \node[vertex] (G-2) at (1.5, 1) [shape = circle, draw] {};
    \node[vertex] (G-4) at (4.5, 1) [shape = circle, draw] {};
    \node[vertex] (G--2) at (1.5, -1) [shape = circle, draw] {};
    \node[vertex] (G--1) at (0.0, -1) [shape = circle, draw] {};
    \node[vertex] (G-1) at (0.0, 1) [shape = circle, draw] {};
    \node[vertex] (G-3) at (3.0, 1) [shape = circle, draw] {};
    \draw[] (G-2) .. controls +(0.6, -0.6) and +(-0.6, -0.6) .. (G-4);
    \draw[] (G-4) .. controls +(0, -1) and +(0, 1) .. (G--4);
    \draw[] (G--4) .. controls +(-0.5, 0.5) and +(0.5, 0.5) .. (G--3);
    %\draw[] (G--3) .. controls +(-0.75, 1) and +(0.75, -1) .. (G-2);
    \draw[] (G-1) .. controls +(0.6, -0.6) and +(-0.6, -0.6) .. (G-3);
    %\draw[] (G-3) .. controls +(-0.75, -1) and +(0.75, 1) .. (G--2);
    \draw[] (G--2) .. controls +(-0.5, 0.5) and +(0.5, 0.5) .. (G--1);
    \draw[] (G--1) .. controls +(0, 1) and +(0, -1) .. (G-1);
    \end{tikzpicture}\right\},
    \\
    L_{12|34}^{23|14} =
    \left\{\begin{tikzpicture}[partition-diagram, representative]
    \node[vertex] (G--4) at (4.5, -1) [shape = circle, draw] {};
    \node[vertex] (G--3) at (3.0, -1) [shape = circle, draw] {};
    \node[vertex] (G-1) at (0.0, 1) [shape = circle, draw] {};
    \node[vertex] (G-4) at (4.5, 1) [shape = circle, draw] {};
    \node[vertex] (G--2) at (1.5, -1) [shape = circle, draw] {};
    \node[vertex] (G--1) at (0.0, -1) [shape = circle, draw] {};
    \node[vertex] (G-2) at (1.5, 1) [shape = circle, draw] {};
    \node[vertex] (G-3) at (3.0, 1) [shape = circle, draw] {};
    \draw[] (G-1) .. controls +(0.7, -0.7) and +(-0.7, -0.7) .. (G-4);
    \draw[] (G-4) .. controls +(0, -1) and +(0, 1) .. (G--4);
    \draw[] (G--4) .. controls +(-0.5, 0.5) and +(0.5, 0.5) .. (G--3);
    %\draw[] (G--3) .. controls +(-1, 1) and +(1, -1) .. (G-1);
    \draw[] (G-2) .. controls +(0.5, -0.5) and +(-0.5, -0.5) .. (G-3);
    %\draw[] (G-3) .. controls +(-0.75, -1) and +(0.75, 1) .. (G--2);
    \draw[] (G--2) .. controls +(-0.5, 0.5) and +(0.5, 0.5) .. (G--1);
    %\draw[] (G--1) .. controls +(0.75, 1) and +(-0.75, -1) .. (G-2);
    \draw[] (G--2) -- (G-2);
    \end{tikzpicture},
    \quad
    \begin{tikzpicture}[partition-diagram]
    \node[vertex] (G--4) at (4.5, -1) [shape = circle, draw] {};
    \node[vertex] (G--3) at (3.0, -1) [shape = circle, draw] {};
    \node[vertex] (G-2) at (1.5, 1) [shape = circle, draw] {};
    \node[vertex] (G-3) at (3.0, 1) [shape = circle, draw] {};
    \node[vertex] (G--2) at (1.5, -1) [shape = circle, draw] {};
    \node[vertex] (G--1) at (0.0, -1) [shape = circle, draw] {};
    \node[vertex] (G-1) at (0.0, 1) [shape = circle, draw] {};
    \node[vertex] (G-4) at (4.5, 1) [shape = circle, draw] {};
    \draw[] (G-2) .. controls +(0.5, -0.5) and +(-0.5, -0.5) .. (G-3);
    %\draw[] (G-3) .. controls +(0.75, -1) and +(-0.75, 1) .. (G--4);
    \draw[] (G--4) .. controls +(-0.5, 0.5) and +(0.5, 0.5) .. (G--3);
    %\draw[] (G--3) .. controls +(-0.75, 1) and +(0.75, -1) .. (G-2);
    \draw[] (G-1) .. controls +(0.7, -0.7) and +(-0.7, -0.7) .. (G-4);
    %\draw[] (G-4) .. controls +(-1, -1) and +(1, 1) .. (G--2);
    \draw[] (G--2) .. controls +(-0.5, 0.5) and +(0.5, 0.5) .. (G--1);
    \draw[] (G--1) .. controls +(0, 1) and +(0, -1) .. (G-1);
    \draw[] (G--3) .. controls +(0, 1) and +(0, -1) .. (G-3);
    \end{tikzpicture}\right\},
    \\
    L_{12|34}^{12|34} =
    \left\{\begin{tikzpicture}[partition-diagram, representative]    \node[vertex] (G--4) at (4.5, -1) [shape = circle, draw] {};     \node[vertex] (G--3) at (3.0, -1) [shape = circle, draw] {};
    \node[vertex] (G-3) at (3.0, 1) [shape = circle, draw] {};
    \node[vertex] (G-4) at (4.5, 1) [shape = circle, draw] {};
    \node[vertex] (G--2) at (1.5, -1) [shape = circle, draw] {};
    \node[vertex] (G--1) at (0.0, -1) [shape = circle, draw] {};
    \node[vertex] (G-1) at (0.0, 1) [shape = circle, draw] {};
    \node[vertex] (G-2) at (1.5, 1) [shape = circle, draw] {};
    \draw[] (G-3) .. controls +(0.5, -0.5) and +(-0.5, -0.5) .. (G-4);
    %\draw[] (G-4) .. controls +(0, -1) and +(0, 1) .. (G--4);
    \draw[] (G--4) .. controls +(-0.5, 0.5) and +(0.5, 0.5) .. (G--3);
    \draw[] (G--3) .. controls +(0, 1) and +(0, -1) .. (G-3);
    \draw[] (G-1) .. controls +(0.5, -0.5) and +(-0.5, -0.5) .. (G-2);
    %\draw[] (G-2) .. controls +(0, -1) and +(0, 1) .. (G--2);
    \draw[] (G--2) .. controls +(-0.5, 0.5) and +(0.5, 0.5) .. (G--1);
    \draw[] (G--1) .. controls +(0, 1) and +(0, -1) .. (G-1);
    \end{tikzpicture},
    \quad
    \begin{tikzpicture}[partition-diagram]
    \node[vertex] (G--4) at (4.5, -1) [shape = circle, draw] {};
    \node[vertex] (G--3) at (3.0, -1) [shape = circle, draw] {};
    \node[vertex] (G-1) at (0.0, 1) [shape = circle, draw] {};
    \node[vertex] (G-2) at (1.5, 1) [shape = circle, draw] {};
    \node[vertex] (G--2) at (1.5, -1) [shape = circle, draw] {};
    \node[vertex] (G--1) at (0.0, -1) [shape = circle, draw] {};
    \node[vertex] (G-3) at (3.0, 1) [shape = circle, draw] {};
    \node[vertex] (G-4) at (4.5, 1) [shape = circle, draw] {};
    \draw[] (G-1) .. controls +(0.5, -0.5) and +(-0.5, -0.5) .. (G-2);
    %\draw[] (G-2) .. controls +(1, -1) and +(-1, 1) .. (G--4);
    \draw[] (G--4) .. controls +(-0.5, 0.5) and +(0.5, 0.5) .. (G--3);
    %\draw[] (G--3) .. controls +(-1, 1) and +(1, -1) .. (G-1);
    \draw[] (G-3) .. controls +(0.5, -0.5) and +(-0.5, -0.5) .. (G-4);
    %\draw[] (G-4) .. controls +(-1, -1) and +(1, 1) .. (G--2);
    \draw[] (G--2) .. controls +(-0.5, 0.5) and +(0.5, 0.5) .. (G--1);
    %\draw[] (G--1) .. controls +(1, 1) and +(-1, -1) .. (G-3);
    \draw[] (G--2) .. controls +(1, 1) and +(-1, -1) .. (G-3);
    \draw[] (G--3) .. controls +(-1, 1) and +(1, -1) .. (G-2);
    \end{tikzpicture}\right\}.
\end{gather*}
\end{example}

We will now choose \defn{orbit representatives} of the right $G_{e_\pi}$-action of $L_\pi$.
Let $L_\pi^\gamma$ be an orbit, where $\gamma \vdash [k]$ and $\mathsf{type}(\gamma) = \mathsf{type}(\pi)$.
Think of the elements of $L_\pi^\gamma$ as
bijections $\ell \colon \gamma \rightarrow \pi$.
Assume $\pi = \{\pi_1 <\cdots <\pi_r\}$ and $\gamma=\{\gamma_1 < \cdots <\gamma_r\}$
are ordered using the graded last letter order.
Let $\ell_\pi^\gamma \colon \gamma \rightarrow \pi$ be the bijection that sends
$\gamma_i$ to $\pi_i$ for all $i$. If $\gamma = \pi$,
then this is the identity bijection and we have $\ell_\pi^\pi = e_\pi$.

\begin{example}
The orbit representative for $L_{12|34}^{13|24}$ is $\ell_{12|34}^{13|24}=$
    \begin{tikzpicture}[partition-diagram, representative]
    \node[vertex] (G--4) at (4.5, -1) [shape = circle, draw] {};
    \node[vertex] (G--3) at (3.0, -1) [shape = circle, draw] {};
    \node[vertex] (G-2) at (1.5, 1) [shape = circle, draw] {};
    \node[vertex] (G-4) at (4.5, 1) [shape = circle, draw] {};
    \node[vertex] (G--2) at (1.5, -1) [shape = circle, draw] {};
    \node[vertex] (G--1) at (0.0, -1) [shape = circle, draw] {};
    \node[vertex] (G-1) at (0.0, 1) [shape = circle, draw] {};
    \node[vertex] (G-3) at (3.0, 1) [shape = circle, draw] {};
    \draw[] (G-2) .. controls +(0.6, -0.6) and +(-0.6, -0.6) .. (G-4);
    \draw[] (G-4) .. controls +(0, -1) and +(0, 1) .. (G--4);
    \draw[] (G--4) .. controls +(-0.5, 0.5) and +(0.5, 0.5) .. (G--3);
    %\draw[] (G--3) .. controls +(-0.75, 1) and +(0.75, -1) .. (G-2);
    \draw[] (G-1) .. controls +(0.6, -0.6) and +(-0.6, -0.6) .. (G-3);
    %\draw[] (G-3) .. controls +(-0.75, -1) and +(0.75, 1) .. (G--2);
    \draw[] (G--2) .. controls +(-0.5, 0.5) and +(0.5, 0.5) .. (G--1);
    \draw[] (G--1) .. controls +(0, 1) and +(0, -1) .. (G-1);
   \end{tikzpicture}.
\end{example}

The next result describes the relationship between the actions of
$\mathcal{U}_k$ and $G_{e_\pi}$ on $\CC{L_\pi}$.
\begin{prop}\label{prop:GeactiononL}
Let $m\in \mathcal{U}_k$ and $d\in L_\pi^\gamma$. If $m \odot d \neq 0$,
then there exists a unique $g \in G_{e_\pi}$  such that $md = \ell_\pi^{\gamma'} g$,
where $\gamma' = \sigma^{-1}(\gamma)$ for any $\sigma \in \mathfrak{S}_k$ such that $m = \sigma e_{\mathsf{bot}(m)}$.
\end{prop}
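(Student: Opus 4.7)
The strategy is to first identify $\mathsf{top}(md)$ explicitly as $\sigma^{-1}(\gamma)$, so that $md$ lies in the orbit $L_\pi^{\sigma^{-1}(\gamma)}$, and then to invoke the freeness of the right $G_{e_\pi}$-action from Proposition~\ref{prop:orbitreps}(a) to produce the unique $g$.

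First, I would simplify $md = \sigma e_{\mathsf{bot}(m)} d$ to $\sigma d$. Setting $\rho = \mathsf{bot}(m)$, the hypothesis $m \odot d \neq 0$ combined with Lemma~\ref{lem:UkactsL} says that $\rho$ is finer than $\mathsf{top}(d) = \gamma$, so $\rho \vee \gamma = \gamma$. Applying $e_\rho e_\gamma = e_{\rho \vee \gamma} = e_\gamma$ from Lemma~\ref{lem:idempotents} together with $e_\gamma d = d$ from Lemma~\ref{lem:eproperties}(d), we obtain
\[
    e_\rho d = e_\rho e_\gamma d = e_\gamma d = d,
\]
and thus $md = \sigma e_\rho d = \sigma d$.

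Next, I would establish $\mathsf{top}(\sigma d) = \sigma^{-1}(\gamma)$. Using Lemma~\ref{lem:eproperties}(c), $\sigma e_\gamma = e_{\sigma^{-1}(\gamma)} \sigma$, so $\sigma d = \sigma e_\gamma d = e_{\sigma^{-1}(\gamma)} (\sigma d)$, and Lemma~\ref{lem:eproperties}(e) then yields that $\mathsf{top}(\sigma d)$ is coarser than or equal to $\sigma^{-1}(\gamma)$. On the other hand, $\sigma d = md \in L_\pi$ forces $\mathsf{type}(\mathsf{top}(\sigma d)) = \mathsf{type}(\pi)$, and since $\mathsf{type}(\sigma^{-1}(\gamma)) = \mathsf{type}(\gamma) = \mathsf{type}(\pi)$ by the definition of $L_\pi^\gamma$, the two set partitions have the same multiset of block sizes. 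A set partition of $[k]$ that coarsens another and shares its type must coincide with it (each block of the coarser partition is a union of blocks of the finer one, and a count of blocks forces every such union to be a singleton), so $\mathsf{top}(\sigma d) = \sigma^{-1}(\gamma)$.

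Finally, with $\gamma' = \sigma^{-1}(\gamma)$, the element $md$ lies in $L_\pi^{\gamma'}$, and Proposition~\ref{prop:orbitreps}(a) then delivers a unique $g \in G_{e_\pi}$ with $md = \ell_\pi^{\gamma'} g$. As a byproduct, since $\mathsf{top}(md)$ is intrinsic to $md$, the formula $\gamma' = \sigma^{-1}(\gamma)$ is automatically independent of the particular factorization $m = \sigma e_\rho$ chosen. The main delicate point I anticipate is the equality $\mathsf{top}(\sigma d) = \sigma^{-1}(\gamma)$; if one prefers to avoid the coarseness-plus-type argument, one may instead trace the connected components of the stacked three-row diagram of $\sigma$ atop $d$ directly to see that two top vertices $i$ and $i'$ of $\sigma d$ lie in the same block of $\mathsf{top}(\sigma d)$ if and only if $\sigma(i)$ and $\sigma(i')$ lie in the same block of $\gamma$.
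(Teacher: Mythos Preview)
Your proof is correct and follows essentially the same route as the paper: reduce $md$ to $\sigma d$ via $e_{\mathsf{bot}(m)} e_\gamma = e_\gamma$ and $e_\gamma d = d$, identify $\mathsf{top}(\sigma d) = \sigma^{-1}(\gamma)$, and then invoke the freeness of the $G_{e_\pi}$-action from Proposition~\ref{prop:orbitreps}. The only difference is that the paper treats $\mathsf{top}(\sigma d) = \sigma^{-1}(\gamma)$ as immediate (left multiplication by a permutation $\sigma$ simply relabels the top vertices by $\sigma^{-1}$), whereas you give a more elaborate coarseness-plus-type argument; your alternative direct argument at the end is closer in spirit to what the paper has in mind.
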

\begin{proof}
Let $m\in \mathcal{U}_k$ and write $m = \sigma e_{\mathsf{bot}(m)}$ for some $\sigma \in \mathfrak{S}_k$.
Let $d\in L_\pi^\gamma$ be such that $m \odot d \neq 0$.
Then $md \in L_\pi$, and so $\mathsf{bot}(m)$ is finer than $\mathsf{top}(d) = \gamma$.
Since $\mathsf{bot}(m)$ is finer than $\gamma$, we have $e_{\mathsf{bot}(m)} e_\gamma = e_\gamma$ by Lemma \ref{lem:idempotents}.
By Lemma \ref{lem:eproperties} we have $e_\gamma d = d$, which means that $md = \sigma e_{\mathsf{bot}(m)} d = \sigma d$.
Since $\mathsf{top}(\sigma d) = \sigma^{-1}(\gamma)$, we have $\sigma d \in L_\pi^{\gamma'}$, where $\gamma' = \sigma^{-1}(\gamma)$.
By Proposition~\ref{prop:orbitreps},
the right $G_{e_\pi}$-action on $L_\pi$ is free and $L_\pi^{\gamma'}$ is an orbit for this action.
This means that there is a unique $g\in G_{e_\pi}$ such that $m d = \ell_\pi^{\gamma'} g$.
\end{proof}

\begin{example} 
Let $\pi = 12|34$ and $\gamma= 13|24$. The following diagram equation is an example of
Proposition~\ref{prop:GeactiononL}, where the left hand side product is $md$ and the right hand side is $\ell_\pi^{\gamma'} g$ with
$\gamma' = 23|14$ and $g = 12\o3\o4 |34\o1\o2 \in G_{(2,2)}$.
\begin{align*}
    \begin{tikzpicture}[partition-diagram]
    \node[vertex] (G--4) at (4.5, -1) [shape = circle, draw] {};
    \node[vertex] (G--2) at (1.5, -1) [shape = circle, draw] {};
    \node[vertex] (G-2) at (1.5, 1) [shape = circle, draw] {};
    \node[vertex] (G-3) at (3.0, 1) [shape = circle, draw] {};
    \node[vertex] (G--3) at (3.0, -1) [shape = circle, draw] {};
    \node[vertex] (G--1) at (0, -1) [shape = circle, draw] {};
    \node[vertex] (G-1) at (0.0, 1) [shape = circle, draw] {};
    \node[vertex] (G-4) at (4.5, 1) [shape = circle, draw] {};
    \draw[] (G-2) .. controls +(0.5, -0.5) and +(-0.5, -0.5) .. (G-3);
    %\draw[] (G-3) .. controls +(0.75, -1) and +(-0.75, 1) .. (G--4);
    \draw[] (G--4) .. controls +(-0.6, 0.6) and +(0.6, 0.6) .. (G--2);
    \draw[] (G--2) .. controls +(0, 1) and +(0, -1) .. (G-2);
    \draw[] (G-1) .. controls +(0.7, -0.7) and +(-0.7, -0.7) .. (G-4);
    %\draw[] (G-4) .. controls +(-0.75, -1) and +(0.75, 1) .. (G--3);
    \draw[] (G--3) .. controls +(-0.6, 0.6) and +(0.6, 0.6) .. (G--1);
    \draw[] (G--1) .. controls +(0, 1) and +(0, -1) .. (G-1);
    \end{tikzpicture} \quad
    \begin{tikzpicture}[partition-diagram, representative]
    \node[vertex] (G--4) at (4.5, -1) [shape = circle, draw] {};
    \node[vertex] (G--3) at (3.0, -1) [shape = circle, draw] {};
    \node[vertex] (G-2) at (1.5, 1) [shape = circle, draw] {};
    \node[vertex] (G-4) at (4.5, 1) [shape = circle, draw] {};
    \node[vertex] (G--2) at (1.5, -1) [shape = circle, draw] {};
    \node[vertex] (G--1) at (0.0, -1) [shape = circle, draw] {};
    \node[vertex] (G-1) at (0.0, 1) [shape = circle, draw] {};
    \node[vertex] (G-3) at (3.0, 1) [shape = circle, draw] {};
    \draw[] (G-2) .. controls +(0.6, -0.6) and +(-0.6, -0.6) .. (G-4);
    \draw[] (G-4) .. controls +(0, -1) and +(0, 1) .. (G--4);
    \draw[] (G--4) .. controls +(-0.5, 0.5) and +(0.5, 0.5) .. (G--3);
    %\draw[] (G--3) .. controls +(-0.75, 1) and +(0.75, -1) .. (G-2);
    \draw[] (G-1) .. controls +(0.6, -0.6) and +(-0.6, -0.6) .. (G-3);
    %\draw[] (G-3) .. controls +(-0.75, -1) and +(0.75, 1) .. (G--2);
    \draw[] (G--2) .. controls +(-0.5, 0.5) and +(0.5, 0.5) .. (G--1);
    \draw[] (G--1) .. controls +(0, 1) and +(0, -1) .. (G-1);
    \end{tikzpicture}
    =
    \begin{tikzpicture}[partition-diagram]
    \node[vertex] (G--4) at (4.5, -1) [shape = circle, draw] {};
    \node[vertex] (G--3) at (3.0, -1) [shape = circle, draw] {};
    \node[vertex] (G-2) at (1.5, 1) [shape = circle, draw] {};
    \node[vertex] (G-3) at (3.0, 1) [shape = circle, draw] {};
    \node[vertex] (G--2) at (1.5, -1) [shape = circle, draw] {};
    \node[vertex] (G--1) at (0.0, -1) [shape = circle, draw] {};
    \node[vertex] (G-1) at (0.0, 1) [shape = circle, draw] {};
    \node[vertex] (G-4) at (4.5, 1) [shape = circle, draw] {};
    \draw[] (G-2) .. controls +(0.5, -0.5) and +(-0.5, -0.5) .. (G-3);
    %\draw[] (G-3) .. controls +(0.75, -1) and +(-0.75, 1) .. (G--4);
    \draw[] (G--4) .. controls +(-0.5, 0.5) and +(0.5, 0.5) .. (G--3);
    %\draw[] (G--3) .. controls +(-0.75, 1) and +(0.75, -1) .. (G-2);
    \draw[] (G--3) -- (G-3);
    \draw[] (G-1) .. controls +(0.7, -0.7) and +(-0.7, -0.7) .. (G-4);
    %\draw[] (G-4) .. controls +(-1, -1) and +(1, 1) .. (G--2);
    \draw[] (G--2) .. controls +(-0.5, 0.5) and +(0.5, 0.5) .. (G--1);
    \draw[] (G--1) .. controls +(0, 1) and +(0, -1) .. (G-1);
    \end{tikzpicture}
    =
    \begin{tikzpicture}[partition-diagram, representative]
    \node[vertex] (G--4) at (4.5, -1) [shape = circle, draw] {};
    \node[vertex] (G--3) at (3.0, -1) [shape = circle, draw] {};
    \node[vertex] (G-1) at (0.0, 1) [shape = circle, draw] {};
    \node[vertex] (G-4) at (4.5, 1) [shape = circle, draw] {};
    \node[vertex] (G--2) at (1.5, -1) [shape = circle, draw] {};
    \node[vertex] (G--1) at (0.0, -1) [shape = circle, draw] {};
    \node[vertex] (G-2) at (1.5, 1) [shape = circle, draw] {};
    \node[vertex] (G-3) at (3.0, 1) [shape = circle, draw] {};
    \draw[] (G-1) .. controls +(0.7, -0.7) and +(-0.7, -0.7) .. (G-4);
    \draw[] (G-4) .. controls +(0, -1) and +(0, 1) .. (G--4);
    \draw[] (G--4) .. controls +(-0.5, 0.5) and +(0.5, 0.5) .. (G--3);
    %\draw[] (G--3) .. controls +(-1, 1) and +(1, -1) .. (G-1);
    \draw[] (G-2) .. controls +(0.5, -0.5) and +(-0.5, -0.5) .. (G-3);
    %\draw[] (G-3) .. controls +(-0.75, -1) and +(0.75, 1) .. (G--2);
    \draw[] (G--2) .. controls +(-0.5, 0.5) and +(0.5, 0.5) .. (G--1);
    %\draw[] (G--1) .. controls +(0.75, 1) and +(-0.75, -1) .. (G-2);
    \draw[] (G--2) -- (G-2);
    \end{tikzpicture}  \quad
    \begin{tikzpicture}[partition-diagram]
    \node[vertex] (G--4) at (4.5, -1) [shape = circle, draw] {};
    \node[vertex] (G--3) at (3.0, -1) [shape = circle, draw] {};
    \node[vertex] (G-1) at (0.0, 1) [shape = circle, draw] {};
    \node[vertex] (G-2) at (1.5, 1) [shape = circle, draw] {};
    \node[vertex] (G--2) at (1.5, -1) [shape = circle, draw] {};
    \node[vertex] (G--1) at (0.0, -1) [shape = circle, draw] {};
    \node[vertex] (G-3) at (3.0, 1) [shape = circle, draw] {};
    \node[vertex] (G-4) at (4.5, 1) [shape = circle, draw] {};
    \draw[] (G-1) .. controls +(0.5, -0.5) and +(-0.5, -0.5) .. (G-2);
    %\draw[] (G-2) .. controls +(1.0, -1.0) and +(-1.0, 1.0) .. (G--4);
    \draw[] (G--4) .. controls +(-0.5, 0.5) and +(0.5, 0.5) .. (G--3);
    %\draw[] (G--3) .. controls +(-1.0, 1.0) and +(1.0, -1.0) .. (G-1);
    \draw[] (G-3) .. controls +(0.5, -0.5) and +(-0.5, -0.5) .. (G-4);
    %\draw[] (G-4) .. controls +(-1.0, -1.0) and +(1.0, 1.0) .. (G--2);
    \draw[] (G--2) .. controls +(-0.5, 0.5) and +(0.5, 0.5) .. (G--1);
    %\draw[] (G--1) .. controls +(1.0, 1.0) and +(-1.0, -1.0) .. (G-3);
    \draw[] (G--2) .. controls +(1, 1) and +(-1, -1) .. (G-3);
    \draw[] (G--3) .. controls +(-1, 1) and +(1, -1) .. (G-2);
    \end{tikzpicture}.
\end{align*}
\end{example}

%%%%%%%%%%%%%%%%%%%%%%%%%%%%%%%%%%%%%%%%%%%%%%%%%%
\subsection{Irreducible representations of $\mathcal{U}_k$}
\label{sec:irreps-of-UBP-algebra}
In this section, we explain how each irreducible representation of
$\mathcal{U}_k$ is obtained by inflating an irreducible representation of
one of its maximal subgroups.
In Section~\ref{sec:irreps-of-UBP-algebra-2}, we will describe a tableau model
for these representations.

We begin by identifying a natural indexing set of the isomorphism classes of
irreducible representations of $\mathcal{U}_k$.

\begin{prop}
    The isomorphism classes of the irreducible representations of $\mathcal{U}_k$ are indexed by $I_k$ as defined in~\eqref{equation.Ik}.
\end{prop}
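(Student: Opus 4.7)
The plan is to apply the Clifford--Munn--Ponizovski\u{\i} classification of simple modules for a finite monoid (see \cite[Chapter~5]{Steinberg.2016}), which states that the isomorphism classes of irreducible representations of a finite monoid $M$ are in bijection with pairs $(J,[\chi])$, where $J$ ranges over the regular $\mathscr{J}$-classes of $M$ and $[\chi]$ ranges over the isomorphism classes of irreducible representations of the maximal subgroup at any chosen idempotent of $J$. Thanks to Corollary~\ref{cor:isomGe}, the isomorphism class of the maximal subgroup depends only on the $\mathscr{J}$-class, so this is well-defined.

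First, I would note that every $\mathscr{J}$-class of $\mathcal{U}_k$ is regular: by Proposition~\ref{prop:charJclass}(a), each contains an idempotent. By Proposition~\ref{prop:charJclass}(c), the $\mathscr{J}$-classes are in bijection with partitions $\lambda \vdash k$, so I may pick the representative idempotent $e_{\pi_\lambda}$ from~\eqref{equation.canonical} and use the representative maximal subgroup $G_\lambda = G_{e_{\pi_\lambda}}$.

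Next, I would invoke Corollary~\ref{cor:whatisGe}: if $\lambda = (1^{a_1} 2^{a_2} \cdots k^{a_k})$, then
$G_\lambda \simeq \mathfrak{S}_{a_1} \times \mathfrak{S}_{a_2} \times \cdots \times \mathfrak{S}_{a_k}.$
By the classical representation theory of direct products of symmetric groups, the irreducible representations of $G_\lambda$ are indexed by $k$-tuples $(\lambda^{(1)}, \lambda^{(2)}, \ldots, \lambda^{(k)})$ with $\lambda^{(i)} \vdash a_i$.

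Finally, I would assemble the bijection with $I_k$. Given a pair $(\lambda, (\lambda^{(1)}, \ldots, \lambda^{(k)}))$ with $\lambda \vdash k$ and $\lambda^{(i)} \vdash a_i$, the condition $\sum_{i=1}^k i a_i = k$ defining a partition of $k$ in exponential notation translates into $\sum_{i=1}^k i |\lambda^{(i)}| = k$, showing that $(\lambda^{(1)}, \ldots, \lambda^{(k)}) \in I_k$. Conversely, given $\vec{\lambda} \in I_k$, setting $a_i := |\lambda^{(i)}|$ recovers the partition $\lambda = (1^{a_1} 2^{a_2} \cdots k^{a_k}) \vdash k$ together with a compatible irreducible of $G_\lambda$. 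The map is evidently a bijection. There is no real obstacle beyond cleanly citing the Clifford--Munn--Ponizovski\u{\i} theorem and observing that the two parameterizations carry the same combinatorial information; the construction of the actual representations (via Sch\"utzenberger induction) and their tableau model is deferred to Sections~\ref{sec:Schutzenberger-representations}--\ref{sec:irreps-of-UBP-algebra-2}.
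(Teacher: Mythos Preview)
Your proposal is correct and follows essentially the same approach as the paper: both invoke the Clifford--Munn--Ponizovski\u{\i} classification from \cite[Chapter~5]{Steinberg.2016} (the paper cites Corollary~5.6 specifically), then use Proposition~\ref{prop:charJclass} to index the $\mathscr{J}$-classes by partitions $\lambda \vdash k$ and Corollary~\ref{cor:whatisGe} to identify $G_\lambda \simeq \mathfrak{S}_{a_1} \times \cdots \times \mathfrak{S}_{a_k}$, from which the indexing by $I_k$ follows. Your explicit verification that every $\mathscr{J}$-class is regular and your spelling out of the bijection at the end are slightly more detailed than the paper's version, but the argument is the same.
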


\begin{proof}
    For any finite monoid $M$, let $\operatorname{Irr}_\mathbb{C}(M)$ be
    the set of isomorphism classes of irreducible representations of $M$ over $\CC$.
    By \cite[Corollary 5.6]{Steinberg.2016}, there is a bijection between
    $\operatorname{Irr}_\mathbb{C}(M)$
    and
    $\bigcup_{e} \operatorname{Irr}_\mathbb{C}(G_e)$,
    where the idempotents $e$ are chosen one from each $\mathscr{J}$-class of $M$.

    Recall from Section~\ref{section.J classes} that
    $\{e_{\pi_\lambda} : \lambda \vdash k\}$ is a set of representative
    idempotents for the $\mathscr{J}$-classes of $\mathcal{U}_k$,
    and that the associated maximal subgroup $G_\lambda$ is isomorphic to
    $\mathfrak{S}_{a_1} \times \mathfrak{S}_{a_2} \times \cdots \times \mathfrak{S}_{a_k}$
    if $\lambda = (1^{a_1}2^{a_2} \ldots k^{a_k})$ (Corollary~\ref{cor:whatisGe}).
    Hence, the isomorphism classes of irreducible representations of
    $G_\lambda$ are indexed by sequences of partitions $(\lambda^{(1)},
    \ldots, \lambda^{(k)})$ such that $\lambda^{(i)}\vdash a_i$
    and $\sum_{i=1}^k ia_i = k$ (\cf Section~\ref{sec:irrepsGe}).
\end{proof}

For $\vec{\lambda} = (\lambda^{(1)}, \lambda^{(2)}, \ldots, \lambda^{(k)})$ with $|\lambda^{(i)}|=a_i$,
we define ${\defncolor \vtype(\vec{\lambda})} = (1^{a_1} 2^{a_2} \ldots k^{a_k})$.
Let $\vec{\lambda} \in I_k$ and write $\lambda = \vtype(\vec{\lambda})$.
Let $V_{G_\lambda}^{\vec{\lambda}}$ be the irreducible representation of $G_\lambda$ indexed by $\vec{\lambda}$.
By~\cite[Theorem 5.5]{Steinberg.2016},
\begin{equation*}
    W_{\mathcal{U}_k}^{\vec{\lambda}} = \mathsf{Ind}_{G_\lambda}^{\mathcal{U}_k}\big(V_{G_\lambda}^{\vec{\lambda}}\big) 
    \Big/ \mathsf{rad}\Big(\mathsf{Ind}_{G_\lambda}^{\mathcal{U}_k}\big(V_{G_\lambda}^{\vec{\lambda}}\big)\Big)
\end{equation*}
is an irreducible representation of $\mathcal{U}_k$.
Since $\mathcal{U}_k$ is a finite inverse monoid, the monoid algebra
$\mathbb{C}\mathcal{U}_k$ is semisimple~\cite[Corollary~9.4]{Steinberg.2016},
from which it follows that $\mathsf{rad}(\mathsf{Ind}_{G_\lambda}^{\mathcal{U}_k}(V_{G_\lambda}^{\vec{\lambda}})) = 0$.
Thus,
\begin{equation*}
    W_{\mathcal{U}_k}^{\vec{\lambda}} = \mathsf{Ind}_{G_\lambda}^{\mathcal{U}_k}\big(V_{G_\lambda}^{\vec{\lambda}}\big) 
    = \mathbb{C}L_\lambda \otimes_{\mathbb{C}G_\lambda} V_{G_\lambda}^{\vec{\lambda}},
\end{equation*}
where $\CC L_\lambda$ is the left Sch\"utzenberger representation associated with the idempotent $e_{\pi_\lambda}$
(\cf Section~\ref{sec:Schutzenberger-representations}).
Since $\CC L_\lambda$ is a $(\mathcal{U}_k, G_\lambda)$-bimodule, the tensor product
$\mathbb{C}L_\lambda \otimes_{\mathbb{C}G_\lambda} V_{G_\lambda}^{\vec{\lambda}}$ is a left
$\mathcal{U}_k$-module, where for all $d \in \mathcal{U}_k$, $\ell \in L_\lambda$ and $v \in V_{G_\lambda}^{\vec{\lambda}}$:
\begin{equation}\label{leftaction}
	d \cdot (\ell \otimes v)  = (d \odot \ell) \otimes v.
\end{equation}
Notice that the tensor product is over $\mathbb{C}G_\lambda$, which is the case throughout this section.

We now describe a basis of $W_{\mathcal{U}_k}^{\vec{\lambda}}$.
In Section~\ref{sec:irrepsGe}, we found that a basis of
$V_{G_\lambda}^{\vec{\lambda}}$ is given by the $\pi$-tableaux of shape $\vec{\lambda}$.
To obtain a basis of $W_{\mathcal{U}_k}^{\vec{\lambda}}$, it suffices to tensor this basis with
the orbit representatives of the right $G_\lambda$-action on $L_\lambda$,
as we prove next.

\begin{prop} \label{prop:irrepbasis}
Let $\vec{\lambda} \in I_k$, $\lambda = \vtype(\vec{\lambda})$ and $\pi = \pi_\lambda$.
Let
$\{\ell_\pi^\gamma : \gamma \vdash [k], \mathsf{type}(\gamma) = \lambda\}$
be the orbit representatives of the right $G_\lambda$-action on $L_\lambda$ as defined in
Section~\ref{sec:Schutzenberger-representations},
and let $\mathcal{B}_{\vec{\lambda}}(G_\lambda)$ be a basis for the irreducible
$G_\lambda$-representation $V_{G_\lambda}^{\vec{\lambda}}$ indexed by $\vec{\lambda}$.
Then a basis for the irreducible $\mathcal{U}_k$-representation $W_{\mathcal{U}_k}^{\vec{\lambda}}$ is
\[
    \mathcal{B}_{\vec{\lambda}}(\mathcal{U}_k) := \left\{ \ell_\pi^\gamma \otimes \mathbf{T} : \gamma \vdash [k], \mathsf{type}(\gamma) = \lambda \text{ and }
    \mathbf{T} \in \mathcal{B}_{\vec{\lambda}}(G_\lambda) \right\}.
\]
\end{prop}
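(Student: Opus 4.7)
The plan is to leverage the freeness of the right $G_\lambda$-action on $L_\lambda$, established in Proposition~\ref{prop:orbitreps}, to reduce the computation of a basis of $W_{\mathcal{U}_k}^{\vec{\lambda}} = \mathbb{C}L_\lambda \otimes_{\mathbb{C}G_\lambda} V_{G_\lambda}^{\vec{\lambda}}$ to standard linear algebra. The key observation is that $\mathbb{C}L_\lambda$ is a free right $\mathbb{C}G_\lambda$-module whose rank and explicit free basis are both determined by the orbit data we have already computed.

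First, I would decompose $L_\lambda$ as a right $G_\lambda$-set. By Proposition~\ref{prop:orbitreps}(c) applied with $\pi = \pi_\lambda$, we have the $G_\lambda$-orbit decomposition
\[
    L_\lambda = \biguplus_{\gamma \,\vdash\, [k],\, \mathsf{type}(\gamma)=\lambda} L_\pi^\gamma,
\]
and by Proposition~\ref{prop:orbitreps}(a) the action is free, so each orbit $L_\pi^\gamma$ is a free (transitive) right $G_\lambda$-set of size $|G_\lambda|$. In particular, since $\ell_\pi^\gamma \in L_\pi^\gamma$, every element of $L_\pi^\gamma$ can be written uniquely as $\ell_\pi^\gamma g$ with $g \in G_\lambda$, giving a bijection $G_\lambda \to L_\pi^\gamma$ of right $G_\lambda$-sets.

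Passing to $\mathbb{C}$-linear combinations, this identifies $\mathbb{C}L_\pi^\gamma$ with $\mathbb{C}G_\lambda$ as right $\mathbb{C}G_\lambda$-modules (via $\ell_\pi^\gamma g \mapsto g$), and summing over $\gamma$ yields an isomorphism of right $\mathbb{C}G_\lambda$-modules
\[
    \mathbb{C}L_\lambda \;\cong\; \bigoplus_{\gamma} \mathbb{C}G_\lambda \cdot \ell_\pi^\gamma,
\]
exhibiting $\{\ell_\pi^\gamma\}$ as a free $\mathbb{C}G_\lambda$-basis of $\mathbb{C}L_\lambda$. Tensoring with $V_{G_\lambda}^{\vec{\lambda}}$ over $\mathbb{C}G_\lambda$ then produces
\[
    W_{\mathcal{U}_k}^{\vec{\lambda}} \;=\; \mathbb{C}L_\lambda \otimes_{\mathbb{C}G_\lambda} V_{G_\lambda}^{\vec{\lambda}} \;\cong\; \bigoplus_{\gamma}\, \ell_\pi^\gamma \otimes V_{G_\lambda}^{\vec{\lambda}},
\]
since tensoring a free rank-one right $\mathbb{C}G_\lambda$-module with a left $\mathbb{C}G_\lambda$-module simply recovers that module. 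Choosing the basis $\mathcal{B}_{\vec{\lambda}}(G_\lambda)$ in each summand, I obtain the claimed basis $\mathcal{B}_{\vec{\lambda}}(\mathcal{U}_k) = \{\ell_\pi^\gamma \otimes \mathbf{T}\}$.

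The content of the statement is really concentrated in the earlier results (freeness and orbit count in Proposition~\ref{prop:orbitreps}) together with the semisimplicity of $\mathbb{C}\mathcal{U}_k$ that was used to identify $W_{\mathcal{U}_k}^{\vec{\lambda}}$ with the induced module; given these, there is no substantial obstacle. The only point that deserves care is checking that the orbit representatives $\ell_\pi^\gamma$ really give a \emph{free} basis on the right (not merely a spanning set of coset representatives), but this is immediate from the freeness statement in Proposition~\ref{prop:orbitreps}(a), which guarantees uniqueness of the expression $\ell = \ell_\pi^\gamma g$ for each $\ell \in L_\lambda$.
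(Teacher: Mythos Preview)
Your proof is correct and takes essentially the same approach as the paper: both arguments use the free right $G_\lambda$-action on $L_\lambda$ from Proposition~\ref{prop:orbitreps} to identify $\mathbb{C}L_\lambda$ as a free right $\mathbb{C}G_\lambda$-module with basis $\{\ell_\pi^\gamma\}$, from which the tensor-product basis follows immediately. The paper's write-up separates spanning (phrased via Proposition~\ref{prop:GeactiononL}) from linear independence, whereas you package both into the single observation that tensoring a free module commutes with choosing a basis, but the underlying content is identical.
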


\begin{proof}
    Since $\mathcal{B}_{\vec{\lambda}}(G_\lambda)$ is a basis of $V_{G_\lambda}^{\vec{\lambda}}$,
    it follows that $W_{\mathcal{U}_k}^{\vec{\lambda}}$ is spanned by $\ell \otimes \mathbf{T}$
    with $\ell \in L_\lambda$ and $\mathbf{T} \in \mathcal{B}_{\vec{\lambda}}(G_{\lambda})$.
    By Proposition \ref{prop:GeactiononL}, if $d\odot \ell\neq 0$, then there is a
    unique $g \in G_{\lambda}$ and $\gamma \vdash [k]$ satisfying $\mathsf{type}(\gamma) = \lambda$ and
    $d\odot \ell = \ell_\pi^\gamma g$. Thus,
    \begin{equation*}
        d \cdot \big(\ell \otimes \mathbf{T}\big)
        = \ell_\pi^{\gamma} g \otimes \mathbf{T}
        = \ell_\pi^{\gamma} \otimes g \mathbf{T},
    \end{equation*}
    which proves that $W_{\mathcal{U}_k}^{\vec{\lambda}}$ is spanned by elements of the form $\ell_\pi^\gamma \otimes \mathbf{T}$.

Furthermore, since $\{\ell_\pi^\gamma : \gamma \vdash [k], \mathsf{type}(\gamma) = \lambda\}$
is a basis of $\mathbb{C}L_\pi$ as a right $G_\lambda$-module
and $\mathcal{B}_{\vec{\lambda}}(G_\lambda)$ is a $\CC$-basis
for the irreducible $G_\lambda$-representation $V^{\vec{\lambda}}_{G_\lambda}$, then
$\mathcal{B}_{\vec{\lambda}}(\mathcal{U}_k)$ is linearly independent
as a vector space over $\mathbb{C}$.
\end{proof}

As a consequence of identifying that the basis is indexed by a pair
consisting of a set partition $\gamma \vdash [k]$ such that
$\mathsf{type}(\gamma) = \vtype(\vec{\lambda})$ and a $\pi$-tableau of
shape $\vec{\lambda}$, we have the following formula for the dimension of
the irreducible representation of $\mathcal{U}_k$.

\begin{cor}\label{cor:dimension}
Let $\vec{\lambda} = (\lambda^{(1)}, \lambda^{(2)}, \ldots, \lambda^{(k)}) \in I_k$
and $\lambda = \vtype(\vec{\lambda})$, then
\[
\dim W_{\mathcal{U}_k}^{\vec{\lambda}} = \mathsf{sp}_k(\lambda)
f^{\lambda^{(1)}} f^{\lambda^{(2)}}\cdots f^{\lambda^{(k)}},
\] where $\mathsf{sp}_k(\lambda)$ is equal to the number
of set partitions of type $\lambda$ (see Equation~\eqref{eq:nosetpartitions})
and $f^\lambda$ is equal to the number of standard tableaux of shape $\lambda$.
\end{cor}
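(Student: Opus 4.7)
The plan is to simply count the basis elements described in Proposition~\ref{prop:irrepbasis}. By that proposition, a basis of $W_{\mathcal{U}_k}^{\vec{\lambda}}$ is given by
\[
    \mathcal{B}_{\vec{\lambda}}(\mathcal{U}_k) = \left\{ \ell_\pi^\gamma \otimes \mathbf{T} : \gamma \vdash [k],\ \mathsf{type}(\gamma) = \lambda,\ \mathbf{T} \in \mathcal{B}_{\vec{\lambda}}(G_\lambda)\right\},
\]
so the dimension factors as a product of two counts: the number of set partitions $\gamma$ of $[k]$ with $\mathsf{type}(\gamma) = \lambda$, times the cardinality of any basis $\mathcal{B}_{\vec{\lambda}}(G_\lambda)$ of the irreducible $G_\lambda$-representation $V_{G_\lambda}^{\vec{\lambda}}$.

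First, the number of set partitions of $[k]$ of type $\lambda$ is exactly $\mathsf{sp}_k(\lambda)$ by Equation~\eqref{eq:nosetpartitions}. Second, by the discussion in Section~\ref{sec:irrepsGe}, $V_{G_\lambda}^{\vec{\lambda}}$ is the external tensor product
\[
    V_{\mathfrak{S}_{\pi^{(1)}}}^{\lambda^{(1)}} \otimes \cdots \otimes V_{\mathfrak{S}_{\pi^{(k)}}}^{\lambda^{(k)}},
\]
where $\pi = \pi_\lambda$ and each factor is an irreducible representation of a symmetric group. The dimension of $V_{\mathfrak{S}_{\pi^{(i)}}}^{\lambda^{(i)}}$ is the number of standard Young tableaux of shape $\lambda^{(i)}$, namely $f^{\lambda^{(i)}}$, so the dimension of $V_{G_\lambda}^{\vec{\lambda}}$ is $f^{\lambda^{(1)}} f^{\lambda^{(2)}} \cdots f^{\lambda^{(k)}}$.

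Multiplying these two counts yields the claimed formula. There is no real obstacle here: the entire argument is bookkeeping, with all the substantive content (the explicit basis, the identification of $G_\lambda$ as a product of symmetric groups, and the indexing of its irreducibles by tuples of partitions) already established in the preceding sections.
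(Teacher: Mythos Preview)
Your proof is correct and follows exactly the same approach as the paper, which derives the dimension formula as an immediate consequence of Proposition~\ref{prop:irrepbasis} by counting the basis elements $\ell_\pi^\gamma \otimes \mathbf{T}$: the factor $\mathsf{sp}_k(\lambda)$ counts the choices of $\gamma$, and $f^{\lambda^{(1)}}\cdots f^{\lambda^{(k)}}$ counts the $\pi$-tableaux $\mathbf{T}$.
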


\begin{example}
There are five irreducible $\mathcal{U}_3$-representations. We give their bases below:
\begin{align*}
W_{\mathcal{U}_3}^{((3))} & = \operatorname{span}\left\{ \ell_{1|2|3}^{1|2|3}\otimes \left(
\raisebox{-.05in}{\tikztableausmall{{1,2,3}}} \right)
\right\}, \\
W_{\mathcal{U}_3}^{((2,1))} & = \operatorname{span}\left\{ \ell_{1|2|3}^{1|2|3}\otimes \left(
\raisebox{-.13in}{\tikztableausmall{{3},{1,2}}} \right),
\ell_{1|2|3}^{1|2|3}\otimes \left(
\raisebox{-.13in}{\tikztableausmall{{2},{1,3}}} \right)
\right\},  \\
W_{\mathcal{U}_3}^{((1,1,1))} & = \operatorname{span}\left\{ \ell_{1|2|3}^{1|2|3}\otimes \left(
\raisebox{-.22in}{\tikztableausmall{{3},{2},{1}}} \right)
\right\}, \\
W_{\mathcal{U}_3}^{((1), (1))} & = \operatorname{span}\left\{ \ell_{1|23}^{1|23}\otimes  \left(
\raisebox{-.05in}{\tikztableausmall{{1}}},\raisebox{-.05in}{\tikztableausmall{{23}}} \right),
\ell_{1|23}^{2|13}\otimes \left(
\raisebox{-.05in}{\tikztableausmall{{1}}},\raisebox{-.05in}{\tikztableausmall{{23}}} \right),\ell_{1|23}^{3|12}\otimes \left(
\raisebox{-.05in}{\tikztableausmall{{1}}},\raisebox{-.05in}{\tikztableausmall{{23}}} \right)\right\}, \\
W_{\mathcal{U}_3}^{(\epart, \epart, (1))} & = \operatorname{span}\left\{ \ell_{123}^{123}\otimes \left(
\epart, \epart,\raisebox{-.05in}{\tikztableausmall{{123}}} \right)\right\}.
\end{align*}
\end{example}

\begin{example}
\label{big-example-of-action}
\def\aa{{\textsl a}}
\def\bb{{\textsl b}}
\def\cc{{\textsl c}}
\def\dd{{\textsl d}}
\def\ee{{\textsl e}}
\def\ff{{\textsl f}}
\def\gg{{\textsl g}}
\def\hh{{\textsl h}}
In this example, we illustrate the action of
an element in $\mathcal{U}_k$ on a basis element.
To demonstrate with an example that is
sufficiently large, take $k=17$ and represent the labels $10$ through $17$ by the letters
$\aa$ through $\hh$.

Let $\vec{\lambda} = ((2,1), (2,2), (1,1)) \in I_{17}$ so that $\lambda = \vtype(\vec{\lambda}) = (1^3 \, 2^4 \, 3^2)$.
Choose our basis element to be
$\ell^\gamma_\pi \otimes {\mathbf T} \in W^{\vec{\lambda}}_{\mathcal{U}_{17}}$,
where
\begin{equation*}
    \ell^\gamma_\pi = 
        2\, \o1 \mid 7\, \o2 \mid g\, \o3 \mid
        1\,3\,\o4\,\o5 \mid 5\,\bb\, \o6\,\o7 \mid 6\,\dd\,\o8\,\o9 \mid
        9\,e\,\o\aa\,\o\bb \mid 4\,\aa\,\cc\,\o\cc\,\o\dd\,\o\ee \mid
        8\,\ff\,\hh\,\o\ff\,\o\gg\,\o\hh
\end{equation*}
and
\[
{\mathbf T} =
\left( \quad\raisebox{-.2in}{\tikztableau{{3},{1,2}}}\, , \quad
\raisebox{-.2in}{\tikztableau{{67,\aa\bb},{45, 89}}}\, , \quad
\raisebox{-.2in}{\tikztableau{{\ff\gg\hh},{\cc\dd\ee}}}\quad\right).
\]

Now any element $d \in \mathcal{U}_{17}$ such that the number of
blocks in $d \ell^\gamma_\pi$ is smaller than the number of blocks in
$\ell^\gamma_\pi$ will act as $0$.

As an example then, let us consider the action of an element $d$ such that
the number of blocks in $d \ell^\gamma_\pi$ is equal to the number of blocks
of $\ell^\gamma_\pi$.  That is, $\mathsf{bot}(d)$ must be finer than
$\gamma$.  Let
\begin{equation*}
    d =   2\,\o8 \mid
            8\,\o2 \mid 9\,\o\gg \mid \aa\,\o\dd \mid \bb\,\o7 \mid \cc\,\o6 \mid \ee\,\o\aa \mid
            \ff\,\o3 \mid \hh\,\o1 \mid
            1\,4\,\o5\, \o\bb \mid 6\,7\,\o9\,\o\ee \mid 3\,\dd\,\o4\, \o\cc \mid 5\,\gg\,\o\ff\,\o\hh.
\end{equation*}
Then the action of $d$ on $\ell_\pi^\gamma \otimes \mathbf{T}$ is
$\ell^{\gamma'}_\pi \otimes g \cdot {\mathbf T}$, where
\begin{equation*}
    \gamma' =
                8 \mid
                9 \mid
                \bb \mid
                1\,4 \mid
                6\,7 \mid
                \aa\,\cc \mid
                \ff\,\hh \mid
                3\,\dd\,\ee \mid
                2\,5\,\gg
 \end{equation*}
and
\begin{equation*}
    g = 1\, \o1 \mid
        2\, \o3 \mid
        3\, \o2 \mid
        4\,5\,\o6\,\o7 \mid
        6\,7\,\o\aa\,\o\bb \mid
        8\,9\,\o8\,\o9 \mid
        \aa\,\bb\,\o4\,\o5 \mid
        \cc\,\dd\,\ee\,\o\cc\,\o\dd\,\o\ee \mid
        \ff\,\gg\,\hh\,\o\ff\,\o\gg\,\o\hh
     \in G_{e_\pi},
\end{equation*}
since $d \ell_\pi^\gamma = \ell_\pi^{\gamma'} g$. We must then
apply some straightening relations on $g \cdot \mathbf{T}$ to express it as
a linear combination of the basis elements.
\end{example}

%%%%%%%%%%%%%%%%%%%%%%%%%%%%%%%%%%%%%%%%%%%%%%%%%
\subsection{Tableau model for the irreducible $\mathcal{U}_k$-representations}
\label{sec:irreps-of-UBP-algebra-2}
We prove that the basis of $W_{\mathcal{U}_k}^{\vec{\lambda}}$ in
Proposition~\ref{prop:irrepbasis} is in bijection with certain sequences of
set-valued tableaux and we describe the action of $\mathcal{U}_k$ directly in
terms of these sequences.

\begin{definition}
    A \defn{uniform tableau} of shape $\vec{\lambda} = (\lambda^{(1)}, \ldots, \lambda^{(k)}) \in I_k$
    is a sequence of tableaux $\mathbf{S} = (S^{(1)}, \ldots, S^{(k)})$ such that:
    \begin{enumerate}
        \item $S^{(i)}$ is a tableau of shape $\lambda^{(i)}$ filled with subsets of $[k]$ of size $i$;
        \item $S^{(i)}$ is standard, \ie, increasing along rows and columns in the last letter order; and
        \item the subsets appearing in $\mathbf{S}$ form a set partition of $[k]$.
    \end{enumerate}
    We define \defn{$\mathcal{T}_{\vec{\lambda}}$} to be the set of uniform tableaux of shape $\vec{\lambda}$.
\end{definition}

\begin{example} Here are the elements in $\mathcal{T}_{((1), (1,1))}$:
$$\left(\raisebox{-.1in}{\tikztableausmall{1}, \tikztableausmall{45,23}}\right),
\left(\raisebox{-.1in}{\tikztableausmall{1}, \tikztableausmall{35,24}}\right),
\left(\raisebox{-.1in}{\tikztableausmall{1}, \tikztableausmall{25,34}}\right),
\left(\raisebox{-.1in}{\tikztableausmall{2}, \tikztableausmall{45,13}}\right),
\left(\raisebox{-.1in}{\tikztableausmall{2}, \tikztableausmall{35,14}}\right),$$
$$\left(\raisebox{-.1in}{\tikztableausmall{2}, \tikztableausmall{15,34}}\right),
\left(\raisebox{-.1in}{\tikztableausmall{3}, \tikztableausmall{45,12}}\right),
\left(\raisebox{-.1in}{\tikztableausmall{3}, \tikztableausmall{25,14}}\right),
\left(\raisebox{-.1in}{\tikztableausmall{3}, \tikztableausmall{15,24}}\right),
\left(\raisebox{-.1in}{\tikztableausmall{4}, \tikztableausmall{35,12}}\right),$$
$$\left(\raisebox{-.1in}{\tikztableausmall{4}, \tikztableausmall{25,13}}\right),
\left(\raisebox{-.1in}{\tikztableausmall{4}, \tikztableausmall{15,23}}\right),
\left(\raisebox{-.1in}{\tikztableausmall{5}, \tikztableausmall{34,12}}\right),
\left(\raisebox{-.1in}{\tikztableausmall{5}, \tikztableausmall{24,13}}\right),
\left(\raisebox{-.1in}{\tikztableausmall{5}, \tikztableausmall{14,23}}\right)~.$$
\end{example}

We now define an action of $\mathcal{U}_k$ on the vector space  $\mathbb{C}\mathcal{T}_{\vec{\lambda}}$ consisting of formal linear
combinations of the uniform tableaux in $\mathcal{T}_{\vec{\lambda}}$ with complex coefficients and then show that it is isomorphic
to the irreducible representation $W^{\vec{\lambda}}_{\mathcal{U}_k}$.

Recall that $\mathcal{U}_k$ is generated by $s_i$ and $b_i$,
where $1\leqslant i \leqslant k-1$, as described in Section~\ref{sec:presentation}. For $\mathbf{S}\in \mathcal{T}_{\vec{\lambda}}$,
let
\begin{equation}
    \label{bi-action-on-S}
    b_i \mathbf{S} = \begin{cases} \mathbf{S}, & \text{if $i$ and $i+1$ are in the same cell in $\mathbf{S}$,} \\
    0, & \text{otherwise,}\end{cases}
\end{equation}
and let $s_i\mathbf{S}$ be obtained from $\mathbf{S}$ by interchanging $i$ and $i+1$. It is possible that
$s_i \mathbf{S}$ is not standard, in which case we apply the Garnir straightening relations
(we illustrate this in Example~\ref{big-example-of-action2} and refer the reader
to \cite[Section 2.6]{Sagan.2001} or \cite{CarreLascouxLeclerc} for details)
to obtain a linear combination of elements in $\mathcal{T}_{\vec{\lambda}}$.
It is straightforward to verify that the relations in Section~\ref{sec:presentation} hold
so that $\mathbb{C}\mathcal{T}_{\vec{\lambda}}$ is a representation of $\mathcal{U}_k$.

For the next result, we remind the reader that the blocks of set partitions are
ordered using the graded last letter order and that
the elements $\ell_\pi^\gamma \otimes \mathbf{T}$, where $\mathsf{type}(\gamma)
= \lambda$ and $\mathbf{T} \in \mathcal{B}_{\vec{\lambda}}(G_\lambda)$,
form a basis of $W^{\vec{\lambda}}_{\mathcal{U}_k}$ (Proposition~\ref{prop:irrepbasis}).
\begin{theorem}\label{th:tableauxmodule}
Let $\vec{\lambda}\in I_k$ and write $\lambda = \vtype(\vec{\lambda})$ and $\pi = \pi_\lambda$.
For $\ell_{\pi}^\gamma \otimes \mathbf{T} \in \mathcal{B}_{\vec{\lambda}}(\mathcal{U}_k)$, let
$\rho(\ell_{\pi}^\gamma \otimes \mathbf{T})$ be the sequence of tableaux obtained
from $\mathbf{T}$ by replacing the block $\pi_i\in \pi$ with the block $\gamma_i\in \gamma$.
Then $\rho$ extends linearly to an isomorphism of representations
$\rho \colon W^{\vec{\lambda}}_{\mathcal{U}_k}\rightarrow \CC\mathcal{T}_{\vec{\lambda}}$.
\end{theorem}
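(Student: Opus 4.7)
The plan is to prove the theorem in two stages. First I show that $\rho$ is a linear bijection between the two bases; then I verify that $\rho$ intertwines the $\mathcal{U}_k$-action. By the presentation of $\mathcal{U}_k$ recalled in Section~\ref{sec:presentation}, the second stage reduces to checking compatibility on the generators $s_i$ and $b_i$.

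For the bijection, write $\pi = \{\pi_1 < \cdots < \pi_r\}$ and $\gamma = \{\gamma_1 < \cdots < \gamma_r\}$ in graded last letter order. Since $\mathsf{type}(\gamma) = \lambda = \mathsf{type}(\pi)$, the blocks $\pi_j$ and $\gamma_j$ have the same size for every $j$, so the substitution $\pi_j \mapsto \gamma_j$ converts the $\pi$-tableau $\mathbf{T}$ of shape $\vec{\lambda}$ into a sequence $\mathbf{S}$ of tableaux of the same shape whose subset entries form exactly the set partition $\gamma$. Because the substitution is order-preserving (the $j$-th position in either list is determined by the same combinatorial criterion), the standardness of $\mathbf{T}$ transfers to $\mathbf{S}$, so $\mathbf{S} \in \mathcal{T}_{\vec{\lambda}}$. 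The inverse map reads the set partition $\gamma$ off the entries of $\mathbf{S}$ and performs the reverse substitution; thus $\rho$ is a bijection between $\mathcal{B}_{\vec{\lambda}}(\mathcal{U}_k)$ and $\mathcal{T}_{\vec{\lambda}}$ and extends linearly to a vector space isomorphism.

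For the generator $b_i$, note that $b_i = e_{\mathsf{top}(b_i)}$ where $\mathsf{top}(b_i)$ has unique non-singleton block $\{i,i+1\}$. Lemma~\ref{lem:UkactsL} gives $b_i \odot \ell_\pi^\gamma \neq 0$ iff $i$ and $i+1$ lie in the same block of $\gamma$, and in that case the identities $b_i e_\gamma = e_\gamma$ (Lemma~\ref{lem:idempotents}) together with $e_\gamma \ell_\pi^\gamma = \ell_\pi^\gamma$ (Lemma~\ref{lem:eproperties}(d)) combine to give $b_i \ell_\pi^\gamma = \ell_\pi^\gamma$. This matches~\eqref{bi-action-on-S} applied to $\mathbf{S} = \rho(\ell_\pi^\gamma \otimes \mathbf{T})$, since $i$ and $i+1$ share a cell in $\mathbf{S}$ precisely when they lie in the same block of $\gamma$. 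For $s_i$, Proposition~\ref{prop:GeactiononL} yields $s_i \ell_\pi^\gamma = \ell_\pi^{\gamma'} g$ with $\gamma' = s_i(\gamma)$ and a unique $g \in G_{e_\pi}$; viewing $g$ as a block-bijection gives $g(\pi_a) = \pi_{\tau(a)}$, where $\tau \in \mathfrak{S}_r$ is defined by $s_i(\gamma_j) = \gamma'_{\tau(j)}$. Because $s_i$ is an involution that affects at most the two blocks of $\gamma$ containing $i$ and $i+1$, a short case analysis shows $\tau$ is either the identity or a single transposition, and in particular $\tau^{-1} = \tau$. Consequently $g \cdot \mathbf{T}$ relabels each entry $\pi_a$ as $\pi_{\tau(a)}$, and applying $\rho$ produces $\gamma'_{\tau(a)} = s_i(\gamma_a)$ at the corresponding cell; this is precisely the entry obtained by swapping $i \leftrightarrow i+1$ inside the $\gamma_a$-cell of $\mathbf{S}$, i.e., the action of $s_i$ on $\mathbf{S}$.

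The main technical hurdle is compatibility with Garnir straightening: both $g \mathbf{T}$ and $s_i \mathbf{S}$ may fail to be standard and must be rewritten as linear combinations of standard tableaux before being compared. I handle this by observing that the Garnir relations depend only on the total order of the entries at each cell, and the substitutions $\pi_a \leftrightarrow \gamma_a$ and $\pi_a \leftrightarrow \gamma'_a$ are order-preserving bijections between their respective entry sets; thus the Garnir relations correspond term by term under $\rho$. Combined with the check on generators, this shows that $\rho$ is $\mathcal{U}_k$-equivariant, and hence an isomorphism of $\mathcal{U}_k$-modules.
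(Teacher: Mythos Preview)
Your proof is correct and follows essentially the same route as the paper: bijection on bases via order-preserving relabelling, then equivariance on the generators $b_i$ and $s_i$. The paper handles $s_i$ by an explicit two-case split (whether swapping $i$ and $i+1$ changes the graded-last-letter order of the blocks), whereas you package both cases uniformly via the permutation $\tau$; since $\tau$ is an involution, the slight slip between $g(\pi_a)=\pi_{\tau(a)}$ and $\pi_{\tau^{-1}(a)}$ is harmless. Your explicit paragraph on Garnir straightening is a nice addition that the paper leaves implicit: once one knows the non-straightened tableaux on both sides agree under the order-preserving relabelling $\pi_a \leftrightarrow \gamma'_a$, the Garnir relations (depending only on relative order) must produce matching linear combinations.
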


\begin{proof}
Let $\mathbf{S} = \rho(\ell_\pi^\gamma \otimes \mathbf{T})$.
If we view $\ell_\pi^\gamma$ as a bijection from $\gamma$ to $\pi$,
then $\mathbf{S}$ is obtained by applying the inverse bijection to the entries in $\mathbf{T}$.
In particular, $\rho$ is invertible.

Notice that $\mathbf{S}\in \mathcal{T}_{\vec{\lambda}}$:
first of all, $\mathbf{S}$ has the same shape $\vec{\lambda}$ as $\mathbf{T}$;
its entries are the blocks of $\gamma$;
each $S^{(i)}$ is filled with blocks of the same size $i$ since $\ell_\pi^\gamma$ preserves the sizes of the blocks;
finally, since the blocks are ordered using graded last letter order,
each $S^{(i)}$ is standard since we have replaced the entries of $\mathbf{T}$ with blocks in the same order as those of $\pi$.

Now we show that the action of $\mathcal{U}_k$ commutes with $\rho$.
It suffices to show this for the generators $s_i$ and $b_i$.
By definition,
\begin{equation*}
    b_i \cdot \left(\ell_\pi^\gamma \otimes \mathbf{T}\right)
    = (b_i \odot \ell_\pi^\gamma) \otimes \mathbf{T}
    =
    \begin{cases}
        b_i \ell_\pi^\gamma \otimes \mathbf{T}, & \text{if $b_i \ell_\pi^\gamma \in L_\pi$}, \\
        0, & \text{otherwise}.
    \end{cases}
\end{equation*}
Note that $b_i \ell_\pi^\gamma \in L_\pi$ if and only if $b_i \ell_\pi^\gamma = \ell_\pi^\gamma$,
or equivalently, if and only if $i$ and $i+1$ are in the same block of $\gamma$. Thus,
\begin{equation*}
    b_i \cdot \left(\ell_\pi^\gamma \otimes \mathbf{T}\right)
    =
    \begin{cases}
        \ell_\pi^\gamma \otimes \mathbf{T}, & \text{if $i$ and $i+1$ are in the same block of $\gamma$,} \\
        0, & \text{otherwise.}
    \end{cases}
\end{equation*}
Comparing with Equation~\eqref{bi-action-on-S},
it follows that $\rho(b_i \cdot (\ell_\pi^\gamma \otimes \mathbf{T})) = b_i \cdot \rho(\ell_\pi^\gamma \otimes \mathbf{T})$.

Next, we consider the action of $s_i$. Tracing through the definitions, we have
\begin{equation*}
    s_i \cdot \left(\ell_\pi^\gamma \otimes \mathbf{T}\right)
    = (s_i \odot \ell_\pi^\gamma) \otimes \mathbf{T}
    = s_i \ell_\pi^\gamma \otimes \mathbf{T},
\end{equation*}
where the last equality follows from the observation that $s_i \ell_\pi^\gamma \in L_\pi$
because $\mathsf{bot}(s_i \ell_\pi^\gamma) = \pi$.

To describe $s_i \ell_\pi^\gamma$ explicitly, write
$\gamma = \{\gamma_1, \ldots, \gamma_\ell\}$ and $\pi = \{\pi_1, \ldots, \pi_\ell\}$
with the blocks order using graded last letter order,
and recall that $\ell_\pi^\gamma$ is the bijection that maps $\gamma_h$ to $\pi_h$.
If exchanging $i$ and $i+1$ in $\gamma$ does not change the order of the
blocks (\ie, $s_i(\gamma_1) < \cdots < s_i(\gamma_\ell)$ in
graded last letter order), then $s_i \ell_\pi^\gamma = \ell_\pi^{s_i(\gamma)}$
so that $s_i \cdot (\ell_\pi^\gamma \otimes \mathbf{T})
= \ell_\pi^{s_i(\gamma)} \otimes \mathbf{T}$.
Its image under $\rho$ is obtained from $\mathbf{T}$ by replacing each block
$\pi_j$ appearing in $\mathbf{T}$ with $s_i(\gamma_j)$,
which is precisely the definition of the action of $s_i$.
Thus, $\rho(s_i \cdot (\ell_\pi^\gamma \otimes \mathbf{T})) = s_i \cdot \rho(\ell_\pi^\gamma \otimes \mathbf{T})$.

Otherwise, there exist blocks $\gamma_{j}$ and $\gamma_{j+1}$ with
$\max(\gamma_{j}) = i$, $\max(\gamma_{j+1}) = i+1$,
$|\gamma_j| = |\gamma_{j+1}|$ and
\begin{equation} \label{eq:sigamma}
    s_i(\gamma) = \big\{
        s_i(\gamma_1), s_i(\gamma_2), \ldots,
        s_i(\gamma_{j-1}), s_i(\gamma_{j+1}), s_i(\gamma_{j}), s_i(\gamma_{j+2}), \ldots, s_i(\gamma_\ell)
        \big\},
\end{equation}
where the blocks are listed in graded last letter order.
Then $s_i \ell_\pi^\gamma = \ell_\pi^{s_i(\gamma)} g$, where
\[
g = \begin{pmatrix} \pi_1&\cdots&\pi_j&\pi_{j+1}&\cdots&\pi_{\ell}\\
\pi_1&\cdots&\pi_{j+1}&\pi_j&\cdots&\pi_{\ell}\end{pmatrix}
\]
is the permutation in $G_\lambda$ that exchanges $\pi_{j}$ and $\pi_{j+1}$.
Therefore, the image of
$s_i \cdot (\ell_\pi^\gamma \otimes \mathbf{T}) = \ell_\pi^{s_i(\gamma)} \otimes g \cdot \mathbf{T}$
under $\rho$ is obtained from $\mathbf{T}$ by exchanging $\pi_j$ and $\pi_{j+1}$
and then each $\pi_{h}$ is replaced with the \emph{block in position} $h$ of $s_i(\gamma)$
(as listed in Equation~\eqref{eq:sigamma}).
Thus, $\rho(s_i \cdot (\ell_\pi^\gamma \otimes \mathbf{T}))$ is
again obtained from $\rho(\ell_\pi^\gamma \otimes \mathbf{T})$
by interchanging $i$ and $i+1$.
\end{proof}

\begin{example}
Under the bijection $\rho$ described in Theorem~\ref{th:tableauxmodule} the basis elements of $W^{((1),(1))}_{\mathcal{U}_3}$ correspond to the tableaux in
$\mathcal{T}_{\vec{\lambda}}$ as follows:
\begin{align*}
&\ell_{1|23}^{1|23}\otimes  \left(
\raisebox{-.05in}{\tikztableausmall{{1}}},\raisebox{-.05in}{\tikztableausmall{{23}}} \right) \mapsto \left(\raisebox{-.05in}{\tikztableausmall{{1}}},\raisebox{-.05in}{\tikztableausmall{{23}}} \right),\\
& \ell_{1|23}^{2|13}\otimes \left(
\raisebox{-.05in}{\tikztableausmall{{1}}},\raisebox{-.05in}{\tikztableausmall{{23}}} \right) \mapsto
\left(\raisebox{-.05in}{\tikztableausmall{{2}}},\raisebox{-.05in}{\tikztableausmall{{13}}} \right),\\
&\ell_{1|23}^{3|12}\otimes \left(
\raisebox{-.05in}{\tikztableausmall{{1}}},\raisebox{-.05in}{\tikztableausmall{{23}}} \right) \mapsto
\left(\raisebox{-.05in}{\tikztableausmall{{3}}},\raisebox{-.05in}{\tikztableausmall{{12}}} \right).
\end{align*}
\end{example}

\begin{example}
\label{big-example-of-action2}
\def\aa{{\textsl a}}
\def\bb{{\textsl b}}
\def\cc{{\textsl c}}
\def\dd{{\textsl d}}
\def\ee{{\textsl e}}
\def\ff{{\textsl f}}
\def\gg{{\textsl g}}
\def\hh{{\textsl h}}
Let $\vec{\lambda} = ((2,1), (2,2), (1,1)) \in I_{17}$ so that $\lambda = \vtype(\vec{\lambda}) = (1^3 \, 2^4 \, 3^2)$.
As in Example \ref{big-example-of-action}, we represent $10$ through $17$ by the letters $\aa$ through $\hh$.
Consider
\[
\mathbf{S} = \left( \quad\raisebox{-.2in}{\tikztableau{{\gg},{2,7}}}\,, \quad
\raisebox{-.2in}{\tikztableau{{5\bb,9\ee},{13, 6\dd}}}\,, \quad
\raisebox{-.2in}{\tikztableau{{8\ff\hh},{4\aa\cc}}}\quad\right)
\]
which is the image under $\rho$ of the basis element in Example~\ref{big-example-of-action}.
Consider the action of
\begin{equation*}
    d =
            2\,\o8 \mid
            8\,\o2 \mid 9\,\o\gg \mid \aa\,\o\dd \mid \bb\,\o7 \mid \cc\,\o6 \mid \ee\,\o\aa \mid
            \ff\,\o3 \mid \hh\,\o1 \mid
            1\,4\,\o5\, \o\bb \mid 6\,7\,\o9\,\o\ee \mid 3\,\dd\,\o4\, \o\cc \mid 5\,\gg\,\o\ff\,\o\hh
\end{equation*}
on the uniform tableau $\mathbf{S}$.
Since $\mathsf{bot}(d)$ is finer than
the set partition of the entries of $\mathbf{S}$, the
result is non-zero and is equal to
\[
\left( \quad\raisebox{-.2in}{\tikztableau{{9},{8,\bb}}}\, , \quad
\raisebox{-.2in}{\tikztableau{{14,67},{\ff\hh, \aa\cc}}}\, , \quad
\raisebox{-.2in}{\tikztableau{{25\gg},{3\dd\ee}}}\quad\right)~.
\]
This is not a basis element because the middle tableau
is not standard with respect to the graded last letter order.
We then
apply some straightening relations to express it as
a linear combination of the basis elements.
The interested reader may then compute that the action of $d$ on $\mathbf{S}$
is equal to the following
linear combination:
\[
\left( \quad\raisebox{-.2in}{\tikztableau{{9},{8,\bb}}}\,, \quad
\raisebox{-.2in}{\tikztableau{{\aa\cc,\ff\hh},{14,67}}}\,, \quad
\raisebox{-.2in}{\tikztableau{{25\gg},{3\dd\ee}}}\quad\right)
-
\left( \quad\raisebox{-.2in}{\tikztableau{{9},{8,\bb}}}\,, \quad
\raisebox{-.2in}{\tikztableau{{67,\ff\hh},{14,\aa\cc}}}\,, \quad
\raisebox{-.2in}{\tikztableau{{25\gg},{3\dd\ee}}}\quad\right)~.
\]
\end{example}

%%%%%%%%%%%%%%%%%%%%%%%%%%%%%%%%%%%%%%%%%%%%%%%%%%%%%%%%%%%%%%%%%%%
\section{The characters of $\mathcal{U}_k$}
\label{section.characters}
%%%%%%%%%%%%%%%%%%%%%%%%%%%%%%%%%%%%%%%%%%%%%%%%%%%%%%%%%%%%%%%%%%%
The last two sections of this paper are devoted to a careful analysis of the
characters of $\mathcal{U}_k$.  This development will allow
us to give an expression of the character values in terms of symmetric
functions in Section \ref{sec:charsSym} and make explicit the connection between
plethysm and the restriction of $\mathcal{U}_k$-modules to the symmetric
group $\mathfrak{S}_k \subseteq \mathcal{U}_k$.

In this section, we describe the characters for the irreducible
$\mathcal{U}_k$-representations that were presented in the previous section.
In general, the characters of finite monoids were studied by McAlister \cite{McAlister}.
Here we use the notation described in~\cite[Chapter 7]{Steinberg.2016}.

%%%%%%%%%%%%%%%%%%%%%%%%%%%%%%%%%%%%%%%%%%%%%%%%%%%%%%%%%%%%%%%%%%%
\subsection{Generalized conjugacy classes}
Let $M$ be a finite monoid. For every $m \in M$, the subsemigroup of $M$ generated
by $m$ contains a unique idempotent that we denote \defn{$m^\omega$} (see~\cite[Corollary 1.2]{Steinberg.2016}). One
can think of $\omega$ as representing the smallest positive integer such that $m^\omega$ is an idempotent.
Two elements $m$ and $n$ in $M$ are \defn{conjugate} if there exist $x, x' \in M$ such that
$xx'x = x$, $x'xx' = x'$, $x'x = m^{\omega}$,
$xx' = n^{\omega}$ and $x m^{\omega +1}x' = n^{\omega +1}$.
This is an equivalence relation whose equivalence classes are called the
\defn{generalized conjugacy classes} of $M$.
Notice that $m$ and $m^{\omega + 1}$ are conjugate for all $m\in M$
\cite[Chapter 7]{Steinberg.2016}.

By \cite[Proposition~7.4]{Steinberg.2016},
there is a bijection between the generalized conjugacy classes of $M$
and the union of the sets of conjugacy classes of the maximal subgroups
$G_{e_1}, \ldots, G_{e_s}$, where $e_1, \ldots, e_s$ are idempotents chosen
one from each $\mathscr{J}$-class of $M$ that contains an idempotent.
The bijection is obtained by intersecting a generalized conjugacy
class of $M$ with the conjugacy classes of $G_{e_i}$: exactly one of these
intersections is nonempty.
In particular, to select a set of representatives of the generalized conjugacy
classes of $M$, it suffices to take one element from each of the conjugacy
classes of the maximal subgroups $G_{e_1}, \ldots, G_{e_s}$.

We now apply the above to $\mathcal{U}_k$.
Since $\mathcal{U}_k$ is an inverse monoid, every
$x\in \mathcal{U}_k$ has a (unique) generalized inverse $\widetilde{x}$
satisfying $x\widetilde{x}x = x$ and $\widetilde{x}x\widetilde{x}
= \widetilde{x}$ (\cf Section~\ref{sec:inverse-monoid}).
Therefore, two elements $c$ and $d$ are conjugate in $\mathcal{U}_k$ if and only if there
exists $x \in \mathcal{U}_k$ such that
$\widetilde{x}x = c^{\omega}$,
$x \widetilde{x} = d^{\omega}$
and $x c^{\omega +1} \widetilde{x} = d^{\omega + 1}$.

We next define a notion of ``cycle type'' for the elements of $\mathcal{U}_k$,
which will allow us to determine whether two elements are conjugate in $\mathcal{U}_k$.
First, let $d$ be an element of a maximal subgroup $G_{e_\pi}$.
Then $d$ is a permutation of the blocks of $\pi$ that maps blocks of size $i$
to blocks of size $i$ for every $1 \leqslant i \leqslant k$.
Letting $d^{(i)}$ denote the restriction of $d$ to the blocks of size $i$ of $\pi$,
we define the \defn{cycle type} of $d$ to be $\mathsf{cycletype}(d) = (\mu^{(1)}, \mu^{(2)}, \ldots, \mu^{(k)})$,
where $\mu^{(i)}$ is the cycle type of the permutation $d^{(i)}$.
For an arbitrary element $x \in \mathcal{U}_k$, we define its
\defn{cycle type} to be the cycle type of $x^{\omega +1} \in G_{x^\omega}$.
In other words, $\mathsf{cycletype}(x) = \mathsf{cycletype}(x^{\omega + 1})$.

\begin{prop}\label{prop:conjugacyclasses}
    Two elements $c, d \in \mathcal{U}_k$ are conjugate if and only if
    \[
        \mathsf{cycletype}(c) = \mathsf{cycletype}(d).
    \]
\end{prop}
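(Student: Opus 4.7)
The plan is to apply \cite[Proposition~7.4]{Steinberg.2016}, which states that the generalized conjugacy classes of a finite monoid $M$ are in bijection with the disjoint union of (group-theoretic) conjugacy classes of the maximal subgroups $G_{e_1}, \ldots, G_{e_s}$, where $e_1, \ldots, e_s$ are idempotents chosen one from each $\mathscr{J}$-class of $M$ containing an idempotent. Concretely, each generalized conjugacy class of $M$ meets exactly one $G_{e_i}$, and does so in a single group-theoretic conjugacy class of $G_{e_i}$. Since every $\mathscr{J}$-class of $\mathcal{U}_k$ contains an idempotent (Proposition~\ref{prop:charJclass}(a)) and the $\mathscr{J}$-classes are indexed by the partitions of $k$, I would take as representatives the idempotents $\{e_{\pi_\lambda} : \lambda \vdash k\}$ with maximal subgroups $G_\lambda$.

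Next I would observe that for any $x \in \mathcal{U}_k$ the element $x^{\omega+1}$ lies in $G_{x^\omega}$ and is conjugate to $x$; hence the generalized conjugacy class of $x$ corresponds to the group-theoretic conjugacy class of $x^{\omega+1}$ in $G_{x^\omega}$. By Corollaries~\ref{cor:whatisGe} and~\ref{cor:isomGe}, $G_{x^\omega} \simeq G_\lambda \simeq \mathfrak{S}_{a_1} \times \cdots \times \mathfrak{S}_{a_k}$, where $\lambda = \mathsf{type}(x^\omega) = (1^{a_1} 2^{a_2} \cdots k^{a_k})$. Conjugacy classes in a direct product of symmetric groups factor as products of conjugacy classes, so they are parametrized by tuples $(\mu^{(1)}, \ldots, \mu^{(k)})$ with $\mu^{(i)} \vdash a_i$, and this tuple is by definition $\mathsf{cycletype}(x^{\omega+1}) = \mathsf{cycletype}(x)$. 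The partition $\lambda$ is moreover recoverable from the cycle type via $a_i = |\mu^{(i)}|$, so the generalized conjugacy classes of $\mathcal{U}_k$ are indexed precisely by cycle types, which yields the proposition.

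The main delicate point will be checking that $\mathsf{cycletype}$ is well-defined and invariant under the isomorphisms used above: for $c \in G_{e_\pi}$ with $\mathsf{type}(\pi) = \lambda$ and $\sigma \in \mathfrak{S}_k$ satisfying $\sigma(\pi) = \pi_\lambda$ (whose existence is asserted in Corollary~\ref{cor:isomGe}), the conjugate $\sigma c \sigma^{-1} \in G_\lambda$ must have the same cycle type as $c$. This holds because $\sigma$ restricts to a size-preserving bijection between the blocks of $\pi$ and those of $\pi_\lambda$, and this bijection intertwines $c^{(i)}$ with $(\sigma c \sigma^{-1})^{(i)}$ for each $i$, so the two permutations share the same cycle-type partition of $a_i$. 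The converse implication—that equal cycle types force conjugacy—then follows by reversing this construction: using Corollary~\ref{cor:isomGe} to move both elements into the same $G_\lambda$ and conjugating within the group $\mathfrak{S}_{a_1} \times \cdots \times \mathfrak{S}_{a_k}$, which makes the hypotheses of the monoid conjugacy definition straightforward to verify using the inverse-monoid structure of $\mathcal{U}_k$ (so that $x'$ may be taken to be $\widetilde{x}$).
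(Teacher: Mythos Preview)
Your argument is correct. You lean on \cite[Proposition~7.4]{Steinberg.2016} as a black box---generalized conjugacy classes of $\mathcal{U}_k$ are in bijection with $\bigsqcup_{\lambda \vdash k} \{\text{conjugacy classes of }G_\lambda\}$---and then identify this index set with $I_k$ via the cycle-type construction, checking that the isomorphism $G_{x^\omega} \simeq G_\lambda$ from Corollary~\ref{cor:isomGe} preserves cycle type so that $\mathsf{cycletype}(x)$ really names the correct class.

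The paper instead proves both implications directly from the definition of monoid conjugacy, without invoking Proposition~7.4 in the formal argument (though it is mentioned in the preamble). For the forward direction it moves $c^{\omega+1}$ into $G_{e_\gamma}$ via $\sigma$, uses that elements of the same cycle type in a product of symmetric groups are group-conjugate by some $y$, and then exhibits the monoid-conjugacy witness $x = \widetilde{\sigma y}$ explicitly, verifying $\widetilde{x}x = c^\omega$, $x\widetilde{x} = d^\omega$, and $x c^{\omega+1}\widetilde{x} = d^{\omega+1}$. For the converse it reads off $\pi = \mathsf{bot}(x)$ and $\gamma = \mathsf{top}(x)$ from a given witness $x$ and again transports through $\sigma$. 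Your route is cleaner and more conceptual, buying brevity at the cost of relying on the general monoid machinery; the paper's route is more self-contained and makes the witnesses concrete, which is useful later when one wants to compute with specific representatives $d_{\vec{\mu}}$.
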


\begin{proof}
    Suppose $\mathsf{cycletype}(c) = \mathsf{cycletype}(d) = (\mu^{(1)}, \mu^{(2)}, \ldots, \mu^{(k)})$.
    Hence by definition
    $\mathsf{cycletype}(c^{\omega+1}) = \mathsf{cycletype}(d^{\omega +1})$.
    Then $c^\omega = e_\pi$ and $d^{\omega} = e_\gamma$ for some set partitions $\pi$ and $\gamma$.
    Moreover, by the definition of cycle type, $\pi$ and $\gamma$ must have
    type $(1^{a_1} 2^{a_2}\ldots k^{a_k})$, where $a_i = |\mu^{(i)}|$ for all $1 \leqslant i \leqslant k$.
    By Corollary~\ref{cor:isomGe}, there exists a permutation $\sigma \in \mathfrak{S}_k$ such that
    $\widetilde{\sigma} G_{e_\pi} \sigma = G_{e_\gamma}$; note that $\widetilde{\sigma} = \sigma^{-1}$ for permutations.
    Thus, $\widetilde{\sigma} c^{\omega+1} \sigma$ and $d^{\omega+1}$
    both belong to $G_{e_\gamma}$ and they both have the same cycle type.
    Hence, they are conjugate in $G_{e_\gamma}$, which implies they are conjugate
    in $\mathcal{U}_k$: explicitly, there exists $y \in G_{e_\gamma}$ such that
    $\widetilde{y} (\widetilde{\sigma} c^{\omega+1} \sigma) y = d^{\omega + 1}$,
    and so the element $x = \widetilde{\sigma y}$ satisfies
    $x \widetilde{x} = \widetilde{y} y = e_\gamma = d^\omega$,
    $\widetilde{x} x = \sigma e_\gamma \widetilde{\sigma} = e_\pi = c^\omega$
    and $x c^{\omega+1} \widetilde{x} = d^{\omega+1}$.

    Conversely, suppose $c$ and $d$ are conjugate in $\mathcal{U}_k$.
    Then there exists $x \in \mathcal{U}_k$ such that
        $\widetilde{x} x = c^\omega$,
        $x \widetilde{x} = d^\omega$,
        and $x c^{\omega + 1} \widetilde{x} = d^{\omega + 1}$.
    Let $\pi = \mathsf{bot}(x)$ and $\gamma = \mathsf{top}(x)$
    so that $c^\omega = e_\pi$ and $d^\omega = e_\gamma$.
    Then $\mathsf{type(\pi)} = \mathsf{type(\gamma)}$
    because $x$ is a bijection from $\pi$ to $\gamma$ that preserves block sizes.
    By Corollary~\ref{cor:isomGe}, there exists a permutation $\sigma \in \mathfrak{S}_k$ such that
    $\widetilde{\sigma} G_{e_\pi} \sigma = G_{e_\gamma}$.
    Thus, $\widetilde{\sigma} c^{\omega + 1} \sigma$ and $d^{\omega + 1}$
    both belong to $G_{e_\gamma}$ and they are conjugate in $\mathcal{U}_k$.
    It follows that they are conjugate in $G_{e_\gamma}$
    and so they have the same cycle type since $G_{e_\gamma}$ is a group of
    permutations.
    Hence,
    $\mathsf{cycletype}(c^{\omega+1})
    = \mathsf{cycletype}(\widetilde{\sigma} c^{\omega+1} \sigma)
    = \mathsf{cycletype}(d^{\omega+1})$.
\end{proof}

The above gives a straightforward algorithm for computing the
cycle type of any $d\in \mathcal{U}_k$.  To find the cycle type of
$d$, we compute $d^{\omega +1}$ and then find the cycle type of
$d^{\omega +1} \in G_{d^{\omega}}$.  For $\vec{\mu} \in I_k$, define
\[
{\defncolor C_{\vec{\mu}}} = \{ x \in \mathcal{U}_k : \mathsf{cycletype}(x) = \vec{\mu} \}~.
\]

\begin{example} The generalized conjugacy classes in $\mathcal{U}_3$
are listed below:
\begin{align*}
  &C_{((1^3))} =  \left\{\begin{tikzpicture}[partition-diagram, xscale=0.75]
    \node[vertex] (G--3) at (2.8, -1) [shape = circle, draw] {};
    \node[vertex] (G-3) at (2.8, 1) [shape = circle, draw] {};
    \node[vertex] (G--2) at (1.4, -1) [shape = circle, draw] {};
    \node[vertex] (G-2) at (1.4, 1) [shape = circle, draw] {};
    \node[vertex] (G--1) at (0.0, -1) [shape = circle, draw] {};
    \node[vertex] (G-1) at (0.0, 1) [shape = circle, draw] {};
    \draw[] (G-3) .. controls +(0, -1) and +(0, 1) .. (G--3);
    \draw[] (G-2) .. controls +(0, -1) and +(0, 1) .. (G--2);
    \draw[] (G-1) .. controls +(0, -1) and +(0, 1) .. (G--1);
    \end{tikzpicture}\right\}
\\
    &C_{((2,1))} = \left\{
    \begin{tikzpicture}[partition-diagram, xscale=0.75]
    \node[vertex] (G--3) at (2.8, -1) [shape = circle, draw] {};
    \node[vertex] (G-1) at (0.0, 1) [shape = circle, draw] {};
    \node[vertex] (G--2) at (1.4, -1) [shape = circle, draw] {};
    \node[vertex] (G-2) at (1.4, 1) [shape = circle, draw] {};
    \node[vertex] (G--1) at (0.0, -1) [shape = circle, draw] {};
    \node[vertex] (G-3) at (2.8, 1) [shape = circle, draw] {};
    \draw[] (G-1) .. controls +(1, -1) and +(-1, 1) .. (G--3);
    \draw[] (G-2) .. controls +(0, -1) and +(0, 1) .. (G--2);
    \draw[] (G-3) .. controls +(-1, -1) and +(1, 1) .. (G--1);
    \end{tikzpicture},
    \quad
    \begin{tikzpicture}[partition-diagram, xscale=0.75]
    \node[vertex] (G--3) at (2.8, -1) [shape = circle, draw] {};
    \node[vertex] (G-3) at (2.8, 1) [shape = circle, draw] {};
    \node[vertex] (G--2) at (1.4, -1) [shape = circle, draw] {};
    \node[vertex] (G-1) at (0.0, 1) [shape = circle, draw] {};
    \node[vertex] (G--1) at (0.0, -1) [shape = circle, draw] {};
    \node[vertex] (G-2) at (1.4, 1) [shape = circle, draw] {};
    \draw[] (G-3) .. controls +(0, -1) and +(0, 1) .. (G--3);
    \draw[] (G-1) .. controls +(0.75, -1) and +(-0.75, 1) .. (G--2);
    \draw[] (G-2) .. controls +(-0.75, -1) and +(0.75, 1) .. (G--1);
    \end{tikzpicture},
    \quad
    \begin{tikzpicture}[partition-diagram, xscale=0.75]
    \node[vertex] (G--3) at (2.8, -1) [shape = circle, draw] {};
    \node[vertex] (G-2) at (1.4, 1) [shape = circle, draw] {};
    \node[vertex] (G--2) at (1.4, -1) [shape = circle, draw] {};
    \node[vertex] (G-3) at (2.8, 1) [shape = circle, draw] {};
    \node[vertex] (G--1) at (0.0, -1) [shape = circle, draw] {};
    \node[vertex] (G-1) at (0.0, 1) [shape = circle, draw] {};
    \draw[] (G-2) .. controls +(0.75, -1) and +(-0.75, 1) .. (G--3);
    \draw[] (G-3) .. controls +(-0.75, -1) and +(0.75, 1) .. (G--2);
    \draw[] (G-1) .. controls +(0, -1) and +(0, 1) .. (G--1);
    \end{tikzpicture}\right\}
    \\
    &C_{((3))}= \left\{
   \begin{tikzpicture}[partition-diagram, xscale=0.75]
    \node[vertex] (G--3) at (2.8, -1) [shape = circle, draw] {};
    \node[vertex] (G-1) at (0.0, 1) [shape = circle, draw] {};
    \node[vertex] (G--2) at (1.4, -1) [shape = circle, draw] {};
    \node[vertex] (G-3) at (2.8, 1) [shape = circle, draw] {};
    \node[vertex] (G--1) at (0.0, -1) [shape = circle, draw] {};
    \node[vertex] (G-2) at (1.4, 1) [shape = circle, draw] {};
    \draw[] (G-1) .. controls +(1, -1) and +(-1, 1) .. (G--3);
    \draw[] (G-3) .. controls +(-0.75, -1) and +(0.75, 1) .. (G--2);
    \draw[] (G-2) .. controls +(-0.75, -1) and +(0.75, 1) .. (G--1);
    \end{tikzpicture},
    \quad
    \begin{tikzpicture}[partition-diagram, xscale=0.75]
    \node[vertex] (G--3) at (2.8, -1) [shape = circle, draw] {};
    \node[vertex] (G-2) at (1.4, 1) [shape = circle, draw] {};
    \node[vertex] (G--2) at (1.4, -1) [shape = circle, draw] {};
    \node[vertex] (G-1) at (0.0, 1) [shape = circle, draw] {};
    \node[vertex] (G--1) at (0.0, -1) [shape = circle, draw] {};
    \node[vertex] (G-3) at (2.8, 1) [shape = circle, draw] {};
    \draw[] (G-2) .. controls +(0.75, -1) and +(-0.75, 1) .. (G--3);
    \draw[] (G-1) .. controls +(0.75, -1) and +(-0.75, 1) .. (G--2);
    \draw[] (G-3) .. controls +(-1, -1) and +(1, 1) .. (G--1);
    \end{tikzpicture}
    \right\}
    \\
    &C_{((1),(1))} = \left\{
    \begin{tikzpicture}[partition-diagram, idempotent, xscale=0.75]
    \node[vertex] (G--3) at (2.8, -1) [shape = circle, draw] {};
    \node[vertex] (G-3) at (2.8, 1) [shape = circle, draw] {};
    \node[vertex] (G--2) at (1.4, -1) [shape = circle, draw] {};
    \node[vertex] (G--1) at (0.0, -1) [shape = circle, draw] {};
    \node[vertex] (G-1) at (0.0, 1) [shape = circle, draw] {};
    \node[vertex] (G-2) at (1.4, 1) [shape = circle, draw] {};
    \draw[] (G-3) .. controls +(0, -1) and +(0, 1) .. (G--3);
    \draw[] (G-1) .. controls +(0.5, -0.5) and +(-0.5, -0.5) .. (G-2);
    %\draw[] (G-2) .. controls +(0, -1) and +(0, 1) .. (G--2);
    \draw[] (G--2) .. controls +(-0.5, 0.5) and +(0.5, 0.5) .. (G--1);
    \draw[] (G--1) .. controls +(0, 1) and +(0, -1) .. (G-1);
    \end{tikzpicture},
    \quad
    \begin{tikzpicture}[partition-diagram, idempotent, xscale=0.75]
    \node[vertex] (G--3) at (2.8, -1) [shape = circle, draw] {};
    \node[vertex] (G--2) at (1.4, -1) [shape = circle, draw] {};
    \node[vertex] (G-2) at (1.4, 1) [shape = circle, draw] {};
    \node[vertex] (G-3) at (2.8, 1) [shape = circle, draw] {};
    \node[vertex] (G--1) at (0.0, -1) [shape = circle, draw] {};
    \node[vertex] (G-1) at (0.0, 1) [shape = circle, draw] {};
    \draw[] (G-2) .. controls +(0.5, -0.5) and +(-0.5, -0.5) .. (G-3);
    %\draw[] (G-3) .. controls +(0, -1) and +(0, 1) .. (G--3);
    \draw[] (G--3) .. controls +(-0.5, 0.5) and +(0.5, 0.5) .. (G--2);
    \draw[] (G--2) .. controls +(0, 1) and +(0, -1) .. (G-2);
    \draw[] (G-1) .. controls +(0, -1) and +(0, 1) .. (G--1);
    \end{tikzpicture},
    \quad
    \begin{tikzpicture}[partition-diagram, idempotent, xscale=0.75]
    \node[vertex] (G--3) at (2.8, -1) [shape = circle, draw] {};
    \node[vertex] (G--1) at (0.0, -1) [shape = circle, draw] {};
    \node[vertex] (G-1) at (0.0, 1) [shape = circle, draw] {};
    \node[vertex] (G-3) at (2.8, 1) [shape = circle, draw] {};
    \node[vertex] (G--2) at (1.4, -1) [shape = circle, draw] {};
    \node[vertex] (G-2) at (1.4, 1) [shape = circle, draw] {};
    \draw[] (G-1) .. controls +(0.6, -0.6) and +(-0.6, -0.6) .. (G-3);
    %\draw[] (G-3) .. controls +(0, -1) and +(0, 1) .. (G--3);
    \draw[] (G--3) .. controls +(-0.6, 0.6) and +(0.6, 0.6) .. (G--1);
    \draw[] (G--1) .. controls +(0, 1) and +(0, -1) .. (G-1);
    \draw[] (G-2) .. controls +(0, -1) and +(0, 1) .. (G--2);
    \end{tikzpicture}
    \right\}
    \\
    &C_{(\epart,\epart,(1))} = \left\{
    \begin{tikzpicture}[partition-diagram, xscale=0.75]
    \node[vertex] (G--3) at (2.8, -1) [shape = circle, draw] {};
    \node[vertex] (G-1) at (0.0, 1) [shape = circle, draw] {};
    \node[vertex] (G--2) at (1.4, -1) [shape = circle, draw] {};
    \node[vertex] (G--1) at (0.0, -1) [shape = circle, draw] {};
    \node[vertex] (G-2) at (1.4, 1) [shape = circle, draw] {};
    \node[vertex] (G-3) at (2.8, 1) [shape = circle, draw] {};
    \draw[] (G-1) .. controls +(1, -1) and +(-1, 1) .. (G--3);
    \draw[] (G-2) .. controls +(0.5, -0.5) and +(-0.5, -0.5) .. (G-3);
    %\draw[] (G-3) .. controls +(-0.75, -1) and +(0.75, 1) .. (G--2);
    \draw[] (G--2) .. controls +(-0.5, 0.5) and +(0.5, 0.5) .. (G--1);
    \draw[] (G--1) .. controls +(0.75, 1) and +(-0.75, -1) .. (G-2);
    \end{tikzpicture},
    \quad
    \begin{tikzpicture}[partition-diagram, xscale=0.75]
    \node[vertex] (G--3) at (2.8, -1) [shape = circle, draw] {};
    \node[vertex] (G-2) at (1.4, 1) [shape = circle, draw] {};
    \node[vertex] (G--2) at (1.4, -1) [shape = circle, draw] {};
    \node[vertex] (G--1) at (0.0, -1) [shape = circle, draw] {};
    \node[vertex] (G-1) at (0.0, 1) [shape = circle, draw] {};
    \node[vertex] (G-3) at (2.8, 1) [shape = circle, draw] {};
    \draw[] (G-2) .. controls +(0.75, -1) and +(-0.75, 1) .. (G--3);
    \draw[] (G-1) .. controls +(0.6, -0.6) and +(-0.6, -0.6) .. (G-3);
    %\draw[] (G-3) .. controls +(-0.75, -1) and +(0.75, 1) .. (G--2);
    \draw[] (G--2) .. controls +(-0.5, 0.5) and +(0.5, 0.5) .. (G--1);
    \draw[] (G--1) .. controls +(0, 1) and +(0, -1) .. (G-1);
    \end{tikzpicture},
    \quad
     \begin{tikzpicture}[partition-diagram, xscale=0.75]
    \node[vertex] (G--3) at (2.8, -1) [shape = circle, draw] {};
    \node[vertex] (G--2) at (1.4, -1) [shape = circle, draw] {};
    \node[vertex] (G-1) at (0.0, 1) [shape = circle, draw] {};
    \node[vertex] (G-3) at (2.8, 1) [shape = circle, draw] {};
    \node[vertex] (G--1) at (0.0, -1) [shape = circle, draw] {};
    \node[vertex] (G-2) at (1.4, 1) [shape = circle, draw] {};
    \draw[] (G-1) .. controls +(0.6, -0.6) and +(-0.6, -0.6) .. (G-3);
    \draw[] (G-3) .. controls +(0, -1) and +(0, 1) .. (G--3);
    \draw[] (G--3) .. controls +(-0.5, 0.5) and +(0.5, 0.5) .. (G--2);
    %\draw[] (G--2) .. controls +(-0.75, 1) and +(0.75, -1) .. (G-1);
    \draw[] (G-2) .. controls +(-0.75, -1) and +(0.75, 1) .. (G--1);
    \end{tikzpicture},
    \quad
    \begin{tikzpicture}[partition-diagram, xscale=0.75]
    \node[vertex] (G--3) at (2.8, -1) [shape = circle, draw] {};
    \node[vertex] (G--2) at (1.4, -1) [shape = circle, draw] {};
    \node[vertex] (G-1) at (0.0, 1) [shape = circle, draw] {};
    \node[vertex] (G-2) at (1.4, 1) [shape = circle, draw] {};
    \node[vertex] (G--1) at (0.0, -1) [shape = circle, draw] {};
    \node[vertex] (G-3) at (2.8, 1) [shape = circle, draw] {};
    \draw[] (G-1) .. controls +(0.5, -0.5) and +(-0.5, -0.5) .. (G-2);
    %\draw[] (G-2) .. controls +(0.75, -1) and +(-0.75, 1) .. (G--3);
    \draw[] (G--3) .. controls +(-0.5, 0.5) and +(0.5, 0.5) .. (G--2);
    \draw[] (G--2) .. controls +(-0.75, 1) and +(0.75, -1) .. (G-1);
    \draw[] (G-3) .. controls +(-1, -1) and +(1, 1) .. (G--1);
    \end{tikzpicture},
    \quad
    \begin{tikzpicture}[partition-diagram, xscale=0.75]
    \node[vertex] (G--3) at (2.8, -1) [shape = circle, draw] {};
    \node[vertex] (G--1) at (0.0, -1) [shape = circle, draw] {};
    \node[vertex] (G-1) at (0.0, 1) [shape = circle, draw] {};
    \node[vertex] (G-2) at (1.4, 1) [shape = circle, draw] {};
    \node[vertex] (G--2) at (1.4, -1) [shape = circle, draw] {};
    \node[vertex] (G-3) at (2.8, 1) [shape = circle, draw] {};
    \draw[] (G-1) .. controls +(0.5, -0.5) and +(-0.5, -0.5) .. (G-2);
    %\draw[] (G-2) .. controls +(0.75, -1) and +(-0.75, 1) .. (G--3);
    \draw[] (G--3) .. controls +(-0.6, 0.6) and +(0.6, 0.6) .. (G--1);
    \draw[] (G--1) .. controls +(0, 1) and +(0, -1) .. (G-1);
    \draw[] (G-3) .. controls +(-0.75, -1) and +(0.75, 1) .. (G--2);
    \end{tikzpicture},
    \quad
    \begin{tikzpicture}[partition-diagram, xscale=0.75]
    \node[vertex] (G--3) at (2.8, -1) [shape = circle, draw] {};
    \node[vertex] (G--1) at (0.0, -1) [shape = circle, draw] {};
    \node[vertex] (G-2) at (1.4, 1) [shape = circle, draw] {};
    \node[vertex] (G-3) at (2.8, 1) [shape = circle, draw] {};
    \node[vertex] (G--2) at (1.4, -1) [shape = circle, draw] {};
    \node[vertex] (G-1) at (0.0, 1) [shape = circle, draw] {};
    \draw[] (G-2) .. controls +(0.5, -0.5) and +(-0.5, -0.5) .. (G-3);
    \draw[] (G-3) .. controls +(0, -1) and +(0, 1) .. (G--3);
    \draw[] (G--3) .. controls +(-0.6, 0.6) and +(0.6, 0.6) .. (G--1);
    %\draw[] (G--1) .. controls +(0.75, 1) and +(-0.75, -1) .. (G-2);
    \draw[] (G-1) .. controls +(0.75, -1) and +(-0.75, 1) .. (G--2);
    \end{tikzpicture},
    \quad
    \begin{tikzpicture}[partition-diagram, idempotent, xscale=0.75]
    \node[vertex] (G--3) at (2.8, -1) [shape = circle, draw] {};
    \node[vertex] (G--2) at (1.4, -1) [shape = circle, draw] {};
    \node[vertex] (G--1) at (0.0, -1) [shape = circle, draw] {};
    \node[vertex] (G-1) at (0.0, 1) [shape = circle, draw] {};
    \node[vertex] (G-2) at (1.4, 1) [shape = circle, draw] {};
    \node[vertex] (G-3) at (2.8, 1) [shape = circle, draw] {};
    \draw[] (G-1) .. controls +(0.5, -0.5) and +(-0.5, -0.5) .. (G-2);
    \draw[] (G-2) .. controls +(0.5, -0.5) and +(-0.5, -0.5) .. (G-3);
    %\draw[] (G-3) .. controls +(0, -1) and +(0, 1) .. (G--3);
    \draw[] (G--3) .. controls +(-0.5, 0.5) and +(0.5, 0.5) .. (G--2);
    \draw[] (G--2) .. controls +(-0.5, 0.5) and +(0.5, 0.5) .. (G--1);
    \draw[] (G--1) .. controls +(0, 1) and +(0, -1) .. (G-1);
    \end{tikzpicture}\right\}
\end{align*}
\end{example}

\begin{example}
\def\o{\overline}
Consider the element
$x \in \mathcal{U}_{10}$ with diagram
\begin{center}
\begin{tikzpicture}[scale = 0.5,thick, baseline={(0,-1ex/2)}]
%\tikzstyle{vertex} = [shape = circle, minimum size = 3pt, inner sep = 1pt]
\node[vertex] (G--10) at (13.5, -1) [shape = circle, draw] {};
\node[vertex] (G--9) at (12.0, -1) [shape = circle, draw] {};
\node[vertex] (G-5) at (6.0, 1) [shape = circle, draw] {};
\node[vertex] (G-10) at (13.5, 1) [shape = circle, draw] {};
\node[vertex] (G--8) at (10.5, -1) [shape = circle, draw] {};
\node[vertex] (G-2) at (1.5, 1) [shape = circle, draw] {};
\node[vertex] (G--7) at (9.0, -1) [shape = circle, draw] {};
\node[vertex] (G-1) at (0.0, 1) [shape = circle, draw] {};
\node[vertex] (G--6) at (7.5, -1) [shape = circle, draw] {};
\node[vertex] (G-6) at (7.5, 1) [shape = circle, draw] {};
\node[vertex] (G--5) at (6.0, -1) [shape = circle, draw] {};
\node[vertex] (G--4) at (4.5, -1) [shape = circle, draw] {};
\node[vertex] (G-3) at (3.0, 1) [shape = circle, draw] {};
\node[vertex] (G-4) at (4.5, 1) [shape = circle, draw] {};
\node[vertex] (G--3) at (3.0, -1) [shape = circle, draw] {};
\node[vertex] (G-9) at (12.0, 1) [shape = circle, draw] {};
\node[vertex] (G--2) at (1.5, -1) [shape = circle, draw] {};
\node[vertex] (G--1) at (0.0, -1) [shape = circle, draw] {};
\node[vertex] (G-7) at (9.0, 1) [shape = circle, draw] {};
\node[vertex] (G-8) at (10.5, 1) [shape = circle, draw] {};
\draw[] (G-5) .. controls +(1, -1) and +(-1, -1) .. (G-10);
\draw[] (G-10) .. controls +(0, -1) and +(0, 1) .. (G--10);
\draw[] (G--10) .. controls +(-0.5, 0.5) and +(0.5, 0.5) .. (G--9);
%\draw[] (G--9) .. controls +(-1, 1) and +(1, -1) .. (G-5);
\draw[] (G-2) .. controls +(1, -1) and +(-1, 1) .. (G--8);
\draw[] (G-1) .. controls +(1, -1) and +(-1, 1) .. (G--7);
\draw[] (G-6) .. controls +(0, -1) and +(0, 1) .. (G--6);
\draw[] (G-3) .. controls +(0.5, -0.5) and +(-0.5, -0.5) .. (G-4);
\draw[] (G-4) .. controls +(0.75, -1) and +(-0.75, 1) .. (G--5);
\draw[] (G--5) .. controls +(-0.5, 0.5) and +(0.5, 0.5) .. (G--4);
%\draw[] (G--4) .. controls +(-0.75, 1) and +(0.75, -1) .. (G-3);
\draw[] (G-9) .. controls +(-1, -1) and +(1, 1) .. (G--3);
\draw[] (G-7) .. controls +(0.5, -0.5) and +(-0.5, -0.5) .. (G-8);
\draw[] (G-7) .. controls +(-1, -1) and +(1, 1) .. (G--2);
\draw[] (G--2) .. controls +(-0.5, 0.5) and +(0.5, 0.5) .. (G--1);
%\draw[] (G--1) .. controls +(1, 1) and +(-1, -1) .. (G-7);
\end{tikzpicture}.
\end{center}
We can then check that $4$ is the smallest integer such that
$x^4$ is idempotent and $x^4 = e_{\pi}$,
where $\pi = \{\{6\},\{1,2\},\{7,8\},\{3,4,5,9,10\}\}$.
The element $x^4$
has the following diagram:
\begin{center}
\begin{tikzpicture}[scale = 0.5,thick, baseline={(0,-1ex/2)}]
%\tikzstyle{vertex} = [shape = circle, minimum size = 3pt, inner sep = 1pt]
\node[vertex] (G--10) at (13.5, -1) [shape = circle, draw] {};
\node[vertex] (G--9) at (12.0, -1) [shape = circle, draw] {};
\node[vertex] (G--5) at (6.0, -1) [shape = circle, draw] {};
\node[vertex] (G--4) at (4.5, -1) [shape = circle, draw] {};
\node[vertex] (G--3) at (3.0, -1) [shape = circle, draw] {};
\node[vertex] (G-3) at (3.0, 1) [shape = circle, draw] {};
\node[vertex] (G-4) at (4.5, 1) [shape = circle, draw] {};
\node[vertex] (G-5) at (6.0, 1) [shape = circle, draw] {};
\node[vertex] (G-9) at (12.0, 1) [shape = circle, draw] {};
\node[vertex] (G-10) at (13.5, 1) [shape = circle, draw] {};
\node[vertex] (G--8) at (10.5, -1) [shape = circle, draw] {};
\node[vertex] (G--7) at (9.0, -1) [shape = circle, draw] {};
\node[vertex] (G-7) at (9.0, 1) [shape = circle, draw] {};
\node[vertex] (G-8) at (10.5, 1) [shape = circle, draw] {};
\node[vertex] (G--6) at (7.5, -1) [shape = circle, draw] {};
\node[vertex] (G-6) at (7.5, 1) [shape = circle, draw] {};
\node[vertex] (G--2) at (1.5, -1) [shape = circle, draw] {};
\node[vertex] (G--1) at (0.0, -1) [shape = circle, draw] {};
\node[vertex] (G-1) at (0.0, 1) [shape = circle, draw] {};
\node[vertex] (G-2) at (1.5, 1) [shape = circle, draw] {};
\draw[] (G-3) .. controls +(0.5, -0.5) and +(-0.5, -0.5) .. (G-4);
\draw[] (G-4) .. controls +(0.5, -0.5) and +(-0.5, -0.5) .. (G-5);
\draw[] (G-5) .. controls +(0.8, -0.8) and +(-0.8, -0.8) .. (G-9);
\draw[] (G-9) .. controls +(0.5, -0.5) and +(-0.5, -0.5) .. (G-10);
%\draw[] (G-10) .. controls +(0, -1) and +(0, 1) .. (G--10);
\draw[] (G--10) .. controls +(-0.5, 0.5) and +(0.5, 0.5) .. (G--9);
\draw[] (G--9) .. controls +(-0.8, 0.8) and +(0.8, 0.8) .. (G--5);
\draw[] (G--5) .. controls +(-0.5, 0.5) and +(0.5, 0.5) .. (G--4);
\draw[] (G--4) .. controls +(-0.5, 0.5) and +(0.5, 0.5) .. (G--3);
\draw[] (G--3) .. controls +(0, 1) and +(0, -1) .. (G-3);
\draw[] (G-7) .. controls +(0.5, -0.5) and +(-0.5, -0.5) .. (G-8);
%\draw[] (G-8) .. controls +(0, -1) and +(0, 1) .. (G--8);
\draw[] (G--8) .. controls +(-0.5, 0.5) and +(0.5, 0.5) .. (G--7);
\draw[] (G--7) .. controls +(0, 1) and +(0, -1) .. (G-7);
\draw[] (G-6) .. controls +(0, -1) and +(0, 1) .. (G--6);
\draw[] (G-1) .. controls +(0.5, -0.5) and +(-0.5, -0.5) .. (G-2);
%\draw[] (G-2) .. controls +(0, -1) and +(0, 1) .. (G--2);
\draw[] (G--2) .. controls +(-0.5, 0.5) and +(0.5, 0.5) .. (G--1);
\draw[] (G--1) .. controls +(0, 1) and +(0, -1) .. (G-1);
\end{tikzpicture}.
\end{center}
\vskip .1in
We deduce that $\vtype(\mathsf{cycletype}(x)) = \mathsf{type}(\pi) = (5,2,2,1)$.
Since $x^5 = x e_\pi = e_\pi x$ is
\[
    \big\{
        \{6,\o{6}\},
        \{1,2,\o{7},\o{8}\},
        \{7,8,\o{1},\o{2}\},
        \{2,3,4,9,10,\o{2},\o{3},\o{4},\o{9},\o{10}\}
    \big\}~,
\]
we conclude that $\mathsf{cycletype}(x) = \mathsf{cycletype}(x^5) = ((1), (2), \epart, \epart, (1))$
because $x$ acts on $e_\pi$ by permuting the two sets of size $2$ (in a cycle)
and fixing the sets of size $1$ and $5$.
\end{example}

%%%%%%%%%%%%%%%%%%%%%%%%%%%%%%%%%%%%%%%%%%%%%%%%%%%%%%%%%
\subsection{Representative conjugacy class element}
\label{sec:conj-class-reps}
In this section, we describe for each $\vec{\mu}\in I_k$ a representative conjugacy class element, denoted $d_{\vec{\mu}}$, contained in the
generalized conjugacy class $C_{\vec{\mu}}$ consisting of all elements of cycle type $\vec{\mu}$.
As shown in the previous section, we can choose $d_{\vec{\mu}}$ to be an
element of a representative maximal subgroup of $\mathcal{U}_k$.

For any set $A = \{x_1, \ldots, x_{|A|}\}$ and $\mu=(\mu_1,\ldots,\mu_\ell)
\vdash |A|$, we define the \defn{representative element of cycle type $\mu$}
in $\mathfrak{S}_A$ to be
\[
    d_\mu^A = (x_1 x_2 \cdots x_{\mu_1})(x_{\mu_1 +1} x_{\mu_1 +2} \cdots x_{\mu_1+\mu_2}) \cdots (x_{|A|-\mu_\ell +1} x_{|A|-\mu_\ell +2}\cdots x_{|A|}),
\]
where $d_\mu^A$ is expressed in cycle notation.
If $A$ is clear from the context, we write $d_\mu = d_\mu^A$.

Let $\vec{\mu} = (\mu^{(1)}, \ldots, \mu^{(k)})  \in I_k$ and $\mu = \vtype(\vec{\mu})$.
Recall that $G_\mu = \mathfrak{S}_{\pi^{(1)}} \times \cdots \times \mathfrak{S}_{\pi^{(k)}}$,
where $\pi^{(i)}$ is the set of blocks of size $i$ in $\pi_\mu$.
Since $d_{\mu^{(i)}}^{\pi^{(i)}} \in \mathfrak{S}_{\pi^{(i)}}$ for all $1 \leqslant i \leqslant k$, we define
\[
    {\defncolor d_{\vec{\mu}}} = d_{\mu^{(1)}}^{\pi^{(1)}} \, d_{\mu^{(2)}}^{\pi^{(2)}} \, \cdots \, d_{\mu^{(k)}}^{\pi^{(k)}}
    \in G_\mu.
\] 

\begin{prop}
 Let $\vec{\mu}\in I_k$. Then $\mathsf{cycletype}(d_{\vec{\mu}}) = \vec{\mu}$.
\end{prop}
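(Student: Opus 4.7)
The plan is to verify the statement by carefully unwinding the definitions. The proof will have three short steps: first, observe that the computation of $\mathsf{cycletype}(d_{\vec{\mu}})$ collapses to a computation inside the maximal subgroup $G_\mu$; second, identify the restriction of $d_{\vec{\mu}}$ to blocks of each fixed size; third, read off the cycle type of each such restriction.

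For the first step, I will note that by construction $d_{\vec{\mu}} \in G_\mu = G_{e_{\pi_\mu}}$, which is a finite group with identity element $e_{\pi_\mu}$. Since the subsemigroup generated by a group element contains a unique idempotent, namely the identity of the group, we have $d_{\vec{\mu}}^{\,\omega} = e_{\pi_\mu}$, and therefore $d_{\vec{\mu}}^{\,\omega+1} = d_{\vec{\mu}}$. Consequently, by the definition of cycle type for an arbitrary monoid element,
\[
\mathsf{cycletype}(d_{\vec{\mu}}) = \mathsf{cycletype}(d_{\vec{\mu}}^{\,\omega+1}) = \mathsf{cycletype}(d_{\vec{\mu}}),
\]
where the rightmost expression is computed using the cycle-type definition for elements of a maximal subgroup.

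For the second step, I will appeal to Proposition~\ref{prop:structGe}, which gives the decomposition $G_\mu = \mathfrak{S}_{\pi^{(1)}} \times \cdots \times \mathfrak{S}_{\pi^{(k)}}$, where $\pi^{(i)}$ denotes the set of blocks of $\pi_\mu$ of size $i$. Under this decomposition, $d_{\vec{\mu}} = d_{\mu^{(1)}}^{\pi^{(1)}} \cdots d_{\mu^{(k)}}^{\pi^{(k)}}$ has $i$-th factor $d_{\mu^{(i)}}^{\pi^{(i)}} \in \mathfrak{S}_{\pi^{(i)}}$, so the restriction of $d_{\vec{\mu}}$ to the set of blocks of size $i$ is precisely $d_{\mu^{(i)}}^{\pi^{(i)}}$.

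For the third step, the element $d_{\mu^{(i)}}^{\pi^{(i)}}$ is, by its very definition as a product of disjoint cycles of lengths $\mu^{(i)}_1, \mu^{(i)}_2, \ldots$, a permutation of $\pi^{(i)}$ of cycle type $\mu^{(i)}$. Assembling these, $\mathsf{cycletype}(d_{\vec{\mu}}) = (\mu^{(1)}, \ldots, \mu^{(k)}) = \vec{\mu}$. There is no substantive obstacle here; the entire content is bookkeeping to confirm that the three layers of definitions (cycle type in a group element, cycle type in a maximal subgroup, cycle type in $\mathcal{U}_k$) agree on the chosen representative.
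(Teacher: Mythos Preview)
Your proof is correct; it is a clean unwinding of the layered definitions, and each step is justified. Note that the paper does not actually supply a proof of this proposition---it states the result and proceeds directly to an example---so your argument fills in exactly the routine verification the authors chose to omit as self-evident from the construction of $d_{\vec{\mu}}$.
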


\begin{example}\label{ex:canonicalconjugacyrep}
Let $k=34$ and $\vec{\mu} = ( (2,1), \epart, (3,1,1), (2,2))$.
Then $d_{\vec{\mu}}$ is represented by the following diagram:
\begin{center}
\begin{tikzpicture}[scale = 0.22,thick, baseline={(0,-1ex/2)}]
%\tikzstyle{vertex} = [shape = circle, minimum size = 2.5pt, inner sep = 1pt]
\node[vertex] (G--34) at (49.5, -2) [shape = circle, draw] {};
\node[vertex] (G--33) at (48.0, -2) [shape = circle, draw] {};
\node[vertex] (G--32) at (46.5, -2) [shape = circle, draw] {};
\node[vertex] (G--31) at (45.0, -2) [shape = circle, draw] {};
\node[vertex] (G-27) at (39.0, 2) [shape = circle, draw] {};
\node[vertex] (G-28) at (40.5, 2) [shape = circle, draw] {};
\node[vertex] (G-29) at (42.0, 2) [shape = circle, draw] {};
\node[vertex] (G-30) at (43.5, 2) [shape = circle, draw] {};
\node[vertex] (G--30) at (43.5, -2) [shape = circle, draw] {};
\node[vertex] (G--29) at (42.0, -2) [shape = circle, draw] {};
\node[vertex] (G--28) at (40.5, -2) [shape = circle, draw] {};
\node[vertex] (G--27) at (39.0, -2) [shape = circle, draw] {};
\node[vertex] (G-31) at (45.0, 2) [shape = circle, draw] {};
\node[vertex] (G-32) at (46.5, 2) [shape = circle, draw] {};
\node[vertex] (G-33) at (48.0, 2) [shape = circle, draw] {};
\node[vertex] (G-34) at (49.5, 2) [shape = circle, draw] {};
\node[vertex] (G--26) at (37.5, -2) [shape = circle, draw] {};
\node[vertex] (G--25) at (36.0, -2) [shape = circle, draw] {};
\node[vertex] (G--24) at (34.5, -2) [shape = circle, draw] {};
\node[vertex] (G--23) at (33.0, -2) [shape = circle, draw] {};
\node[vertex] (G-19) at (27.0, 2) [shape = circle, draw] {};
\node[vertex] (G-20) at (28.5, 2) [shape = circle, draw] {};
\node[vertex] (G-21) at (30.0, 2) [shape = circle, draw] {};
\node[vertex] (G-22) at (31.5, 2) [shape = circle, draw] {};
\node[vertex] (G--22) at (31.5, -2) [shape = circle, draw] {};
\node[vertex] (G--21) at (30.0, -2) [shape = circle, draw] {};
\node[vertex] (G--20) at (28.5, -2) [shape = circle, draw] {};
\node[vertex] (G--19) at (27.0, -2) [shape = circle, draw] {};
\node[vertex] (G-23) at (33.0, 2) [shape = circle, draw] {};
\node[vertex] (G-24) at (34.5, 2) [shape = circle, draw] {};
\node[vertex] (G-25) at (36.0, 2) [shape = circle, draw] {};
\node[vertex] (G-26) at (37.5, 2) [shape = circle, draw] {};
\node[vertex] (G--18) at (25.5, -2) [shape = circle, draw] {};
\node[vertex] (G--17) at (24.0, -2) [shape = circle, draw] {};
\node[vertex] (G--16) at (22.5, -2) [shape = circle, draw] {};
\node[vertex] (G-16) at (22.5, 2) [shape = circle, draw] {};
\node[vertex] (G-17) at (24.0, 2) [shape = circle, draw] {};
\node[vertex] (G-18) at (25.5, 2) [shape = circle, draw] {};
\node[vertex] (G--15) at (21.0, -2) [shape = circle, draw] {};
\node[vertex] (G--14) at (19.5, -2) [shape = circle, draw] {};
\node[vertex] (G--13) at (18.0, -2) [shape = circle, draw] {};
\node[vertex] (G-13) at (18.0, 2) [shape = circle, draw] {};
\node[vertex] (G-14) at (19.5, 2) [shape = circle, draw] {};
\node[vertex] (G-15) at (21.0, 2) [shape = circle, draw] {};
\node[vertex] (G--12) at (16.5, -2) [shape = circle, draw] {};
\node[vertex] (G--11) at (15.0, -2) [shape = circle, draw] {};
\node[vertex] (G--10) at (13.5, -2) [shape = circle, draw] {};
\node[vertex] (G-4) at (4.5, 2) [shape = circle, draw] {};
\node[vertex] (G-5) at (6.0, 2) [shape = circle, draw] {};
\node[vertex] (G-6) at (7.5, 2) [shape = circle, draw] {};
\node[vertex] (G--9) at (12.0, -2) [shape = circle, draw] {};
\node[vertex] (G--8) at (10.5, -2) [shape = circle, draw] {};
\node[vertex] (G--7) at (9.0, -2) [shape = circle, draw] {};
\node[vertex] (G-10) at (13.5, 2) [shape = circle, draw] {};
\node[vertex] (G-11) at (15.0, 2) [shape = circle, draw] {};
\node[vertex] (G-12) at (16.5, 2) [shape = circle, draw] {};
\node[vertex] (G--6) at (7.5, -2) [shape = circle, draw] {};
\node[vertex] (G--5) at (6.0, -2) [shape = circle, draw] {};
\node[vertex] (G--4) at (4.5, -2) [shape = circle, draw] {};
\node[vertex] (G-7) at (9.0, 2) [shape = circle, draw] {};
\node[vertex] (G-8) at (10.5, 2) [shape = circle, draw] {};
\node[vertex] (G-9) at (12.0, 2) [shape = circle, draw] {};
\node[vertex] (G--3) at (3.0, -2) [shape = circle, draw] {};
\node[vertex] (G-3) at (3.0, 2) [shape = circle, draw] {};
\node[vertex] (G--2) at (1.5, -2) [shape = circle, draw] {};
\node[vertex] (G-1) at (0.0, 2) [shape = circle, draw] {};
\node[vertex] (G--1) at (0.0, -2) [shape = circle, draw] {};
\node[vertex] (G-2) at (1.5, 2) [shape = circle, draw] {};
\draw[] (G-27) .. controls +(0.5, -0.5) and +(-0.5, -0.5) .. (G-28);
\draw[] (G-28) .. controls +(0.5, -0.5) and +(-0.5, -0.5) .. (G-29);
\draw[] (G-29) .. controls +(0.5, -0.5) and +(-0.5, -0.5) .. (G-30);
\draw[] (G-30) .. controls +(-2, -2) and +(2, 2) .. (G--31);
\draw[] (G--34) .. controls +(-0.5, 0.5) and +(0.5, 0.5) .. (G--33);
\draw[] (G--33) .. controls +(-0.5, 0.5) and +(0.5, 0.5) .. (G--32);
\draw[] (G--32) .. controls +(-0.5, 0.5) and +(0.5, 0.5) .. (G--31);
\draw[] (G--30) .. controls +(-2, 2) and +(2, -2) .. (G-31);
\draw[] (G-31) .. controls +(0.5, -0.5) and +(-0.5, -0.5) .. (G-32);
\draw[] (G-32) .. controls +(0.5, -0.5) and +(-0.5, -0.5) .. (G-33);
\draw[] (G-33) .. controls +(0.5, -0.5) and +(-0.5, -0.5) .. (G-34);
%\draw[] (G-34) .. controls +(-1, -1) and +(1, 1) .. (G--30);
\draw[] (G--30) .. controls +(-0.5, 0.5) and +(0.5, 0.5) .. (G--29);
\draw[] (G--29) .. controls +(-0.5, 0.5) and +(0.5, 0.5) .. (G--28);
\draw[] (G--28) .. controls +(-0.5, 0.5) and +(0.5, 0.5) .. (G--27);
%\draw[] (G--27) .. controls +(1, 1) and +(-1, -1) .. (G-31);
\draw[] (G-19) .. controls +(0.5, -0.5) and +(-0.5, -0.5) .. (G-20);
\draw[] (G-20) .. controls +(0.5, -0.5) and +(-0.5, -0.5) .. (G-21);
\draw[] (G-21) .. controls +(0.5, -0.5) and +(-0.5, -0.5) .. (G-22);
\draw[] (G-22) .. controls +(-2, -2) and +(2, 2) .. (G--23);
\draw[] (G--26) .. controls +(-0.5, 0.5) and +(0.5, 0.5) .. (G--25);
\draw[] (G--25) .. controls +(-0.5, 0.5) and +(0.5, 0.5) .. (G--24);
\draw[] (G--24) .. controls +(-0.5, 0.5) and +(0.5, 0.5) .. (G--23);
%\draw[] (G--23) .. controls +(-1, 1) and +(1, -1) .. (G-19);
\draw[] (G-23) .. controls +(0.5, -0.5) and +(-0.5, -0.5) .. (G-24);
\draw[] (G-24) .. controls +(0.5, -0.5) and +(-0.5, -0.5) .. (G-25);
\draw[] (G-25) .. controls +(0.5, -0.5) and +(-0.5, -0.5) .. (G-26);
%\draw[] (G-26) .. controls +(-1, -1) and +(1, 1) .. (G--22);
\draw[] (G--22) .. controls +(-0.5, 0.5) and +(0.5, 0.5) .. (G--21);
\draw[] (G--21) .. controls +(-0.5, 0.5) and +(0.5, 0.5) .. (G--20);
\draw[] (G--20) .. controls +(-0.5, 0.5) and +(0.5, 0.5) .. (G--19);
\draw[] (G--22) .. controls +(-2, 2) and +(2, -2) .. (G-23);
\draw[] (G-16) .. controls +(0.5, -0.5) and +(-0.5, -0.5) .. (G-17);
\draw[] (G-17) .. controls +(0.5, -0.5) and +(-0.5, -0.5) .. (G-18);
%\draw[] (G-18) .. controls +(0, -1) and +(0, 1) .. (G--18);
\draw[] (G--18) .. controls +(-0.5, 0.5) and +(0.5, 0.5) .. (G--17);
\draw[] (G--17) .. controls +(-0.5, 0.5) and +(0.5, 0.5) .. (G--16);
\draw[] (G--16) .. controls +(0, 1) and +(0, -1) .. (G-16);
\draw[] (G-13) .. controls +(0.5, -0.5) and +(-0.5, -0.5) .. (G-14);
\draw[] (G-14) .. controls +(0.5, -0.5) and +(-0.5, -0.5) .. (G-15);
%\draw[] (G-15) .. controls +(0, -1) and +(0, 1) .. (G--15);
\draw[] (G--15) .. controls +(-0.5, 0.5) and +(0.5, 0.5) .. (G--14);
\draw[] (G--14) .. controls +(-0.5, 0.5) and +(0.5, 0.5) .. (G--13);
\draw[] (G--13) .. controls +(0, 1) and +(0, -1) .. (G-13);
\draw[] (G-4) .. controls +(0.5, -0.5) and +(-0.5, -0.5) .. (G-5);
\draw[] (G-5) .. controls +(0.5, -0.5) and +(-0.5, -0.5) .. (G-6);
%\draw[] (G-6) .. controls +(1, -1) and +(-1, 1) .. (G--12);
\draw[] (G--12) .. controls +(-0.5, 0.5) and +(0.5, 0.5) .. (G--11);
\draw[] (G--11) .. controls +(-0.5, 0.5) and +(0.5, 0.5) .. (G--10);
\draw[] (G--7) .. controls +(-1, 1) and +(1, -1) .. (G-6);
\draw[] (G-10) .. controls +(0.5, -0.5) and +(-0.5, -0.5) .. (G-11);
\draw[] (G-11) .. controls +(0.5, -0.5) and +(-0.5, -0.5) .. (G-12);
\draw[] (G-10) .. controls +(-1, -1) and +(1, 1) .. (G--6);
\draw[] (G--9) .. controls +(-0.5, 0.5) and +(0.5, 0.5) .. (G--8);
\draw[] (G--8) .. controls +(-0.5, 0.5) and +(0.5, 0.5) .. (G--7);
\draw[] (G--10) .. controls +(-1, 1) and +(1, -1) .. (G-9);
\draw[] (G-7) .. controls +(0.5, -0.5) and +(-0.5, -0.5) .. (G-8);
\draw[] (G-8) .. controls +(0.5, -0.5) and +(-0.5, -0.5) .. (G-9);
%\draw[] (G-9) .. controls +(-1, -1) and +(1, 1) .. (G--6);
\draw[] (G--6) .. controls +(-0.5, 0.5) and +(0.5, 0.5) .. (G--5);
\draw[] (G--5) .. controls +(-0.5, 0.5) and +(0.5, 0.5) .. (G--4);
%\draw[] (G--6) .. controls +(1, 1) and +(-1, -1) .. (G-7);
\draw[] (G-3) .. controls +(0, -1) and +(0, 1) .. (G--3);
\draw[] (G-1) .. controls +(0.75, -1) and +(-0.75, 1) .. (G--2);
\draw[] (G-2) .. controls +(-0.75, -1) and +(0.75, 1) .. (G--1);
\end{tikzpicture}\; .
\end{center}
\end{example}

%%%%%%%%%%%%%%%%%%%%%%%%%%%%%%%%%%%%%%%%%%%%%%%%%%%%%%%%%%%%%%%%%%%%
\subsection{Characters of $\mathcal{U}_k$}
\label{sec:UBP-characters}

We give a formula for the characters of the irreducible representations of
$\mathcal{U}_k$ in terms of the characters of the irreducible representations
of the representative maximal subgroups $G_\lambda$.

For $\vec{\lambda} \in I_k$, let
$W^{\vec{\lambda}}_{\mathcal{U}_k}$ denote the irreducible representation
of $\mathcal{U}_k$ defined in Section~\ref{sec:irreps-of-UBP-algebra}.
Its \defn{character} is the function
$\chi^{\vec{\lambda}}_{\mathcal{U}_k} \colon \mathcal{U}_k \rightarrow {\mathbb C}$
defined by
\begin{equation*}
    {\defncolor \chi^{\vec{\lambda}}_{\mathcal{U}_k}(d)} = \operatorname{trace}\left(\rho_{\mathcal{B}}(d)\right),
\end{equation*}
where $\mathcal{B}$ is any basis of $W^{\vec{\lambda}}_{\mathcal{U}_k}$
and $\rho_{\mathcal{B}}(d)$ denotes the matrix representing the action of
$d$ with respect to $\mathcal{B}$.
Since trace is unchanged under change of basis,
this definition does not depend on the chosen basis.

As in the case of group characters, monoid characters are constant on
generalized conjugacy classes~\cite[Proposition~7.9]{Steinberg.2016}.
Therefore, we need only determine the value of
$\chi^{\vec{\lambda}}_{\mathcal{U}_k}$
on the representative conjugacy class elements
$d_{\vec{\mu}}$ defined in Section~\ref{sec:conj-class-reps}.
We begin by expressing this in terms of the characters
$\chi^{\vec{\lambda}}_{G_\lambda}$
of the irreducible representations $V^{\vec{\lambda}}_{G_\lambda}$.
Recall the refinement order on set partitions defined in Section~\ref{section.set partitions}.

\begin{prop}
\label{prop:char}
Let $\vec{\lambda} \in I_k$ and $\lambda = \vtype(\vec{\lambda})$.
If $d_{\vec{\mu}} \in \mathcal{U}_k$ is an element of the generalized conjugacy
class $C_{\vec{\mu}} \subseteq \mathcal{U}_k$ indexed by $\vec{\mu} \in I_k$, then
\[
    \chi^{\vec{\lambda}}_{\mathcal{U}_k}(d_{\vec{\mu}})
    = \sum_{d \in C(d_{\vec{\mu}}; \lambda)} \chi^{\vec{\lambda}}_{G_\lambda}(\sigma_d),
\]
where
\begin{itemize}
    \item
        $C(d_{\vec{\mu}}; \lambda)
        = \left\{
        d : d \geqslant d_{\vec{\mu}}, \,
        \mathsf{top}(d) = \mathsf{bot}(d), \, \mathsf{type}(\mathsf{top}(d))=\lambda
        \right\}$, and
    \item
        $\sigma_d$ is the unique element in $G_\lambda$ satisfying
        $d \ell_{\pi_\lambda}^{\mathsf{top}(d)} = \ell_{\pi_\lambda}^{\mathsf{top}(d)} \sigma_d$.
\end{itemize}
\end{prop}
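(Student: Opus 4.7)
The plan is to compute $\chi^{\vec{\lambda}}_{\mathcal{U}_k}(d_{\vec{\mu}})$ directly as the trace of $d_{\vec{\mu}}$ acting on $W^{\vec{\lambda}}_{\mathcal{U}_k}$ in the basis $\mathcal{B}_{\vec{\lambda}}(\mathcal{U}_k) = \{\ell_\pi^\gamma \otimes \mathbf{T}\}$ of Proposition~\ref{prop:irrepbasis}, where $\pi = \pi_\lambda$. By~\eqref{leftaction}, $d_{\vec{\mu}} \cdot (\ell_\pi^\gamma \otimes \mathbf{T}) = (d_{\vec{\mu}} \odot \ell_\pi^\gamma) \otimes \mathbf{T}$. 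Lemma~\ref{lem:UkactsL} shows this vanishes unless $\pi_\mu = \mathsf{bot}(d_{\vec{\mu}})$ is finer than $\gamma$; when nonzero, Proposition~\ref{prop:GeactiononL} gives $d_{\vec{\mu}} \odot \ell_\pi^\gamma = \ell_\pi^{\gamma'} g_\gamma$ for a unique $\gamma'$ and $g_\gamma \in G_\lambda$. The diagonal contribution from $\ell_\pi^\gamma \otimes \mathbf{T}$ thus vanishes unless $\gamma' = \gamma$, and summing over a basis $\mathbf{T}$ of $V^{\vec{\lambda}}_{G_\lambda}$ for such a $\gamma$ contributes exactly $\chi^{\vec{\lambda}}_{G_\lambda}(g_\gamma)$ to the trace.

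The next step is to set up a bijection between the $\gamma$'s contributing above and the set $C(d_{\vec{\mu}}; \lambda)$, via $\gamma \mapsto d := d_{\vec{\mu}} e_\gamma$. Writing $d_{\vec{\mu}} = \sigma e_{\pi_\mu}$ as in Proposition~\ref{prop:standard-decomposition}, where $\sigma \in \mathfrak{S}_k$ permutes the blocks of $\pi_\mu$ according to the block permutation encoded by $d_{\vec{\mu}}$, the assumption $\pi_\mu \leqslant \gamma$ together with Lemma~\ref{lem:idempotents} gives $e_{\pi_\mu} e_\gamma = e_\gamma$, and Lemma~\ref{lem:eproperties}(c) then yields $d_{\vec{\mu}} e_\gamma = \sigma e_\gamma = e_{\sigma^{-1}(\gamma)} \sigma$. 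Hence $\mathsf{bot}(d) = \gamma$ and $\mathsf{top}(d) = \sigma^{-1}(\gamma) = \gamma'$, so the conditions $\gamma' = \gamma$ and $\mathsf{top}(d) = \mathsf{bot}(d) = \gamma$ coincide, $d$ has top of type $\lambda$, and $d \geqslant d_{\vec{\mu}}$ is immediate since right-multiplication by an idempotent only coarsens.

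For the reverse direction, given $d \in C(d_{\vec{\mu}}; \lambda)$ and $\gamma := \mathsf{top}(d)$, one has $\pi_\mu = \mathsf{top}(d_{\vec{\mu}}) \leqslant \gamma$ (restriction preserves refinement) and $\mathsf{type}(\gamma) = \lambda$ by hypothesis, so it remains to check $d = d_{\vec{\mu}} e_\gamma$. By Proposition~\ref{prop:structGe}, the blocks of $d_{\vec{\mu}}$ are precisely $\pi_i \cup \overline{\pi_{\sigma(i)}}$ for $1 \leqslant i \leqslant \ell(\pi_\mu)$, so every coarsening of $d_{\vec{\mu}}$ is described by a set partition on $\{1, \ldots, \ell(\pi_\mu)\}$. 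The condition $\mathsf{top}(d) = \mathsf{bot}(d) = \gamma$ forces this partition to be exactly the one grouping indices $i$ whose $\pi_i$ lie in a common block of $\gamma$, which is the same partition produced by $d_{\vec{\mu}} e_\gamma$; hence $d = d_{\vec{\mu}} e_\gamma$.

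Finally, to identify $g_\gamma = \sigma_d$, note that Lemma~\ref{lem:eproperties}(d) yields $e_\gamma \ell_\pi^\gamma = \ell_\pi^\gamma$ (since $\mathsf{top}(\ell_\pi^\gamma) = \gamma$), so $d \, \ell_\pi^\gamma = d_{\vec{\mu}} e_\gamma \ell_\pi^\gamma = d_{\vec{\mu}} \ell_\pi^\gamma = \ell_\pi^\gamma g_\gamma$, and uniqueness of $\sigma_d$ forces $\sigma_d = g_\gamma$. Combining everything then gives the claimed formula. The main obstacle is the reverse direction of the bijection, which requires carefully exploiting the rigid block structure of elements of $G_\mu$ described in Proposition~\ref{prop:structGe} to rule out any coarsening of $d_{\vec{\mu}}$ other than $d_{\vec{\mu}} e_\gamma$ whose top and bottom both equal $\gamma$.
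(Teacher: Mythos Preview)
Your proof is correct and follows essentially the same approach as the paper: compute the trace on the basis $\{\ell_{\pi_\lambda}^\gamma \otimes \mathbf{T}\}$, use Proposition~\ref{prop:GeactiononL} to identify the nonvanishing diagonal terms, and match the contributing $\gamma$'s with the elements of $C(d_{\vec{\mu}};\lambda)$. The paper's version is terser --- it simply says ``merge blocks of $d_{\vec{\mu}}$ to obtain a diagram $d$ such that $\mathsf{bot}(d)=\gamma$'' --- while you make the bijection $\gamma \leftrightarrow d = d_{\vec{\mu}} e_\gamma$ explicit and verify both directions, which is a welcome clarification rather than a different argument.
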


\begin{proof}
We compute the trace of $d_{\vec{\mu}}$ acting on the basis
$\{ \ell_{\pi_\lambda}^\gamma \otimes \mathbf{T} :
    \mathsf{type}(\gamma) = \lambda, \,
    \mathbf{T} \in \mathcal{B}_{\vec{\lambda}}(G_\lambda) \}$
of Proposition~\ref{prop:irrepbasis}.
Recall that the action of $d_{\vec{\mu}}$ on $\ell_{\pi_\lambda}^\gamma \otimes \mathbf{T}$
is given by
\begin{equation*}
    d_{\vec{\mu}} \cdot (\ell_{\pi_\lambda}^\gamma \otimes \mathbf{T})
    = (d_{\vec{\mu}} \odot \ell_{\pi_\lambda}^\gamma) \otimes \mathbf{T}
    =
    \begin{cases}
        d_{\vec{\mu}} \ell_{\pi_\lambda}^\gamma \otimes \mathbf{T}, & \text{if~} \mathsf{bot}(d_{\vec{\mu}} \ell_{\pi_\lambda}^\gamma) = \pi_\lambda, \\
        0, & \text{otherwise.}
    \end{cases}
\end{equation*}
Writing $\mu = \vtype(\vec{\mu})$ and noting that $d_{\vec{\mu}}$ is
a permutation of the blocks of $\pi_\mu$ (since $d_{\vec{\mu}} \in G_\mu$),
it follows that $\mathsf{bot}(d_{\vec{\mu}} \ell_{\pi_\lambda}^\gamma) = \pi_\lambda$
if and only if $\pi_\mu = \mathsf{bot}(d_{\vec{\mu}})$ is finer than $\gamma$.
Thus, if $\pi_\mu$ is not finer than $\gamma$, then there is no contribution to
the trace.

Suppose that $\pi_\mu$ is finer than $\gamma$. Then we can merge blocks of
$d_{\vec{\mu}}$ to obtain a diagram $d$ such that $\mathsf{bot}(d) = \gamma$.
Notice that the diagram $d$ satisfies
$d_{\vec{\mu}} \ell_{\pi_\lambda}^\gamma = d \ell_{\pi_\lambda}^\gamma$.
By Proposition~\ref{prop:GeactiononL}, there is a unique $\sigma_{d} \in G_\lambda$
satisfying $d \ell_{\pi_\lambda}^\gamma = \ell_{\pi_\lambda}^{\mathsf{top}(d)} \sigma_{d}$.
Therefore, $d_{\vec{\mu}} \cdot (\ell_{\pi_\lambda}^\gamma \otimes \mathbf{T})
= \ell_{\pi_\lambda}^{\mathsf{top}(d)} \otimes \sigma_{d} \mathbf{T}$,
which contributes to the computation of the trace only when $\mathsf{top}(d) = \gamma$.
In this case,
$d_{\vec{\mu}}$ maps $\ell_{\pi_\lambda}^\gamma \otimes \mathbf{T}$
to $\ell_{\pi_\lambda}^\gamma \otimes \sigma_d \mathbf{T}$ and so
the trace is equal to the trace of the mapping
$\mathbf{T} \mapsto \sigma_d \mathbf{T}$,
which is precisely $\chi^{\vec{\lambda}}_{G_\lambda}(\sigma_{d})$
because $\mathcal{B}_{\vec{\lambda}}(G_\lambda)$
is a basis of $V^{\vec{\lambda}}_{G_\lambda}$.
\end{proof}

\begin{example} \label{ex:character}
Let $\vec{\lambda} = (\epart, (1,1))$ and $\vec{\mu} = ((2,2))$.
Then $\lambda = (2, 2)$,
$d_{\vec{\mu}} =$
\begin{tikzpicture}[partition-diagram]
\node[vertex] (G--3) at (3.0, -1) [shape = circle, draw] {};
\node[vertex] (G-3) at (3.0, 1) [shape = circle, draw] {};
\node[vertex] (G--4) at (4.5, -1) [shape = circle, draw] {};
\node[vertex] (G--1) at (0.0, -1) [shape = circle, draw] {};
\node[vertex] (G-1) at (0.0, 1) [shape = circle, draw] {};
\node[vertex] (G-4) at (4.5, 1) [shape = circle, draw] {};
\node[vertex] (G--2) at (1.5, -1) [shape = circle, draw] {};
\node[vertex] (G-2) at (1.5, 1) [shape = circle, draw] {};
\draw[] (G-2) .. controls +(-1, -1) and +(1,1) .. (G--1);
\draw[] (G-1) .. controls +(1, -1) and +(-1, 1) .. (G--2);
\draw[] (G-4) .. controls +(-1, -1) and +(1, 1) .. (G--3);
\draw[] (G-3) .. controls +(1, -1) and +(-1,1) .. (G--4);
\end{tikzpicture},
and
\begin{equation*}
    C(d_{\vec{\mu}}; \lambda) =
    \left\{
    \begin{tikzpicture}[partition-diagram]
    \node[vertex] (G--3) at (3.0, -1) [shape = circle, draw] {};
    \node[vertex] (G-3) at (3.0, 1) [shape = circle, draw] {};
    \node[vertex] (G--4) at (4.5, -1) [shape = circle, draw] {};
    \node[vertex] (G--1) at (0.0, -1) [shape = circle, draw] {};
    \node[vertex] (G-1) at (0.0, 1) [shape = circle, draw] {};
    \node[vertex] (G-4) at (4.5, 1) [shape = circle, draw] {};
    \node[vertex] (G--2) at (1.5, -1) [shape = circle, draw] {};
    \node[vertex] (G-2) at (1.5, 1) [shape = circle, draw] {};
    \draw[] (G-1) -- (G--1);
    \draw[] (G-3) -- (G--3);
    \draw[] (G-1) .. controls +(.5,-.5) and +(-.5,-.5) .. (G-2);
    \draw[] (G-3) .. controls +(.5,-.5) and +(-.5,-.5) .. (G-4);
    \draw[] (G--1) .. controls +(.5, .5) and +(-.5, .5) .. (G--2);
    \draw[] (G--3) .. controls +(.5, .5) and +(-.5, .5) .. (G--4);
    \end{tikzpicture}, \quad
    \begin{tikzpicture}[partition-diagram]
    \node[vertex] (G--3) at (3.0, -1) [shape = circle, draw] {};
    \node[vertex] (G-3) at (3.0, 1) [shape = circle, draw] {};
    \node[vertex] (G--4) at (4.5, -1) [shape = circle, draw] {};
    \node[vertex] (G--1) at (0.0, -1) [shape = circle, draw] {};
    \node[vertex] (G-1) at (0.0, 1) [shape = circle, draw] {};
    \node[vertex] (G-4) at (4.5, 1) [shape = circle, draw] {};
    \node[vertex] (G--2) at (1.5, -1) [shape = circle, draw] {};
    \node[vertex] (G-2) at (1.5, 1) [shape = circle, draw] {};
    \draw[] (G-1) .. controls +(1, -1) and +(-1, 1) .. (G--2);
    \draw[] (G-4) .. controls +(-1, -1) and +(1, 1) .. (G--3);
    \draw[] (G-1) .. controls +(.6,-.6) and +(-.6,-.6) .. (G-3);
    \draw[] (G--1) .. controls +(.6, .6) and +(-.6, .6) .. (G--3);
    \draw[] (G-2) .. controls +(.6,-.6) and +(-.6,-.6) .. (G-4);
    \draw[] (G--2) .. controls +(.6, .6) and +(-.6, .6) .. (G--4);
    \end{tikzpicture},\quad
    \begin{tikzpicture}[partition-diagram]
    \node[vertex] (G--3) at (3.0, -1) [shape = circle, draw] {};
    \node[vertex] (G-3) at (3.0, 1) [shape = circle, draw] {};
    \node[vertex] (G--4) at (4.5, -1) [shape = circle, draw] {};
    \node[vertex] (G--1) at (0.0, -1) [shape = circle, draw] {};
    \node[vertex] (G-1) at (0.0, 1) [shape = circle, draw] {};
    \node[vertex] (G-4) at (4.5, 1) [shape = circle, draw] {};
    \node[vertex] (G--2) at (1.5, -1) [shape = circle, draw] {};
    \node[vertex] (G-2) at (1.5, 1) [shape = circle, draw] {};
    \draw[] (G-2) .. controls +(-1, -1) and +(1,1) .. (G--1);
    \draw[] (G-4) .. controls +(-1, -1) and +(1, 1) .. (G--3);
    \draw[] (G-1) .. controls +(.7,-.7) and +(-.7,-.7) .. (G-4);
    \draw[] (G--1) .. controls +(.7, .7) and +(-.7, .7) .. (G--4);
    \draw[] (G-2) .. controls +(.5,-.5) and +(-.5,-.5) .. (G-3);
    \draw[] (G--2) .. controls +(.5, .5) and +(-.5, .5) .. (G--3);
    \end{tikzpicture}
    \right\}.
\end{equation*}
For each $d \in C(d_{\vec{\mu}}; \lambda)$, there is a unique $\sigma_d \in
G_\lambda$ satisfying
$d \ell_{\pi_\lambda}^{\mathsf{top}(d)} = \ell_{\pi_\lambda}^{\mathsf{top}(d)} \sigma_d$,
which we illustrate below:
\begin{align*}
\begin{tikzpicture}[partition-diagram]
\node[vertex] (G--3) at (3.0, -1) [shape = circle, draw] {};
\node[vertex] (G-3) at (3.0, 1) [shape = circle, draw] {};
\node[vertex] (G--4) at (4.5, -1) [shape = circle, draw] {};
\node[vertex] (G--1) at (0.0, -1) [shape = circle, draw] {};
\node[vertex] (G-1) at (0.0, 1) [shape = circle, draw] {};
\node[vertex] (G-4) at (4.5, 1) [shape = circle, draw] {};
\node[vertex] (G--2) at (1.5, -1) [shape = circle, draw] {};
\node[vertex] (G-2) at (1.5, 1) [shape = circle, draw] {};
\draw[] (G-1) -- (G--1);
\draw[] (G-3) -- (G--3);
\draw[] (G-1) .. controls +(.5,-.5) and +(-.5,-.5) .. (G-2);
\draw[] (G-3) .. controls +(.5,-.5) and +(-.5,-.5) .. (G-4);
\draw[] (G--1) .. controls +(.5, .5) and +(-.5, .5) .. (G--2);
\draw[] (G--3) .. controls +(.5, .5) and +(-.5, .5) .. (G--4);
\end{tikzpicture} \quad \cdot \quad
\begin{tikzpicture}[partition-diagram]
\node[vertex] (G--3) at (3.0, -1) [shape = circle, draw] {};
\node[vertex] (G-3) at (3.0, 1) [shape = circle, draw] {};
\node[vertex] (G--4) at (4.5, -1) [shape = circle, draw] {};
\node[vertex] (G--1) at (0.0, -1) [shape = circle, draw] {};
\node[vertex] (G-1) at (0.0, 1) [shape = circle, draw] {};
\node[vertex] (G-4) at (4.5, 1) [shape = circle, draw] {};
\node[vertex] (G--2) at (1.5, -1) [shape = circle, draw] {};
\node[vertex] (G-2) at (1.5, 1) [shape = circle, draw] {};
\draw[] (G-1) -- (G--1);
\draw[] (G-3) -- (G--3);
\draw[] (G-1) .. controls +(.5,-.5) and +(-.5,-.5) .. (G-2);
\draw[] (G-3) .. controls +(.5,-.5) and +(-.5,-.5) .. (G-4);
\draw[] (G--1) .. controls +(.5, .5) and +(-.5, .5) .. (G--2);
\draw[] (G--3) .. controls +(.5, .5) and +(-.5, .5) .. (G--4);
\end{tikzpicture}  & \quad = \quad
\begin{tikzpicture}[partition-diagram]
\node[vertex] (G--3) at (3.0, -1) [shape = circle, draw] {};
\node[vertex] (G-3) at (3.0, 1) [shape = circle, draw] {};
\node[vertex] (G--4) at (4.5, -1) [shape = circle, draw] {};
\node[vertex] (G--1) at (0.0, -1) [shape = circle, draw] {};
\node[vertex] (G-1) at (0.0, 1) [shape = circle, draw] {};
\node[vertex] (G-4) at (4.5, 1) [shape = circle, draw] {};
\node[vertex] (G--2) at (1.5, -1) [shape = circle, draw] {};
\node[vertex] (G-2) at (1.5, 1) [shape = circle, draw] {};
\draw[] (G-1) -- (G--1);
\draw[] (G-3) -- (G--3);
\draw[] (G-1) .. controls +(.5,-.5) and +(-.5,-.5) .. (G-2);
\draw[] (G-3) .. controls +(.5,-.5) and +(-.5,-.5) .. (G-4);
\draw[] (G--1) .. controls +(.5, .5) and +(-.5, .5) .. (G--2);
\draw[] (G--3) .. controls +(.5, .5) and +(-.5, .5) .. (G--4);
\end{tikzpicture} \quad \cdot \quad
\begin{tikzpicture}[partition-diagram]
\node[vertex] (G--3) at (3.0, -1) [shape = circle, draw] {};
\node[vertex] (G-3) at (3.0, 1) [shape = circle, draw] {};
\node[vertex] (G--4) at (4.5, -1) [shape = circle, draw] {};
\node[vertex] (G--1) at (0.0, -1) [shape = circle, draw] {};
\node[vertex] (G-1) at (0.0, 1) [shape = circle, draw] {};
\node[vertex] (G-4) at (4.5, 1) [shape = circle, draw] {};
\node[vertex] (G--2) at (1.5, -1) [shape = circle, draw] {};
\node[vertex] (G-2) at (1.5, 1) [shape = circle, draw] {};
\draw[] (G-1) -- (G--1);
\draw[] (G-3) -- (G--3);
\draw[] (G-1) .. controls +(.5,-.5) and +(-.5,-.5) .. (G-2);
\draw[] (G-3) .. controls +(.5,-.5) and +(-.5,-.5) .. (G-4);
\draw[] (G--1) .. controls +(.5, .5) and +(-.5, .5) .. (G--2);
\draw[] (G--3) .. controls +(.5, .5) and +(-.5, .5) .. (G--4);
\end{tikzpicture}\\
\begin{tikzpicture}[partition-diagram]
\node[vertex] (G--3) at (3.0, -1) [shape = circle, draw] {};
\node[vertex] (G-3) at (3.0, 1) [shape = circle, draw] {};
\node[vertex] (G--4) at (4.5, -1) [shape = circle, draw] {};
\node[vertex] (G--1) at (0.0, -1) [shape = circle, draw] {};
\node[vertex] (G-1) at (0.0, 1) [shape = circle, draw] {};
\node[vertex] (G-4) at (4.5, 1) [shape = circle, draw] {};
\node[vertex] (G--2) at (1.5, -1) [shape = circle, draw] {};
\node[vertex] (G-2) at (1.5, 1) [shape = circle, draw] {};
\draw[] (G-1) .. controls +(1, -1) and +(-1, 1) .. (G--2);
\draw[] (G-4) .. controls +(-1, -1) and +(1, 1) .. (G--3);
\draw[] (G-1) .. controls +(.6,-.6) and +(-.6,-.6) .. (G-3);
\draw[] (G--1) .. controls +(.6, .6) and +(-.6, .6) .. (G--3);
\draw[] (G-2) .. controls +(.6,-.6) and +(-.6,-.6) .. (G-4);
\draw[] (G--2) .. controls +(.6, .6) and +(-.6, .6) .. (G--4);
\end{tikzpicture}  \quad \cdot \quad
\begin{tikzpicture}[partition-diagram]
\node[vertex] (G--3) at (3.0, -1) [shape = circle, draw] {};
\node[vertex] (G-3) at (3.0, 1) [shape = circle, draw] {};
\node[vertex] (G--4) at (4.5, -1) [shape = circle, draw] {};
\node[vertex] (G--1) at (0.0, -1) [shape = circle, draw] {};
\node[vertex] (G-1) at (0.0, 1) [shape = circle, draw] {};
\node[vertex] (G-4) at (4.5, 1) [shape = circle, draw] {};
\node[vertex] (G--2) at (1.5, -1) [shape = circle, draw] {};
\node[vertex] (G-2) at (1.5, 1) [shape = circle, draw] {};
%\draw[] (G-3) .. controls +(-1, -1) and +(1, 1) .. (G--2);
\draw[] (G-1) -- (G--1);
\draw[] (G-4) -- (G--4);
\draw[] (G-1) .. controls +(.6,-.6) and +(-.6,-.6) .. (G-3);
\draw[] (G--1) .. controls +(.6, .6) and +(-.6, .6) .. (G--2);
\draw[] (G-2) .. controls +(.6,-.6) and +(-.6,-.6) .. (G-4);
\draw[] (G--3) .. controls +(.6, .6) and +(-.6, .6) .. (G--4);
\end{tikzpicture}
&\quad = \quad
\begin{tikzpicture}[partition-diagram]
\node[vertex] (G--3) at (3.0, -1) [shape = circle, draw] {};
\node[vertex] (G-3) at (3.0, 1) [shape = circle, draw] {};
\node[vertex] (G--4) at (4.5, -1) [shape = circle, draw] {};
\node[vertex] (G--1) at (0.0, -1) [shape = circle, draw] {};
\node[vertex] (G-1) at (0.0, 1) [shape = circle, draw] {};
\node[vertex] (G-4) at (4.5, 1) [shape = circle, draw] {};
\node[vertex] (G--2) at (1.5, -1) [shape = circle, draw] {};
\node[vertex] (G-2) at (1.5, 1) [shape = circle, draw] {};
%\draw[] (G-3) .. controls +(-1, -1) and +(1, 1) .. (G--2);
\draw[] (G-1) -- (G--1);
\draw[] (G-4) -- (G--4);
\draw[] (G-1) .. controls +(.6,-.6) and +(-.6,-.6) .. (G-3);
\draw[] (G--1) .. controls +(.6, .6) and +(-.6, .6) .. (G--2);
\draw[] (G-2) .. controls +(.6,-.6) and +(-.6,-.6) .. (G-4);
\draw[] (G--3) .. controls +(.6, .6) and +(-.6, .6) .. (G--4);
\end{tikzpicture}\quad \cdot \quad
\begin{tikzpicture}[partition-diagram]
\node[vertex] (G--3) at (3.0, -1) [shape = circle, draw] {};
\node[vertex] (G-3) at (3.0, 1) [shape = circle, draw] {};
\node[vertex] (G--4) at (4.5, -1) [shape = circle, draw] {};
\node[vertex] (G--1) at (0.0, -1) [shape = circle, draw] {};
\node[vertex] (G-1) at (0.0, 1) [shape = circle, draw] {};
\node[vertex] (G-4) at (4.5, 1) [shape = circle, draw] {};
\node[vertex] (G--2) at (1.5, -1) [shape = circle, draw] {};
\node[vertex] (G-2) at (1.5, 1) [shape = circle, draw] {};
\draw[] (G-3) .. controls +(-1, -1) and +(1, 1) .. (G--2);
\draw[] (G-2) .. controls +(1, -1) and +(-1, 1) .. (G--3);
\draw[] (G-1) .. controls +(.6,-.6) and +(-.6,-.6) .. (G-2);
\draw[] (G--1) .. controls +(.6, .6) and +(-.6, .6) .. (G--2);
\draw[] (G-3) .. controls +(.6,-.6) and +(-.6,-.6) .. (G-4);
\draw[] (G--3) .. controls +(.6, .6) and +(-.6, .6) .. (G--4);
\end{tikzpicture}\\
\begin{tikzpicture}[partition-diagram]
\node[vertex] (G--3) at (3.0, -1) [shape = circle, draw] {};
\node[vertex] (G-3) at (3.0, 1) [shape = circle, draw] {};
\node[vertex] (G--4) at (4.5, -1) [shape = circle, draw] {};
\node[vertex] (G--1) at (0.0, -1) [shape = circle, draw] {};
\node[vertex] (G-1) at (0.0, 1) [shape = circle, draw] {};
\node[vertex] (G-4) at (4.5, 1) [shape = circle, draw] {};
\node[vertex] (G--2) at (1.5, -1) [shape = circle, draw] {};
\node[vertex] (G-2) at (1.5, 1) [shape = circle, draw] {};
\draw[] (G-2) .. controls +(-1, -1) and +(1,1) .. (G--1);
\draw[] (G-4) .. controls +(-1, -1) and +(1, 1) .. (G--3);
\draw[] (G-1) .. controls +(.7,-.7) and +(-.7,-.7) .. (G-4);
\draw[] (G--1) .. controls +(.7, .7) and +(-.7, .7) .. (G--4);
\draw[] (G-2) .. controls +(.5,-.5) and +(-.5,-.5) .. (G-3);
\draw[] (G--2) .. controls +(.5, .5) and +(-.5, .5) .. (G--3);
\end{tikzpicture} \quad \cdot \quad
\begin{tikzpicture}[partition-diagram]
\node[vertex] (G--3) at (3.0, -1) [shape = circle, draw] {};
\node[vertex] (G-3) at (3.0, 1) [shape = circle, draw] {};
\node[vertex] (G--4) at (4.5, -1) [shape = circle, draw] {};
\node[vertex] (G--1) at (0.0, -1) [shape = circle, draw] {};
\node[vertex] (G-1) at (0.0, 1) [shape = circle, draw] {};
\node[vertex] (G-4) at (4.5, 1) [shape = circle, draw] {};
\node[vertex] (G--2) at (1.5, -1) [shape = circle, draw] {};
\node[vertex] (G-2) at (1.5, 1) [shape = circle, draw] {};
%\draw[] (G-3) .. controls +(-1, -1) and +(1,1) .. (G--2);
\draw[] (G-2) -- (G--2);
\draw[] (G-4) -- (G--4);
\draw[] (G-1) .. controls +(.8,-.8) and +(-.8,-.8) .. (G-4);
\draw[] (G--1) .. controls +(.5, .5) and +(-.5, .5) .. (G--2);
\draw[] (G-2) .. controls +(.5,-.5) and +(-.5,-.5) .. (G-3);
\draw[] (G--3) .. controls +(.5, .5) and +(-.5, .5) .. (G--4);
\end{tikzpicture}
&\quad = \quad
\begin{tikzpicture}[partition-diagram]
\node[vertex] (G--3) at (3.0, -1) [shape = circle, draw] {};
\node[vertex] (G-3) at (3.0, 1) [shape = circle, draw] {};
\node[vertex] (G--4) at (4.5, -1) [shape = circle, draw] {};
\node[vertex] (G--1) at (0.0, -1) [shape = circle, draw] {};
\node[vertex] (G-1) at (0.0, 1) [shape = circle, draw] {};
\node[vertex] (G-4) at (4.5, 1) [shape = circle, draw] {};
\node[vertex] (G--2) at (1.5, -1) [shape = circle, draw] {};
\node[vertex] (G-2) at (1.5, 1) [shape = circle, draw] {};
%\draw[] (G-3) .. controls +(-1, -1) and +(1,1) .. (G--2);
\draw[] (G-2) -- (G--2);
\draw[] (G-4) -- (G--4);
\draw[] (G-1) .. controls +(.8,-.8) and +(-.8,-.8) .. (G-4);
\draw[] (G--1) .. controls +(.5, .5) and +(-.5, .5) .. (G--2);
\draw[] (G-2) .. controls +(.5,-.5) and +(-.5,-.5) .. (G-3);
\draw[] (G--3) .. controls +(.5, .5) and +(-.5, .5) .. (G--4);
\end{tikzpicture} \quad \cdot \quad
\begin{tikzpicture}[partition-diagram]
\node[vertex] (G--3) at (3.0, -1) [shape = circle, draw] {};
\node[vertex] (G-3) at (3.0, 1) [shape = circle, draw] {};
\node[vertex] (G--4) at (4.5, -1) [shape = circle, draw] {};
\node[vertex] (G--1) at (0.0, -1) [shape = circle, draw] {};
\node[vertex] (G-1) at (0.0, 1) [shape = circle, draw] {};
\node[vertex] (G-4) at (4.5, 1) [shape = circle, draw] {};
\node[vertex] (G--2) at (1.5, -1) [shape = circle, draw] {};
\node[vertex] (G-2) at (1.5, 1) [shape = circle, draw] {};
\draw[] (G-3) .. controls +(-1, -1) and +(1, 1) .. (G--2);
\draw[] (G-2) .. controls +(1, -1) and +(-1, 1) .. (G--3);
\draw[] (G-1) .. controls +(.6,-.6) and +(-.6,-.6) .. (G-2);
\draw[] (G--1) .. controls +(.6, .6) and +(-.6, .6) .. (G--2);
\draw[] (G-3) .. controls +(.6,-.6) and +(-.6,-.6) .. (G-4);
\draw[] (G--3) .. controls +(.6, .6) and +(-.6, .6) .. (G--4);
\end{tikzpicture}
\end{align*}
Therefore,
\begin{equation*}
\chi^{\vec{\lambda}}_{\mathcal{U}_4}\left(\  \begin{tikzpicture}[partition-diagram]
\node[vertex] (G--3) at (3.0, -1) [shape = circle, draw] {};
\node[vertex] (G-3) at (3.0, 1) [shape = circle, draw] {};
\node[vertex] (G--4) at (4.5, -1) [shape = circle, draw] {};
\node[vertex] (G--1) at (0.0, -1) [shape = circle, draw] {};
\node[vertex] (G-1) at (0.0, 1) [shape = circle, draw] {};
\node[vertex] (G-4) at (4.5, 1) [shape = circle, draw] {};
\node[vertex] (G--2) at (1.5, -1) [shape = circle, draw] {};
\node[vertex] (G-2) at (1.5, 1) [shape = circle, draw] {};
\draw[] (G-2) .. controls +(-1, -1) and +(1,1) .. (G--1);
\draw[] (G-1) .. controls +(1, -1) and +(-1, 1) .. (G--2);
\draw[] (G-4) .. controls +(-1, -1) and +(1, 1) .. (G--3);
\draw[] (G-3) .. controls +(1, -1) and +(-1,1) .. (G--4);
\end{tikzpicture}\ \right)  = \chi^{\vec{\lambda}}_{G_\lambda}\left(\
\begin{tikzpicture}[partition-diagram]
\node[vertex] (G--3) at (3.0, -1) [shape = circle, draw] {};
\node[vertex] (G-3) at (3.0, 1) [shape = circle, draw] {};
\node[vertex] (G--4) at (4.5, -1) [shape = circle, draw] {};
\node[vertex] (G--1) at (0.0, -1) [shape = circle, draw] {};
\node[vertex] (G-1) at (0.0, 1) [shape = circle, draw] {};
\node[vertex] (G-4) at (4.5, 1) [shape = circle, draw] {};
\node[vertex] (G--2) at (1.5, -1) [shape = circle, draw] {};
\node[vertex] (G-2) at (1.5, 1) [shape = circle, draw] {};
\draw[] (G-1) -- (G--1);
\draw[] (G-3) -- (G--3);
\draw[] (G-1) .. controls +(.5,-.5) and +(-.5,-.5) .. (G-2);
\draw[] (G-3) .. controls +(.5,-.5) and +(-.5,-.5) .. (G-4);
\draw[] (G--1) .. controls +(.5, .5) and +(-.5, .5) .. (G--2);
\draw[] (G--3) .. controls +(.5, .5) and +(-.5, .5) .. (G--4);
\end{tikzpicture}\
\right) + 2
\chi^{\vec{\lambda}}_{G_\lambda}\left(\
\begin{tikzpicture}[partition-diagram]
\node[vertex] (G--3) at (3.0, -1) [shape = circle, draw] {};
\node[vertex] (G-3) at (3.0, 1) [shape = circle, draw] {};
\node[vertex] (G--4) at (4.5, -1) [shape = circle, draw] {};
\node[vertex] (G--1) at (0.0, -1) [shape = circle, draw] {};
\node[vertex] (G-1) at (0.0, 1) [shape = circle, draw] {};
\node[vertex] (G-4) at (4.5, 1) [shape = circle, draw] {};
\node[vertex] (G--2) at (1.5, -1) [shape = circle, draw] {};
\node[vertex] (G-2) at (1.5, 1) [shape = circle, draw] {};
\draw[] (G-3) .. controls +(-1, -1) and +(1, 1) .. (G--2);
\draw[] (G-2) .. controls +(1, -1) and +(-1, 1) .. (G--3);
\draw[] (G-1) .. controls +(.6,-.6) and +(-.6,-.6) .. (G-2);
\draw[] (G--1) .. controls +(.6, .6) and +(-.6, .6) .. (G--2);
\draw[] (G-3) .. controls +(.6,-.6) and +(-.6,-.6) .. (G-4);
\draw[] (G--3) .. controls +(.6, .6) and +(-.6, .6) .. (G--4);
\end{tikzpicture}\
\right) = -1.
\end{equation*}
\end{example}

%%%%%%%%%%%%%%%%%%%%%%%%%%%%%%%%%%%%%%%%%%%%%%%%%%%%
\subsection{Reformulation of the characters of $\mathcal{U}_k$}
\label{sec:zeeformula}

As observed in Example~\ref{ex:character}, there can be elements in $C(d_{\vec{\mu}}; \lambda)$ that have the same cycle type in
$G_\lambda$.
For $\vec{\nu}$ with $\vtype(\vec{\nu}) = \lambda$, let
\[
    C(d_{\vec{\mu}}; \vec{\nu}) = \{ d\in C(d_{\vec{\mu}}; \lambda)\, :\, \mathsf{cycletype}(d)= \vec{\nu} \}
\]
and define $b_{\vec{\mu}}^{\vec{\nu}}= | C(d_{\vec{\mu}}; \vec{\nu})|$.
Then rewriting the formula in Proposition~\ref{prop:char}, we have
\begin{equation}\label{Eq:X=CB}
    \chi^{\vec{\lambda}}_{\mathcal{U}_k}(d_{\vec{\mu}}) = \sum_{\substack{\vec{\nu}\in I_k\\ \vtype(\vec{\nu}) = \lambda}}
    b_{\vec{\mu}}^{\vec{\nu}} \; \chi^{\vec{\lambda}}_{G_\lambda}(d_{\vec{\nu}}).
\end{equation}
Our next goal is to find an explicit formula for the multiplicities $b_{\vec{\mu}}^{\vec{\nu}}$.

\begin{example}\label{ex:Cnuset}
Consider $\vec{\mu} = ((4,2), (2)) \in I_{10}$, so that
\begin{center}
$d_{\vec{\mu}} = $
\begin{tikzpicture}[partition-diagram]
\node[vertex] (G--10) at (13.5, -1) [shape = circle, draw] {};
\node[vertex] (G--9) at (12.0, -1) [shape = circle, draw] {};
\node[vertex] (G-7) at (9.0, 1) [shape = circle, draw] {};
\node[vertex] (G-8) at (10.5, 1) [shape = circle, draw] {};
\node[vertex] (G--8) at (10.5, -1) [shape = circle, draw] {};
\node[vertex] (G--7) at (9.0, -1) [shape = circle, draw] {};
\node[vertex] (G-9) at (12.0, 1) [shape = circle, draw] {};
\node[vertex] (G-10) at (13.5, 1) [shape = circle, draw] {};
\node[vertex] (G--6) at (7.5, -1) [shape = circle, draw] {};
\node[vertex] (G-5) at (6.0, 1) [shape = circle, draw] {};
\node[vertex] (G--5) at (6.0, -1) [shape = circle, draw] {};
\node[vertex] (G-6) at (7.5, 1) [shape = circle, draw] {};
\node[vertex] (G--4) at (4.5, -1) [shape = circle, draw] {};
\node[vertex] (G-3) at (3.0, 1) [shape = circle, draw] {};
\node[vertex] (G--3) at (3.0, -1) [shape = circle, draw] {};
\node[vertex] (G-2) at (1.5, 1) [shape = circle, draw] {};
\node[vertex] (G--2) at (1.5, -1) [shape = circle, draw] {};
\node[vertex] (G-1) at (0.0, 1) [shape = circle, draw] {};
\node[vertex] (G--1) at (0.0, -1) [shape = circle, draw] {};
\node[vertex] (G-4) at (4.5, 1) [shape = circle, draw] {};
\draw[] (G-7) .. controls +(0.5, -0.5) and +(-0.5, -0.5) .. (G-8);
\draw[] (G--10) .. controls +(-0.5, 0.5) and +(0.5, 0.5) .. (G--9);
\draw[] (G-9) .. controls +(0.5, -0.5) and +(-0.5, -0.5) .. (G-10);
\draw[] (G--8) .. controls +(-0.5, 0.5) and +(0.5, 0.5) .. (G--7);
\draw[] (G-9) .. controls +(-1, -1) and +(1, 1) .. (G--8);
\draw[] (G-8) .. controls +(1, -1) and +(-1, 1) .. (G--9);
\draw[] (G-5) .. controls +(0.75, -1) and +(-0.75, 1) .. (G--6);
\draw[] (G-6) .. controls +(-0.75, -1) and +(0.75, 1) .. (G--5);
\draw[] (G-3) .. controls +(0.75, -1) and +(-0.75, 1) .. (G--4);
\draw[] (G-2) .. controls +(0.75, -1) and +(-0.75, 1) .. (G--3);
\draw[] (G-1) .. controls +(0.75, -1) and +(-0.75, 1) .. (G--2);
\draw[] (G-4) .. controls +(-1, -1) and +(1, 1) .. (G--1);
\end{tikzpicture}~.
\end{center}
We compute $C(d_{\vec{\mu}}; \vec{\nu})$ for three different choices of $\vec{\nu}$.
These examples will then be used later to demonstrate how
the algebraic formulas capture the enumeration of these sets.

First consider $\vec{\nu} = (\epart, (2,1), \epart, (1)) \in I_{10}$.
Starting with the above diagram for $d_{\vec{\mu}}$, we see that there
are two ways of creating a block of size $4$, either by adding edges
to the first cycle or the last cycle.  Therefore,
$C(d_{\vec{\mu}}; \vec{\nu})$ consists of the following two elements
\[
\left\{
\begin{tikzpicture}[partition-diagram]
\node[vertex] (G--10) at (13.5, -1) [shape = circle, draw] {};
\node[vertex] (G--9) at (12.0, -1) [shape = circle, draw] {};
\node[vertex] (G-7) at (9.0, 1) [shape = circle, draw] {};
\node[vertex] (G-8) at (10.5, 1) [shape = circle, draw] {};
\node[vertex] (G--8) at (10.5, -1) [shape = circle, draw] {};
\node[vertex] (G--7) at (9.0, -1) [shape = circle, draw] {};
\node[vertex] (G-9) at (12.0, 1) [shape = circle, draw] {};
\node[vertex] (G-10) at (13.5, 1) [shape = circle, draw] {};
\node[vertex] (G--6) at (7.5, -1) [shape = circle, draw] {};
\node[vertex] (G-5) at (6.0, 1) [shape = circle, draw] {};
\node[vertex] (G--5) at (6.0, -1) [shape = circle, draw] {};
\node[vertex] (G-6) at (7.5, 1) [shape = circle, draw] {};
\node[vertex] (G--4) at (4.5, -1) [shape = circle, draw] {};
\node[vertex] (G-3) at (3.0, 1) [shape = circle, draw] {};
\node[vertex] (G--3) at (3.0, -1) [shape = circle, draw] {};
\node[vertex] (G-2) at (1.5, 1) [shape = circle, draw] {};
\node[vertex] (G--2) at (1.5, -1) [shape = circle, draw] {};
\node[vertex] (G-1) at (0.0, 1) [shape = circle, draw] {};
\node[vertex] (G--1) at (0.0, -1) [shape = circle, draw] {};
\node[vertex] (G-4) at (4.5, 1) [shape = circle, draw] {};
\draw[] (G-7) .. controls +(0.5, -0.5) and +(-0.5, -0.5) .. (G-8);
\draw[] (G--10) .. controls +(-0.5, 0.5) and +(0.5, 0.5) .. (G--9);
\draw[] (G-9) .. controls +(0.5, -0.5) and +(-0.5, -0.5) .. (G-10);
\draw[] (G--8) .. controls +(-0.5, 0.5) and +(0.5, 0.5) .. (G--7);
\draw[] (G-9) .. controls +(-1, -1) and +(1, 1) .. (G--8);
\draw[] (G-8) .. controls +(1, -1) and +(-1, 1) .. (G--9);
\draw[] (G-1) .. controls +(0.5, -0.5) and +(-0.5, -0.5) .. (G-2);
\draw[] (G-2) .. controls +(0.5, -0.5) and +(-0.5, -0.5) .. (G-3);
\draw[] (G-3) .. controls +(0.5, -0.5) and +(-0.5, -0.5) .. (G-4);
\draw[] (G--2) .. controls +(-0.5, 0.5) and +(0.5, 0.5) .. (G--1);
\draw[] (G--3) .. controls +(-0.5, 0.5) and +(0.5, 0.5) .. (G--2);
\draw[] (G--4) .. controls +(-0.5, 0.5) and +(0.5, 0.5) .. (G--3);
\draw[] (G-5) .. controls +(0.5, -0.5) and +(-0.5, -0.5) .. (G-6);
\draw[] (G--6) .. controls +(-0.5, 0.5) and +(0.5, 0.5) .. (G--5);
\draw[] (G-1) -- (G--1);
\draw[] (G-5) -- (G--5);
\end{tikzpicture},\qquad
\begin{tikzpicture}[partition-diagram]
\node[vertex] (G--10) at (13.5, -1) [shape = circle, draw] {};
\node[vertex] (G--9) at (12.0, -1) [shape = circle, draw] {};
\node[vertex] (G-7) at (9.0, 1) [shape = circle, draw] {};
\node[vertex] (G-8) at (10.5, 1) [shape = circle, draw] {};
\node[vertex] (G--8) at (10.5, -1) [shape = circle, draw] {};
\node[vertex] (G--7) at (9.0, -1) [shape = circle, draw] {};
\node[vertex] (G-9) at (12.0, 1) [shape = circle, draw] {};
\node[vertex] (G-10) at (13.5, 1) [shape = circle, draw] {};
\node[vertex] (G--6) at (7.5, -1) [shape = circle, draw] {};
\node[vertex] (G-5) at (6.0, 1) [shape = circle, draw] {};
\node[vertex] (G--5) at (6.0, -1) [shape = circle, draw] {};
\node[vertex] (G-6) at (7.5, 1) [shape = circle, draw] {};
\node[vertex] (G--4) at (4.5, -1) [shape = circle, draw] {};
\node[vertex] (G-3) at (3.0, 1) [shape = circle, draw] {};
\node[vertex] (G--3) at (3.0, -1) [shape = circle, draw] {};
\node[vertex] (G-2) at (1.5, 1) [shape = circle, draw] {};
\node[vertex] (G--2) at (1.5, -1) [shape = circle, draw] {};
\node[vertex] (G-1) at (0.0, 1) [shape = circle, draw] {};
\node[vertex] (G--1) at (0.0, -1) [shape = circle, draw] {};
\node[vertex] (G-4) at (4.5, 1) [shape = circle, draw] {};
\draw[] (G-7) .. controls +(0.5, -0.5) and +(-0.5, -0.5) .. (G-8);
\draw[] (G-8) .. controls +(0.5, -0.5) and +(-0.5, -0.5) .. (G-9);
\draw[] (G--10) .. controls +(-0.5, 0.5) and +(0.5, 0.5) .. (G--9);
\draw[] (G-9) .. controls +(0.5, -0.5) and +(-0.5, -0.5) .. (G-10);
\draw[] (G--8) .. controls +(-0.5, 0.5) and +(0.5, 0.5) .. (G--7);
\draw[] (G--9) .. controls +(-0.5, 0.5) and +(0.5, 0.5) .. (G--8);
\draw[] (G-7) -- (G--7);
\draw[] (G-1) .. controls +(.6,-.6) and +(-.6,-.6) .. (G-3);
\draw[] (G--1) .. controls +(.6, .6) and +(-.6, .6) .. (G--3);
\draw[] (G-3) .. controls +(0.75, -1) and +(-0.75, 1) .. (G--4);
\draw[] (G-1) .. controls +(0.75, -1) and +(-0.75, 1) .. (G--2);
\draw[] (G-2) .. controls +(-0.75, -1) and +(0.75, 1) .. (G--1);
\draw[] (G-4) .. controls +(-0.75, -1) and +(0.75, 1) .. (G--3);
\draw[] (G-5) .. controls +(0.5, -0.5) and +(-0.5, -0.5) .. (G-6);
\draw[] (G--6) .. controls +(-0.5, 0.5) and +(0.5, 0.5) .. (G--5);
\draw[] (G-5) -- (G--5);
\end{tikzpicture}
\right\}.
\]

Next consider $\vec{\nu} = (\epart, (2), (2)) \in I_{10}$.  To make a cycle
of length $2$ with three vertices, the cycle of length two in the middle
of the diagram can either connect to the first or the last cycle and it
can connect in two ways.  Therefore, there are 4 elements in the set
$C(d_{\vec{\mu}}; \vec{\nu})$
\[
\left\{
\begin{tikzpicture}[partition-diagram]
\node[vertex] (G--10) at (13.5, -1) [shape = circle, draw] {};
\node[vertex] (G--9) at (12.0, -1) [shape = circle, draw] {};
\node[vertex] (G-7) at (9.0, 1) [shape = circle, draw] {};
\node[vertex] (G-8) at (10.5, 1) [shape = circle, draw] {};
\node[vertex] (G--8) at (10.5, -1) [shape = circle, draw] {};
\node[vertex] (G--7) at (9.0, -1) [shape = circle, draw] {};
\node[vertex] (G-9) at (12.0, 1) [shape = circle, draw] {};
\node[vertex] (G-10) at (13.5, 1) [shape = circle, draw] {};
\node[vertex] (G--6) at (7.5, -1) [shape = circle, draw] {};
\node[vertex] (G-5) at (6.0, 1) [shape = circle, draw] {};
\node[vertex] (G--5) at (6.0, -1) [shape = circle, draw] {};
\node[vertex] (G-6) at (7.5, 1) [shape = circle, draw] {};
\node[vertex] (G--4) at (4.5, -1) [shape = circle, draw] {};
\node[vertex] (G-3) at (3.0, 1) [shape = circle, draw] {};
\node[vertex] (G--3) at (3.0, -1) [shape = circle, draw] {};
\node[vertex] (G-2) at (1.5, 1) [shape = circle, draw] {};
\node[vertex] (G--2) at (1.5, -1) [shape = circle, draw] {};
\node[vertex] (G-1) at (0.0, 1) [shape = circle, draw] {};
\node[vertex] (G--1) at (0.0, -1) [shape = circle, draw] {};
\node[vertex] (G-4) at (4.5, 1) [shape = circle, draw] {};
\draw[] (G-7) .. controls +(0.5, -0.5) and +(-0.5, -0.5) .. (G-8);
\draw[] (G--10) .. controls +(-0.5, 0.5) and +(0.5, 0.5) .. (G--9);
\draw[] (G-9) .. controls +(0.5, -0.5) and +(-0.5, -0.5) .. (G-10);
\draw[] (G--8) .. controls +(-0.5, 0.5) and +(0.5, 0.5) .. (G--7);
\draw[] (G-9) .. controls +(-1, -1) and +(1, 1) .. (G--8);
\draw[] (G-8) .. controls +(1, -1) and +(-1, 1) .. (G--9);
\draw[] (G-1) .. controls +(.6,-.6) and +(-.6,-.6) .. (G-3);
\draw[] (G--1) .. controls +(.6, .6) and +(-.6, .6) .. (G--3);
\draw[] (G-3) .. controls +(0.75, -1) and +(-0.75, 1) .. (G--4);
\draw[] (G-1) .. controls +(0.75, -1) and +(-0.75, 1) .. (G--2);
\draw[] (G-2) .. controls +(-0.75, -1) and +(0.75, 1) .. (G--1);
\draw[] (G-4) .. controls +(-0.75, -1) and +(0.75, 1) .. (G--3);
\draw[] (G-5) .. controls +(0.75, -1) and +(-0.75, 1) .. (G--6);
\draw[] (G-6) .. controls +(-0.75, -1) and +(0.75, 1) .. (G--5);
\draw[] (G-6) .. controls +(0.5, -0.5) and +(-0.5, -0.5) .. (G-7);
\draw[] (G--7) .. controls +(-0.5, 0.5) and +(0.5, 0.5) .. (G--6);
\end{tikzpicture},\qquad
\begin{tikzpicture}[partition-diagram]
\node[vertex] (G--10) at (13.5, -1) [shape = circle, draw] {};
\node[vertex] (G--9) at (12.0, -1) [shape = circle, draw] {};
\node[vertex] (G-7) at (9.0, 1) [shape = circle, draw] {};
\node[vertex] (G-8) at (10.5, 1) [shape = circle, draw] {};
\node[vertex] (G--8) at (10.5, -1) [shape = circle, draw] {};
\node[vertex] (G--7) at (9.0, -1) [shape = circle, draw] {};
\node[vertex] (G-9) at (12.0, 1) [shape = circle, draw] {};
\node[vertex] (G-10) at (13.5, 1) [shape = circle, draw] {};
\node[vertex] (G--6) at (7.5, -1) [shape = circle, draw] {};
\node[vertex] (G-5) at (6.0, 1) [shape = circle, draw] {};
\node[vertex] (G--5) at (6.0, -1) [shape = circle, draw] {};
\node[vertex] (G-6) at (7.5, 1) [shape = circle, draw] {};
\node[vertex] (G--4) at (4.5, -1) [shape = circle, draw] {};
\node[vertex] (G-3) at (3.0, 1) [shape = circle, draw] {};
\node[vertex] (G--3) at (3.0, -1) [shape = circle, draw] {};
\node[vertex] (G-2) at (1.5, 1) [shape = circle, draw] {};
\node[vertex] (G--2) at (1.5, -1) [shape = circle, draw] {};
\node[vertex] (G-1) at (0.0, 1) [shape = circle, draw] {};
\node[vertex] (G--1) at (0.0, -1) [shape = circle, draw] {};
\node[vertex] (G-4) at (4.5, 1) [shape = circle, draw] {};
\draw[] (G-7) .. controls +(0.5, -0.5) and +(-0.5, -0.5) .. (G-8);
\draw[] (G--10) .. controls +(-0.5, 0.5) and +(0.5, 0.5) .. (G--9);
\draw[] (G-9) .. controls +(0.5, -0.5) and +(-0.5, -0.5) .. (G-10);
\draw[] (G--8) .. controls +(-0.5, 0.5) and +(0.5, 0.5) .. (G--7);
\draw[] (G-9) .. controls +(-1, -1) and +(1, 1) .. (G--8);
\draw[] (G-8) .. controls +(1, -1) and +(-1, 1) .. (G--9);
\draw[] (G-1) .. controls +(.6,-.6) and +(-.6,-.6) .. (G-3);
\draw[] (G--1) .. controls +(.6, .6) and +(-.6, .6) .. (G--3);
\draw[] (G-3) .. controls +(0.75, -1) and +(-0.75, 1) .. (G--4);
\draw[] (G-4) .. controls +(-0.75, -1) and +(0.75, 1) .. (G--3);
\draw[] (G-1) .. controls +(0.75, -1) and +(-0.75, 1) .. (G--2);
\draw[] (G-2) .. controls +(-0.75, -1) and +(0.75, 1) .. (G--1);
\draw[] (G-5) .. controls +(0.75, -1) and +(-0.75, 1) .. (G--6);
\draw[] (G-6) .. controls +(-0.75, -1) and +(0.75, 1) .. (G--5);
\draw[] (G-6) .. controls +(0.75, -1) and +(-0.75, 1) .. (G--7);
\draw[] (G-7) .. controls +(-0.75, -1) and +(0.75, 1) .. (G--6);
\end{tikzpicture}, \qquad
\right.\hskip .09in
\]
\[
\left.\hskip .09in
\begin{tikzpicture}[partition-diagram]
\node[vertex] (G--10) at (13.5, -1) [shape = circle, draw] {};
\node[vertex] (G--9) at (12.0, -1) [shape = circle, draw] {};
\node[vertex] (G-7) at (9.0, 1) [shape = circle, draw] {};
\node[vertex] (G-8) at (10.5, 1) [shape = circle, draw] {};
\node[vertex] (G--8) at (10.5, -1) [shape = circle, draw] {};
\node[vertex] (G--7) at (9.0, -1) [shape = circle, draw] {};
\node[vertex] (G-9) at (12.0, 1) [shape = circle, draw] {};
\node[vertex] (G-10) at (13.5, 1) [shape = circle, draw] {};
\node[vertex] (G--6) at (7.5, -1) [shape = circle, draw] {};
\node[vertex] (G-5) at (6.0, 1) [shape = circle, draw] {};
\node[vertex] (G--5) at (6.0, -1) [shape = circle, draw] {};
\node[vertex] (G-6) at (7.5, 1) [shape = circle, draw] {};
\node[vertex] (G--4) at (4.5, -1) [shape = circle, draw] {};
\node[vertex] (G-3) at (3.0, 1) [shape = circle, draw] {};
\node[vertex] (G--3) at (3.0, -1) [shape = circle, draw] {};
\node[vertex] (G-2) at (1.5, 1) [shape = circle, draw] {};
\node[vertex] (G--2) at (1.5, -1) [shape = circle, draw] {};
\node[vertex] (G-1) at (0.0, 1) [shape = circle, draw] {};
\node[vertex] (G--1) at (0.0, -1) [shape = circle, draw] {};
\node[vertex] (G-4) at (4.5, 1) [shape = circle, draw] {};
\draw[] (G-7) .. controls +(0.5, -0.5) and +(-0.5, -0.5) .. (G-8);
\draw[] (G--10) .. controls +(-0.5, 0.5) and +(0.5, 0.5) .. (G--9);
\draw[] (G-9) .. controls +(0.5, -0.5) and +(-0.5, -0.5) .. (G-10);
\draw[] (G--8) .. controls +(-0.5, 0.5) and +(0.5, 0.5) .. (G--7);
\draw[] (G-9) .. controls +(-1, -1) and +(1, 1) .. (G--8);
\draw[] (G-8) .. controls +(1, -1) and +(-1, 1) .. (G--9);
\draw[] (G-1) .. controls +(.6,-.6) and +(-.6,-.6) .. (G-3);
\draw[] (G--1) .. controls +(.6, .6) and +(-.6, .6) .. (G--3);
\draw[] (G-3) .. controls +(0.75, -1) and +(-0.75, 1) .. (G--4);
\draw[] (G-1) .. controls +(0.75, -1) and +(-0.75, 1) .. (G--2);
\draw[] (G-2) .. controls +(-0.75, -1) and +(0.75, 1) .. (G--1);
\draw[] (G-4) .. controls +(-0.75, -1) and +(0.75, 1) .. (G--3);
\draw[] (G-5) .. controls +(0.75, -1) and +(-0.75, 1) .. (G--6);
\draw[] (G-6) .. controls +(-0.75, -1) and +(0.75, 1) .. (G--5);
\draw[] (G--5) .. controls +(-0.5, 0.5) and +(0.5, 0.5) .. (G--4);
\draw[] (G-4) .. controls +(0.5, -0.5) and +(-0.5, -0.5) .. (G-5);
\end{tikzpicture}, \qquad
\begin{tikzpicture}[partition-diagram]
\node[vertex] (G--10) at (13.5, -1) [shape = circle, draw] {};
\node[vertex] (G--9) at (12.0, -1) [shape = circle, draw] {};
\node[vertex] (G-7) at (9.0, 1) [shape = circle, draw] {};
\node[vertex] (G-8) at (10.5, 1) [shape = circle, draw] {};
\node[vertex] (G--8) at (10.5, -1) [shape = circle, draw] {};
\node[vertex] (G--7) at (9.0, -1) [shape = circle, draw] {};
\node[vertex] (G-9) at (12.0, 1) [shape = circle, draw] {};
\node[vertex] (G-10) at (13.5, 1) [shape = circle, draw] {};
\node[vertex] (G--6) at (7.5, -1) [shape = circle, draw] {};
\node[vertex] (G-5) at (6.0, 1) [shape = circle, draw] {};
\node[vertex] (G--5) at (6.0, -1) [shape = circle, draw] {};
\node[vertex] (G-6) at (7.5, 1) [shape = circle, draw] {};
\node[vertex] (G--4) at (4.5, -1) [shape = circle, draw] {};
\node[vertex] (G-3) at (3.0, 1) [shape = circle, draw] {};
\node[vertex] (G--3) at (3.0, -1) [shape = circle, draw] {};
\node[vertex] (G-2) at (1.5, 1) [shape = circle, draw] {};
\node[vertex] (G--2) at (1.5, -1) [shape = circle, draw] {};
\node[vertex] (G-1) at (0.0, 1) [shape = circle, draw] {};
\node[vertex] (G--1) at (0.0, -1) [shape = circle, draw] {};
\node[vertex] (G-4) at (4.5, 1) [shape = circle, draw] {};
\draw[] (G-7) .. controls +(0.5, -0.5) and +(-0.5, -0.5) .. (G-8);
\draw[] (G--10) .. controls +(-0.5, 0.5) and +(0.5, 0.5) .. (G--9);
\draw[] (G-9) .. controls +(0.5, -0.5) and +(-0.5, -0.5) .. (G-10);
\draw[] (G--8) .. controls +(-0.5, 0.5) and +(0.5, 0.5) .. (G--7);
\draw[] (G-9) .. controls +(-1, -1) and +(1, 1) .. (G--8);
\draw[] (G-8) .. controls +(1, -1) and +(-1, 1) .. (G--9);
\draw[] (G-1) .. controls +(.6,-.6) and +(-.6,-.6) .. (G-3);
\draw[] (G--1) .. controls +(.6, .6) and +(-.6, .6) .. (G--3);
\draw[] (G-3) .. controls +(0.75, -1) and +(-0.75, 1) .. (G--4);
\draw[] (G-1) .. controls +(0.75, -1) and +(-0.75, 1) .. (G--2);
\draw[] (G-2) .. controls +(-0.75, -1) and +(0.75, 1) .. (G--1);
\draw[] (G-4) .. controls +(-0.75, -1) and +(0.75, 1) .. (G--3);
\draw[] (G-5) .. controls +(0.75, -1) and +(-0.75, 1) .. (G--6);
\draw[] (G-6) .. controls +(-0.75, -1) and +(0.75, 1) .. (G--5);
\draw[] (G-4) .. controls +(0.75, -1) and +(-0.75, 1) .. (G--5);
\draw[] (G-5) .. controls +(-0.75, -1) and +(0.75, 1) .. (G--4);
\end{tikzpicture}
\right\}.
\]

Finally, consider $\vec{\nu} = (\epart, (2,2,1))$.  There is
precisely one way to add edges to $d_{\vec{\mu}}$ to obtain an
element of cycle type $\vec{\nu}$ and so $C(d_{\vec{\mu}}; \vec{\nu})$
is the following singleton set
\[
\left\{
\begin{tikzpicture}[partition-diagram]
\node[vertex] (G--10) at (13.5, -1) [shape = circle, draw] {};
\node[vertex] (G--9) at (12.0, -1) [shape = circle, draw] {};
\node[vertex] (G-7) at (9.0, 1) [shape = circle, draw] {};
\node[vertex] (G-8) at (10.5, 1) [shape = circle, draw] {};
\node[vertex] (G--8) at (10.5, -1) [shape = circle, draw] {};
\node[vertex] (G--7) at (9.0, -1) [shape = circle, draw] {};
\node[vertex] (G-9) at (12.0, 1) [shape = circle, draw] {};
\node[vertex] (G-10) at (13.5, 1) [shape = circle, draw] {};
\node[vertex] (G--6) at (7.5, -1) [shape = circle, draw] {};
\node[vertex] (G-5) at (6.0, 1) [shape = circle, draw] {};
\node[vertex] (G--5) at (6.0, -1) [shape = circle, draw] {};
\node[vertex] (G-6) at (7.5, 1) [shape = circle, draw] {};
\node[vertex] (G--4) at (4.5, -1) [shape = circle, draw] {};
\node[vertex] (G-3) at (3.0, 1) [shape = circle, draw] {};
\node[vertex] (G--3) at (3.0, -1) [shape = circle, draw] {};
\node[vertex] (G-2) at (1.5, 1) [shape = circle, draw] {};
\node[vertex] (G--2) at (1.5, -1) [shape = circle, draw] {};
\node[vertex] (G-1) at (0.0, 1) [shape = circle, draw] {};
\node[vertex] (G--1) at (0.0, -1) [shape = circle, draw] {};
\node[vertex] (G-4) at (4.5, 1) [shape = circle, draw] {};
\draw[] (G-7) .. controls +(0.5, -0.5) and +(-0.5, -0.5) .. (G-8);
\draw[] (G--10) .. controls +(-0.5, 0.5) and +(0.5, 0.5) .. (G--9);
\draw[] (G-9) .. controls +(0.5, -0.5) and +(-0.5, -0.5) .. (G-10);
\draw[] (G--8) .. controls +(-0.5, 0.5) and +(0.5, 0.5) .. (G--7);
\draw[] (G-9) .. controls +(-1, -1) and +(1, 1) .. (G--8);
\draw[] (G-8) .. controls +(1, -1) and +(-1, 1) .. (G--9);
\draw[] (G-5) .. controls +(0.5, -0.5) and +(-0.5, -0.5) .. (G-6);
\draw[] (G--6) .. controls +(-0.5, 0.5) and +(0.5, 0.5) .. (G--5);
\draw[] (G-5) -- (G--5);
\draw[] (G-1) .. controls +(.6,-.6) and +(-.6,-.6) .. (G-3);
\draw[] (G--1) .. controls +(.6, .6) and +(-.6, .6) .. (G--3);
\draw[] (G-3) .. controls +(0.75, -1) and +(-0.75, 1) .. (G--4);
\draw[] (G-1) .. controls +(0.75, -1) and +(-0.75, 1) .. (G--2);
\draw[] (G-2) .. controls +(-0.75, -1) and +(0.75, 1) .. (G--1);
\draw[] (G-4) .. controls +(-0.75, -1) and +(0.75, 1) .. (G--3);
\end{tikzpicture}
\right\}.
\]
\end{example}

Let $A_1, \ldots, A_r$ be the blocks of a set partition of $[k]$ so that $|A_i| = |A_j|$ for all $1\leqslant i, j\leqslant r$. Suppose that $d$ consists of
the blocks $\{A_i, \overline{A_{i+1}}\}$ for $1\leqslant i \leqslant r-1$ and $\{A_r, \overline{A_1}\}$. If $|A_i|=q$ and $A_i =\{(i-1)q +1, (i-1)q +2,\ldots, iq\}$ 
for all $i$, then we refer to this particular cycle as the \defn{canonical $r$-cycle}.  Notice that $d$ is an $r$-cycle in the sense that if we
ignore bars and present the permutation as a two line array, then $d$ has the form
\[
    \left( \begin{array}{cccc}
    A_1 & A_2 & \cdots & A_r\\
    A_2 & A_3 & \cdots & A_1
    \end{array}\right).
\]
In general, a permutation of the blocks $A_1, A_2, \ldots, A_r$ is called an
\defn{$r$-cycle} if there exists a $\sigma\in \mathfrak{S}_r$ such that $A_i \mapsto A_{\sigma(i)}$ and $\sigma$ is an $r$-cycle in $\mathfrak{S}_r$.

\begin{lemma}
\label{lem:onecycle}
Let $d\in \mathcal{U}_k$ be the canonical $r$-cycle permuting blocks $A_1, \ldots, A_r$ with $|A_i| = \frac{k}{r}$ for all $i$.
If we take unions of blocks of $d$ to get a new diagram $d'$ satisfying $\mathsf{top}(d') = \mathsf{bot}(d')$, then $d'$ is an $s$-cycle where $s$
divides $r$ and the blocks of $\mathsf{top}(d')$ are all of size $\frac{k}{s}$.
\end{lemma}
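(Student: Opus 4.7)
The strategy is to translate the combinatorial condition $\mathsf{top}(d') = \mathsf{bot}(d')$ into a statement about partitions of $\mathbb{Z}/r\mathbb{Z}$ that are invariant under the cyclic shift, and then analyze such partitions.

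First, I would set up notation. Index the blocks $A_1,\dots,A_r$ by $\mathbb{Z}/r\mathbb{Z}$, so that the canonical $r$-cycle $d$ corresponds to the shift $\sigma\colon i\mapsto i+1$ (mod $r$), in the sense that the blocks of $d$ are $\{A_i\cup\overline{A_{\sigma(i)}}:i\in\mathbb{Z}/r\mathbb{Z}\}$. Taking unions of blocks of $d$ to form $d'$ is the same as choosing a set partition $\Pi=\{P_1,\dots,P_s\}$ of $\mathbb{Z}/r\mathbb{Z}$ and declaring the blocks of $d'$ to be $\bigl\{\bigcup_{i\in P_j}A_i\;\cup\;\overline{\bigcup_{i\in P_j}A_{\sigma(i)}}:1\leqslant j\leqslant s\bigr\}$.

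Next, I would observe that by construction $\mathsf{top}(d')=\{\bigcup_{i\in P_j}A_i\}_j$ and $\mathsf{bot}(d')=\{\bigcup_{i\in\sigma(P_j)}A_i\}_j$, so the condition $\mathsf{top}(d')=\mathsf{bot}(d')$ is equivalent to $\sigma(\Pi)=\Pi$, i.e.\ $\Pi$ is stable under the cyclic shift. Since $\sigma$ then permutes the blocks of $\Pi$ and preserves cardinalities, all $P_j$ have a common size $r/s$, forcing $s\mid r$. Consequently each block of $\mathsf{top}(d')$ has cardinality $(r/s)\cdot(k/r)=k/s$, as required.

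Finally, to see that $d'$ is an $s$-cycle on its $s$ blocks, I would use the transitivity of the $\sigma$-action on $\mathbb{Z}/r\mathbb{Z}$: pick any $x\in P_1$; since $\sigma$ acts as a single $r$-cycle, the orbit $\{x,\sigma(x),\sigma^2(x),\dots\}$ visits every element of $\mathbb{Z}/r\mathbb{Z}$, hence every block of $\Pi$. Therefore the induced permutation $\tau\in\mathfrak{S}_s$ defined by $\sigma(P_j)=P_{\tau(j)}$ acts transitively on $\{1,\dots,s\}$, which forces $\tau$ to be an $s$-cycle. Under our identification of $d'$ with this induced permutation of its blocks, this exactly says $d'$ is an $s$-cycle.

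The only mildly subtle step is the bookkeeping in the first paragraph: verifying that the condition ``$\mathsf{top}(d')=\mathsf{bot}(d')$'' really does translate to shift-invariance of $\Pi$ rather than some weaker condition (e.g.\ merely that the multisets of block sizes agree). Once this translation is made precise, the rest is an elementary orbit-counting argument on $\mathbb{Z}/r\mathbb{Z}$.
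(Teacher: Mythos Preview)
Your proposal is correct and is a cleaner, more conceptual argument than the paper's. The paper argues by hand: it fixes the block $B_1$ containing $A_1$, writes $B_1=A_1\cup A_{i_2}\cup\cdots\cup A_{i_{r_1}}$, and then traces the images under $d$ to show that $B_2,B_3,\ldots,B_{i_2-1}$ are forced and that $B_i=\bigcup_{j\equiv i\,(\mathrm{mod}\,s)}A_j$ with $s=i_2-1$. Your approach instead recognises the problem as classifying shift-invariant partitions of $\mathbb{Z}/r\mathbb{Z}$ and invokes transitivity of the cyclic action; this is shorter and exposes why the answer is what it is.

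Two small remarks. First, your deduction that all $P_j$ have the same size in the second paragraph actually relies on the transitivity of the induced permutation $\tau$, which you only establish in the third paragraph; when you write this up, swap the order or at least forward-reference. Second, the paper extracts from its proof two facts used immediately afterwards (in the paragraph following the lemma and in Lemma~\ref{lem:manycycles}): the explicit description $B_i=\bigcup_{j\equiv i\,(\mathrm{mod}\,s)}A_j$, and the uniqueness of the coarsening for each divisor $s\mid r$. Your framework yields both with one extra line---since $\tau$ is an $s$-cycle, $\sigma^s$ fixes every $P_j$, so each $P_j$ is a union of $\langle\sigma^s\rangle$-orbits, and comparing sizes forces each $P_j$ to be a single residue class modulo $s$---but you should include this, as the subsequent arguments depend on it.
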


\begin{proof}
Suppose $d'$ is a uniform block permutation obtained by taking the union of blocks of $d$ and $\mathsf{top}(d') = \mathsf{bot}(d')=\{B_1, \ldots, B_s\}$,
where $s\leqslant r$. Without loss of generality, suppose that $B_1$ is the block containing $A_1$ and $B_1$ is the union of $r_1$ blocks. Then
$B_1 = A_1\cup A_{i_2} \cup \cdots \cup A_{i_{r_1}}$ with $1<i_2< \cdots < i_{r_1}$. If two indices $1,i_2,\ldots,i_r$ are adjacent, then by the assumption
that $d$ is the canonical cycle and $\mathsf{top}(d')= \mathsf{bot}(d')$, $B_1$ would be the union of all $A_i$. In this case $d'$ is a cycle of length $s=1$.
Otherwise, since $d$ is the canonical $r$-cycle, we have that $A_2\cup A_{i_2+1}\cup \cdots \cup A_{i_{r_1}+1}$ is the image block of $B_1$ in
$\mathsf{bot}(d')$ which is the union of $r_1$ blocks, we call this block $B_2$.
By a similar argument, there are blocks $B_3, B_4, \ldots, B_{i_2-1}$ all of size $r_1$ containing $A_3, A_4, \ldots, A_{i_2-1}$,
respectively. Notice that $B_{i_2-1}$ contains $A_r$ since $d$ maps $A_{i_2-1}$ to $A_{i_2}$ and $A_r$ to $A_1$, so that in $d'$ $A_{i_2}$ and $A_1$
are in the same block, i.e. $B_1$, and this block connects to the block that contains $A_{i_2-1}$ and $A_r$.  Similar arguments show that $A_{r-1}$
is contained in $B_{i_2-2}$, etc. So, $s=i_2-1$ is the total number of blocks in $\mathsf{top}(d')$.  In addition, the blocks $B_1, B_2, \ldots, B_{i_2-1}$
contain blocks $A_1, A_2, \ldots, A_{(i_2-1)r_1}$ and they are all of the same size.  This means that $r = (i_2-1)r_1$, hence
$|B_i| = r_1|A_i|= \frac{r|A_i|}{s} = \frac{k}{s}$.  By the description above we see that $d' = \left( \begin{array}{cccc} B_1 & B_2 & \ldots & B_s\\
B_2 & B_3 & \ldots & B_1\end{array} \right)$ is an $s$-cycle.
\end{proof}

There are two takeaways from the proof of Lemma \ref{lem:onecycle}. The first is that taking a union of blocks of the canonical $r$-cycle results in an
$s$-cycle. The second take away is that if we label the blocks of $d'$ as in the proof of Lemma~\ref{lem:onecycle}, then $B_i = \cup_j A_j$ where $j$ is 
congruent to $i$ mod $s$. In addition, the proof implies that the union of blocks of the canonical $r$-cycle results in an $s$-cycle, where $s$ divides $r$ 
and there is only one possible way to get an $s$-cycle.

\begin{example}\label{ex:unions1cycle} 
In this example we use squares for the vertices to indicate that the vertices represent sets $A_1, A_2, \ldots, A_6$ all of the same size and 
containing consecutive integers. Let 
\begin{center}
$d = \left( \begin{array}{cccccc}
    A_1 & A_2 & A_3 & A_4 & A_5 & A_6\\
    A_2 & A_3 & A_4 & A_5 & A_6 & A_1
    \end{array}\right)= $
\begin{tikzpicture}[partition-diagram]%\begin{tikzpicture}[scale = 0.5,thick, baseline={(0,-1ex/2)}]
%\tikzstyle{vertex} = [shape = circle, minimum size = 7pt, inner sep = 1pt]
\node[vertex] (G--6) at (7.5, -1) [shape = rectangle, draw] {};
\node[vertex] (G-5) at (6.0, 1) [shape = rectangle, draw] {};
\node[vertex] (G--5) at (6.0, -1) [shape = rectangle, draw] {};
\node[vertex] (G-6) at (7.5, 1) [shape = rectangle, draw] {};
\node[vertex] (G--4) at (4.5, -1) [shape = rectangle, draw] {};
\node[vertex] (G-3) at (3.0, 1) [shape = rectangle, draw] {};
\node[vertex] (G--3) at (3.0, -1) [shape = rectangle, draw] {};
\node[vertex] (G-2) at (1.5, 1) [shape = rectangle, draw] {};
\node[vertex] (G--2) at (1.5, -1) [shape = rectangle, draw] {};
\node[vertex] (G-1) at (0.0, 1) [shape = rectangle, draw] {};
\node[vertex] (G--1) at (0.0, -1) [shape = rectangle, draw] {};
\node[vertex] (G-4) at (4.5, 1) [shape = rectangle, draw] {};
\draw[] (G-5) .. controls +(0.75, -1) and +(-0.75, 1) .. (G--6);
\draw[] (G-6) .. controls +(-0.75, -1) and +(0.75, 1) .. (G--1);
\draw[] (G-3) .. controls +(0.75, -1) and +(-0.75, 1) .. (G--4);
\draw[] (G-2) .. controls +(0.75, -1) and +(-0.75, 1) .. (G--3);
\draw[] (G-1) .. controls +(0.75, -1) and +(-0.75, 1) .. (G--2);
\draw[] (G-4) .. controls +(.75, -1) and +(-.75, 1) .. (G--5);
\end{tikzpicture}~.
\end{center}
Then there are three possible ways to take unions of the blocks of $d$ to get diagrams $d'$ with $\mathsf{top}(d') = \mathsf{bot}(d')$.  
We can take the union of all the blocks to get one single block on top and on the bottom, i.e., $B = \cup_{i=1}^6 A_i$, which yields
\begin{center}
$d' = \left( \begin{array}{c}
    B\\ B
    \end{array}\right)= $
\begin{tikzpicture}[partition-diagram]%\begin{tikzpicture}[scale = 0.5,thick, baseline={(0,-1ex/2)}]
%\tikzstyle{vertex} = [shape = circle, minimum size = 7pt, inner sep = 1pt]
\node[vertex] (G--6) at (7.5, -1) [shape = rectangle, draw] {};
\node[vertex] (G-5) at (6.0, 1) [shape = rectangle, draw] {};
\node[vertex] (G--5) at (6.0, -1) [shape = rectangle, draw] {};
\node[vertex] (G-6) at (7.5, 1) [shape = rectangle, draw] {};
\node[vertex] (G--4) at (4.5, -1) [shape = rectangle, draw] {};
\node[vertex] (G-3) at (3.0, 1) [shape = rectangle, draw] {};
\node[vertex] (G--3) at (3.0, -1) [shape = rectangle, draw] {};
\node[vertex] (G-2) at (1.5, 1) [shape = rectangle, draw] {};
\node[vertex] (G--2) at (1.5, -1) [shape = rectangle, draw] {};
\node[vertex] (G-1) at (0.0, 1) [shape = rectangle, draw] {};
\node[vertex] (G--1) at (0.0, -1) [shape = rectangle, draw] {};
\node[vertex] (G-4) at (4.5, 1) [shape = rectangle, draw] {};
\draw[] (G-1) -- (G--1);
\draw[] (G-1) .. controls +(.5, -.5) and +(-.5, -.5) .. (G-2);
\draw[] (G-2) .. controls +(.5, -.5) and +(-.5, -.5) .. (G-3);
\draw[] (G-3) .. controls +(.5, -.5) and +(-.5, -.5) .. (G-4);
\draw[] (G-4) .. controls +(.5, -.5) and +(-.5, -.5) .. (G-5);
\draw[] (G-5) .. controls +(.5, -.5) and +(-.5, -.5) .. (G-6);
\draw[] (G--6) .. controls +(-0.5, .5) and +(0.5, .5) .. (G--5);
\draw[] (G--5) .. controls +(-0.5, .5) and +(0.5, .5) .. (G--4);
\draw[] (G--4) .. controls +(-0.5, .5) and +(0.5, .5) .. (G--3);
\draw[] (G--3) .. controls +(-0.5, .5) and +(0.5, .5) .. (G--2);
\draw[] (G--2) .. controls +(-0.5, .5) and +(0.5, .5) .. (G--1);
\end{tikzpicture}~.
\end{center}
Alternatively, we can get a two cycle by taking the following unions $B_1 = A_1\cup A_3 \cup A_5$ and $B_2 = A_2 \cup A_4 \cup A_6$, which gives
\begin{center}
$d' = \left( \begin{array}{cc}
   \textcolor{red}{B_1} & \textcolor{blue}{B_2}\\ \textcolor{blue}{B_2} & \textcolor{red}{B_1}
    \end{array}\right)= $
\begin{tikzpicture}[partition-diagram]%\begin{tikzpicture}[scale = 0.5,thick, baseline={(0,-1ex/2)}]
%\tikzstyle{vertex} = [shape = circle, minimum size = 7pt, inner sep = 1pt]
\node[vertex] (G--6) at (7.5, -1) [shape = rectangle, draw] {};
\node[vertex] (G-5) at (6.0, 1) [shape = rectangle, draw] {};
\node[vertex] (G--5) at (6.0, -1) [shape = rectangle, draw] {};
\node[vertex] (G-6) at (7.5, 1) [shape = rectangle, draw] {};
\node[vertex] (G--4) at (4.5, -1) [shape = rectangle, draw] {};
\node[vertex] (G-3) at (3.0, 1) [shape = rectangle, draw] {};
\node[vertex] (G--3) at (3.0, -1) [shape = rectangle, draw] {};
\node[vertex] (G-2) at (1.5, 1) [shape = rectangle, draw] {};
\node[vertex] (G--2) at (1.5, -1) [shape = rectangle, draw] {};
\node[vertex] (G-1) at (0.0, 1) [shape = rectangle, draw] {};
\node[vertex] (G--1) at (0.0, -1) [shape = rectangle, draw] {};
\node[vertex] (G-4) at (4.5, 1) [shape = rectangle, draw] {};
\draw[line width=0.5mm] (G-1) .. controls +(.7, -.7) and +(-.7, -.7) .. (G-3)[color=red];
\draw[line width=0.5mm] (G-3) .. controls +(.7, -.7) and +(-.7, -.7) .. (G-5)[color=red];
\draw[line width=0.5mm] (G-2) .. controls +(.7, -.7) and +(-.7, -.7) .. (G-4)[color=blue];
\draw[line width=0.5mm] (G-4) .. controls +(.7, -.7) and +(-.7, -.7) .. (G-6)[color=blue];
\draw[line width=0.5mm] (G--6) .. controls +(-0.7, .7) and +(0.7, .7) .. (G--4)[color=blue];
\draw[line width=0.5mm] (G--4) .. controls +(-0.7, .7) and +(0.7, .7) .. (G--2)[color=blue];
\draw[line width=0.5mm] (G--5) .. controls +(-0.7, .7) and +(0.7, .7) .. (G--3)[color=red];
\draw[line width=0.5mm] (G--3) .. controls +(-0.7, .7) and +(0.7, .7) .. (G--1)[color=red];
\draw[] (G-1) .. controls +(0.75, -1) and +(-0.75, 1) .. (G--2);
\draw[] (G-2) .. controls +(-0.75, -1) and +(0.75, 1) .. (G--1);
\end{tikzpicture}~.
\end{center}
Finally, we can get a three cycle by taking the following unions $B_1 = A_1\cup A_4$ and $B_2 = A_2 \cup A_5$ and $B_3 = A_3 \cup A_6$ to obtain
\begin{center}
$d' = \left( \begin{array}{ccc}
    \textcolor{red}{B_1} & \textcolor{blue}{B_2} & \textcolor{darkgreen}{B_3}\\ \textcolor{blue}{B_2} & \textcolor{darkgreen}{B_3} & \textcolor{red}{B_1}
    \end{array}\right)= $
\begin{tikzpicture}[partition-diagram]%\begin{tikzpicture}[scale = 0.5,thick, baseline={(0,-1ex/2)}]
%\tikzstyle{vertex} = [shape = circle, minimum size = 7pt, inner sep = 1pt]
\node[vertex] (G--6) at (7.5, -1) [shape = rectangle, draw] {};
\node[vertex] (G-5) at (6.0, 1) [shape = rectangle, draw] {};
\node[vertex] (G--5) at (6.0, -1) [shape = rectangle, draw] {};
\node[vertex] (G-6) at (7.5, 1) [shape = rectangle, draw] {};
\node[vertex] (G--4) at (4.5, -1) [shape = rectangle, draw] {};
\node[vertex] (G-3) at (3.0, 1) [shape = rectangle, draw] {};
\node[vertex] (G--3) at (3.0, -1) [shape = rectangle, draw] {};
\node[vertex] (G-2) at (1.5, 1) [shape = rectangle, draw] {};
\node[vertex] (G--2) at (1.5, -1) [shape = rectangle, draw] {};
\node[vertex] (G-1) at (0.0, 1) [shape = rectangle, draw] {};
\node[vertex] (G--1) at (0.0, -1) [shape = rectangle, draw] {};
\node[vertex] (G-4) at (4.5, 1) [shape = rectangle, draw] {};
\draw[line width=0.5mm] (G-1) .. controls +(.8, -.8) and +(-.8, -.8) .. (G-4)[color=red];
\draw[line width=0.5mm] (G-2) .. controls +(.8, -.8) and +(-.8, -.8) .. (G-5)[color=blue];
\draw[line width=0.5mm] (G-3) .. controls +(.8, -.8) and +(-.8, -.8) .. (G-6)[color=darkgreen];
\draw[line width=0.5mm] (G--6) .. controls +(-0.8, .8) and +(0.8, .8) .. (G--3)[color=darkgreen];
\draw[line width=0.5mm] (G--5) .. controls +(-0.8, .8) and +(0.8, .8) .. (G--2)[color=blue];
\draw[line width=0.5mm] (G--4) .. controls +(-0.8, .8) and +(0.8, .8) .. (G--1)[color=red];
\draw[] (G-1) .. controls +(0.75, -1) and +(-0.75, 1) .. (G--2);
\draw[] (G-2) .. controls +(0.75, -1) and +(-0.75, 1) .. (G--3);
\draw[] (G-3) .. controls +(-0.75, -1) and +(0.75, 1) .. (G--1);
\end{tikzpicture}~.
\end{center}
\end{example}

\begin{lemma}
\label{lem:manycycles}
Let $r, t$ be positive integers.
Let $d\in \mathcal{U}_k$ consist of $t$ $r$-cycles so that the $r$ blocks in each cycle are labeled $B_1^{(i)},\ldots, B_r^{(i)}$ for $1\leqslant i\leqslant t$
and the $r$-cycles have the form:
\[
    \left( \begin{array}{cccc}
    B_1^{(i)} & B_2^{(i)} & \cdots & B_r^{(i)}\\
    B_2^{(i)} & B_3^{(i)} & \cdots & B_1^{(i)}
    \end{array}\right),
\]
as obtained in the proof of Lemma \ref{lem:onecycle}.
There are $r^{t-1}$ ways to take unions of the blocks of $d$ to get a diagram $d'$ that is an $r$-cycle and $\mathsf{top}(d') = \mathsf{bot}(d')$.
Moreover, the blocks of $d'$ consist of disjoint unions of the blocks of $d$, where there is exactly one block from each $r$-cycle in $d$ and hence all blocks of $d'$ have the same size.
\end{lemma}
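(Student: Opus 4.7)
The plan is to encode the blocks of $d$ as elements of $[t] \times \mathbb{Z}/r\mathbb{Z}$ via the identification $B_j^{(i)} \leftrightarrow (i, j)$. Under this encoding, the action of $d$ on blocks is $(i,j) \mapsto (i, j+1)$, a cyclic shift within each ``row'' $\{i\} \times \mathbb{Z}/r\mathbb{Z}$. A choice of unions yielding a new diagram $d'$ corresponds to a partition $\mathcal{P}$ of $[t] \times \mathbb{Z}/r\mathbb{Z}$; the requirement $\mathsf{top}(d') = \mathsf{bot}(d')$ with $d'$ an $r$-cycle translates to the condition that $d$ permutes the parts of $\mathcal{P}$ in a single cycle of length $r$. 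Labelling the parts $P_0, \ldots, P_{r-1}$ so that $d(P_k) = P_{k+1 \bmod r}$, the task becomes to enumerate such $\mathcal{P}$.

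The key step is to prove that each $P_k$ meets every row in exactly one point. Restricting to row $i$, the sets $P_k \cap (\{i\} \times \mathbb{Z}/r\mathbb{Z})$ partition the row, and $d$ cyclically shifts the row while sending $P_k$ to $P_{k+1}$, so these intersections share a common cardinality $m_i$. Summing over $k$ forces $r \cdot m_i = r$, so $m_i = 1$. Hence $P_0 = \{(i, j_i) : 1 \leqslant i \leqslant t\}$ for a unique tuple $(j_1, \ldots, j_t) \in (\mathbb{Z}/r\mathbb{Z})^t$, and then $P_k = \{(i, j_i + k) : 1 \leqslant i \leqslant t\}$. Conversely, every tuple yields a valid partition by this formula, and the induced action of $d$ is indeed a single $r$-cycle since the $r$ parts $P_k$ are pairwise distinct (if $P_k = P_{k'}$, comparing the row-$i$ entries gives $k = k' \bmod r$).

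For the count, two tuples give the same unordered partition precisely when they differ by a common shift $j_i \mapsto j_i + c$, corresponding to the cyclic relabeling $P_k \mapsto P_{k+c}$. This diagonal $\mathbb{Z}/r\mathbb{Z}$-action on $(\mathbb{Z}/r\mathbb{Z})^t$ is free, so the number of such partitions is $r^t / r = r^{t-1}$. The ``moreover'' clause is immediate from the description of each $P_k$: it contains exactly one block from each of the $t$ cycles of $d$, so all blocks of $d'$ have the same size. I do not anticipate a significant obstacle; the only mild subtlety is phrasing the bijection between partitions and shift-orbits of tuples precisely enough that the quotient by $\mathbb{Z}/r\mathbb{Z}$ appears naturally.
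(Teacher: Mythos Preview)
Your argument is correct and complete. The encoding by $[t]\times\mathbb{Z}/r\mathbb{Z}$ is clean, the key equality $r\cdot m_i=r$ forcing $m_i=1$ is exactly the right observation, and the quotient by the free diagonal $\mathbb{Z}/r\mathbb{Z}$-action gives the count $r^{t-1}$ immediately. The distinctness check for the $P_k$ (looking at a single row) and the identification of when two tuples yield the same unordered partition are both handled correctly.

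The paper proceeds differently: it argues by induction on $t$. For $t=2$ it shows directly that the block containing $B_1^{(1)}$ must be $B_1^{(1)}\cup B_j^{(2)}$ for some $j$ (arguing by contradiction that any other shape either drops the number of parts below $r$ or produces parts of unequal size), after which the remaining blocks are forced; this gives $r$ choices. The inductive step then merges the first $t-1$ cycles into a single $r$-cycle in $r^{t-2}$ ways and applies the $t=2$ case to combine with the last cycle. Your approach avoids induction entirely and yields the structural ``moreover'' statement ($m_i=1$ for every $i$) as a uniform consequence of the cardinality count, rather than as a byproduct of the case analysis for $t=2$. The paper's route is slightly more hands-on and stays closer to the diagram picture; yours is more algebraic and arguably tighter, since the equal-cardinality argument for the row intersections handles all $t$ at once.
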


\begin{proof}
The proof is by induction on $t$. The lemma is trivially true when $t=1$. Suppose that $d$ consists of two $r$-cycles, the first permutes blocks
$B_1^{(1)}, \ldots, B_r^{(1)}$ and the other permutes blocks $B_{1}^{(2)}, \ldots, B_{r}^{(2)}$. Notice that there are exactly $r$ ways to form an $r$-cycle
if the union consists of one block in the first cycle with one block in the second cycle. Since $B_1^{(1)}\cup B_{j}^{(2)}$, for any $1\leqslant j \leqslant r$,
can be a block in the union, by the
structure of $d$ and the requirement that $\mathsf{top}(d') = \mathsf{bot}(d')$ we obtain that $B_2^{(1)}\cup B_{j+1}^{(2)}$ is a block and in general
$B_i^{(1)} \cup B_{j+i-1}^{(2)}$ are the blocks permuted by $d'$, where $j+i-1$ is taken mod $r$.  This is an $r$-cycle because there are $r$ blocks and
 we have that $d'$ maps $B_i^{(1)} \cup B_{j+i-1}^{(2)}$ to the block $B_{i+1}^{(1)} \cup B_{j+i}^{(2)}$ and, if $i = r$, then it maps to $B_1^{(1)}\cup B_{j}^{(2)}$.

To see that these are the only ways to obtain an $r$ cycle, we argue by contradiction. If the unions contain more blocks from each $r$-cycle, $d'$ has
fewer than $r$ blocks; and if the union contains more blocks from only one cycle, then the cycles are not all of the same size. In either case we
cannot get an $r$-cycle of blocks all of the same size.

Now if we have $t\geqslant 2$ $r$-cycles, we know by induction that there are $r^{t-2}$ ways in which the first $t-1$ form an $r$-cycle. Now for each way of
forming this union there are $r$ ways to take the union with the last $r$-cycle.
\end{proof}
In our next Proposition we want to count the number of elements in $C(d_{\vec{\mu}}; \vec{\nu})$ for any $\vec{\mu}, \vec{\nu}\in I_k$.
Here, the parts of both $\vec{\mu}$ and $\vec{\nu}$ represent the sizes of cycles.  We think of each component in $\vec{\mu} = (\mu^{(1)}, \ldots, \mu^{(k)})$ in exponential notation with $m_i(\mu^{(a)})$ the number of cycles
of length $i$ in $d_{\vec{\mu}}$ permuting blocks of size $a$.  For any partition $\mu$ of $k$, let 
$\mathfrak{m}(\mu) = (m_1(\mu), m_2(\mu), \ldots, m_k(\mu))$, where $m_i(\mu)$ is the multiplicity of $i$ in $\mu$. 
For $\vec{\mu}\in I_k$, define $\mathfrak{m}(\vec{\mu}) :=(\mathfrak{m}(\mu^{(1)}), \mathfrak{m}(\mu^{(2)}), \ldots, \mathfrak{m}(\mu^{(k)}))$, where 
we think of $\mathfrak{m}(\mu^{(i)})$ as a sequence of length $k$ by adding trailing zeros if necessary.
Given two vectors $\vec{v} = (v_1, \ldots, v_k)$ and $\vec{u}= (u_1, \ldots, u_k)$ we define ${\vec{v}\choose \vec{u}}
= {v_1\choose u_1}{v_2\choose u_2}\ldots{v_k \choose u_k}$. Given any vector partition $\vec{\tau} = (\tau^{(1)}, \ldots,\tau^{(k)})$ we define
\[
    { \mathfrak{m}(\vec{\mu}) \choose \mathfrak{m}(\vec{\tau})}:= {\mathfrak{m}(\mu^{(1)})
    \choose \mathfrak{m}(\tau^{(1)})} {\mathfrak{m}(\mu^{(2)})
    \choose \mathfrak{m}(\tau^{(2)})} \cdots{\mathfrak{m}(\mu^{(k)})
    \choose \mathfrak{m}(\tau^{(k)})}.
\]
In a similar way, the multinomial coefficient generalizes to any vectors. If $\vec{u}_1, \vec{u}_2, \ldots, \vec{u}_\ell$ are vectors such that
$\vec{u}_1+\vec{u}_2+\cdots +\vec{u}_\ell = \vec{v}$, then
\[
    {\vec{v} \choose {\vec{u}_1, \vec{u}_2, \ldots, \vec{u}_\ell}}={\vec{v} \choose \vec{u}_1}{\vec{v}-\vec{u}_1 \choose
    \vec{u}_2} {\vec{v}-\vec{u}_1-\vec{u}_2 \choose \vec{u}_3}\cdots {\vec{u}_\ell \choose \vec{u}_{\ell}}.
\]

For any vector partition $\vec{\mu} \in I_k$, let $\ell(\vec{\mu}) = \sum_{a=1}^k \ell(\mu^{(a)})$
be the sum of the length of  the partitions in $\vec{\mu}$ and $\mathfrak{m}(\vec{\mu})! :=
\prod_{i=1}^k m_1(\mu^{(i)})! m_2(\mu^{(i)})! \cdots m_k(\mu^{(i)})!$
be the product of the factorial of the multiplicities of the parts of $\vec{\mu}$.
Furthermore, for a partition $\mu$, we define $\mathfrak{m}(\mu)! = m_1(\mu)! m_2(\mu)! \cdots m_k(\mu)!$.

\begin{prop}\label{prop:B}  For $\vec{\mu}, \vec{\nu} \in I_k$,
\begin{equation}\label{eq:B}
b_{\vec{\mu}}^{\vec{\nu}} = \frac{1}{\mathfrak{m}(\vec{\nu})!}\sum_{\vec{\tau}(\bullet, \bullet)}{\mathfrak{m}(\vec{\mu})\choose \mathfrak{m}(\vec{\tau}(1,1)),
\mathfrak{m}(\vec{\tau}(2,1)), \ldots, \mathfrak{m}(\vec{\tau}(k,k))}
\prod_{\substack{1\leqslant j\leqslant k\\
    1 \leqslant i \leqslant \ell(\nu^{(j)})}} (\nu_i^{(j)})^{\ell(\vec{\tau}(i,j))-1},
\end{equation}
where the sum is over all sequences $\vec{\tau}(\bullet, \bullet)$ of $\vec{\tau}(i,j)$'s such that for each $\nu_i^{(j)} \neq 0$,
$\vec{\tau}(i,j) \in I_j$ and $\vec{\mu} = \biguplus_{i,j} \nu_i^{(j)} \vec{\tau}(i,j)$.
\end{prop}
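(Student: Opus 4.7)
The plan is to biject $C(d_{\vec{\mu}};\vec{\nu})$ with certain labelled combinatorial data and then enumerate, dividing out symmetries at the end. By definition of $C(d_{\vec{\mu}};\vec{\nu})$, each $d \in C(d_{\vec{\mu}};\vec{\nu})$ is a coarsening $d \geqslant d_{\vec{\mu}}$ with $\mathsf{top}(d) = \mathsf{bot}(d)$ and $\mathsf{cycletype}(d) = \vec{\nu}$. Since merging blocks cannot split the cycles of $d_{\vec{\mu}}$, the cycles of $d_{\vec{\mu}}$ partition into groups, one per cycle of $d$.

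The next step is a local analysis of a single group. If a cycle $C$ of $d$ has length $r = \nu_i^{(j)}$ and blocks of size $j$, built from cycles $D_1,\ldots,D_t$ of $d_{\vec{\mu}}$ with $D_s$ of length $\ell_s$ and blocks of size $a_s$, then Lemma~\ref{lem:onecycle} forces $r \mid \ell_s$, say $\ell_s = r m_s$, and provides a \emph{unique} reduction of $D_s$ to an $r$-cycle with blocks of size $a_s m_s$. Lemma~\ref{lem:manycycles} then yields exactly $r^{t-1}$ ways to combine these reduced $r$-cycles into a single $r$-cycle, whose blocks have size $\sum_s a_s m_s$; this sum must equal $j$. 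The local data is encoded by a vector partition $\vec{\tau}(i,j) \in I_j$ whose $a$-th component has parts $\{m_s : a_s = a\}$; the block-size identity becomes $\sum_a a|\tau(i,j)^{(a)}| = j$, and the requirement that every cycle of $d_{\vec{\mu}}$ is used exactly once becomes the global condition $\vec{\mu} = \biguplus_{i,j}\nu_i^{(j)}\vec{\tau}(i,j)$.

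The count then proceeds as follows. Fix a family $\{\vec{\tau}(i,j)\}$ satisfying the biguplus condition, and momentarily label the cycles of $d$ individually, distinguishing cycles of the same length and block size. The number of assignments of cycles of $d_{\vec{\mu}}$ to these labelled cycles of $d$ compatible with $\{\vec{\tau}(i,j)\}$ is the multinomial coefficient appearing in~\eqref{eq:B}, interpreted as the number of ways to distribute the multiplicities of $\vec{\mu}$ among the rescaled data $\nu_i^{(j)}\vec{\tau}(i,j)$. For each such assignment, the number of internal combinations is $\prod_{i,j}(\nu_i^{(j)})^{\ell(\vec{\tau}(i,j))-1}$ by the preceding local analysis, noting that $\ell(\vec{\tau}(i,j)) = t$. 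Finally, dividing by $\mathfrak{m}(\vec{\nu})!$ removes the arbitrary labelling of indistinguishable cycles of $d$, and summing over all valid families $\{\vec{\tau}(i,j)\}$ yields~\eqref{eq:B}.

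The main obstacle is verifying the factorization claim used in the local analysis: that every coarsening of $d_{\vec{\mu}}$ producing a cycle $C$ of $d$ really does factor uniquely as Lemma~\ref{lem:onecycle} reductions of each contributing $D_s$ followed by a single Lemma~\ref{lem:manycycles} combination step, and that Lemma~\ref{lem:manycycles}'s count of $r^{t-1}$ combinations remains valid when the contributing reduced $r$-cycles have differing block sizes (a mild extension of its stated form, which is block-size agnostic). A secondary bookkeeping point is to match the multinomial coefficient exactly as written to the enumeration of assignments described above, taking proper account of the scaling by $\nu_i^{(j)}$ in passing from $\vec{\tau}(i,j)$ to $\nu_i^{(j)}\vec{\tau}(i,j)$.
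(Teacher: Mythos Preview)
Your proposal is correct and follows essentially the same approach as the paper's proof: partition the cycles of $d_{\vec{\mu}}$ among the cycles of $d$, encode each group by a vector partition $\vec{\tau}(i,j)\in I_j$, invoke Lemma~\ref{lem:onecycle} for the unique reduction and Lemma~\ref{lem:manycycles} for the $r^{t-1}$ count, distribute via the multinomial coefficient, and quotient by $\mathfrak{m}(\vec{\nu})!$. Regarding your two flagged obstacles: the paper handles the first just as you do (the factorization through the two lemmas is taken as evident from their statements), and for the second, note that the proof of Lemma~\ref{lem:manycycles} never uses equality of block sizes across the $t$ input $r$-cycles, so no genuine extension is needed.
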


\begin{proof}
Recall that $b_{\vec{\mu}}^{\vec{\nu}}$ is the number of ways
to take the union of the cycles in $d_{\vec{\mu}}$ so that the result $d'$
satisfies $\mathsf{top}(d') = \mathsf{bot}(d')$ and $\mathsf{cycletype}(d') = \vec{\nu}$.

By Lemma \ref{lem:onecycle} and Lemma \ref{lem:manycycles}
we know that every $r$ cycle in $d_{\vec{\mu}}$ can contribute to exactly one
cycle of length $s$, where $s$ divides $r$.  Fix $1\leqslant j\leqslant k$ and $1\leqslant i\leqslant \ell(\nu^{(j)})$, and
let $C_{i,j}$ be a cycle of length $\nu_i^{(j)}$ in $d'$, an element in $C(d_{\vec{\mu}}; \vec{\nu})$.
The cycle $C_{i,j}$ is obtained as the union of cycles in $d_{\vec{\mu}}$.
We record which cycles we use to form $C_{i,j}$ in a vector partition
$\vec{\tau}(i,j)$ that satisfies the following conditions: 
\begin{enumerate}
    \item For every part $\vec{\tau}(i,j)_t^{(a)}$ of $\vec{\tau}(i,j)$ there exists a nonzero $\mu_{t'}^{(a)}$ such that 
    $\mu_{t'}^{(a)} = \nu_i^{(j)}\vec{\tau}(i,j)_t^{(a)}$, where $1\leqslant a\leqslant k$, $1\leqslant t \leqslant \ell(\vec{\tau}(i,j)^{(a)})$ and 
    $1\leqslant t' \leqslant \ell(\mu^{(a)})$.
    \item The total number of cycles from $d_{\vec{\mu}}$ used to construct $C_{i,j}$
    is $\ell(\vec{\tau}(i,j))$.
    \item The blocks that make up $C_{i,j}$ have $\sum_{a=1}^j |\vec{\tau}(i,j)^{(a)}|\cdot a = j$ elements
    and so $\vec{\tau}(i,j) \in I_j$.
\end{enumerate}
Condition (1) simply says that if a cycle $\mu_{t'}^{(a)}$ is used in the union that gives $C_{i,j}$, then we get a part in $\vec{\tau}(i,j)$.  
Condition (2) is a consequence of condition (1).  Condition (3) follows because $C_{i,j}$ is a cycle that permutes blocks of size $j$, 
$\vec{\tau}(i,j)_t^{(a)}$ represents the number of blocks in a cycle of length $\mu_{t'}^{(a)}$ that were unioned to get a cycle of length $\nu_i^{(j)}$.  

To count all possible ways to union the cycles of $d_{\vec{\mu}}$ to form a diagram $d'\in C(d_{\vec{\mu}}; \vec{\nu})$ we order the cycle lengths: 
$\nu_i^{(j)} < \nu_{i'}^{(j')}$ if $j< j'$, or if $j=j'$ and $i<i'$. If it is possible to union the blocks of $d_{\vec{\mu}}$ to get
$d'$, then there exists a sequence of $\vec{\tau}(i,j)$ such that
$\vec{\mu} = \biguplus_{i,j} \nu_i^{(j)} \vec{\tau}(i,j)$ and
each $\vec{\tau}(i,j) \in I_j$.  For each such $\vec{\tau}(\bullet, \bullet)$ there are
${\mathfrak{m}(\vec{\mu})\choose \mathfrak{m}(\vec{\tau}(1,1)), \mathfrak{m}(\vec{\tau}(2,1)), \ldots, \mathfrak{m}(\vec{\tau}(k,k))}$
ways to choose the cycles from $d_{\vec{\mu}}$ to form the vector partitions $\vec{\tau}(i,j)$. Once the cycles are chosen from $d_{\vec{\mu}}$, we use Lemma \ref{lem:onecycle} to get cycles of length $\nu_i^{(j)}$ and then by
Lemma \ref{lem:manycycles} there are $(\nu_i^{(j)})^{\ell(\vec{\tau}(i,j))-1}$ ways to union the cycles indexed by $\vec{\tau}(i,j)$ since by condition (2) 
there are a total of $\ell(\vec{\tau}(i,j))$ such cycles. If the cycles were ordered as described above, there are
\begin{equation}\label{eq:orderedcycles}
    \sum_{\vec{\tau}(\bullet, \bullet)}{\mathfrak{m}(\vec{\mu})\choose \mathfrak{m}(\vec{\tau}(1,1)), \mathfrak{m}(\vec{\tau}(2,1)), \ldots,
    \mathfrak{m}(\vec{\tau}(k,k))}\prod_{\substack{1\leqslant j\leqslant k\\
    1 \leqslant i \leqslant \ell(\nu^{(j)})}} (\nu_i^{(j)})^{\ell(\vec{\tau}(i,j))-1}
\end{equation}
ways to obtain elements $d'\in C(d_{\vec{\mu}}; \vec{\nu})$.

However, we want to count the number of ways of obtaining elements of cycle type $\vec{\nu}$,
where the elements of the same length are indistinguishable.
In the case when $\nu_i^{(j)} = \nu_{i'}^{(j)}$ the order in which the cycles in
$d_{\vec{\mu}}$ are chosen to form the cycles of this length is
interchangeable.  Therefore, we need to divide by the ways to permute the
cycles of a fixed length. That is we divide Equation~\eqref{eq:orderedcycles} by $\mathfrak{m}(\vec{\nu})!$
to enumerate $b_{\vec{\mu}}^{\vec{\nu}}$.
\end{proof}

\begin{example}  In Example \ref{ex:Cnuset} we gave three examples of
constructing the elements of $C( d_{\vec{\mu}}; \vec{\nu})$ with $\vec{\mu} = ((4,2), (2))$.
Here, we show
how Proposition~\ref{prop:B} enumerates these sets.

First, we choose $\vec{\nu} = (\epart, (2,1), \epart, (1)) \in I_{10}$
and we compute that there are two possible
$\vec{\tau}(\bullet, \bullet) = (\vec{\tau}(1,2), \vec{\tau}(2,2), \vec{\tau}(1,4))$
satisfying
\[
    \big((4, 2), (2)\big) =
    2 \cdot \vec{\tau}(1,2)~\biguplus~1 \cdot \vec{\tau}(2,2)~\biguplus~1 \cdot \vec{\tau}(1,4) \,;
\]
namely,
\begin{equation*}
    \vec{\tau}(\bullet, \bullet) = \Big(\big((2), \epart\big), \big((2), \epart\big), \big(\epart, (2)\big)\Big)
    \quad\text{and}\quad
    \vec{\tau}(\bullet, \bullet) = \Big(\big(\epart, (1)\big), \big((2), \epart \big), \big((4), \epart\big)\Big).
\end{equation*}
Note that $\mathfrak{m}(\vec{\nu})! = 1$ and in both cases
the summands in Equation~\eqref{eq:orderedcycles} are equal to $1$.
Hence $b_{\vec{\mu}}^{\vec{\nu}} = 2$.

Now consider $\vec{\nu} = (\epart, (2), (2)) \in I_{10}$.
There are again two possible $\vec{\tau}(\bullet, \bullet) = (\vec{\tau}(1,2), \vec{\tau}(1,3))$
such that
\[
\big((4, 2), (2)\big) = 2 \cdot \vec{\tau}(1,2)~\biguplus~2 \cdot \vec{\tau}(1,3)
\]
and this time they are
\begin{equation*}
    \vec{\tau}(\bullet, \bullet) = \Big(\big((2), \epart\big), \big((1), (1)\big)\Big)
    \quad\text{and}\quad
    \vec{\tau}(\bullet, \bullet) = \Big(\big(\epart, (1)\big), \big((2, 1), \epart\big)\Big).
\end{equation*}
We have again that $\mathfrak{m}(\vec{\nu})! = 1$,
however this time we see that the summands for Equation~\eqref{eq:orderedcycles}
are both equal to $2$, hence $b_{\vec{\mu}}^{\vec{\nu}} = 4$.

Finally,
$\vec{\nu} = (\epart, (2,2,1)) \in I_{10}$, there are two
$\vec{\tau}(\bullet, \bullet) = (\vec{\tau}(1,2), \vec{\tau}(2,2), \vec{\tau}(3,2))$ satisfying
\[
\big((4, 2), (2)\big) =
2 \cdot \vec{\tau}(1,2)~\biguplus~2 \cdot \vec{\tau}(2,2)~\biguplus~1 \cdot \vec{\tau}(3,2) \,;
\]
namely,
\begin{equation*}
    \vec{\tau}(\bullet, \bullet) = \Big(\big((2), \epart\big), \big(\epart, (1)\big), \big((2), \epart\big)\Big)
    \quad\text{and}\quad
    \vec{\tau}(\bullet, \bullet) = \Big(\big(\epart, (1)\big), \big((2), \epart \big), \big((2), \epart\big)\Big).
\end{equation*}
The summands of Equation~\eqref{eq:orderedcycles} are both $1$ for these $\vec{\tau}(\bullet, \bullet)$,
but we now have $\mathfrak{m}(\vec{\nu})! = 2$,
hence Equation~\eqref{eq:B} says that $b_{\vec{\mu}}^{\vec{\nu}} = 1$.
\end{example}

For any partition $\mu = (1^{a_1} 2^{a_2} \ldots r^{a_r})$ we define
\[
    z_\mu = 1^{a_1} a_1! 2^{a_2} a_2! \ldots r^{a_r}a_r!.
\]
Notice that we can rewrite this as follows:
\[
    z_\mu = \mathfrak{m}(\mu)!\prod_{1\leqslant i\leqslant \ell(\mu)}\mu_i ~,
\]
which is the product of the parts of $\mu$ times the factorials of the multiplicities.
Then for $\vec{\mu} \in I_k$ we set
\[
    \mathbf{z}_{\vec{\mu}} = z_{\mu^{(1)}}z_{\mu^{(2)}}\cdots z_{\mu^{(k)}}.
\]

\begin{cor}
For $\vec{\mu}, \vec{\nu} \in I_k$,
\begin{equation}\label{eq:BinZ}
    b_{\vec{\mu}}^{\vec{\nu}} = \frac{1}{\mathbf{z}_{\vec{\nu}}} \sum_{\vec{\tau}(\bullet, \bullet)}\frac{\mathbf{z}_{\vec{\mu}}}{\prod_{i,j}\mathbf{z}_{\vec{\tau}(i,j)}},
\end{equation}
where the sum is over all $\vec{\tau}(\bullet, \bullet)$
such that for each $1\leqslant j\leqslant k$ and $1 \leqslant i \leqslant \ell(\nu^{(j)})$,
$\vec{\tau}(i,j) \in I_j$
and $\vec{\mu} = \biguplus_{i,j} \nu_i^{(j)} \vec{\tau}(i,j)$.
\end{cor}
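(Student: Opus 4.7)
The plan is to derive the corollary by straightforward algebraic manipulation of the formula in Proposition~\ref{prop:B}, using only the identity $z_\mu = \mathfrak{m}(\mu)!\prod_{i}\mu_i$ noted just before the corollary and the defining condition $\vec{\mu} = \biguplus_{i,j}\nu_i^{(j)}\vec{\tau}(i,j)$ on the summation index $\vec{\tau}(\bullet,\bullet)$.

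First I would rewrite the multinomial coefficient in Equation~\eqref{eq:B} as $\binom{\mathfrak{m}(\vec{\mu})}{\mathfrak{m}(\vec{\tau}(1,1)),\ldots,\mathfrak{m}(\vec{\tau}(k,k))} = \mathfrak{m}(\vec{\mu})!\big/\prod_{i,j}\mathfrak{m}(\vec{\tau}(i,j))!$, so that each summand of~\eqref{eq:B} equals
\[
\frac{\mathfrak{m}(\vec{\mu})!}{\mathfrak{m}(\vec{\nu})!\,\prod_{i,j}\mathfrak{m}(\vec{\tau}(i,j))!}\,\prod_{i,j}\bigl(\nu_i^{(j)}\bigr)^{\ell(\vec{\tau}(i,j))-1}.
\]
On the other hand, unpacking $z_\mu = \mathfrak{m}(\mu)!\prod_i\mu_i$ gives $\mathbf{z}_{\vec{\mu}} = \mathfrak{m}(\vec{\mu})!\prod_{a,i}\mu_i^{(a)}$, and similarly for $\vec{\nu}$ and each $\vec{\tau}(i,j)$, so each summand of~\eqref{eq:BinZ} equals
\[
\frac{\mathfrak{m}(\vec{\mu})!\prod_{a,i}\mu_i^{(a)}}{\mathfrak{m}(\vec{\nu})!\prod_{i,j}\nu_i^{(j)}\cdot\prod_{i,j}\mathfrak{m}(\vec{\tau}(i,j))!\prod_{i,j,a,t}\tau(i,j)^{(a)}_t}.
\]
Comparing, the corollary reduces to the identity
\[
\prod_{i,j}\bigl(\nu_i^{(j)}\bigr)^{\ell(\vec{\tau}(i,j))-1} \;=\; \frac{\prod_{a,i}\mu_i^{(a)}}{\prod_{i,j}\nu_i^{(j)}\cdot\prod_{i,j,a,t}\tau(i,j)^{(a)}_t}
\]
for each admissible $\vec{\tau}(\bullet,\bullet)$.

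The key step is to derive this identity from the multiset equation $\vec{\mu} = \biguplus_{i,j}\nu_i^{(j)}\vec{\tau}(i,j)$. Reading this equation part-by-part, the multiset of all parts of $\vec{\mu}$ is exactly the multiset of numbers $\nu_i^{(j)}\cdot \tau(i,j)^{(a)}_t$ indexed by all admissible quadruples $(i,j,a,t)$. Taking the product of all these parts gives
\[
\prod_{a,i}\mu_i^{(a)} = \prod_{i,j,a,t}\nu_i^{(j)}\,\tau(i,j)^{(a)}_t = \prod_{i,j}\bigl(\nu_i^{(j)}\bigr)^{\ell(\vec{\tau}(i,j))}\cdot\prod_{i,j,a,t}\tau(i,j)^{(a)}_t,
\]
where I used that the number of parts contributed by $\vec{\tau}(i,j)$ is $\ell(\vec{\tau}(i,j))$. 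Dividing both sides by $\prod_{i,j}\nu_i^{(j)}\cdot\prod_{i,j,a,t}\tau(i,j)^{(a)}_t$ yields precisely the required identity.

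Putting everything together, the two summands agree term-by-term, so the sums agree, which proves the corollary. There is no real obstacle here: the whole argument is bookkeeping, and the only subtlety worth stating carefully is to note that the product $\prod_{i,j}$ appearing in the definition of $\mathbf{z}_{\vec{\nu}}$ (and in the product $\prod_{i,j}\mathbf{z}_{\vec{\tau}(i,j)}$) ranges over exactly the index set $\{(i,j) : 1\leqslant j\leqslant k,\ 1\leqslant i\leqslant\ell(\nu^{(j)})\}$ that indexes the $\vec{\tau}(i,j)$'s, so there is no mismatch in the range of the products when we substitute.
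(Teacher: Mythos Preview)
Your proof is correct and follows essentially the same route as the paper's: both start from Equation~\eqref{eq:B}, unpack the definitions of $\mathbf{z}_{\vec{\mu}}$, $\mathbf{z}_{\vec{\nu}}$, and $\mathbf{z}_{\vec{\tau}(i,j)}$, and reduce the comparison to the identity $\prod_{a,i}\mu_i^{(a)} = \prod_{i,j}(\nu_i^{(j)})^{\ell(\vec{\tau}(i,j))}\prod_{i,j,a,t}\tau(i,j)^{(a)}_t$, which is read off directly from the multiset condition $\vec{\mu} = \biguplus_{i,j}\nu_i^{(j)}\vec{\tau}(i,j)$. The paper phrases this as ``multiply numerator and denominator by $\prod_{i,j}\nu_i^{(j)}$ and then substitute $(\nu_i^{(j)})^{\ell(\vec{\tau}(i,j))} = \prod_{a,t}\mu_{t'}^{(a)}/\tau(i,j)_t^{(a)}$,'' but the underlying computation is the same as yours.
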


\begin{proof}
In Proposition \ref{prop:B} we showed
\[
    b_{\vec{\mu}}^{\vec{\nu}} = \frac{1}{\mathfrak{m}(\vec{\nu})!}\sum_{\vec{\tau}(\bullet,\bullet)}{\mathfrak{m}(\vec{\mu})\choose \mathfrak{m}(\vec{\tau}(1,1)),
    \mathfrak{m}(\vec{\tau}(2,1)), \ldots, \mathfrak{m}(\vec{\tau}(k,k))}
    \prod_{\substack{1\leqslant j\leqslant k\\
        1 \leqslant i \leqslant \ell(\nu^{(j)})}} (\nu_i^{(j)})^{\ell(\vec{\tau}(i,j))-1}.
\]
We multiply the numerator and denominator of the right-hand side by
$\nu_i^{(j)}$ for all $(i, j)$ satisfying $1\leqslant j\leqslant k$ and $1 \leqslant i \leqslant \ell(\nu^{(j)})$.
In the denominator, we get $\mathbf{z}_{\vec{\nu}}$ and in the numerator all the powers of the $\nu_i^{(j)}$ increase by 1; therefore, we have
\[
    b_{\vec{\mu}}^{\vec{\nu}} = \frac{1}{\mathbf{z}_{\vec{\nu}}}\sum_{\vec{\tau}(\bullet,\bullet)}{\mathfrak{m}(\vec{\mu})\choose \mathfrak{m}(\vec{\tau}(1,1)),
    \mathfrak{m}(\vec{\tau}(2,1)), \ldots, \mathfrak{m}(\vec{\tau}(k,k))}
    \prod_{\substack{1\leqslant j\leqslant k\\
        1 \leqslant i \leqslant \ell(\nu^{(j)})}} (\nu_i^{(j)})^{\ell(\vec{\tau}(i,j))}.
\]
Note that
\begin{equation*}
    (\nu_i^{(j)})^{\ell(\vec{\tau}(i,j))} = \prod\limits_{\substack{1\leqslant a\leqslant k\\
    1 \leqslant t \leqslant \ell(\tau(i,j)^{(a)})}} \frac{\mu_{t'}^{(a)}}{\tau(i,j)_t^{(a)}},
\end{equation*}
where for each $t$, $\mu_{t'}^{(a)}$ is the cycle length corresponding to $\tau(i,j)_t^{(a)}$
used in the construction of the cycle of length $\nu_i^{(j)}$,
i.e., cycles that satisfy $\mu_{t'}^{(a)} = \nu_i^{(j)} \tau(i,j)_t^{(a)}$
for some $t$ and $t'$.
Once we substitute this expression for $\nu_i^{(j)}$, in the numerator we set the product of all the parts of $\vec{\mu}$ and in the
denominator we get products of the parts of all $\vec{\tau}(i,j)$ for all $i$ and $j$.  Now expanding the multinomial coefficient and regrouping gives
Equation~\eqref{eq:BinZ}.
\end{proof}

%%%%%%%%%%%%%%%%%%%%%%%%%%%%%%%%%%%%%%%%%%%%%%%%%%%%%%%%%%%%%%%%%%%%
\section{Characters and symmetric functions}
\label{sec:charsSym}
%%%%%%%%%%%%%%%%%%%%%%%%%%%%%%%%%%%%%%%%%%%%%%%%%%%%%%%%%%%%%%%%%%%%

This section presents a formula for the irreducible characters of $\mathcal{U}_k$ in terms of symmetric functions.

%%%%%%%%%%%%%%%%%%%%%%%%%%%%%%%%%%%%%%%%%%%%%%%%%%%%%%%%%%%%%%%%%%%%
\subsection{Class functions and a scalar product} \label{subsec:classfunctions}

A \defn{class function} of $\mathcal{U}_k$ is a map $\alpha: \mathcal{U}_k \to
\CC$ that is constant on the generalized conjugacy classes of $\mathcal{U}_k$.
In light of Proposition~\ref{prop:conjugacyclasses},
$\alpha: \mathcal{U}_k \to \CC$ is a class function of $\mathcal{U}_k$ if
$x, y \in \mathcal{U}_k$,
$$\mathsf{cycletype}(x) = \mathsf{cycletype}(y)\hbox{ implies }\alpha(x) = \alpha(y)~.$$
Denote the set of class functions on $\mathcal{U}_k$ by
$\class({\mathcal{U}_k})$.  The class functions form a $\CC$-algebra
under pointwise product (also called the Kronecker product).

Let $\vec{\mu} \in I_k$ and define
$\ind_{\vec{\mu}} : \mathcal{U}_k \rightarrow \CC$ to be the
indicator function of the generalized conjugacy class indexed by
$\vec{\mu}$.  That is, for $x \in \mathcal{U}_k$,
\begin{equation}\label{eq:indicator}
    \ind_{\vec{\mu}}(x) = \begin{cases}1,&\hbox{ if }\mathsf{cycletype}(x)=\vec{\mu},\\
    0,&\hbox{ otherwise.}\end{cases}
\end{equation}
These functions form a
basis of the $\CC$-vector space of class functions of $\mathcal{U}_k$.

The restriction of a class function $\alpha$ of $\mathcal{U}_k$ to its
representative maximal subgroups $G_\lambda$ results in a class function of
$G_\lambda$. Moreover, by \cite[Proposition 7.5 and Proposition 7.6]{Steinberg.2016},
the function
\begin{equation}\label{restrictionmap}
    \mathsf{Res} \colon \class(\mathcal{U}_k) \longrightarrow \prod_{\lambda \vdash k} \class(G_{\lambda})
\end{equation}
defined by
%$$\mathsf{Res}(\alpha) = \big\{\alpha|_{G_{\lambda}} : \lambda \vdash k\big\}$$
$$\mathsf{Res}(\alpha) = \alpha|_{\prod_{\lambda \vdash k} G_{\lambda}}$$
is an isomorphism of $\CC$-algebras.

As with the class functions of a finite group, there is a scalar product
defined on the class functions of finite monoids.
The \defn{scalar product} of two class functions $\alpha, \beta \in \class(\mathcal{U}_k)$
is
\begin{equation}\label{eq:defspclass}
\left< \alpha, \beta \right>_{\class(\mathcal{U}_k)} =
\sum_{\lambda \vdash k} \frac{1}{|G_{\lambda}|} \sum_{x \in G_{\lambda}} \alpha(x) \overline{\beta(x)}.
\end{equation}
The indicator functions defined in Equation~\ref{eq:indicator} form an orthogonal basis
with respect to this scalar product.  That is,
$$
    \left< \ind_{\vec{\lambda}}, \ind_{\vec{\mu}} \right>_{\class(\mathcal{U}_k)}
    = \begin{cases}\frac{1}{\mathbf{z}_{\vec{\lambda}}}, &\hbox{ if }\vec{\lambda} = \vec{\mu},\\
    0, & \hbox{else.}\end{cases}
$$
The irreducible characters of $\mathcal{U}_k$ also form a basis of $\class(\mathcal{U}_k)$
\cite[Proposition 7.9 and 7.10]{Steinberg.2016};
note however that they are not orthogonal with respect to this
scalar product.

%%%%%%%%%%%%%%%%%%%%%%%%%%%%%%%%%%%%%%%%%%%%%%%%%%%%%%%%%%%%%
\subsection{Symmetric functions on multiple alphabets}
Let $\XXX = X_1, X_2, \ldots$ be an infinite number of alphabets.  We define the polynomial ring
\[
    \mathsf{Sym}^\ast_\XXX := \CC[p_i[X_j]\, |\, i,j\geqslant 1],
\]
where the degree of $p_i[X_j]$ is $ij$.  If $\mu = (1^{a_1}2^{a_2}\ldots r^{a_r})$ is a partition,
then we define
$$p_\mu[X_j] := p_1[X_j]^{a_1} p_2[X_j]^{a_2} \cdots p_r[X_j]^{a_r};$$
for $\vec{\mu} \in I_k$, we define
$$\mathbf{p}_{\vec{\mu}}[\XXX] := p_{\mu^{(1)}}[X_1] p_{\mu^{(2)}}[X_2] \cdots p_{\mu^{(k)}}[X_k],$$
which is an element in $\mathsf{Sym}^\ast_\XXX$ of degree $k$.

For any $f[\XXX] \in \mathsf{Sym}^\ast_\XXX$, we say that $f[\XXX]$ is of \defn{homogeneous
degree $k$} if
$f[\XXX]$ is in the linear span of $\{ \mathbf{p}_{\vec{\mu}}[\XXX] \}_{\vec{\mu} \in I_k}$
for some non-negative integer $k$.
When $f[\XXX]$ is of homogeneous degree $k$, we denote the degree by $\deg(f[\XXX]) = k$.
The subspace of $\mathsf{Sym}^\ast_\XXX$ of elements of homogeneous degree $k$ is denoted by
$\mathsf{Sym}^\ast_{\XXX,k}$ and it has the set of elements $\{ \mathbf{p}_{\vec{\mu}}[\XXX] \}_{\vec{\mu} \in I_k}$ as a linear basis. Therefore, we have that
\[
    \mathsf{Sym}^\ast_\XXX = \bigoplus_{k \geqslant 0} \mathsf{Sym}^\ast_{\XXX,k}
\]
is a graded ring such that if $f[\XXX] \in \mathsf{Sym}^\ast_{\XXX,k}$ and $g[\XXX] \in \mathsf{Sym}^\ast_{\XXX,\ell}$, then $f[\XXX] g[\XXX] \in 
\mathsf{Sym}^\ast_{\XXX,k+\ell}$.

We define a scalar product on $\mathsf{Sym}^\ast_{\XXX}$ as follows:
\begin{equation}\label{eq:defspsym}
    \left< \mathbf{p}_{\vec{\lambda}}[\XXX], \mathbf{p}_{\vec{\mu}}[\XXX] \right> =
    \begin{cases}\mathbf{z}_{\vec{\mu}}, &\hbox{ if }\vec{\lambda} = \vec{\mu},\\
    0,&\hbox{else.}\end{cases}
\end{equation}
Note that $\big\langle \mathbf{p}_{\vec{\lambda}}[\XXX], \mathbf{p}_{\vec{\mu}}[\XXX] \big\rangle
= \prod_{i=1}^{k} \big\langle p_{\lambda^{(i)}}[X_i], p_{\mu^{(i)}}[X_i] \big\rangle$,
where on the right hand side the scalar products are in the
usual ring of symmetric functions for which the power sum functions form an orthogonal basis.
In particular, for any element $f[\XXX] \in \mathsf{Sym}^\ast_\XXX$,
\begin{equation}
\label{eq:scalar-product-with-power-sums}
\text{$\left< f[\XXX], \mathbf{p}_{\vec{\mu}}[\XXX] \right>$ is equal
to the coefficient of $\frac{\mathbf{p}_{\vec{\mu}}[\XXX]}{\mathbf{z}_{\vec{\mu}}}$
in $f[\XXX]$.}
\end{equation}

Define also an analogue of the \defn{Schur basis}.  For
$\vec{\mu} \in I_k$, set
\[
    \mathbf{s}_{\vec{\mu}}[\XXX] :=
    s_{\mu^{(1)}}[X_1] s_{\mu^{(2)}}[X_2] \cdots s_{\mu^{(k)}}[X_k]~,
\]
where $s_{\mu^{(i)}}[X_i]$ is the Schur function over the alphabet $X_i$.
It follows that
\begin{equation}\label{eq:sorthonormal}
    \left< \mathbf{s}_{\vec{\lambda}}[\XXX],
    \mathbf{s}_{\vec{\mu}}[\XXX] \right> =
    \begin{cases}
    1,&\hbox{ if }\vec{\lambda} = \vec{\mu},\\
    0,&\hbox{ otherwise.}
    \end{cases}
\end{equation}
Since we know for $\lambda, \mu \vdash a$, that the coefficient $\left< s_\lambda[X], p_\mu[X] \right> = \chi^{\lambda}_{\mathfrak{S}_a}(\mu)$, 
it follows that 
\begin{equation}\label{eq:spval}
\left< \mathbf{s}_{\vec{\lambda}}[\XXX],
\mathbf{p}_{\vec{\mu}}[\XXX] \right> =
\chi^{\lambda^{(1)}}_{\mathfrak{S}_{a_1}}(\mu^{(1)})
\chi^{\lambda^{(2)}}_{\mathfrak{S}_{a_2}}(\mu^{(2)})
\cdots
\chi^{\lambda^{(k)}}_{\mathfrak{S}_{a_k}}(\mu^{(k)}),
\end{equation}
where $a_i = |\lambda^{(i)}|$ for $1 \leqslant i \leqslant k$.

Note that the value on the right hand side of Equation~\eqref{eq:spval}
is equal to the irreducible character indexed by $\vec{\lambda}$
evaluated at an element of cycle type $\vec{\mu}$ of the maximal subgroup
$\mathfrak{S}_{a_1} \times \mathfrak{S}_{a_2} \times \cdots \times \mathfrak{S}_{a_k} \simeq
G_\lambda$, where $\lambda = \vtype(\vec{\lambda}) = \vtype(\vec{\mu})=(1^{a_1} 2^{a_2} \ldots k^{a_k})$.

%%%%%%%%%%%%%%%%%%%%%%%%%%%%%%%%%%%%%%%%%%%%%%
\subsection{A Frobenius characteristic map for the maximal subgroups $G_\lambda$}
By Corollary~\ref{cor:whatisGe},
for each partition $\lambda = (1^{a_1}2^{a_2}\ldots k^{a_k})$,
the maximal subgroup $G_\lambda$ is isomorphic to
\[
    G_\lambda \simeq
    \mathfrak{S}_{a_1} \times \mathfrak{S}_{a_2} \times \cdots \times \mathfrak{S}_{a_k}~.
\]
The usual Frobenius map for $\mathfrak{S}_r$ sends the irreducible character of $\mathfrak{S}_r$
indexed by the partition $\mu \vdash r$ to the Schur function indexed
by that partition.  We denote this map by
\[
    \phi_{\mathfrak{S}_r}( \chi^\mu_{\mathfrak{S}_r} ) = s_\mu~.
\]

Since the maximal subgroups are isomorphic to direct products of
symmetric groups, the Frobenius map can be extended to $G_\lambda$ by
mapping the class functions of $G_\lambda$
to the $k$-fold tensor product of the symmetric functions.
Under this map, the image of the irreducible character of $G_\lambda$ indexed by $\vec{\lambda} \in I_k$
with $\vtype(\vec{\lambda}) = \lambda = (1^{a_1}2^{a_2}\ldots k^{a_k})$
is
\[
    \phi_{G_\lambda}\left( \chi^{\vec{\lambda}}_{G_\lambda}\right) =
    \phi_{G_\lambda}\left( \chi^{\lambda^{(1)}}_{\mathfrak{S}_{a_1}}
    \chi^{\lambda^{(2)}}_{\mathfrak{S}_{a_2}}
    \cdots
    \chi^{\lambda^{(k)}}_{\mathfrak{S}_{a_k}} \right)
    = s_{\lambda^{(1)}}[X_1] s_{\lambda^{(2)}}[X_2] \cdots s_{\lambda^{(k)}}[X_k]
    = \mathbf{s}_{\vec{\lambda}}[\XXX]~.
\]

Equation~\eqref{eq:spval} states that for $\vec{\lambda}, \vec{\mu} \in I_k$
with $\vtype(\vec{\lambda}) = \vtype(\vec{\mu}) = \lambda$,
\[
    \chi^{\vec{\lambda}}_{G_\lambda}( d_{\vec{\mu}})
    = \left< \mathbf{s}_{\vec{\lambda}}[\XXX], \mathbf{p}_{\vec{\mu}}[\XXX]\right>
\]
which by Equation~\ref{eq:scalar-product-with-power-sums} is equal to the coefficient
$\frac{\mathbf{p}_{\vec{\mu}}[\XXX]}{\mathbf{z}_{\vec{\mu}}}$
in $\mathbf{s}_{\vec{\lambda}}[\XXX]$.
Furthermore, by Equation~\eqref{eq:sorthonormal}, the images
of the irreducible characters are all (even if they are not
characters of the same maximal subgroup)
orthonormal in $\mathsf{Sym}^{\ast}_{\XXX,k}$.

%%%%%%%%%%%%%%%%%%%%%%%%%%%%%%%%%%%%%%%%%%%%%%
\subsection{A Frobenius characteristic map for $\mathcal{U}_k$}
\label{subsec:UkFrob}
We now extend the characteristic map from the class functions
on the maximal subgroups to the ring of class functions
on $\mathcal{U}_k$.

The \defn{Frobenius characteristic map}
$\phi_{\mathcal{U}_k}: \class(\mathcal{U}_k) \to \mathsf{Sym}^{\ast}_{\XXX,k}$
is defined on the basis of indicator functions by
\begin{equation}\label{eq:chmapind}
\phi_{\mathcal{U}_k}( \ind_{\vec{\mu}} ) = \frac{\mathbf{p}_{\vec{\mu}}[\XXX]}{\mathbf{z}_{\vec{\mu}}}~,
\end{equation}
and extended linearly to all of $\class(\mathcal{U}_k)$.
Then for any $\mathcal{U}_k$-class function
$\psi_{\mathcal{U}_k} \colon \mathcal{U}_k \rightarrow \CC$,
\begin{equation}
\label{equation.phi Uk}
    \phi_{\mathcal{U}_k}( \psi_{\mathcal{U}_k} ) = \sum_{\vec{\mu}\in I_k}
    \psi_{\mathcal{U}_k}(d_{\vec{\mu}}) \frac{\mathbf{p}_{\vec{\mu}}[\XXX]}{\mathbf{z}_{\vec{\mu}}},
\end{equation}
where $d_{\vec{\mu}}$ is a representative element from the
generalized conjugacy class $C_{\vec{\mu}}$.

Recall from Equation~\eqref{restrictionmap} that we have an isomorphism
$\class(\mathcal{U}_k) \simeq \prod_{\lambda \vdash k} \class(G_\lambda)$
that is given by mapping a class function of $\mathcal{U}_k$ to the
restrictions to maximal subgroups.
A similar result also holds for the Frobenius characteristic map.
Since $d_{\vec{\mu}}$ belongs to exactly one maximal subgroup,
Equation~\eqref{eq:chmapind} implies that
\begin{equation}\label{eq:restrictimage}
\phi_{\mathcal{U}_k}( f ) = \sum_{\lambda \vdash k}
\phi_{G_\lambda}( f |_{G_\lambda})~
\end{equation}
for any $f \in \class({\mathcal{U}_k})$.

Since the images of the irreducible characters of $G_\lambda$ are the symmetric
functions $\mathbf{s}_{\vec{\lambda}}[\XXX]$
and this basis is orthonormal in $\mathsf{Sym}^{\ast}_{\XXX,k}$,
Equation~\eqref{eq:restrictimage} implies the following important property
of the Frobenius images that we have defined here.

\begin{prop}\label{prop:multiplicity}
Let $\vec{\lambda} \in I_k$, $\lambda = \vtype(\vec{\lambda})$,
and let $\chi$ be a character of $\mathcal{U}_k$.
The multiplicity of $\chi_{G_\lambda}^{\vec{\lambda}}$
in the restriction of $\chi$ from $\mathcal{U}_k$ to $G_\lambda$
is equal to
\[
    \big\langle \phi_{\mathcal{U}_k}( \chi ), \, \mathbf{s}_{\vec{\lambda}}[\XXX] \big\rangle~.
\]
\end{prop}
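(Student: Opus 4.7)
The plan is to prove this by a direct unpacking of the definition of the Frobenius characteristic map $\phi_{\mathcal{U}_k}$, followed by an application of the orthonormality of the Schur basis $\{\mathbf{s}_{\vec{\mu}}[\XXX]\}_{\vec{\mu}\in I_k}$ established in Equation~\eqref{eq:sorthonormal}. There is no serious obstacle here: once the images of restrictions are assembled correctly, the result falls out of a standard Frobenius-reciprocity style argument adapted to the monoid setting through the isomorphism \eqref{restrictionmap}.

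First, I would expand the restriction $\chi|_{G_\lambda}$ in the basis of irreducible characters of the maximal subgroup $G_\lambda$. Since the irreducible representations of $G_\lambda$ are indexed by those $\vec{\mu} \in I_k$ with $\vtype(\vec{\mu}) = \lambda$, there exist unique nonnegative integers $c_{\vec{\mu}}$ such that
\[
    \chi|_{G_\lambda} \;=\; \sum_{\substack{\vec{\mu} \in I_k \\ \vtype(\vec{\mu}) = \lambda}} c_{\vec{\mu}}\, \chi^{\vec{\mu}}_{G_\lambda},
\]
and by definition $c_{\vec{\lambda}}$ is the multiplicity we wish to compute.

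Next, I would apply the Frobenius map $\phi_{G_\lambda}$ for the maximal subgroup to this decomposition. Since $\phi_{G_\lambda}(\chi^{\vec{\mu}}_{G_\lambda}) = \mathbf{s}_{\vec{\mu}}[\XXX]$, linearity gives
\[
    \phi_{G_\lambda}(\chi|_{G_\lambda}) \;=\; \sum_{\substack{\vec{\mu} \in I_k \\ \vtype(\vec{\mu}) = \lambda}} c_{\vec{\mu}}\, \mathbf{s}_{\vec{\mu}}[\XXX].
\]
Now I would invoke Equation~\eqref{eq:restrictimage}, which expresses $\phi_{\mathcal{U}_k}(\chi)$ as the sum over all $\lambda \vdash k$ of $\phi_{G_\lambda}(\chi|_{G_\lambda})$. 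Since every $\vec{\mu} \in I_k$ satisfies $\vtype(\vec{\mu}) = \lambda$ for exactly one $\lambda \vdash k$, summing over $\lambda$ yields
\[
    \phi_{\mathcal{U}_k}(\chi) \;=\; \sum_{\vec{\mu} \in I_k} c_{\vec{\mu}}\, \mathbf{s}_{\vec{\mu}}[\XXX].
\]

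Finally, I would take the scalar product of both sides with $\mathbf{s}_{\vec{\lambda}}[\XXX]$ and apply the orthonormality relation \eqref{eq:sorthonormal}, which immediately gives $\langle \phi_{\mathcal{U}_k}(\chi), \mathbf{s}_{\vec{\lambda}}[\XXX] \rangle = c_{\vec{\lambda}}$, as desired. The only thing worth double-checking is that the decomposition \eqref{eq:restrictimage} really partitions $I_k$ according to $\vtype$, which is immediate from how indicator functions $\ind_{\vec{\mu}}$ on generalized conjugacy classes restrict to the unique maximal subgroup $G_{\vtype(\vec{\mu})}$ containing a representative $d_{\vec{\mu}}$.
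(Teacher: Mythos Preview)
Your proof is correct and follows essentially the same approach as the paper: both arguments use Equation~\eqref{eq:restrictimage} to decompose $\phi_{\mathcal{U}_k}(\chi)$ as a sum of $\phi_{G_\gamma}(\chi|_{G_\gamma})$ over $\gamma\vdash k$, and then extract the multiplicity via the orthonormality~\eqref{eq:sorthonormal} of the Schur basis. The only cosmetic difference is that the paper starts from the scalar product and reduces it to $\langle \phi_{G_\lambda}(\chi|_{G_\lambda}),\mathbf{s}_{\vec{\lambda}}[\XXX]\rangle$, whereas you first expand each $\chi|_{G_\lambda}$ in irreducibles and then take the scalar product; the content is the same.
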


\begin{proof}
By Equation~\eqref{eq:restrictimage},
\begin{align*}
\left< \phi_{\mathcal{U}_k}( \chi ), \mathbf{s}_{\vec{\lambda}}[\XXX] \right>
&=\left< \phi_{\mathcal{U}_k}( \chi ),
\phi_{G_\lambda}( \chi^{\vec{\lambda}}_{G_\lambda}) \right>\\
&=
\sum_{\gamma \vdash k}
\left< \phi_{G_\gamma}( \chi |_{G_\gamma}),
\phi_{G_\lambda}( \chi^{\vec{\lambda}}_{G_\lambda}) \right>\\
&=
\left< \phi_{G_\lambda}( \chi |_{G_\lambda}),
\phi_{G_\lambda}( \chi^{\vec{\lambda}}_{G_\lambda}) \right>
\end{align*}
because the Frobenius images of the irreducible characters of $G_\lambda$
are orthogonal to the Frobenius images of the
irreducible characters of $G_\gamma$ if $\gamma \neq \lambda$.
Since $\phi_{G_\lambda}( \chi^{\vec{\lambda}}_{G_\lambda}) =
\mathbf{s}_{\vec{\lambda}}[\XXX]$,
\[
    \left< \phi_{\mathcal{U}_k}( \chi ), \mathbf{s}_{\vec{\lambda}}[\XXX] \right>
    =
    \left< \phi_{G_\lambda}( \chi |_{G_\lambda}),
    \phi_{G_\lambda}( \chi^{\vec{\lambda}}_{G_\lambda}) \right>
    = \left< \phi_{G_\lambda}( \chi |_{G_\lambda}), \mathbf{s}_{\vec{\lambda}}[\XXX] \right>,
\]
which is the coefficient of $\mathbf{s}_{\vec{\lambda}}[\XXX]$ in
$\phi_{G_\lambda}( \chi |_{G_\lambda})$;
or in other words, it is the multiplicity of the
irreducible $G_\lambda$-representation
indexed by $\vec{\lambda} \in I_k$ in $\chi |_{G_\lambda}$.
\end{proof}

Moreover, the Frobenius characteristic function $\phi_{\mathcal{U}_k}$ has the property that
$$\left< \ind_{\vec{\mu}},
\ind_{\vec{\lambda}} \right>_{\class(\mathcal{U}_k)}
= \left< \phi_{\mathcal{U}_k}( \ind_{\vec{\mu}} ),
\phi_{\mathcal{U}_k}( \ind_{\vec{\lambda}} )\right>$$
where, to be clear, on the left hand side of the equation the inner
product is on the class functions from Equation~\eqref{eq:defspclass}
and on the right hand side the inner product is on the symmetric functions
from Equation~\eqref{eq:defspsym}. Hence $\phi_{\mathcal{U}_k}$ is an isometry with respect to the
inner products on the class functions of $\mathcal{U}_k$ and
the inner product on $\mathsf{Sym}^{\ast}_{\XXX,k}$.

Let $\mathbf{1}_{\mathcal{U}_r}$ denote the trivial character for $\mathcal{U}_r$
(this is the irreducible character indexed by $(\epart, \ldots, \epart, (1)) \in I_r$ with $r-1$ copies of $\epart$).
This is a class function with the property that $\mathbf{1}_{\mathcal{U}_r}(a) = 1$ for all $a \in \mathcal{U}_r$.
Then let
\begin{align}
E_r:&= E_r[X_1, X_2, \ldots, X_r] := \phi_{\mathcal{U}_r}(\mathbf{1}_{\mathcal{U}_r})\label{def:Ed}\\
&= \sum_{\vec{\mu} \in I_r} \frac{\mathbf{p}_{\vec{\mu}}[\XXX]}{\mathbf{z}_{\vec{\mu}}}\label{eq:Edpexp}\\
&= \sum_{(1^{a_1}2^{a_2}\cdots r^{a_r}) \vdash r}
s_{a_1}[X_1] s_{a_2}[X_2] \cdots s_{a_r}[X_r]~.\label{eq:Edsexp}
\end{align}

The symmetric function $E_r$ is the generating function
for the character values for the trivial representation of $\mathcal{U}_r$. It will serve as
a building block in a formula for the other the irreducible characters of $\mathcal{U}_r$.

The notation we have been using for symmetric functions can be extended
to allow substituting an expression in place of a set of variables.
This is called \defn{plethystic notation} and more details can be found in \cite{LR},
but for our purposes the following should suffice.
For an element $A \in \mathsf{Sym}^{\ast}_{\XXX}$, let $p_k[A]$ be the element
obtained by first expressing $A$ in the power sum basis and then
replacing each $p_r[X_i]$ appearing in the expression with $p_{kr}[X_i]$.
Since every symmetric function $f \in \mathsf{Sym}$ is a polynomial in the power sum elements
$p_1, p_2, p_3, \ldots$, we define $f[A]$ to be the element obtained from $f$ by
replacing each $p_i$ with $p_i[A]$.
This notation is consistent with the expressions we have used thus far once we identify $X_i$ and $p_1[X_i]$.

\begin{remark}
This notational extension is useful for
providing a generating function for
Equation~\eqref{eq:Edsexp}.  For any $r\geqslant0$ and
expressions $A$ and $B$, we have
\begin{equation*}
s_0[A] = 1,
\qquad
s_r[A+B] = \sum_{i=0}^r s_i[A] s_{r-i}[B],
\qquad
s_r[t^i X_i] = t^{ri} s_r[X_i].
\end{equation*}
The middle expression above is sometimes known in the literature as the
\emph{alphabet addition formula}. Applying these to expand the
expression below in $t$, we have
\[
    s_k[1 + t X_1 + t^2 X_2 + \cdots + t^k X_k]
    = 1 + E_1 t + E_2 t^2 + \cdots + E_k t^k + \cdots + s_k[X_k] t^{k^2}~.
\]
The coefficient of $t^r$ for $r>k$ in the expression on the
right hand side above are symmetric functions that are
not necessarily equal to $E_r$ since they will depend on both $r$ and $k$.
\end{remark}

We will use the shorthand notation
$$\mathbf{s}_{\vec{\lambda}}[\EEE] :=s_{\lambda^{(1)}}[E_1]s_{\lambda^{(2)}}[E_2]
\cdots s_{\lambda^{(k)}}[E_k]$$
and
$$\mathbf{p}_{\vec{\lambda}}[\EEE] :=p_{\lambda^{(1)}}[E_1]p_{\lambda^{(2)}}[E_2]
\cdots p_{\lambda^{(k)}}[E_k]~.$$
Then as a corollary to Equation~\eqref{eq:BinZ}, we have that the
coefficients $b_{\vec{\mu}}^{\vec{\nu}}$ are given by the following symmetric function expression.

\begin{cor}\label{cor:bscalar}
For $\vec{\mu}, \vec{\nu} \in I_k$,
\begin{equation}\label{eq:sfbcoef}
b_{\vec{\mu}}^{\vec{\nu}}
= \frac{1}{\mathbf{z}_{\vec{\nu}}}\left< \mathbf{p}_{\vec{\nu}}[\EEE], \mathbf{p}_{\vec{\mu}}[\XXX] \right>~.
\end{equation}
\end{cor}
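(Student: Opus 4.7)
The plan is to start from the formula for $b_{\vec{\mu}}^{\vec{\nu}}$ given in Equation~\eqref{eq:BinZ} and show that the right-hand side of Equation~\eqref{eq:sfbcoef} produces exactly the same sum. The bridge between the two is the power-sum expansion of $E_r$ from Equation~\eqref{eq:Edpexp}, namely
\[
    E_r = \sum_{\vec{\tau} \in I_r} \frac{\mathbf{p}_{\vec{\tau}}[\XXX]}{\mathbf{z}_{\vec{\tau}}}~,
\]
combined with the standard plethystic identities $p_n[f + g] = p_n[f] + p_n[g]$, $p_n[fg] = p_n[f]\,p_n[g]$, $p_n[c \cdot f] = c \cdot p_n[f]$ for rational scalars $c$, and $p_n[p_m[X_j]] = p_{nm}[X_j]$.

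First I would apply $p_{\nu_i^{(j)}}[-]$ to the above expansion of $E_j$. Since $p_n$ is a ring homomorphism in plethystic notation, we get
\[
    p_{\nu_i^{(j)}}[E_j] = \sum_{\vec{\tau}(i,j) \in I_j} \frac{\mathbf{p}_{\nu_i^{(j)}\vec{\tau}(i,j)}[\XXX]}{\mathbf{z}_{\vec{\tau}(i,j)}}~,
\]
where $\nu_i^{(j)}\vec{\tau}(i,j)$ denotes the vector partition obtained by multiplying every part of each component of $\vec{\tau}(i,j)$ by $\nu_i^{(j)}$. Taking the product over all $(i,j)$ with $1 \leqslant j \leqslant k$ and $1 \leqslant i \leqslant \ell(\nu^{(j)})$, and noting that $\mathbf{p}_{\vec{\alpha}}[\XXX]\cdot \mathbf{p}_{\vec{\beta}}[\XXX] = \mathbf{p}_{\vec{\alpha}\uplus\vec{\beta}}[\XXX]$, I would obtain
\[
    \mathbf{p}_{\vec{\nu}}[\EEE]
    = \sum_{\vec{\tau}(\bullet,\bullet)} \frac{\mathbf{p}_{\biguplus_{i,j} \nu_i^{(j)}\vec{\tau}(i,j)}[\XXX]}{\prod_{i,j}\mathbf{z}_{\vec{\tau}(i,j)}}~,
\]
where the sum is over all sequences $\vec{\tau}(\bullet,\bullet)$ with $\vec{\tau}(i,j) \in I_j$.

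Next, I would take the inner product of both sides with $\mathbf{p}_{\vec{\mu}}[\XXX]$ and use the orthogonality relation in Equation~\eqref{eq:defspsym}: the inner product $\langle \mathbf{p}_{\vec{\alpha}}[\XXX], \mathbf{p}_{\vec{\mu}}[\XXX]\rangle$ equals $\mathbf{z}_{\vec{\mu}}$ when $\vec{\alpha} = \vec{\mu}$ and $0$ otherwise. This kills all terms except those with $\biguplus_{i,j}\nu_i^{(j)}\vec{\tau}(i,j) = \vec{\mu}$, leaving
\[
    \left\langle \mathbf{p}_{\vec{\nu}}[\EEE],\mathbf{p}_{\vec{\mu}}[\XXX]\right\rangle = \sum_{\vec{\tau}(\bullet,\bullet)}\frac{\mathbf{z}_{\vec{\mu}}}{\prod_{i,j}\mathbf{z}_{\vec{\tau}(i,j)}}~,
\]
with the sum ranging exactly over the sequences in Equation~\eqref{eq:BinZ}. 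Dividing by $\mathbf{z}_{\vec{\nu}}$ gives the claim.

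There is no real obstacle here; the argument is a routine plethystic unwinding. The only thing to be careful about is matching the combinatorial indexing conventions: the sums in Equation~\eqref{eq:BinZ} and in the expansion of $\mathbf{p}_{\vec{\nu}}[\EEE]$ must range over the same sequences $\vec{\tau}(\bullet,\bullet)$, which they do because multiplying $\vec{\tau}(i,j) \in I_j$ by $\nu_i^{(j)}$ contributes weight $\nu_i^{(j)} \cdot j$ to the $(\nu_i^{(j)} \cdot a)$-th component, so the constraint $\vec{\mu} = \biguplus_{i,j}\nu_i^{(j)}\vec{\tau}(i,j)$ is the same on both sides.
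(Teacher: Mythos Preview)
Your proof is correct and follows essentially the same approach as the paper: both expand $\mathbf{p}_{\vec{\nu}}[\EEE]$ in the $\mathbf{p}_{\vec{\mu}}[\XXX]$ basis using the power-sum expansion~\eqref{eq:Edpexp} of $E_j$ together with the plethystic identity $p_n[p_m[X_j]] = p_{nm}[X_j]$, then read off the coefficient of $\mathbf{p}_{\vec{\mu}}[\XXX]$ (equivalently, pair against it) to recover the sum in Equation~\eqref{eq:BinZ}. The paper phrases the coefficient extraction directly while you use the orthogonality relation~\eqref{eq:defspsym}, but these are the same computation.
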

\begin{proof}
This is found by expanding $\mathbf{p}_{\vec{\nu}}[\EEE]$
in the power sum basis and taking the coefficient of $\mathbf{p}_{\vec{\mu}}[\XXX]$
to show that it agrees with Equation~\eqref{eq:BinZ} using Equation~\eqref{eq:defspsym}.

Using Equation~\eqref{eq:Edpexp},
\begin{align}
\frac{1}{\mathbf{z}_{\vec{\nu}}} \prod_{a=1}^k p_{\nu^{(a)}}[E_a]
&=
\frac{1}{\mathbf{z}_{\vec{\nu}}} \prod_{j=1}^k
\prod_{i=1}^{\ell(\nu^{(j)})} p_{\nu^{(j)}_i}\left[
\sum_{\vec{\tau}(i,j) \in I_j}
\frac{\mathbf{p}_{\vec{\tau}(i,j)}[\XXX]}{\mathbf{z}_{\vec{\tau}(i,j)}}\right]\\
&=\label{eq:bigexp}
\frac{1}{\mathbf{z}_{\vec{\nu}}} \prod_{j=1}^k
\prod_{i=1}^{\ell(\nu^{(j)})}
\sum_{\vec{\tau}(i,j) \in I_j}
\frac{\mathbf{p}_{\nu^{(j)}_i\vec{\tau}(i,j)}[\XXX]}{\mathbf{z}_{\vec{\tau}(i,j)}},
\end{align}
where in the terms of the sum, we note that $\nu^{(j)}_i \in \ZZ_{>0}$
and the expression $\nu^{(j)}_i\vec{\tau}(i,j)$ is to be interpreted
as $( \nu^{(j)}_i \vec{\tau}(i,j)^{(1)}, \nu^{(j)}_i \vec{\tau}(i,j)^{(2)},
\ldots, \nu^{(j)}_i \vec{\tau}(i,j)^{(k)})$ where for a positive integer $a$
and a partition $\lambda \vdash r$ we have
$a \lambda = (a \lambda_1, a \lambda_2,
\ldots, a \lambda_{\ell(\lambda)})\vdash ar$.

The coefficient of $\frac{\mathbf{p}_{\vec{\mu}}[\XXX]}{\mathbf{z}_{\vec{\mu}}}$
is equal to the sum of the coefficients such that
for each $1 \leqslant a \leqslant k$,
\begin{equation}\label{eq:cond}
    \biguplus_{j=1}^k \biguplus_{i=1}^{\ell(\nu^{(j)})} \nu^{(j)}_i\vec{\tau}(i,j)^{(a)} = \vec{\mu}^{(a)}~.
\end{equation}
More specifically the coefficient of $\frac{p^\ast_{\vec{\mu}}[\XXX]}{\mathbf{z}_{\vec{\mu}}}$
in Equation~\eqref{eq:bigexp} is equal to
\begin{equation}\label{eq:coeffp}
    \frac{1}{\mathbf{z}_{\vec{\nu}}}
    \sum_{\vec{\tau}(\bullet,\bullet)}
    \frac{\mathbf{z}_{\vec{\mu}}}
    {\prod_{j=1}^k \prod_{i=1}^{\ell(\nu^{(j)})} \mathbf{z}_{\vec{\tau}(i,j)}},
\end{equation}
where the sum is over all sequences of partitions $\vec{\tau}(i,j) \in I_j$ for $1\leqslant j \leqslant k$,
$1 \leqslant i \leqslant \ell(\nu^{(j)})$ such that
Equation~\eqref{eq:cond} holds.
\end{proof}

As a consequence, we have the following symmetric function expression for
the character table of the uniform block permutation algebra $\CC\mathcal{U}_k$.
\begin{theorem}
\label{conj:frobchar}
Let $\chi^{\vec{\lambda}}_{\mathcal{U}_k}$ be the irreducible character of $\mathcal{U}_k$ indexed by $\vec{\lambda}\in I_k$.
For $\vec{\mu} \in I_k$, let $d_{\vec{\mu}} \in \mathcal{U}_k$ be any element
such that $\mathsf{cycletype}(d_{\vec{\mu}}) = \vec{\mu}$. Then
\begin{equation}\label{eq:sfexpr}
\chi^{\vec{\lambda}}_{\mathcal{U}_k}(d_{\vec{\mu}}) = \left< \mathbf{s}_{\vec{\lambda}}[\EEE], \mathbf{p}_{\vec{\mu}}[\XXX] \right>~.
\end{equation}
As a consequence,
\begin{equation}
\label{eq:frobchar}
    \phi_{\mathcal{U}_k}( \chi^{\vec{\lambda}}_{\mathcal{U}_k} )
    = \mathbf{s}_{\vec{\lambda}}[\EEE]~.
\end{equation}
\end{theorem}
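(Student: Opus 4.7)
The plan is to derive the character formula by combining the three main ingredients already assembled: Proposition~\ref{prop:char} (which reduces $\chi^{\vec{\lambda}}_{\mathcal{U}_k}(d_{\vec{\mu}})$ to a weighted sum of $G_\lambda$-characters), Corollary~\ref{cor:bscalar} (which identifies the coefficients $b_{\vec{\mu}}^{\vec{\nu}}$ as scalar products of power-sum products), and the Frobenius formula~\eqref{eq:spval} for the maximal subgroups $G_\lambda$. All the nontrivial combinatorics has already been carried out; the work here is essentially bookkeeping inside $\mathsf{Sym}^{\ast}_{\XXX,k}$.

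First I would expand $\mathbf{s}_{\vec{\lambda}}[\EEE]$ in the basis $\{\mathbf{p}_{\vec{\nu}}[\EEE]\}$ by applying the classical expansion $s_{\lambda^{(i)}} = \sum_{\mu^{(i)} \vdash a_i} \frac{\chi^{\lambda^{(i)}}_{\mathfrak{S}_{a_i}}(\mu^{(i)})}{z_{\mu^{(i)}}}\, p_{\mu^{(i)}}$ componentwise and then substituting $E_i$ for the alphabet. Using Equation~\eqref{eq:spval} to recognise the product of group characters as $\chi^{\vec{\lambda}}_{G_\lambda}(d_{\vec{\nu}})$, this yields
\[
    \mathbf{s}_{\vec{\lambda}}[\EEE] \;=\; \sum_{\substack{\vec{\nu}\in I_k\\ \vtype(\vec{\nu})=\lambda}} \frac{\chi^{\vec{\lambda}}_{G_\lambda}(d_{\vec{\nu}})}{\mathbf{z}_{\vec{\nu}}}\, \mathbf{p}_{\vec{\nu}}[\EEE],
\]
where only indices with $|\nu^{(i)}|=a_i$ contribute, forcing $\vtype(\vec{\nu}) = \vtype(\vec{\lambda}) = \lambda$.

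Next I would pair both sides with $\mathbf{p}_{\vec{\mu}}[\XXX]$. By the bilinearity of the scalar product and Corollary~\ref{cor:bscalar}, the coefficient $\frac{1}{\mathbf{z}_{\vec{\nu}}}\langle \mathbf{p}_{\vec{\nu}}[\EEE], \mathbf{p}_{\vec{\mu}}[\XXX]\rangle$ is exactly $b_{\vec{\mu}}^{\vec{\nu}}$. Therefore
\[
    \big\langle \mathbf{s}_{\vec{\lambda}}[\EEE],\, \mathbf{p}_{\vec{\mu}}[\XXX] \big\rangle \;=\; \sum_{\substack{\vec{\nu}\in I_k\\ \vtype(\vec{\nu})=\lambda}} b_{\vec{\mu}}^{\vec{\nu}}\, \chi^{\vec{\lambda}}_{G_\lambda}(d_{\vec{\nu}}),
\]
and the right-hand side is precisely $\chi^{\vec{\lambda}}_{\mathcal{U}_k}(d_{\vec{\mu}})$ by Equation~\eqref{Eq:X=CB}. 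This establishes~\eqref{eq:sfexpr}.

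For the Frobenius image identity~\eqref{eq:frobchar}, I would substitute the just-proved expression into the definition~\eqref{equation.phi Uk} of $\phi_{\mathcal{U}_k}$:
\[
    \phi_{\mathcal{U}_k}\big(\chi^{\vec{\lambda}}_{\mathcal{U}_k}\big) \;=\; \sum_{\vec{\mu}\in I_k} \big\langle \mathbf{s}_{\vec{\lambda}}[\EEE],\, \mathbf{p}_{\vec{\mu}}[\XXX] \big\rangle\, \frac{\mathbf{p}_{\vec{\mu}}[\XXX]}{\mathbf{z}_{\vec{\mu}}}.
\]
By observation~\eqref{eq:scalar-product-with-power-sums}, each scalar product is precisely the coefficient of $\mathbf{p}_{\vec{\mu}}[\XXX]/\mathbf{z}_{\vec{\mu}}$ in the expansion of $\mathbf{s}_{\vec{\lambda}}[\EEE]$ (viewed in $\mathsf{Sym}^\ast_{\XXX,k}$ after expanding each $E_i$), and summing these coefficients times the corresponding basis elements simply recovers $\mathbf{s}_{\vec{\lambda}}[\EEE]$. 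There is no real obstacle in this argument; the only point requiring any care is checking that the degree-matching condition makes the restriction $\vtype(\vec{\nu}) = \lambda$ automatic in the expansion and that one is allowed to treat $\mathbf{s}_{\vec{\lambda}}[\EEE]$ as an element of $\mathsf{Sym}^\ast_{\XXX,k}$ when pairing with $\mathbf{p}_{\vec{\mu}}[\XXX]$, both of which follow directly from the degree conventions set up in Section~\ref{subsec:UkFrob}.
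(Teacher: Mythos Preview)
Your proposal is correct and follows essentially the same route as the paper: expand $\mathbf{s}_{\vec{\lambda}}[\EEE]$ in the $\mathbf{p}_{\vec{\nu}}[\EEE]$ basis (the paper phrases this via orthogonality of power sums, you via the explicit componentwise Schur-to-power-sum formula, but these are the same expansion), then pair with $\mathbf{p}_{\vec{\mu}}[\XXX]$ and invoke Corollary~\ref{cor:bscalar} and Equation~\eqref{Eq:X=CB}. Your derivation of~\eqref{eq:frobchar} from~\eqref{eq:sfexpr} likewise matches the paper's.
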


\begin{proof}
Since characters are class functions, they are constant on generalized
conjugacy classes and so it suffices to prove the result for the conjugacy
class representatives $d_{\vec{\mu}}$ defined in Section~\ref{sec:conj-class-reps}.
By Equation~\eqref{eq:defspsym}, $\{ \mathbf{p}_{\vec{\nu}}[\XXX] \}_{\vec{\nu}}$
is an orthogonal basis, hence for any symmetric function alphabet $\mathbf{Y} = Y_1, Y_2, Y_3, \ldots$,
$$\mathbf{s}_{\vec{\lambda}}[\mathbf{Y}] = \sum_{\vec{\nu}}
\left< \mathbf{s}_{\vec{\lambda}}[\mathbf{Y}], \mathbf{p}_{\vec{\nu}}[\mathbf{Y}]\right>
\frac{\mathbf{p}_{\vec{\nu}}[\mathbf{Y}]}{\mathbf{z}_{\vec{\nu}}}~.
$$
We can expand
\begin{align}\label{eq:sfversionXeqCB}
\left< \mathbf{s}_{\vec{\lambda}}[\EEE], \mathbf{p}_{\vec{\mu}}[\XXX] \right>&=
\sum_{\vec{\nu}} \left< \mathbf{s}_{\vec{\lambda}}[\EEE],
\mathbf{p}_{\vec{\nu}}[\EEE] \right>
\left< \frac{\mathbf{p}_{\vec{\nu}}[\EEE]}{\mathbf{z}_{\vec{\nu}}},\mathbf{p}_{\vec{\mu}}[\XXX] \right>~.
\end{align}
This last expression is equal to $\chi^{\vec{\lambda}}_{\mathcal{U}_k}(d_{\vec{\mu}})$
by Equations \eqref{eq:sfbcoef}, \eqref{eq:spval} and \eqref{Eq:X=CB}.
Equation~\eqref{eq:frobchar} follows from~\eqref{equation.phi Uk} and the fact that for any $f$
$$f = \sum_{\vec{\nu}} \left< f, \mathbf{p}_{\vec{\nu}}[\XXX]\right>
\frac{\mathbf{p}_{\vec{\nu}}[\XXX]}{\mathbf{z}_{\vec{\nu}}}~.
$$
\end{proof}

Theorem~\ref{conj:frobchar} allows us to compute the character table for
$\mathcal{U}_k$ using symmetric function computations.
In order to write down the character table, we define a total order on the elements in $I_k$.  To do this we first define the \defn{reverse lexicographic order}
on partitions: we say \defn{$\lambda\leqslant_{\mathrm{rl}} \mu$} if we have $\lambda_i > \mu_i$ at the first index $i$ where $\lambda$ and $\mu$ differ.
This is a total order. For example, $(5) \leqslant_{\mathrm{rl}} (4,1) \leqslant_{\mathrm{rl}} (3,2) \leqslant_{\mathrm{rl}} (3,1,1) \leqslant_{\mathrm{rl}} (2,2,1) \leqslant_{\mathrm{rl}} (2,1,1,1) \leqslant_{\mathrm{rl}} (1,1,1,1,1)$.
Now for $\vec{\lambda},\vec{\mu}\in I_k$, we say \defn{$\vec{\lambda}< \vec{\mu}$} if
$\vtype(\vec{\lambda}) <_{\mathrm{rl}} \vtype(\vec{\mu})$ or if $\vtype(\vec{\lambda}) = \vtype(\vec{\mu})$ and there exists an
$1 \leqslant i \leqslant k$ such that $\lambda^{(j)} = \mu^{(j)}$ for all $1 \leqslant j < i$ and
$\lambda^{(i)} <_{\mathrm{rl}} \mu^{(i)}$.

\begin{example} \label{Ex:charactertable}
\label{ex:UBP2chartable}
The matrices are presented below with the rows and columns ordered from
smallest to largest from the top row of the matrix to the bottom.  The elements of
$I_k$ are presented compactly by dropping a layer of enclosing parentheses and commas.

The character table of $\mathcal{U}_2$ is
$$
\begin{array}{l|r|rr}
(\epart,1) & 1 & 1 & 1 \\
\hline
(2) & 0 & 1 & 1 \\
(11) & 0 & -1 & 1
\end{array}
$$

\label{ex:UBP3chartable}
The character table
of $\mathcal{U}_3$ is
$$
\begin{array}{l|r|r|rrr}
(\epart, \epart, 1) & 1 & 1 & 1 & 1 & 1 \\
\hline
(1, 1) & 0 & 1 & 0 & 1 & 3 \\
\hline
(3) & 0 & 0 & 1 & 1 & 1 \\
(21) & 0 & 0 & -1 & 0 & 2 \\
(111) & 0 & 0 & 1 & -1 & 1
\end{array}
$$

\label{ex:UBP4chartable}
The character table
of $\mathcal{U}_4$ is
$$
\begin{array}{l|r|r|rr|rr|rrrrr}
(\epart, \epart, \epart, 1) & 1 & 1 & 1 & 1 & 1 & 1 & 1 & 1 & 1 & 1 & 1 \\
\hline
(1, \epart, 1) &0 & 1 & 0 & 0 & 0 & 2 & 0 & 1 & 0 & 2 & 4 \\
\hline
(\epart, 2) &0 & 0 & 1 & 1 & 1 & 1 & 1 & 0 & 3 & 1 & 3 \\
(\epart, 11) &0 & 0 & -1 & 1 & 1 & 1 & -1 & 0 & -1 & 1 & 3 \\
\hline
(2, 1) &0 & 0 & 0 & 0 & 1 & 1 & 0 & 0 & 2 & 2 & 6 \\
(11, 1) &0 & 0 & 0 & 0 & -1 & 1 & 0 & 0 & -2 & 0 & 6 \\
\hline
(4) &0 & 0 & 0 & 0 & 0 & 0 & 1 & 1 & 1 & 1 & 1 \\
(31) &0 & 0 & 0 & 0 & 0 & 0 & -1 & 0 & -1 & 1 & 3 \\
(22) &0 & 0 & 0 & 0 & 0 & 0 & 0 & -1 & 2 & 0 & 2 \\
(211) &0 & 0 & 0 & 0 & 0 & 0 & 1 & 0 & -1 & -1 & 3 \\
(1111) &0 & 0 & 0 & 0 & 0 & 0 & -1 & 1 & 1 & -1 & 1
\end{array}
$$
\end{example}

%%%%%%%%%%%%%%%%%%%%%%%%%%%%%%%%%%%%%%%%%%%%%%
\subsection{Factorizations of the character table of $\mathcal{U}_k$}

In \cite[Section 7]{Steinberg-Mobius}, Steinberg describes two factorizations of the character table for
finite inverse semigroups in terms of the character tables of its
maximal subgroups. We describe both of these factorizations here in
Proposition~\ref{prop:matrixAB} and Proposition \ref{prop:UAfactorization} below.

The first factorization uses an upper uni-triangular matrix $B_k$ with
non-negative integer entries. The general description for the entries in $B_k$ is discussed
in~\cite[Proposition 7.1]{Steinberg-Mobius} in which Steinberg remarks that computing this matrix is in
general a ``daunting task.'' In~\cite[Corollary 3.7]{Solomon}, Solomon computes this factorization for
the character table of the rook monoid. For the uniform block permutation monoid $\mathcal{U}_k$,
our formula for the entries of $B_k$ in terms of symmetric functions
will follow from the results in Section~\ref{sec:zeeformula}.

The second factorization uses a different upper uni-triangular with
non-negative integer entries $U_k$.  Its entries are the multiplicities of the
irreducible representations of the maximal subgroups when we restrict an
irreducible representation of $\mathcal{U}_k$ to its maximal subgroups.
Our interest in this matrix arises because of a relation with the operation
of plethysm described in Corollary \ref{cor:plethmult}.

Throughout this section we assume that $I_k$ is totally ordered as in Section~\ref{subsec:UkFrob}.
This order
satisfies the condition that if
$\{e_{\pi_\lambda}\, |\, \lambda \vdash k\}$ are the idempotent
representatives for the $\mathscr{J}$-classes of $\mathcal{U}_k$, then
$\mathcal{U}_k e_{\pi_\mu} \mathcal{U}_k \subseteq \mathcal{U}_k e_{\pi_\nu} \mathcal{U}_k$
implies $\mu\leqslant_{\mathrm{rl}} \nu$. In particular, the largest element is $\mu = (1^k)$
since $e_{\pi_{(1^k)}}$ is the identity element of $\mathcal{U}_k$,
and $\nu = (k)$ is the smallest element since $e_{\pi_{(k)}}$ has one block and
$e_{\pi_{(k)}}$ is the only element in $\mathcal{U}_k e_{\pi_{(k)}} \mathcal{U}_k$.

Let $X_k$ be the character table of $\mathcal{U}_k$, which we view as a matrix,
and whose entries are denoted by $X_{\vec{\lambda}, \vec{\mu}}$.
The following result summarizes
the properties of $X_k$ proved in the previous section. They
will be used to factor $X_k$ as the product of two matrices.

\begin{prop} \label{prop:matrixX}
Let $X_k = \big( X_{ \vec{\lambda}, \vec{\mu}} \big)_{\vec{\lambda}, \vec{\mu}\in I_k}$
be the character table of $\mathcal{U}_k$ viewed as a matrix. Then
\[
X_{ \vec{\lambda}, \vec{\mu}} =
\chi^{\vec{\lambda}}_{\mathcal{U}_k}(d_{\vec{\mu}})
= \left< \mathbf{s}_{\vec{\lambda}}[\EEE], \mathbf{p}_{\vec{\mu}}[\XXX] \right>
\]
and $X_k$ is upper block diagonal with respect to the total order on $I_k$
defined in Section~\ref{subsec:UkFrob}.
\end{prop}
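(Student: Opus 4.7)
The plan has two parts, corresponding to the two claims. The first chain of equalities is essentially already in hand: $X_{\vec{\lambda}, \vec{\mu}} = \chi^{\vec{\lambda}}_{\mathcal{U}_k}(d_{\vec{\mu}})$ is the definition of the character table entry, and $\chi^{\vec{\lambda}}_{\mathcal{U}_k}(d_{\vec{\mu}}) = \langle \mathbf{s}_{\vec{\lambda}}[\EEE], \mathbf{p}_{\vec{\mu}}[\XXX]\rangle$ is precisely Equation~\eqref{eq:sfexpr} of Theorem~\ref{conj:frobchar}. So only the block-triangularity claim requires argument.

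For the block-triangular structure, I would invoke Proposition~\ref{prop:char}. Setting $\lambda = \vtype(\vec{\lambda})$ and $\mu = \vtype(\vec{\mu})$, that proposition gives
$$\chi^{\vec{\lambda}}_{\mathcal{U}_k}(d_{\vec{\mu}}) = \sum_{d \in C(d_{\vec{\mu}}; \lambda)} \chi^{\vec{\lambda}}_{G_\lambda}(\sigma_d).$$
Since $d_{\vec{\mu}} \in G_\mu$ we have $\mathsf{top}(d_{\vec{\mu}}) = \pi_\mu$, a set partition of type $\mu$. Any $d \in C(d_{\vec{\mu}}; \lambda)$ satisfies $d \geqslant d_{\vec{\mu}}$ in refinement order and $\mathsf{type}(\mathsf{top}(d)) = \lambda$, so $\mathsf{top}(d)$ is obtained from $\pi_\mu$ by merging blocks. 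Consequently, $\lambda$ is obtained from $\mu$ by partitioning its parts into groups and summing within each group.

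The combinatorial core of the argument is then the classical observation that such a merging operation increases the partition in dominance order: for every $i$, the partial sums satisfy $\sum_{j=1}^i \lambda_j \geqslant \sum_{j=1}^i \mu_j$. (I would verify this by noting that the $i$ largest group sums of $\lambda$ can be chosen so as to cover the $i$ largest parts of $\mu$, possibly together with additional parts.) Suppose now $\lambda \neq \mu$ and let $i$ be the first index where $\lambda$ and $\mu$ differ. Then $\sum_{j=1}^{i-1} \lambda_j = \sum_{j=1}^{i-1} \mu_j$, and dominance at the $i$-th partial sum forces $\lambda_i \geqslant \mu_i$, hence $\lambda_i > \mu_i$. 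By the reverse-lex convention of the excerpt, this means $\lambda <_{\mathrm{rl}} \mu$. Taking the contrapositive: whenever $\vtype(\vec{\lambda}) >_{\mathrm{rl}} \vtype(\vec{\mu})$, the set $C(d_{\vec{\mu}}; \lambda)$ is empty and $X_{\vec{\lambda}, \vec{\mu}} = 0$. Under the total order on $I_k$ defined in Section~\ref{subsec:UkFrob}, these vanishing entries are precisely those lying below the diagonal blocks (the blocks being the subsets of $I_k$ with fixed $\vtype$), giving the upper block triangular form.

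The only real work is the dominance lemma for coarsenings of partitions and the elementary deduction of the strict inequality $\lambda_i > \mu_i$ at the first differing index; both are standard but worth pinning down carefully to match the particular (reverse lexicographic) order used in the paper.
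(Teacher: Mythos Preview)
Your proposal is correct and aligns with the paper's approach: the paper does not give a separate proof of this proposition, stating only that it ``summarizes the properties of $X_k$ proved in the previous section,'' so the scalar-product formula is indeed just Theorem~\ref{conj:frobchar}, and your use of Proposition~\ref{prop:char} together with the observation that any $d \in C(d_{\vec{\mu}};\lambda)$ forces $\lambda$ to be a merging of the parts of $\mu$ is exactly the intended (if unstated) justification for the block-triangular shape. Your dominance argument to pass from ``$\lambda$ is a merging of $\mu$'' to ``$\lambda \leqslant_{\mathrm{rl}} \mu$'' is a clean way to make this explicit.
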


Define $A_k= \big( A_{ \vec{\lambda}, \vec{\mu}} \big)_{\vec{\lambda}, \vec{\mu}\in I_k}$
to be the block diagonal matrix whose diagonal blocks are the character
tables of the maximal subgroups of $\mathcal{U}_k$;
explicitly,
$A_{\vec{\lambda}, \vec{\mu}} = 0$ if $\vtype(\vec{\lambda}) \neq \vtype(\vec{\mu})$,
and otherwise
$A_{\vec{\lambda}, \vec{\mu}} = \chi^{\vec{\lambda}}_{G_\lambda}(d_{\vec{\mu}})$,
where $\lambda = \vtype(\vec{\lambda})$.
By Equation~\eqref{eq:spval},
\[
A_{\vec{\lambda}, \vec{\mu}} = \chi^{\vec{\lambda}}_{G_\lambda}( d_{\vec{\mu}} )
= \left< \mathbf{s}_{\vec{\lambda}}[\XXX], \mathbf{p}_{\vec{\mu}}[\XXX] \right>~.
\]

Define a second square matrix $B_k  = \big( B_{ \vec{\lambda}, \vec{\mu}} \big)_{\vec{\lambda}, \vec{\mu}\in I_k}$, with the entries from Corollary~\ref{cor:bscalar},
\[
B_{\vec{\mu},\vec{\nu}} = b_{\vec{\mu}}^{\vec{\nu}} =
\left< \frac{\mathbf{p}_{\vec{\nu}}[\EEE]}{\mathbf{z}_{\vec{\nu}}},
\mathbf{p}_{\vec{\mu}}[\XXX]\right>.
\]

\begin{prop}\label{prop:matrixAB}
The matrix $A_k$ is block diagonal,
$B_k$ is upper uni-triangular
with non-negative integer entries, and
\[
X_k = A_k \cdot B_k~.
\]
\end{prop}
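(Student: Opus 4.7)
The plan is to read off the matrix identity entry by entry from Equation~\eqref{Eq:X=CB} and then verify the structural claims about $A_k$ and $B_k$ separately, each of which essentially just unpacks definitions established earlier.

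First, I would fix the factorization. Equation~\eqref{Eq:X=CB} reads
\[
X_{\vec{\lambda},\vec{\mu}} = \chi^{\vec{\lambda}}_{\mathcal{U}_k}(d_{\vec{\mu}})
= \sum_{\substack{\vec{\nu}\in I_k\\ \vtype(\vec{\nu})=\vtype(\vec{\lambda})}} \chi^{\vec{\lambda}}_{G_\lambda}(d_{\vec{\nu}})\, b_{\vec{\mu}}^{\vec{\nu}}.
\]
The constraint $\vtype(\vec{\nu})=\vtype(\vec{\lambda})$ can be dropped because $A_{\vec{\lambda},\vec{\nu}}=\chi^{\vec{\lambda}}_{G_\lambda}(d_{\vec{\nu}})$ was defined to vanish off the diagonal blocks. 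Thus, with the $(\vec{\nu},\vec{\mu})$ entry of $B_k$ equal to $b_{\vec{\mu}}^{\vec{\nu}}$, the sum above is exactly $(A_k B_k)_{\vec{\lambda},\vec{\mu}}$, so $X_k = A_k\cdot B_k$.

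Second, I would dispatch the structural claims about $A_k$ and the non-negativity/integrality of $B_k$. That $A_k$ is block diagonal, with diagonal blocks equal to the character tables of the maximal subgroups $G_\lambda$, is immediate from its definition together with the identification $\chi^{\vec{\lambda}}_{G_\lambda}(d_{\vec{\nu}}) = \langle \mathbf{s}_{\vec{\lambda}}[\XXX],\mathbf{p}_{\vec{\nu}}[\XXX]\rangle$ from Equation~\eqref{eq:spval}. For $B_k$, the entry $b_{\vec{\mu}}^{\vec{\nu}} = |C(d_{\vec{\mu}};\vec{\nu})|$ is by definition the cardinality of a finite set, hence a non-negative integer.

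Third, I would verify the upper uni-triangular shape of $B_k$ with respect to the total order on $I_k$ from Section~\ref{subsec:UkFrob}. For the diagonal entries, observe that any $d\in C(d_{\vec{\mu}};\vec{\mu})$ satisfies $d\geqslant d_{\vec{\mu}}$; but merging any two distinct blocks of $d_{\vec{\mu}}$ strictly coarsens $\mathsf{top}(d_{\vec{\mu}})$ and hence changes the block-size multiset, so $\vtype(\mathsf{cycletype}(d))\neq\vtype(\vec{\mu})$ unless $d=d_{\vec{\mu}}$; thus $b_{\vec{\mu}}^{\vec{\mu}}=1$. For the triangularity, suppose $b_{\vec{\mu}}^{\vec{\nu}}\neq 0$, so that some $d\in C(d_{\vec{\mu}};\vec{\nu})$ exists. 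Since $d\geqslant d_{\vec{\mu}}$, the set partition $\mathsf{top}(d)$ is coarser than $\mathsf{top}(d_{\vec{\mu}})$, so the part-multiset $\vtype(\vec{\nu})=\mathsf{type}(\mathsf{top}(d))$ is obtained from $\vtype(\vec{\mu})$ by repeatedly merging parts. Such merges produce a partition with strictly larger parts appearing earlier, which in reverse lexicographic order means $\vtype(\vec{\nu})\leqslant_{\mathrm{rl}}\vtype(\vec{\mu})$; equivalently, $\vec{\nu}\leqslant\vec{\mu}$ in the order on $I_k$. Under the indexing of $B_k$ that makes $B_k$ a companion to the factorization $X_k=A_k B_k$ (namely the $(\vec{\nu},\vec{\mu})$ entry equals $b_{\vec{\mu}}^{\vec{\nu}}$), this is precisely the upper triangular condition, completing the proof. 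The only real subtlety lies in keeping the order conventions straight between refinement of set partitions, coarsening of cycle types, and the reverse lexicographic order on $I_k$; once these are pinned down, no calculation beyond what is already in Section~\ref{sec:zeeformula} is required.
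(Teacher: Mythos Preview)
Your proof is correct and follows essentially the same approach as the paper: the factorization $X_k = A_k B_k$ is read off from Equation~\eqref{Eq:X=CB}, and the uni-triangularity of $B_k$ is deduced from the observation that any $d \in C(d_{\vec{\mu}};\vec{\nu})$ is obtained from $d_{\vec{\mu}}$ by merging blocks, forcing $\vtype(\vec{\nu}) \leqslant_{\mathrm{rl}} \vtype(\vec{\mu})$ (with equality only when $d = d_{\vec{\mu}}$). Your write-up is in fact slightly more explicit than the paper's, which compresses both the diagonal and off-diagonal cases into a single sentence.
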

\begin{proof}
The statement that
$X_k = A_k \cdot B_k$ is a restatement of Equations~\eqref{Eq:X=CB} and~\eqref{eq:sfversionXeqCB}.

If $\vtype(\vec{\nu}) = \vtype(\vec{\mu})$, then
$b_{\vec{\mu}}^{\vec{\nu}}$ is the number of ways
of merging parts of $d_{\vec{\mu}}$ to obtain an element
of cycle type $\vec{\nu}$.  There is of course one way to do this
if $\vec{\nu} = \vec{\mu}$ and zero ways
if $\vtype(\vec{\mu}) <_{\mathrm{rl}} \vtype(\vec{\nu})$.
\end{proof}

\begin{example} \label{Ex:firstfactorization}
If $k = 2$, the character table for the maximal
subgroups in block diagonal form and the matrix
$B$ are
$$
A_2 = \begin{array}{l|r|rr}
(\epart,1) & 1 & 0 & 0 \\
\hline
(2) & 0 & 1 & 1 \\
(11) & 0 & -1 & 1
\end{array}
\qquad\qquad\qquad\qquad
B_2 = \begin{array}{l|r|rr}
(\epart,1) & 1 & 1 & 1 \\
\hline
(2) & 0 & 1 & 0 \\
(11) & 0 & 0 & 1
\end{array}~.
$$

If $k = 3$, the character table for the maximal
subgroups in block diagonal form and the matrix
$B$ are
$$
A_3 = \begin{array}{l|r|r|rrr}
(\epart, \epart, 1) & 1 & 0 & 0 & 0 & 0 \\
\hline
(1, 1) & 0 & 1 & 0 & 0 & 0 \\
\hline
(3) & 0 & 0 & 1 & 1 & 1 \\
(21) & 0 & 0 & -1 & 0 & 2 \\
(111) & 0 & 0 & 1 & -1 & 1
\end{array}
\qquad\qquad\qquad\qquad
B_3 = \begin{array}{l|r|r|rrr}
(\epart, \epart, 1) & 1 & 1 & 1 & 1 & 1 \\
\hline
(1, 1) & 0 & 1 & 0 & 1 & 3 \\
\hline
(3) & 0 & 0 & 1 & 0 & 0 \\
(21) & 0 & 0 & 0 & 1 & 0 \\
(111) & 0 & 0 & 0 & 0 & 1
\end{array}~.
$$

If $k = 4$, the character table for the maximal
subgroups in block diagonal form is
$$
A_4 = \begin{array}{l|r|r|rr|rr|rrrrr}
(\epart, \epart, \epart, 1) & 1 & 0 & 0 & 0 & 0 & 0 & 0 & 0 & 0 & 0 & 0 \\
\hline
(1, \epart, 1) &0 & 1 & 0 & 0 & 0 & 0 & 0 & 0 & 0 & 0 & 0 \\
\hline
(\epart, 2) &0 & 0 & 1 & 1 & 0 & 0 & 0 & 0 & 0 & 0 & 0 \\
(\epart, 11) &0 & 0 & -1 & 1 & 0 & 0 & 0 & 0 & 0 & 0 & 0 \\
\hline
(2, 1) &0 & 0 & 0 & 0 & 1 & 1 & 0 & 0 & 0 & 0 & 0 \\
(11, 1) &0 & 0 & 0 & 0 & -1 & 1 & 0 & 0 & 0 & 0 & 0 \\
\hline
(4) &0 & 0 & 0 & 0 & 0 & 0 & 1 & 1 & 1 & 1 & 1 \\
(31) &0 & 0 & 0 & 0 & 0 & 0 & -1 & 0 & -1 & 1 & 3 \\
(22) &0 & 0 & 0 & 0 & 0 & 0 & 0 & -1 & 2 & 0 & 2 \\
(211) &0 & 0 & 0 & 0 & 0 & 0 & 1 & 0 & -1 & -1 & 3 \\
(1111) &0 & 0 & 0 & 0 & 0 & 0 & -1 & 1 & 1 & -1 & 1
\end{array}
$$
and the matrix
$B_4$ of values $b^{\vec{\lambda}}_{\vec{\mu}}$ is
$$
B_4 = \begin{array}{l|r|r|rr|rr|rrrrr}
(\epart, \epart, \epart, 1) & 1 & 1 & 1 & 1 & 1 & 1 & 1 & 1 & 1 & 1 & 1 \\
\hline
(1, \epart, 1) &0 & 1 & 0 & 0 & 0 & 2 & 0 & 1 & 0 & 2 & 4 \\
\hline
(\epart, 2) &0 & 0 & 1 & 0 & 0 & 0 & 1 & 0 & 2 & 0 & 0 \\
(\epart, 11) &0 & 0 & 0 & 1 & 1 & 1 & 0 & 0 & 1 & 1 & 3 \\
\hline
(2, 1) &0 & 0 & 0 & 0 & 1 & 0 & 0 & 0 & 2 & 1 & 0 \\
(11, 1) &0 & 0 & 0 & 0 & 0 & 1 & 0 & 0 & 0 & 1 & 6 \\
\hline
(4) &0 & 0 & 0 & 0 & 0 & 0 & 1 & 0 & 0 & 0 & 0 \\
(31) &0 & 0 & 0 & 0 & 0 & 0 & 0 & 1 & 0 & 0 & 0 \\
(22) &0 & 0 & 0 & 0 & 0 & 0 & 0 & 0 & 1 & 0 & 0 \\
(211) &0 & 0 & 0 & 0 & 0 & 0 & 0 & 0 & 0 & 1 & 0 \\
(1111) &0 & 0 & 0 & 0 & 0 & 0 & 0 & 0 & 0 & 0 & 1
\end{array}~.
$$
\end{example}

The second factorization arises from the isomorphism in
Equation~\eqref{restrictionmap}, which is induced by restricting class
functions of $\mathcal{U}_k$ to the maximal subgroups $G_\lambda$.
By~\cite[Theorem 6.5]{Steinberg.2016}, the matrix corresponding to the restriction isomorphism,
which we denote by $U_k$, is upper triangular with 1s on the diagonal. Since $\CC\mathcal{U}_k$
is semisimple, the entries of the matrix $U_k$ are the multiplicities of the
irreducible representations of the maximal subgroups when we restrict an
irreducible representation of $\mathcal{U}_k$ to the maximal subgroups.
Thus, $U_k$ is sometimes known as the \defn{decomposition matrix}.

\begin{prop}\label{prop:UAfactorization}
Define the matrix
$U_k = \big( U_{ \vec{\lambda}, \vec{\mu}} \big)_{\vec{\lambda}, \vec{\mu}\in I_k}$
by
\[
U_{ \vec{\lambda}, \vec{\mu}} = \left<
\mathbf{s}_{\vec{\lambda}}[\EEE], \mathbf{s}_{\vec{\mu}}[\XXX] \right>~.
\]
Then $U_k$ is upper uni-triangular with non-negative integer entries, and
\[
X_k = U_k \cdot A_k~,
\]
where $X_k$ is the character table of $\mathcal{U}_k$ (see Proposition~\ref{eq:scalar-product-with-power-sums}),
and $A_k$ is the block diagonal matrix whose blocks
are the character tables of the maximal subgroups (see Proposition~\ref{prop:matrixAB}).
\end{prop}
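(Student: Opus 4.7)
The plan is to derive both the factorization and the triangularity from a representation-theoretic interpretation of the entries of $U_k$, combined with the explicit tensor-product model for the irreducible $\mathcal{U}_k$-representations from Section~\ref{sec:irreps-of-UBP-algebra}.

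First, I would identify $U_{\vec{\lambda}, \vec{\mu}}$ as a multiplicity. Theorem~\ref{conj:frobchar} gives $\phi_{\mathcal{U}_k}(\chi^{\vec{\lambda}}_{\mathcal{U}_k}) = \mathbf{s}_{\vec{\lambda}}[\EEE]$, so applying Proposition~\ref{prop:multiplicity} to the irreducible character $\chi^{\vec{\lambda}}_{\mathcal{U}_k}$ yields that $U_{\vec{\lambda}, \vec{\mu}} = \big\langle \mathbf{s}_{\vec{\lambda}}[\EEE], \mathbf{s}_{\vec{\mu}}[\XXX] \big\rangle$ equals the multiplicity of $\chi^{\vec{\mu}}_{G_\mu}$ in the restriction $\chi^{\vec{\lambda}}_{\mathcal{U}_k}|_{G_\mu}$, where $\mu = \vtype(\vec{\mu})$. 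In particular $U_{\vec{\lambda}, \vec{\mu}} \in \ZZ_{\geqslant 0}$.

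The factorization $X_k = U_k \cdot A_k$ then follows at once: since $d_{\vec{\nu}} \in G_\nu$ with $\nu = \vtype(\vec{\nu})$, decomposing $\chi^{\vec{\lambda}}_{\mathcal{U}_k}|_{G_\nu}$ into irreducibles of $G_\nu$ yields
$$
X_{\vec{\lambda}, \vec{\nu}} = \chi^{\vec{\lambda}}_{\mathcal{U}_k}(d_{\vec{\nu}}) = \sum_{\vec{\mu}\,:\,\vtype(\vec{\mu}) = \nu} U_{\vec{\lambda}, \vec{\mu}}\, \chi^{\vec{\mu}}_{G_\nu}(d_{\vec{\nu}}) = \sum_{\vec{\mu} \in I_k} U_{\vec{\lambda}, \vec{\mu}}\, A_{\vec{\mu}, \vec{\nu}},
$$
where the last equality uses $A_{\vec{\mu}, \vec{\nu}} = 0$ whenever $\vtype(\vec{\mu}) \neq \nu$.

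For the upper uni-triangular property, I would analyze how $G_\mu$ acts on the basis $\{\ell_{\pi_\lambda}^\gamma \otimes \mathbf{T}\}$ of $W^{\vec{\lambda}}_{\mathcal{U}_k}$ from Proposition~\ref{prop:irrepbasis}. By Lemma~\ref{lem:UkactsL}, an element $g \in G_\mu$ acts as zero on $\ell_{\pi_\lambda}^\gamma \otimes \mathbf{T}$ unless $\pi_\mu$ is finer than $\gamma$. Since $\mathsf{type}(\gamma) = \vtype(\vec{\lambda}) =: \lambda$, existence of such a $\gamma$ forces the partition $\mu$ to refine $\lambda$, and a standard argument shows that partition refinement implies $\mu \geqslant_{\mathrm{rl}} \lambda$. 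Hence $U_{\vec{\lambda}, \vec{\mu}} = 0$ whenever $\vtype(\vec{\mu}) <_{\mathrm{rl}} \vtype(\vec{\lambda})$. When $\vtype(\vec{\mu}) = \vtype(\vec{\lambda}) = \lambda$, the refinement condition combined with equal block-type forces $\gamma = \pi_\lambda$ (two set partitions of the same type in refinement relation must be equal), and a direct calculation using $\ell_{\pi_\lambda}^{\pi_\lambda} = e_{\pi_\lambda}$, $g e_{\pi_\lambda} = g$ for $g \in G_\lambda$, and the tensor relation $\ell_{\pi_\lambda}^{\pi_\lambda} g \otimes \mathbf{T} = \ell_{\pi_\lambda}^{\pi_\lambda} \otimes g\mathbf{T}$ shows that the invariant subspace $\operatorname{span}\{\ell_{\pi_\lambda}^{\pi_\lambda} \otimes \mathbf{T}\}$ is isomorphic to $V^{\vec{\lambda}}_{G_\lambda}$ as a $G_\lambda$-module, while the remaining basis vectors are sent to zero by every element of $G_\lambda$. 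Consequently $U_{\vec{\lambda}, \vec{\lambda}} = 1$ and $U_{\vec{\lambda}, \vec{\mu}} = 0$ for every other $\vec{\mu}$ with the same $\vtype$.

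The main obstacle is the last verification: carefully tracing through the tensor product over $\CC G_\lambda$ to confirm that the restriction yields exactly one copy of $V^{\vec{\lambda}}_{G_\lambda}$ and nothing else. This reduces to the combinatorial facts that (i) same-type refinement forces equality of set partitions, and (ii) partition refinement strictly moves parts up in reverse lexicographic order; once these are in hand, the action on the relevant subspace is precisely the one analyzed in Proposition~\ref{prop:GeactiononL}.
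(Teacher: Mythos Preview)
Your argument is correct, and it reaches the same conclusion as the paper, but the strategy for upper uni-triangularity is genuinely different.

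Both you and the paper identify $U_{\vec{\lambda},\vec{\mu}}$ as the multiplicity of $\chi^{\vec{\mu}}_{G_\mu}$ in $\chi^{\vec{\lambda}}_{\mathcal{U}_k}|_{G_\mu}$ via Proposition~\ref{prop:multiplicity} and Theorem~\ref{conj:frobchar}, which immediately gives nonnegativity and (by either orthonormality of the Schur basis or decomposition of the restricted character) the factorization $X_k = U_k \cdot A_k$. Where you diverge is in proving triangularity. The paper argues purely on the symmetric-function side: if $\vtype(\vec{\nu}) <_{\mathrm{rl}} \vtype(\vec{\lambda})$, then there is an index $r$ for which the degree of $\mathbf{s}_{\vec{\nu}}[\XXX]$ in the alphabet $X_r$ exceeds that of every monomial appearing in $\mathbf{s}_{\vec{\lambda}}[\EEE]$, forcing the inner product to vanish; and the diagonal entry is read off from the observation that $\mathbf{s}_{\vec{\lambda}}[\EEE] = \mathbf{s}_{\vec{\lambda}}[\XXX] + (\text{terms with different multidegree})$. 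You instead work directly with the module $W^{\vec{\lambda}}_{\mathcal{U}_k} = \CC L_{\pi_\lambda} \otimes_{\CC G_\lambda} V^{\vec{\lambda}}_{G_\lambda}$: using Lemma~\ref{lem:UkactsL} you show $G_\mu$ annihilates every basis vector $\ell_{\pi_\lambda}^\gamma \otimes \mathbf{T}$ unless $\pi_\mu \leqslant \gamma$, reduce this via partition refinement and dominance to $\vtype(\vec{\lambda}) \leqslant_{\mathrm{rl}} \vtype(\vec{\mu})$, and then in the equal-type case isolate the single surviving summand $e_{\pi_\lambda} \otimes V^{\vec{\lambda}}_{G_\lambda} \cong V^{\vec{\lambda}}_{G_\lambda}$.

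What each approach buys: the paper's degree argument is short and stays entirely within $\mathsf{Sym}^\ast_{\XXX}$, requiring nothing beyond the definition of $E_r$. Your approach is longer but more structural: it makes explicit that $e_{\pi_\mu} W^{\vec{\lambda}}_{\mathcal{U}_k}$ is the unital part of the restriction, and in the diagonal block it exhibits the isomorphism $e_{\pi_\lambda} W^{\vec{\lambda}}_{\mathcal{U}_k} \cong V^{\vec{\lambda}}_{G_\lambda}$ concretely. This is exactly the mechanism behind \cite[Theorem~6.5]{Steinberg.2016}, which the paper cites but does not unpack.
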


\begin{proof}
For $\vec{\lambda}, \vec{\nu} \in I_k$ and $\nu = \vtype(\vec{\nu})$,
\[
U_{\vec{\lambda}, \vec{\nu}} =
\left< \phi_{\mathcal{U}_k}(\chi^{\vec{\lambda}}_{\mathcal{U}_k}),
\phi_{G_\nu}(\chi^{\vec{\nu}}_{G_\nu}) \right>
=
\left< \mathbf{s}_{\vec{\lambda}}[\EEE], \mathbf{s}_{\vec{\nu}}[\XXX] \right>~.
\]
By Proposition~\ref{prop:multiplicity},
the entries of this matrix are multiplicities of irreducible representations in a
restriction and hence they are non-negative integers; more precisely,
$U_{\vec{\lambda},\vec{\nu}}$ is the multiplicity of the irreducible
$G_\nu$-representation $V^{\vec{\nu}}_{G_\nu}$ in the restriction of
the irreducible $\mathcal{U}_k$-representation
$W^{\vec{\lambda}}_{\mathcal{U}_k}$ to the maximal subgroup $G_{\nu}$.

The factorization $X_k = U_k \cdot A_k$ is a consequence of the fact
that $\{ \mathbf{s}_{\vec{\nu}}[\XXX] \}_{\vec{\nu} \in I_k}$ is an orthonormal
basis of $\mathsf{Sym}^{\ast}_{\XXX,k}$, so that
\[
\left< \mathbf{s}_{\vec{\lambda}}[\EEE], \mathbf{p}_{\vec{\mu}}[\XXX] \right> =
\sum_{\vec{\nu} \in I_k}
\left< \mathbf{s}_{\vec{\lambda}}[\EEE], \mathbf{s}_{\vec{\nu}}[\XXX] \right>
\left< \mathbf{s}_{\vec{\nu}}[\XXX], \mathbf{p}_{\vec{\mu}}[\XXX] \right>~.
\]

Now if we examine the expansion of
$\mathbf{s}_{\vec{\lambda}}[\EEE]$,
then if $\vtype(\vec{\nu}) <_{\mathrm{rl}} \vtype(\vec{\lambda})$, there
exists an $r$ such that the multiplicity of $r$ in $\vtype(\vec{\nu})$
is $a>0$ and in $\vtype(\vec{\lambda})$ it is smaller than $a$.
This implies that the degree in $X_r$ in
the symmetric function $\phi_{G_\mu}(\chi^{\vec{\mu}}_{G_\mu})$ is
$a$ but that all terms in
$\mathbf{s}_{\vec{\lambda}}[\EEE] = \phi_{\mathcal{U}_k}(\chi^{\vec{\lambda}}_{\mathcal{U}_k})$
have degree in $X_r$ smaller than $a$ and hence $U_{\vec{\lambda},\vec{\nu}} = 0$.

Note that $\mathbf{s}_{\vec{\lambda}}[\EEE]$ is equal to $\mathbf{s}_{\vec{\lambda}}[\XXX]$ plus
terms that are of not of the same degree in the same variables
as $\mathbf{s}_{\vec{\lambda}}[\XXX]$.  Therefore,
$U_{\vec{\lambda},\vec{\lambda}} = 1$ and
$U_{\vec{\lambda},\vec{\nu}} = 0$ if $\vec{\lambda} \neq \vec{\nu}$.
We conclude that $U_k$ is upper uni-triangular.
\end{proof}

\begin{example}  
For $k=2,3,4,$ the matrices corresponding to the multiplicities of an irreducible 
representation in the restriction from the uniform block permutation algebra
to the maximal subgroups are given by
$$
U_2 =
\begin{array}{l|r|rr}
(\epart, 1) & 1 & 1 & 0 \\
\hline
(2) & 0 & 1 & 0 \\
(11) & 0 & 0 & 1
\end{array}~,
$$
$$
U_3 = \begin{array}{l|r|r|rrr}
(\epart, \epart, 1) & 1 & 1 & 1 & 0 & 0 \\
\hline
(1, 1) & 0 & 1 & 1 & 1 & 0 \\
\hline
(3) & 0 & 0 & 1 & 0 & 0 \\
(21) & 0 & 0 & 0 & 1 & 0 \\
(111) & 0 & 0 & 0 & 0 & 1
\end{array}
$$
and
$$
U_4 = \begin{array}{l|r|r|rr|rr|rrrrr}
(\epart, \epart, \epart, 1) & 1 & 1 & 1 & 0 & 1 & 0 & 1 & 0 & 0 & 0 & 0 \\
\hline
(1, \epart, 1) &0 & 1 & 0 & 0 & 1 & 1 & 1 & 1 & 0 & 0 & 0 \\
\hline
(\epart, 2) &0 & 0 & 1 & 0 & 1 & 0 & 1 & 0 & 1 & 0 & 0 \\
(\epart, 11) &0 & 0 & 0 & 1 & 1 & 0 & 0 & 1 & 0 & 0 & 0 \\
\hline
(2, 1) &0 & 0 & 0 & 0 & 1 & 0 & 1 & 1 & 1 & 0 & 0 \\
(11, 1) &0 & 0 & 0 & 0 & 0 & 1 & 0 & 1 & 0 & 1 & 0 \\
\hline
(4) &0 & 0 & 0 & 0 & 0 & 0 & 1 & 0 & 0 & 0 & 0 \\
(31) &0 & 0 & 0 & 0 & 0 & 0 & 0 & 1 & 0 & 0 & 0 \\
(22) &0 & 0 & 0 & 0 & 0 & 0 & 0 & 0 & 1 & 0 & 0 \\
(211) &0 & 0 & 0 & 0 & 0 & 0 & 0 & 0 & 0 & 1 & 0 \\
(1111) &0 & 0 & 0 & 0 & 0 & 0 & 0 & 0 & 0 & 0 & 1
\end{array}
$$
To dispel the impression that the entries $U_{\vec{\lambda},\vec{\mu}}$ are always $0$ or $1$,
we note that
\begin{equation*}
    U_{(\epart,(1),(1)), ((3),(1))} = \left< s_1[E_2] s_1[E_3], s_3[X_1] s_1[X_2] \right> = 2.
\end{equation*}
\end{example}

To indicate the importance of the decomposition matrix of $\mathcal{U}_k$,
we note in the following result that some of the entries of the matrix $U_k$
correspond to the Schur expansion of certain
symmetric function expressions involving plethysm.
One objective of this research is to give a description of
the decomposition matrix in order to provide an interpretation
of these coefficients.

\begin{cor}\label{cor:plethmult}
For $\mu \vdash k$ and $\vec{\lambda} \in I_k$,
the multiplicity of the irreducible $\mathfrak{S}_k$-module
$V^\mu_{\mathfrak{S}_k}$ in the restriction of
the irreducible $\mathcal{U}_k$-module $W^{\vec{\lambda}}_{\mathcal{U}_k}$
to $\mathfrak{S}_k$ is equal to
\begin{equation}\label{eq:multiplicity}
\big\langle s_{\lambda^{(1)}}[s_1]s_{\lambda^{(2)}}[s_2]
\cdots s_{\lambda^{(k)}}[s_k], \, s_\mu \big\rangle~.
\end{equation}
\end{cor}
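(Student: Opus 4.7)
The plan is to apply Proposition~\ref{prop:multiplicity} with the maximal subgroup $G_{(1^k)} = \mathfrak{S}_k$ and then unpack the resulting inner product on $\mathsf{Sym}^*_\XXX$ using the explicit formula for the Frobenius image provided by Theorem~\ref{conj:frobchar}. First I would identify the correct indexing data: the representative set partition $\pi_{(1^k)}$ is $\{\{1\},\{2\},\ldots,\{k\}\}$, so the maximal subgroup $G_{(1^k)}$ is precisely $\mathfrak{S}_k$. Under the parametrization of Section~\ref{sec:irrepsGe}, the irreducible $\mathfrak{S}_k$-module $V^{\mu}_{\mathfrak{S}_k}$ corresponds to the tuple $\vec{\nu} = (\mu, \epart, \epart, \ldots, \epart) \in I_k$ with $\vtype(\vec{\nu}) = (1^k)$.

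Next, I would invoke Proposition~\ref{prop:multiplicity} (using $\chi = \chi^{\vec{\lambda}}_{\mathcal{U}_k}$ and the above choice of $\vec{\nu}$) together with Theorem~\ref{conj:frobchar}. This identifies the multiplicity we seek as
\[
    \left\langle \phi_{\mathcal{U}_k}\!\left(\chi^{\vec{\lambda}}_{\mathcal{U}_k}\right),\, \mathbf{s}_{\vec{\nu}}[\XXX] \right\rangle
    = \left\langle \mathbf{s}_{\vec{\lambda}}[\EEE],\, s_\mu[X_1] \right\rangle,
\]
since $\mathbf{s}_{\vec{\nu}}[\XXX] = s_\mu[X_1] \cdot s_{\epart}[X_2] \cdots s_{\epart}[X_k] = s_\mu[X_1]$.

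Then I would exploit the tensor-product factorization of the scalar product on $\mathsf{Sym}^*_\XXX$ from Equation~\eqref{eq:defspsym}. Since $s_\mu[X_1]$ involves only the alphabet $X_1$ and $\langle s_\alpha[X_i], 1\rangle = \delta_{\alpha,\epart}$ in the standard one-alphabet inner product, the pairing $\left\langle -, s_\mu[X_1]\right\rangle$ first extracts from $\mathbf{s}_{\vec{\lambda}}[\EEE]$ its component that is constant (i.e.\ of degree $0$) in each alphabet $X_2, X_3, \ldots, X_k$, and then takes the single-alphabet scalar product with $s_\mu$ in $X_1$. From Equation~\eqref{eq:Edsexp}, the only summand in
\[
    E_r = \sum_{(1^{a_1}2^{a_2}\cdots r^{a_r}) \vdash r} s_{a_1}[X_1] s_{a_2}[X_2] \cdots s_{a_r}[X_r]
\]
contributing a nonzero constant term in $X_2, \ldots, X_r$ is the one with $a_2 = \cdots = a_r = 0$ and $a_1 = r$, giving $s_r[X_1]$. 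Consequently, restricting to the $X_1$-alphabet yields $s_{\lambda^{(j)}}[E_j] \mapsto s_{\lambda^{(j)}}[s_j]$ for each $j$, so the inner product reduces to
\[
    \left\langle s_{\lambda^{(1)}}[s_1] s_{\lambda^{(2)}}[s_2] \cdots s_{\lambda^{(k)}}[s_k],\, s_\mu \right\rangle,
\]
which is Equation~\eqref{eq:multiplicity}.

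There is not really a conceptual obstacle here; the result is essentially a direct consequence of the character formula $\phi_{\mathcal{U}_k}(\chi^{\vec{\lambda}}_{\mathcal{U}_k}) = \mathbf{s}_{\vec{\lambda}}[\EEE]$ of Theorem~\ref{conj:frobchar} together with the identification of $\mathfrak{S}_k$ as a maximal subgroup. The only point requiring a small amount of care is the step ``pairing with $s_\mu[X_1]$ is the same as setting $X_i = 0$ for $i \geq 2$,'' which must be justified by verifying that the scalar product on $\mathsf{Sym}^*_\XXX$ factors over alphabets and that $\langle f[X_i], 1\rangle$ picks off the degree-zero component of $f$ in $X_i$. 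Once this is in hand, the rest is a short computation.
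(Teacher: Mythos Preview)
Your proof is correct and follows essentially the same route as the paper: both identify $G_{(1^k)} = \mathfrak{S}_k$, apply Proposition~\ref{prop:multiplicity} together with Theorem~\ref{conj:frobchar} to obtain $\langle \mathbf{s}_{\vec{\lambda}}[\EEE], s_\mu[X_1]\rangle$, and then reduce this by observing that pairing against a function only in $X_1$ amounts to setting $X_2,\ldots,X_k$ to zero in $\mathbf{s}_{\vec{\lambda}}[\EEE]$, which by Equation~\eqref{eq:Edsexp} sends $E_r$ to $s_r[X_1]$. Your justification of the ``set the other alphabets to zero'' step via the tensor-product factorization of the scalar product is slightly more explicit than the paper's, but the argument is the same.
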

\begin{proof}
Since $G_{(1^k)} = \mathfrak{S}_k$,
Proposition \ref{prop:multiplicity} states that the multiplicity of
$V^{\mu}_{\mathfrak{S}_k}$ in
${\rm Res}^{\mathcal{U}_k}_{\mathfrak{S}_k} W^{\vec{\lambda}}_{\mathcal{U}_k}$ is
\begin{equation}\label{eq:sfmultiplicity}
\Big\langle \phi_{\mathcal{U}_k}(\chi^{\vec{\lambda}}_{\mathcal{U}_k}),
\phi_{G_{(1^k)}}(\chi^{\mu}_{\mathfrak{S}_k}) \Big\rangle =
\Big\langle \mathbf{s}_{\vec{\lambda}}[\EEE], s_{\mu}[X_1] \Big\rangle~.
\end{equation}
Now $\mathbf{s}_{\vec{\lambda}}[\EEE]$ has symmetric functions involving
the alphabets $X_2, X_3, \ldots, X_k$ while $s_{\mu}[X_1]$ does
not.  Thus, the value in Equation~\eqref{eq:sfmultiplicity} is not changed if we set each of those alphabets
equal to $0$ in $\mathbf{s}_{\vec{\lambda}}[\EEE]$.
From Equation~\eqref{eq:Edsexp}, we know
\[
E_r|_{X_2=X_3= \cdots =X_k=0} = s_r[X_1],
\]
and therefore the right hand side of Equation~\eqref{eq:sfmultiplicity} is equal to
\[
\big\langle s_{\lambda^{(1)}}[s_1[X_1]]
s_{\lambda^{(2)}}[s_2[X_1]]
\cdots
s_{\lambda^{(k)}}[s_k[X_1]], \, s_{\mu}[X_1] \big\rangle,
\]
which is the same as Equation~\eqref{eq:multiplicity}
upon dropping the reference to the variables $X_1$.
\end{proof}

%%%%%%%%%%%%%%%%%%%%%%%%%%%%%%%%%%%%%%%%%%%%%%%%%%%
\bibliographystyle{plain}
\bibliography{main}{}

\begin{thebibliography}{10}

\bibitem{CarreLascouxLeclerc}
Christophe Carr{\'e}, Alain Lascoux, and Bernard Leclerc.
\newblock Turbo-straightening for decomposition into standard bases.
\newblock {\em International Journal of Algebra and Computation},
  2(03):275--290, 1992.

\bibitem{Clifford}
A.~H. Clifford.
\newblock Matrix representations of completely simple semigroups.
\newblock {\em Amer. J. Math.}, 64:327--342, 1942.

\bibitem{COSSZ}
Laura Colmenarejo, Rosa Orellana, Franco Saliola, Anne Schilling, and Mike
  Zabrocki.
\newblock An insertion algorithm on multiset partitions with applications to
  diagram algebras.
\newblock {\em Journal of Algebra}, 557:97--128, 2020.

\bibitem{FitzGerald.2003}
D.~G. FitzGerald.
\newblock A presentation for the monoid of uniform block permutations.
\newblock {\em Bull. Austral. Math. Soc.}, 68(2):317--324, 2003.

\bibitem{GMS09}
Olexandr Ganyushkin, Volodymyr Mazorchuk, and Benjamin Steinberg.
\newblock On the irreducible representations of a finite semigroup.
\newblock {\em Proc. Amer. Math. Soc.}, 137(11):3585--3592, 2009.

\bibitem{HJ}
Tom Halverson and Theodore~N. Jacobson.
\newblock Set-partition tableaux and representations of diagram algebras.
\newblock {\em Algebr. Comb.}, 3(2):509--538, 2020.

\bibitem{Har}
Nate Harman.
\newblock Representations of monomial matrices and restriction from ${G}l_n$ to
  ${S}_n$.
\newblock preprint, \arxiv{1804.04702}, Apr 2018.

\bibitem{OEIS}
OEIS~Foundation Inc.
\newblock The on-line encyclopedia of integer sequences, 2019.
\newblock [Online].

\bibitem{Jones.1994}
V.~F.~R. Jones.
\newblock The {P}otts model and the symmetric group.
\newblock In {\em Subfactors ({K}yuzeso, 1993)}, pages 259--267. World Sci.
  Publ., River Edge, NJ, 1994.

\bibitem{Kosuda.2000}
Masashi Kosuda.
\newblock Characterization for the party algebras.
\newblock {\em Ryukyu Math. J.}, 13:7--22, 2000.

\bibitem{Kosuda.2001}
Masashi Kosuda.
\newblock Party algebra and construction of its irreducible representations.
\newblock In {\em Formal Power Series and Algebraic Combinatorics (FPSAC01),
  Tempe, Arizona (USA)}, pages 20--26, 2001.

\bibitem{Kosuda.2006}
Masashi Kosuda.
\newblock Irreducible representations of the party algebra.
\newblock {\em Osaka J. Math.}, 43(2):431--474, 2006.

\bibitem{LP1969}
G\'{e}rard Lallement and Mario Petrich.
\newblock Irreducible matrix representations of finite semigroups.
\newblock {\em Trans. Amer. Math. Soc.}, 139:393--412, 1969.

\bibitem{LR}
Nicholas~A Loehr and Jeffrey~B Remmel.
\newblock A computational and combinatorial expos{\'e} of plethystic calculus.
\newblock {\em Journal of Algebraic Combinatorics}, 33(2):163--198, 2011.

\bibitem{Martin.2000}
P.~P. Martin.
\newblock The partition algebra and the {P}otts model transfer matrix spectrum
  in high dimensions.
\newblock {\em J. Phys. A}, 33(19):3669--3695, 2000.

\bibitem{Martin.1991}
Paul Martin.
\newblock {\em Potts models and related problems in statistical mechanics},
  volume~5 of {\em Series on Advances in Statistical Mechanics}.
\newblock World Scientific Publishing Co., Inc., Teaneck, NJ, 1991.

\bibitem{Martin.1994}
Paul Martin.
\newblock Temperley-{L}ieb algebras for nonplanar statistical mechanics---the
  partition algebra construction.
\newblock {\em J. Knot Theory Ramifications}, 3(1):51--82, 1994.

\bibitem{Martin.1996}
Paul Martin.
\newblock The structure of the partition algebras.
\newblock {\em J. Algebra}, 183(2):319--358, 1996.

\bibitem{McAlister}
Donald~B McAlister.
\newblock Characters of finite semigroups.
\newblock {\em Journal of Algebra}, 22(1):183--200, 1972.

\bibitem{Munn}
W.~D. Munn.
\newblock On semigroup algebras.
\newblock {\em Proc. Cambridge Philos. Soc.}, 51:1--15, 1955.

\bibitem{Naruse.2005}
Hiroshi Naruse.
\newblock Characters of party algebras.
\newblock slides of talk for Workshop on Cellular and Diagram Algebras in
  Mathematics and Physics at Oxford Univ., dated 04-04-2005, 2005.

\bibitem{Ponizovski}
I.~S. Ponizovski\u{\i}.
\newblock On matrix representations of associative systems.
\newblock {\em Mat. Sb. N.S.}, 38(80):241--260, 1956.

\bibitem{RZ1991}
John Rhodes and Yechezkel Zalcstein.
\newblock Elementary representation and character theory of finite semigroups
  and its application.
\newblock In {\em Monoids and semigroups with applications ({B}erkeley, {CA},
  1989)}, pages 334--367. World Sci. Publ., River Edge, NJ, 1991.

\bibitem{Sagan.2001}
Bruce~E. Sagan.
\newblock {\em The symmetric group: {R}epresentations, combinatorial
  algorithms, and symmetric functions}, volume 203 of {\em Graduate Texts in
  Mathematics}.
\newblock Springer-Verlag, New York, second edition, 2001.

\bibitem{Solomon}
Louis Solomon.
\newblock Representations of the rook monoid.
\newblock {\em J. Algebra}, 256(2):309--342, 2002.

\bibitem{Steinberg-Mobius}
Benjamin Steinberg.
\newblock M\"obius functions and semigroup representation theory {II}:
  {C}haracter formulas and multiplicities.
\newblock {\em Advances in Mathematics}, 217:1521--1557, 2008.

\bibitem{Steinberg.2016}
Benjamin Steinberg.
\newblock {\em Representation theory of finite monoids}.
\newblock Universitext. Springer, Cham, 2016.

\bibitem{Tanabe.1997}
Kenichiro Tanabe.
\newblock On the centralizer algebra of the unitary reflection group
  {$G(m,p,n)$}.
\newblock {\em Nagoya Math. J.}, 148:113--126, 1997.

\end{thebibliography}

\end{document}